\newcommand{\vertiii}[1]{{\vert\kern-0.25ex\vert\kern-0.25ex\vert #1 
		\vert\kern-0.25ex\vert\kern-0.25ex\vert}}
	\definecolor{Darkgreen}{rgb}{0,0.4,0}
\DeclareMathAlphabet\bscal{OMS}{cmsy}{b}{n}
\DeclareMathAlphabet\mathbfscr{OMS}{mdugm}{b}{n}
\spnewtheorem{defn}[equation]{Definition}{\bfseries}{\upshape}
\spnewtheorem{prop}[equation]{Proposition}{\bfseries}{\upshape}
\spnewtheorem{thm}[equation]{Theorem}{\bfseries}{\upshape}
\spnewtheorem{cor}[equation]{Corollary}{\bfseries}{\upshape}
\spnewtheorem{rmk}[equation]{Remark}{\bfseries}{\upshape}
\spnewtheorem{lem}[equation]{Lemma}{\bfseries}{\upshape}
\spnewtheorem{expl}[equation]{Example}{\bfseries}{\upshape}
\numberwithin{equation}{section} 
\begin{document}
	
		\title{Pseudo-monotone operator theory for unsteady problems in variable exponent spaces
	}
	
	\author{A. Kaltenbach}

	\institute{A. Kaltenbach \at
		Institute of Applied Mathematics, Albert--Ludwigs--University Freiburg, Ernst--Zermelo--Straße 1, 79104 Freiburg,\\
		\email{alex.kaltenbach@mathematik.uni-freiburg.de}           
	}
	
	\date{Received: date / Accepted: date}

	\maketitle
	
	\begin{abstract}
		We 
		prove by means of advanced pseudo-monotonicity methods an abstract existence result
		for parabolic partial differential equations with  $\log$--Hölder continuous variable exponent nonlinearity governed by the symmetric part of a gradient only.  To this end, we introduce the notions Bochner pseudo-monotonicity
		and Bochner coercivity, which are appropriate extensions of the concepts of pseudo-monotonicity and coercivity to unsteady problems in variable exponent spaces. In this context, we apply the so-called Hirano--Landes approach, which enables us to give general and easily verifiable 
		conditions for these new notions. 
		Moreover, we prove essential parabolic embedding  and compactness results involving only the symmetric part of the gradient.
		
		\keywords{Variable exponent spaces\and Poincar\'e
			inequality\and Pseudo-monotone operator\and Symmetric gradient \and Nonlinear parabolic problem \and Existence
			of weak solutions}
		\subclass{46E35, 35K55, 47H05} 
	\end{abstract}
	
	\section{Introduction}
	Let $\Omega\subseteq \setR^d$, $d\ge 2$, be a bounded 
	Lipschitz domain, $I:=\left(0,T\right)$, $T<\infty$, a 
	bounded time interval, $Q_T:=I\times \Omega$ a cylinder and $\Gamma_T:=I\times\pa\Omega$. 
	We consider as a model problem the system
	\begin{align}
	\begin{split}
	\begin{alignedat}{2}
	\pa_t \bsu-\divo(\bfS(\cdot,\cdot,\bfvarepsilon(\bsu)))
	+\bfb(\cdot,\cdot,\bsu)&
	=\bsf-\divo(\bsF)&&\quad\text{ in }Q_T,\\
	\bsu&=0&&\quad\text{ on }\Gamma_T,\\
	\bsu(0)&=\bfu_0&&\quad\text{ in }\Omega.
	\end{alignedat}
	\end{split}\label{eq:model}
	\end{align}
	Here, $\bsf:Q_T\to \setR^d$ is a given vector field, 
	$\bsF:Q_T\to \mathbb{M}^{d\times d}_{\sym}$ a given tensor field\footnote{Here, $\mathbb{M}^{d\times d}_{\sym}:=\{\mathbf{A}\in \mathbb{R}^{d\times d}\mid  \mathbf{A}^\top=\mathbf{A}\}$ is equipped~with~the~Frobenius~scalar~product~${\bfA:\bfB:=\sum_{i,j=1}^{d}{\textrm{A}_{ij}\textrm{B}_{ij}}}$\\[-2pt] 
		and the Frobenius norm $\vert\bfA\vert:=(\bfA:\bfA)^{\frac{1}{2}}$. 
		By $\bfa\cdot\bfb:=\sum_{i=1}^{d}{\textrm{a}_{i}\textrm{b}_{i}}$ we denote the Euclidean scalar product in $\setR^d$\\[-1pt]  and 
		by $\vert \bfa\vert:=(\bfa\cdot\bfa)^{\frac{1}{2}}$  the Euclidean norm.}, 
	$\bfu_0:\Omega\to \setR^d$ an initial condition, and 
	$\bfvarepsilon(\bfu):=\frac{1}{2}(\nb \bfu+\nb \bfu^\top)$ 
	denotes the symmetric part of the gradient. Moreover, let $p:Q_T\to\left[1,+\infty\right)$
	be a globally $\log$--Hölder continuous exponent with
	\begin{align*}
	1< p^-:=\inf_{(t,x)^\top\in Q_T}{p(t,x)}\leq p^+:=\sup_{(t,x)^\top\in Q_T}{p(t,x)}<\infty.
	\end{align*}
	The  tensor-valued mapping $\bfS:Q_T\times \mathbb{M}^{d\times d}_{\sym}\to 
	\mathbb{M}^{d\times d}_{\sym}$ is supposed to possess $(p(\cdot,\cdot),\delta)$--\textit{structure}, i.e., for some exponent $p:Q_T\to\left[1,+\infty\right)$ and some $\delta\ge 0$ the following properties are satisfied:
	\begin{description}[{\textbf{(S.3)}}]
		\item[\textbf{(S.1)}]\hypertarget{S.1}{} $\bfS:Q_T\times \mathbb{M}^{d\times d}_{\sym}\to 
		\mathbb{M}^{d\times d}_{\sym}$ is a Carath\'eodory mapping\footnote{$\bfS(\cdot,\cdot,\bfA):Q_T\to \mathbb{M}^{d\times d}_{\sym}$ is Lebesgue measurable for every $\bfA\in \mathbb{M}^{d\times d}_{\sym}$ and $\bfS(t,x,\cdot):\mathbb{M}^{d\times d}_{\sym}\to \mathbb{M}^{d\times d}_{\sym}$ is continuous for almost every $(t,x)^\top\in Q_T$.}.
		\item[\textbf{(S.2)}]\hypertarget{S.2}{} $\vert\bfS(t,x,\bfA)\vert \leq \alpha(\delta+\vert \bfA\vert)^{p(t,x)-2}\vert\bfA\vert+\beta(t,x)$ 
		for every $\bfA\in \mathbb{M}^{d\times d}_{\sym}$, a.e. $(t,x)^\top\in Q_T$ \newline
		$(\alpha\ge 1,\;\beta\in L^{p'(\cdot,\cdot)}(Q_T,\setR_{\ge 0}))$.
		\item[\textbf{(S.3)}]\hypertarget{S.3}{} $\bfS(t,x,\bfA):\bfA\ge 
		c_0(\delta+\vert \bfA\vert)^{p(t,x)-2}\vert\bfA\vert^2-c_1(t,x)$ 
		for every $\bfA\in \mathbb{M}^{d\times d}_{\sym}$, a.e. $(t,x)^\top\in Q_T$ \newline
		$(c_0>0,\;c_1\in L^1(Q_T,\setR_{\ge 0}))$.
		\item[\textbf{(S.4)}]\hypertarget{S.4}{} $(\bfS(t,x,\bfA)-\bfS(t,x,\bfB)):
		(\bfA-\bfB)\ge 0$ for every
		$\bfA,\bfB\in \mathbb{M}^{d\times d}_{\sym}$, a.e. $(t,x)^\top\in Q_T$.
	\end{description}
	The  vector-valued mapping $\bfb:Q_T\times \setR^d\to \setR^d$ is supposed
	to satisfy:
	\begin{description}[{\textbf{(B.3)}}]
		\item[\textbf{(B.1)}]\hypertarget{B.1}{} $\bfb:Q_T\times \setR^d\to \setR^d$ is a 
		Carath\'eodory mapping.
		\item[\textbf{(B.2)}]\hypertarget{B.2}{} $\vert\bfb(t,x,\bfa)\vert \leq 
		\gamma(1+\vert \bfa\vert)^{r(t,x)-1}+\eta(t,x)$ for every
		$\bfa\in \setR^d$, a.e. $(t,x)^\top\in Q_T$, where \linebreak${r:=\max\{2,p_*\}-\vep\in \mathcal{P}^{\log}(Q_T)}$ for some $\vep \in (0,(p^-)_*-1]$ and $(\cdot)_*\in W^{1,\infty}(1,\infty)$ is for all $s\in (1,\infty)$ given via $s_*:=s\frac{d+2}{d}$ if $s<d$ and $s_*:=s+2$ if $s\ge d$ (cf.~Definition~\ref{5.7})\newline
		$(\gamma\ge 1,\;\eta\in L^{r'(\cdot,\cdot)}(Q_T,\setR_{\ge 0}))$. 
		\item[\textbf{(B.3)}]\hypertarget{B.3}{} $\bfb(t,x,\bfa)\cdot\bfa\ge c_2\vert \bfa\vert^2-c_3(t,x)$ 
		for every $\bfa\in \setR^d$,~a.e.~${(t,x)^\top\in Q_T}$~${(c_2\ge 0,c_3\in L^1(Q_T,\setR_{\ge 0}))}$.
	\end{description}
	A system like \eqref{eq:model}  has e.g. applications in nonlinear elastic mechanics \cite{Z86} or in 
	 image restoration
	\cite{LLP10,HHLT13,CLR06}. Similar equations also appear in Zhikov's model for the termistor problem~in~\cite{Z08b}, or in the investigation of variational integrals with non-standard growth by Acerbi and Mingione in  \cite{AM00,AM01,AM02}~or~by~Marcellini~in~\cite{Mar91}.
	However, the author's underlying incentive for the consideration of the system \eqref{eq:model} is that it represents
	 a simplification
	of the not yet completely solved unsteady $p(\cdot,\cdot)$--Stokes and
	 $p(\cdot,\cdot)$--Navier--Stokes equations, as it does not
	contain an incompressibility constraint, i.e., the condition $\textup{div}(\bsu)=0$ in $Q_T$ is missing in \eqref{eq:model}, which would further complicate the hereinafter analysis, but the nonlinear
	elliptic operator, i.e., the mapping  ${\bfS:Q_T\times \mathbb{M}_{\textup{sym}}^{d\times d}\to  \mathbb{M}_{\textup{sym}}^{d\times d}}$,  is governed
	by the symmetric gradient only, rather than the full gradient. In fact, 
	a long term goal of this research is to
	prove the existence of weak solutions of the unsteady $p(\cdot,\cdot)$--Stokes and 
	$p(\cdot,\cdot)$--Navier--Stokes equations. 	
	
	The system \eqref{eq:model}  in its present form, up to more restrictive assumptions on the lower order nonlinear operator, i.e., on the mapping  $\bfb:Q_T\times \setR^d\to \setR^d$, has already~been~considered~in~\cite{KR20}. There it has been elaborated that the appropriate functional framework for the treatment of the problem \eqref{eq:model} is given through so-called variable exponent Bochner--Lebesgue spaces and variable exponent Bochner--Sobolev spaces with a~symmetric~gradient~structure,~i.e., the spaces\footnote{All spaces are thoroughly defined in Section \ref{sec:3} and Section \ref{sec:4}.}
	\begin{align}
	\bscal{X}^{q,p}_{\bfvarepsilon}(Q_T)
	&:=\Big\{\bsu\in L^{q(\cdot,\cdot)}(Q_T)^d\;\big|\;
	\bfvarepsilon(\bsu)\in L^{p(\cdot,\cdot)}(Q_T,\mathbb{M}^{d\times d}_{\sym}),
	\;\bsu(t)\in X^{q,p}_{\bfvarepsilon}(t)\text{ for a.e. }t\in I\Big\},\notag\\
	\bscal{W}^{q,p}_{\bfvarepsilon}(Q_T)
	&:=\bigg\{\bsu\in \bscal{X}^{q,p}_{\bfvarepsilon}(Q_T)\;\Big|\;\frac{\textbf{d}\bsu}{\textbf{dt}} \in \bscal{X}^{q,p}_{\bfvarepsilon}(Q_T)^*\bigg\},\label{eq:ff}
	\end{align}
	where $X^{q,p}_{\bfvarepsilon}(t)$ is for every $t\in I$ defined as the closure of $C_0^\infty(\Omega)^d$ with respect to the norm $\|\cdot\|_{L^{q(t,\cdot)}(\Omega)^d}+\|\bfvarepsilon(\cdot)\|_{L^{p(t,\cdot)}(\Omega)^{d\times d}}$ (cf.~Definition~\ref{3.1}) and
	$\frac{\bfd}{\bfd\bft}$ denotes the \textit{generalized time derivative} (cf.~Definition~\ref{4.1}), which is highly related to the distributional time derivative, living~in~$\mathcal{D}'(Q_T)^d$, i.e., the topological dual space of the locally convex Hausdorff vector space $C_0^\infty(Q_T)^d$ (cf.~\cite[Kap.~2,~4.]{GGZ74}). Using these spaces, the maximal monotonicity of the generalized time derivative $\frac{\bfd}{\bfd\bft}$ on the  space $\{\bsu\in\bscal{W}^{q,p}_{\bfvarepsilon}(Q_T)\mid \bsu_c(0)=\mathbf{0}\textup{ in }Y\}$~(cf.~\cite[Proposition 7.3]{KR20}),~where~${Y:=L^2(\Omega)^d}$, and a version of the main theorem on pseudo-monotone perturbations of
	maximal monotone mappings (cf.~\cite[Cor. 22]{Bre72}), \cite{KR20} proved the following~abstract~existence~result~(cf.~\cite[Thm.~7.11]{KR20}).\newpage
	\begin{thm}\label{1.2}
		Let $\Omega\subseteq \setR^d$, $d\ge 2$, be a bounded Lipschitz domain, $I:=\left(0,T\right)$,  $T<\infty$,  $Q_T:=I\times \Omega$ and $q,p\in \mathcal{P}^{\log}(Q_T)$  with $q^->1$ and $p^-\ge \frac{2d}{d+2}$. Moreover, let the operator $\bscal{A}:\bscal{W}^{q,p}_{\bfvarepsilon}(Q_T)\subseteq \bscal{X}^{q,p}_{\bfvarepsilon}(Q_T)\to \bscal{X}^{q,p}_{\bfvarepsilon}(Q_T)^*$ be coercive, $\frac{\textbf{d}}{\textbf{dt}}$-pseudo-monotone, i.e., from
		\begin{align}
			\begin{split}
			\bsu_n\overset{n\to \infty}{\weakto }\bsu\quad\text{ in }\bscal{W}^{q,p}_{\bfvarepsilon}(Q_T),\\
			\limsup_{n\to \infty}{\langle \bscal{A}\bsu_n,\bsu_n-\bsu\rangle_{\bscal{X}^{q,p}_{\bfvarepsilon}(Q_T)}}\leq 0,
		\end{split}\label{Lpm}
		\end{align}
		it follows that $\langle \bscal{A}\bsu,\bsu-\bsv\rangle_{\bscal{X}^{q,p}_{\bfvarepsilon}(Q_T)}\leq \liminf_{n\to \infty}{\langle \bscal{A}\bsu_n,\bsu_n-\bsv\rangle_{\bscal{X}^{q,p}_{\bfvarepsilon}(Q_T)}}$ for every $\bsv\in \bscal{X}^{q,p}_{\bfvarepsilon}(Q_T)$, and assume that there exists a bounded function $\psi:\setR_{\ge 0}\times \setR_{\ge 0}\to \setR_{\ge 0}$ and a constant $\theta\in \left[0,1\right)$, such that for every $\bsu\in \bscal{W}^{q,p}_{\bfvarepsilon}(Q_T)$ there holds
		\begin{align}
			\|\bscal{A}\bsu\|_{\bscal{X}^{q,p}_{\bfvarepsilon}(Q_T)^*}\leq \psi(\|\bsu\|_{\bscal{X}^{q,p}_{\bfvarepsilon}(Q_T)},\|\bsu_c(0)\|_Y)+\theta\bigg\|\frac{\textbf{d}\bsu}{\textbf{dt}}\bigg\|_{\bscal{X}^{q,p}_{\bfvarepsilon}(Q_T)}.\label{eq:bddcon}
		\end{align}
		 Then, for arbitrary $\bfu_0\in Y$ and $\bsu^*\in \bscal{X}^{q,p}_{\bfvarepsilon}(Q_T)^*$ there exists a solution $\bsu\in \bscal{W}^{q,p}_{\bfvarepsilon}(Q_T)$ of 
		\begin{alignat*}{2}
		\frac{\bfd\bsu}{\bfd\bft}+\bscal{A}\bsu&=\bsu^*&&\quad\text{ in }\bscal{X}^{q,p}_{\bfvarepsilon}(Q_T)^*,\\
		\bsu_c(0)&=\bfu_0&&\quad\text{ in }Y.
		\end{alignat*}
	\end{thm}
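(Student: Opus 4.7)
The plan is to apply the Br\'ezis-type surjectivity theorem \cite[Cor.~22]{Bre72} for the sum of a maximal monotone and a pseudo-monotone operator to $\frac{\bfd}{\bfd\bft}+\bscal{A}$ on the affine subset of $\bscal{W}^{q,p}_{\bfvarepsilon}(Q_T)$ consisting of functions with initial trace $\bfu_0$. The maximal monotonicity of $\frac{\bfd}{\bfd\bft}$ on the zero-initial-trace subspace $\bscal{V}:=\{\bsu\in\bscal{W}^{q,p}_{\bfvarepsilon}(Q_T):\bsu_c(0)=\mathbf{0}\textup{ in }Y\}$ is already recorded in \cite[Prop.~7.3]{KR20}, so the whole proof reduces to verifying the surjectivity criterion of the Br\'ezis theorem under the weakened boundedness condition \eqref{eq:bddcon}.

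First I would reduce to the homogeneous initial condition $\bfu_0=\mathbf{0}$ by a lifting argument. After approximating $\bfu_0\in Y$ in $Y$ by a field $\tilde\bfu_0\in C_0^\infty(\Omega)^d$ and fixing $\chi\in C^1(\overline I)$ with $\chi(0)=1$, the product $\bsw(t,x):=\chi(t)\tilde\bfu_0(x)$ defines a lifting in $\bscal{W}^{q,p}_{\bfvarepsilon}(Q_T)$. Writing $\bsv:=\bsu-\bsw$ and $\widetilde{\bscal{A}}\bsv:=\bscal{A}(\bsv+\bsw)$, the problem becomes the search for $\bsv\in\bscal{V}$ with $\frac{\bfd\bsv}{\bfd\bft}+\widetilde{\bscal{A}}\bsv=\bsu^*-\frac{\bfd\bsw}{\bfd\bft}$. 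A direct computation shows that $\widetilde{\bscal{A}}$ inherits coercivity, $\frac{\bfd}{\bfd\bft}$-pseudo-monotonicity, and \eqref{eq:bddcon} from $\bscal{A}$ with the same $\theta$ (only $\psi$ changes, absorbing the fixed quantities $\|\bsw\|$, $\|\tilde\bfu_0\|_Y$, $\|\frac{\bfd\bsw}{\bfd\bft}\|$); the residual error between $\tilde\bfu_0$ and $\bfu_0$ is resolved by a standard diagonal argument in the final limit passage.

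Next I would invoke the Br\'ezis theorem on $\bscal{V}$ via a Galerkin approximation $(\bsv_n)_n$ in finite-dimensional subspaces adapted to the symmetric-gradient space $X^{q,p}_{\bfvarepsilon}(t)$. Coercivity produces a uniform bound on $\bsv_n$ in $\bscal{X}^{q,p}_{\bfvarepsilon}(Q_T)$; testing the Galerkin equation by basis elements and using \eqref{eq:bddcon} together with the absorption manoeuvre enabled by $\theta<1$ yields simultaneous uniform bounds on $\widetilde{\bscal{A}}\bsv_n$ and on $\frac{\bfd\bsv_n}{\bfd\bft}$ in $\bscal{X}^{q,p}_{\bfvarepsilon}(Q_T)^*$. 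A weakly convergent subsequence $\bsv_n\rightharpoonup\bsv$ in $\bscal{W}^{q,p}_{\bfvarepsilon}(Q_T)$ exists, and the integration-by-parts formula for $\bscal{W}^{q,p}_{\bfvarepsilon}(Q_T)$, combined with $\bsv_{n,c}(0)=\mathbf{0}$ in $Y$ and weak lower semicontinuity of $\|\cdot\|_Y$ at $t=T$, delivers $\limsup_{n\to\infty}\langle\widetilde{\bscal{A}}\bsv_n,\bsv_n-\bsv\rangle_{\bscal{X}^{q,p}_{\bfvarepsilon}(Q_T)}\leq 0$. The $\frac{\bfd}{\bfd\bft}$-pseudo-monotonicity of $\widetilde{\bscal{A}}$ then identifies $\bsv$ as a solution of the reduced problem, whence $\bsu:=\bsv+\bsw$ solves the original one.

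The principal obstacle is the $\theta$-term in \eqref{eq:bddcon}: classical formulations of Br\'ezis's theorem require $\bscal{A}$ to be bounded on bounded subsets of $\bscal{X}^{q,p}_{\bfvarepsilon}(Q_T)$, whereas here one has only a self-referential bound through $\|\frac{\bfd\bsu}{\bfd\bft}\|_{\bscal{X}^{q,p}_{\bfvarepsilon}(Q_T)^*}$, and closing the uniform a priori estimate on the Galerkin sequence is possible only because the strict inequality $\theta<1$ permits absorption of the time-derivative term into the left-hand side of the equation. A secondary subtlety lies in selecting a Galerkin basis compatible with the symmetric-gradient and time-varying structure of $X^{q,p}_{\bfvarepsilon}(t)$, which will rely on the variable-exponent Korn-type results developed later in the paper rather than on standard full-gradient constructions.
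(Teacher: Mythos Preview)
The paper does not give its own proof of Theorem~\ref{1.2}; the theorem is quoted from \cite[Thm.~7.11]{KR20}, and the only information the present paper offers about its proof is one sentence in the introduction: maximal monotonicity of $\frac{\bfd}{\bfd\bft}$ on $\{\bsu\in\bscal{W}^{q,p}_{\bfvarepsilon}(Q_T)\mid \bsu_c(0)=\mathbf{0}\}$ (cf.~\cite[Prop.~7.3]{KR20}) combined with Br\'ezis's surjectivity theorem for pseudo-monotone perturbations of maximal monotone operators (cf.~\cite[Cor.~22]{Bre72}). Your proposal uses exactly these two ingredients, so at the level of strategy it matches what the paper attributes to \cite{KR20}.

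Two remarks on the details you add. First, your closing sentence about the Galerkin basis ``relying on the variable-exponent Korn-type results developed later in the paper'' is misplaced: the material in Section~\ref{sec:5} onward is developed precisely to set up the \emph{new} Bochner framework that supersedes Theorem~\ref{1.2}, not to prove Theorem~\ref{1.2} itself, which predates those results. Second, the phrasing ``invoke the Br\'ezis theorem on $\bscal{V}$ via a Galerkin approximation'' conflates the abstract theorem with its proof; if you apply \cite[Cor.~22]{Bre72} as a black box, the maximal monotonicity of $\frac{\bfd}{\bfd\bft}$, the $\frac{\bfd}{\bfd\bft}$-pseudo-monotonicity of $\bscal{A}$, coercivity, and the boundedness condition \eqref{eq:bddcon} (with the $\theta<1$ absorption you correctly identify) are precisely the hypotheses to check, and no explicit Galerkin scheme or diagonal argument needs to appear in your argument.
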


	In fact, in \cite[Proposition 8.1 \& Proposition 8.9]{KR20} it has been proved for $q:=p^-\ge \frac{2d}{d+2}$~that ${\bscal{A}\!:\! \bscal{W}^{q,p}_{\bfvarepsilon}(Q_T)\!\subseteq\! \bscal{X}^{q,p}_{\bfvarepsilon}(Q_T)\!\to\! \bscal{X}^{q,p}_{\bfvarepsilon}(Q_T)^*}$, defined via $\langle \bscal{A}\bsu,\bsv\rangle_{\bscal{X}^{q,p}_{\bfvarepsilon}(Q_T)}\!:=\!\int_{Q_T}{\!\bfS(\cdot,\cdot,\bfvarepsilon(\bsu)):\bfvarepsilon(\bsv)\,dtdx}\linebreak+\int_{Q_T}{\bfb(\cdot,\cdot,\bsu)\cdot\bsv\,dtdx}$ for all $\bsu\in \bscal{W}^{q,p}_{\bfvarepsilon}(Q_T)$ and $\bsv\in  \bscal{X}^{q,p}_{\bfvarepsilon}(Q_T)$, is  coercive, $\frac{\bfd}{\bfd\bft}$-pseudo-monotone and satisfies the boundedness condition \eqref{eq:bddcon}, provided that the mapping $\bfb:Q_T\times \setR^d\to \setR^d$ does not satisfy (\hyperlink{B.2}{B.2}) and (\hyperlink{B.3}{B.3}), but has the following properties instead:
	\begin{description}[{(B.3*) }]
		\item[\textbf{(B.2*)}]\hypertarget{B.2*}{} $\vert\bfb(t,x,\bfa)\vert \leq 
		\gamma(1+\vert \bfa\vert)^s+\eta(t,x)$ for every
		$\bfa\in \setR^d$, a.e. $(t,x)^\top\in Q_T$ \newline$(\gamma\ge 0,s\in \left(0,(p^-)_*/(p^-)'\right),\eta\in L^{(p^-)'}(Q_T,\setR_{\ge 0}))$.
		\item[\textbf{(B.3*)}]\hypertarget{B.3*}{} $\bfb(t,x,\bfa)\cdot\bfa\ge -c_3(t,x)$ 
		for every $\bfa\in \setR^d$, a.e. $(t,x)^\top\in Q_T$ $(c_3\in L^1(Q_T,\setR_{\ge 0}))$.
	\end{description}
	 Therefore, Theorem~\ref{1.2} is applicable on \eqref{eq:model}, if we replace  (\hyperlink{B.2}{B.2}) and (\hyperlink{B.3}{B.3}) by (\hyperlink{B.2*}{B.2*}) and (\hyperlink{B.3*}{B.3*}). 
	
	Apparently, (\hyperlink{B.3*}{B.3*}) is more restrictive than (\hyperlink{B.3}{B.3}). Also note that (\hyperlink{B.2*}{B.2*}) implies (\hyperlink{B.2}{B.2}).~In~fact, suppose that $\bfb:Q_T\times \setR^d\to \setR^d$  satisfies (\hyperlink{B.2*}{B.2*}). 
	 Then, there holds
	${s\in \left(0,(p^-)_*/(p^-)'\right)}$, i.e., $s\leq (p^-)_*/(p^-)'-\varepsilon$ for some $\varepsilon\in (0,(p^-)_*-1]$. Thus, if we define $r:=\max\{2,p_*\}-\varepsilon$, then \linebreak
	$s\leq p_*/(p_*)'-\varepsilon=p_*-1-\vep\leq r-1$, where we used that $(p_*)'\leq (p^-)'$, due to $p_*\ge p^-$, i.e., ${\bfb:Q_T\times \setR^d\to \setR^d}$  satisfies (\hyperlink{B.2}{B.2}).
	However, if we have (\hyperlink{B.2}{B.2}) and (\hyperlink{B.3}{B.3}) in \eqref{eq:model},~or~even~${p^-<\frac{2d}{d+2}}$, then we can neither say that the operator ${\bscal{A}: \bscal{W}^{q,p}_{\bfvarepsilon}(Q_T)\subseteq \bscal{X}^{q,p}_{\bfvarepsilon}(Q_T)\to \bscal{X}^{q,p}_{\bfvarepsilon}(Q_T)^*}$
	is coercive, nor that it  satisfies the boundedness condition \eqref{eq:bddcon}, i.e., Theorem~\ref{1.2} is reaching its limits. This issue indicates an imbalance between the demanded continuity and coercivity conditions in Theorem~\ref{1.2}.
	 More precisely, while the required $\frac{\textbf{d}}{\textbf{dt}}$-pseudo-monotonicity is quite general, coercivity is a restrictive assumption, which e.g. for ${\bfb:Q_T\times \setR^d\to \setR^d}$ with (\hyperlink{B.3*}{B.3*})   is not fulfilled.
	
	In \cite{KR19,K19} the same issue has already been considered in the context of Bochner--Lebesgue and Bochner--Sobolev spaces. The approach of these articles consists in the introduction of alternative notions of pseudo-monotonicity and coercivity, which, in contrast to $\frac{\mathbf{d}}{\mathbf{dt}}$-pseudo-monotonicity and coercivity, both incorporate information from the time derivative, and therefore are more in balance. The idea is to weaken the pseudo-monotonicity assumption to a bearable extend, in order to make a coercivity condition accessible, which takes the additional information from the time derivative into account. This approach led to the notions Bochner pseudo-monotonicity, Bochner condition (M) and Bochner coercivity. We will follow the approaches of  \cite{KR19,K19}. 
	To be more precise, the main purpose of this paper is to show that the ideas from \cite{KR19,K19} can be generalized to the functional framework of  \eqref{eq:model}, i.e., to \eqref{eq:ff}. In this context, we prove fundamental parabolic embedding and compactness results involving  the spaces $\bscal{X}^{2,p}_{\bfvarepsilon}(Q_T)$ and $\bscal{Y}^{\infty}(Q_T):=L^\infty(I,Y)$, which are 
	indispensable, in order to guarantee that ${\bscal{A}: \bscal{W}^{q,p}_{\bfvarepsilon}(Q_T)\subseteq \bscal{X}^{q,p}_{\bfvarepsilon}(Q_T)\to \bscal{X}^{q,p}_{\bfvarepsilon}(Q_T)^*}$ satisfies these new concepts, if only (\hyperlink{B.2}{B.2}) and (\hyperlink{B.3}{B.3}) are satisfied in \eqref{eq:model}.\vspace*{-1cm}
	
	\newpage
	\textbf{Plan of the paper:} In Section~\ref{sec:2} we recall some basic definitions and results concerning evolution equations and variable exponent spaces. In Section~\ref{sec:3} we recall the definition of the space $\bscal{X}^{q,p}_{\bfvarepsilon}(Q_T)$ and some of its basic properties, such as completeness, reflexivity and a characterization of its dual space. In Section~\ref{sec:4} we recapitulate the definition of the generalized time derivative $\frac{\bfd}{\bfd\bft}$ and of the space $\bscal{W}^{q,p}_{\bfvarepsilon}(Q_T)$. In Section~\ref{sec:5} we show the invalidity of a Poincar\'e type inequality for $\bscal{X}^{q,p}_{\bfvarepsilon}(Q_T)$ and prove 
	in this context an appropriate analogue of  Poincar\'e's~inequality~for~$\bscal{X}^{q,p}_{\bfvarepsilon}(Q_T)$. Simultaneously, we derive from
	this inequality several embedding results for the space $\bscal{X}^{q,p}_{\bfvarepsilon}(Q_T)$.
	In Section~\ref{sec:6} we introduce extensions of the notions Bochner
	pseudo-monotonicity, Bochner condition (M) and Bochner coercivity to  the framework of $\bscal{X}^{q,p}_{\bfvarepsilon}(Q_T)$. In Section~\ref{sec:7} we generalize the so-called Hirano--Landes approach to the framework $\bscal{X}^{q,p}_{\bfvarepsilon}(Q_T)$, to derive 
	sufficient conditions on  families  of operators that imply these 
	concepts. In Section~\ref{sec:8} we prove an abstract existence result for bounded, Bochner coercive operators, which satisfy the Bochner condition (M). In Section~\ref{sec:9} we apply this abstract existence result on~the~problem~\eqref{eq:model}.
	
	\section{Preliminaries}
	\label{sec:2}
	
	\subsection{Basic Notation}
	For a Banach space $X$ with norm $\|\cdot\|_X$ we denote by $X^*$ its dual space equipped with the norm $\|\cdot\|_{X^*}$. The duality pairing is denoted by
	$\langle \cdot,\cdot\rangle_X$. All occuring Banach spaces are assumed to be real. By $D(A)$ we denote the domain of definition of an operator $A:D(A)\subseteq X\to Y$, and by $R(A):=\{Ax\mid x\in D(A)\}$ its range.
	
	\subsection{Evolution equations}
	Let $X$ be a reflexive Banach space, 
	$Y$ a Hilbert space
	and $j:X\to Y$ an embedding, i.e., $j $ is linear, bounded and injective, such that $R(j)$ is dense in $Y$.
	Then, the triple $(X,Y,j)$ is said to be an \textbf{evolution triple}.  Denote by $R_Y:Y\to Y^*$ the Riesz isomorphism with 
	respect to $(\cdot,\cdot)_Y$. As $j$ is a dense embedding 
	the adjoint operator $j^*:Y^*\to X^*$ and therefore~${e:=j^*R_Yj:X\to X^*}$
	are embeddings~as~well. We call $e$ the 
	\textbf{canonical embedding} of $(X,Y,j)$.
	
	\begin{defn}\label{2.1}
	Let $(X,Y,j)$ be an evolution triple, $I:=\left(0,T\right)$, $T<\infty$, and~${p\in \left(1,\infty\right)}$.  A function $\bsu\in L^p(I,X)$ possesses a \textbf{generalized derivative with respect to $e$}, if there exists ${\bsu^*\in (L^p(I,X))^*}$, such that for every $x\in X$ and $\varphi\in C^\infty_0(I)$ it holds
	\begin{align*}
		-\int_I{(\bsu(t),jx)_Y\varphi^\prime(t)\,dt}=\langle \bsu^*,x\varphi\rangle_{L^p(I,X)}.
	\end{align*} 
	In this case, we define $\frac{d_e\bsu}{dt}:=\bsu^*$.
	By $W_e^{1,p,p'}(I,X,X^*):=\big\{\bsu\in L^p(I,X)\mid\exists \frac{d_e\bsu}{dt}\in (L^p(I,X))^*\big\}$
	we denote the  \textbf{Bochner--Sobolev space with respect to $e$}.
	\end{defn}

	\begin{prop}\label{2.2}
		Let $(X,Y,j)$ be an evolution triple, $I:=\left(0,T\right)$, $T<\infty$, and~${p\in \left(1,\infty\right)}$. Then, it holds:
		\begin{description}[{(ii)}]
			\item[{(i)}] Given $\bsu\in W_e^{1,p,p'}(I,X,X^*)$ the function 
			$\bsj\bsu\in L^p(I,Y)$, given via 
			$(\bsj\bsu)(t):=j(\bsu(t))$ in $Y$ for 
			almost every $t\in I$, possesses a unique representation 
			$\bsj_c\bsu\in C^0(\overline{I},Y)$ and the resulting mapping 
			$\bsj_c:W_e^{1,p,p'}(I,X,X^*)\to C^0(\overline{I},Y)$ is an embedding.
			\item[{(ii)}] For every  $\bsu,\bsv\in W_e^{1,p,p'}(I,X,X^*)$ 
			and $t,t'\in \overline{I}$ with $t'\leq t$ there holds
			\begin{align*}
			\int_{t'}^{t}{\left\langle
				\frac{d_e\bsu}{dt}(s),\bsv(s)\right\rangle_Xds}
			=\left[((\bsj_c\bsu)(s), (\bsj_c
			\bsv)(s))_Y\right]^{s=t}_{s=t'}-\int_{t'}^{t}{\left\langle
				\frac{d_e\bsv}{dt}(s),\bsu(s)\right\rangle_Xds}.
			\end{align*}
		\end{description}  
	\end{prop}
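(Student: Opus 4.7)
The proposition is the classical Gelfand triple regularity/integration-by-parts statement, adapted to the setting where the derivative is taken with respect to $e=j^*R_Y j$ rather than to the usual identification. My plan is the standard density/mollification argument: first prove the formulas for temporally smooth functions (where everything reduces to the classical product rule in $Y$), then transfer these to arbitrary elements of $W_e^{1,p,p'}(I,X,X^*)$ by a time-mollification approximation.

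\textbf{Step 1 (identification of the derivative space).} Since $X$ is reflexive and $p\in(1,\infty)$, one has the canonical isometric isomorphism $(L^p(I,X))^*\cong L^{p'}(I,X^*)$ via $j^*R_Y j$-compatible pairing; this lets me regard $\tfrac{d_e\bsu}{dt}$ as an $X^*$-valued $L^{p'}$-function, and in particular makes pointwise-in-time pairings meaningful. I will use this identification throughout but phrase all estimates in the duality $\langle\cdot,\cdot\rangle_X$ so that the final statement matches the one given.

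\textbf{Step 2 (smooth case).} Extend $\bsu,\bsv\in W_e^{1,p,p'}(I,X,X^*)$ to the whole line by reflection at $0$ and $T$ followed by a cutoff, and convolve in time with a standard mollifier $\rho_\eta$; a short computation using the defining identity of $\tfrac{d_e\bsu}{dt}$ shows that mollification commutes with $\tfrac{d_e}{dt}$, so $\bsu_\eta,\bsv_\eta\in C^\infty(\overline I,X)$ and $\tfrac{d_e\bsu_\eta}{dt},\tfrac{d_e\bsv_\eta}{dt}\in C^\infty(\overline I,X^*)$, with convergence $\bsu_\eta\to\bsu$ in $L^p(I,X)$ and $\tfrac{d_e\bsu_\eta}{dt}\to\tfrac{d_e\bsu}{dt}$ in $L^{p'}(I,X^*)$ (on $I$). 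For smooth $X$-valued functions the classical product rule in $Y$ yields
\begin{align*}
\tfrac{d}{ds}\bigl((j\bsu_\eta)(s),(j\bsv_\eta)(s)\bigr)_Y
=\bigl\langle\tfrac{d_e\bsu_\eta}{dt}(s),\bsv_\eta(s)\bigr\rangle_X
+\bigl\langle\tfrac{d_e\bsv_\eta}{dt}(s),\bsu_\eta(s)\bigr\rangle_X,
\end{align*}
and integration from $t'$ to $t$ gives the integration-by-parts formula for $\bsu_\eta,\bsv_\eta$.

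\textbf{Step 3 (continuous representative for (i)).} Taking $\bsv=\bsu$ in the smooth identity and integrating gives the uniform-in-$\eta$ estimate $\tfrac12\|(j\bsu_\eta)(t)\|_Y^2\le \tfrac12\|(j\bsu_\eta)(t_0)\|_Y^2+\int_{t_0}^{t}\langle\tfrac{d_e\bsu_\eta}{dt}(s),\bsu_\eta(s)\rangle_X\,ds$; averaging over an $L^2(I,Y)$-good base point $t_0$ and applying this inequality to differences $\bsu_\eta-\bsu_{\eta'}$ shows that $(j\bsu_\eta)_{\eta>0}$ is Cauchy in $C^0(\overline I,Y)$. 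Its uniform limit is a continuous representative $\bsj_c\bsu\in C^0(\overline I,Y)$ of $\bsj\bsu$, and the same estimate yields the bound $\|\bsj_c\bsu\|_{C^0(\overline I,Y)}\le C\bigl(\|\bsu\|_{L^p(I,X)}+\|\tfrac{d_e\bsu}{dt}\|_{(L^p(I,X))^*}\bigr)$, which gives continuity of the embedding; injectivity of $j$ forces injectivity of $\bsj_c$. Passing to the limit $\eta\to 0$ in the smooth integration-by-parts formula of Step~2, using $\bsj_c\bsu_\eta\to \bsj_c\bsu$ in $C^0(\overline I,Y)$ and the $L^p$/$L^{p'}$ convergences to handle the integral, establishes (ii).

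\textbf{Anticipated obstacle.} The only delicate point is the commutation $\tfrac{d_e}{dt}(\rho_\eta*\widetilde\bsu)=\rho_\eta*\tfrac{d_e\widetilde\bsu}{dt}$ after the reflection extension, because the reflected function need not be differentiable across $0$ and $T$. I will handle this by choosing the extension so that the extended generalized derivative coincides, on a fixed neighbourhood of $\overline I$, with the reflection of $\tfrac{d_e\bsu}{dt}$ plus localized boundary distributions that vanish on $I$ after cutoff, and by working only on compact subintervals of $I$ when passing to the limit (the boundary values of $\bsj_c\bsu$ are then recovered by uniform convergence). Everything else is routine once this approximation is in place.
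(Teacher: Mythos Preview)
Your sketch is the standard mollification/density argument and is correct in outline; it is essentially the proof one finds in the cited reference. Note, however, that the paper does not actually prove this proposition: it simply refers to \cite[Chapter III.1, Proposition 1.2]{Sho97}. So there is no ``paper's own proof'' to compare against beyond that citation, and your proposal supplies precisely the kind of argument that reference contains.
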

	
	\begin{proof}
		See \cite[Chapter III.1, Proposition 1.2]{Sho97}.\hfill$\qed$\vspace*{-0.5cm}
	\end{proof}
	\newpage

	For an evolution triple $(X,Y,j)$, $I:=\left(0,T\right)$, $T<\infty$, and $p\in \left(1,\infty\right)$, we define the induced embedding $\boldsymbol{j}:L^p(I,X)\to L^p(I,Y)$ for every $\boldsymbol{u}\in L^p(I,X)$ via $(\boldsymbol{j}\boldsymbol{u})(t):=j(\boldsymbol{u}(t))$ in $Y$ for almost every $t\in I$ (cf.~\cite[Bem.~2.40~(ii)]{Ru04}). Moreover, we define
	the intersection space
	\begin{align*}
		L^p(I,X)\cap_{\bsj}L^\infty(I,Y):=\big\{\bsu\in L^p(I,X)\mid\bsj\bsu\in L^\infty(I,Y)\big\},
	\end{align*}
	which forms a Banach space, if equipped with the canonical norm (cf.~\cite{KR19,K19})
	\begin{align*}
	\|\cdot\|_{L^p(I,X)\cap_{\bsj}L^\infty(I,Y)}:=\|\cdot\|_{L^p(I,X)}+\|\bsj(\cdot)\|_{L^\infty(I,Y)}.
	\end{align*}
	In the context of evolutionary problems the following notions of continuity and coercivity proved themselves the to be fruitful.

	\begin{defn}[Bochner pseudo-monotonicity and Bochner condition (M)]\label{2.3}\newline
		Let $(X,Y,j)$ be an evolution triple, $I:=\left(0,T\right)$, $T<\infty$, and $p\in \left(1,\infty\right)$.  Then, an
		 operator ${\bscal{A}:L^p(I,X)\cap_{\bsj}L^\infty(I,Y)
		\to(L^p(I,X))^*}$ is said to
		\begin{description}[{(ii)}]
			\item[{(i)}] be \textbf{Bochner pseudo-monotone}, if 
			for a sequence $(\bsu_n)_{n\in\setN}\subseteq 
			L^p(I,X)\cap_{\bsj} L^\infty(I,Y)$ from
			\begin{alignat}{2}
			\bsu_n&\overset{n\to\infty}{\weakto}\bsu
			&&\quad\text{ in }L^p(I,X),\label{eq:2.4}\\
			\bsj\bsu_n&\;\;\overset{\ast}{\rightharpoondown}\;\;\bsj\bsu
			&&\quad\text{ in }L^\infty(I,Y),\label{eq:2.5}\\
			(\bsj\bsu_n)(t)&\overset{n\to\infty}{\weakto}(\bsj\bsu)(t)&&\quad\text{ in }Y\quad\text{ for a.e. }t\in I,\label{eq:2.6}
			\end{alignat}
			and 
			\begin{align}
				\limsup_{n\to\infty}{\langle \bscal{A}\bsu_n,
					\bsu_n-\bsu\rangle_{L^p(I,X)}}\leq 0,\label{eq:2.7}
			\end{align}
			it follows that $\langle\bscal{A}\bsu,
			\bsu-\bsv\rangle_{L^p(I,X)}
			\leq	\liminf_{n\to\infty}{\langle \bscal{A}\bsu_n,
				\bsu_n-\bsv\rangle_{L^p(I,X)}}$ for every
			$\bsv\in L^p(I,X)$.
			\item[{(ii)}] satisfy the \textbf{Bochner condition (M)}, if 
			for a sequence $(\bsu_n)_{n\in\setN}\subseteq 
		L^p(I,X)\cap_{\bsj}L^\infty(I,Y)$ from \eqref{eq:2.4}--\eqref{eq:2.6} and
			\begin{align}
			\bscal{A}\bsu_n\overset{n\to\infty}{\weakto}
			\bsu^*\quad\text{ in }(L^p(I,X))^*,\label{eq:2.8}\\
			\limsup_{n\to\infty}{\langle \bscal{A}\bsu_n,
				\bsu_n\rangle_{L^p(I,X)}}\leq \langle \bsu^*,
			\bsu\rangle_{L^p(I,X)},\label{eq:2.9}
			\end{align}
			it follows that $\bscal{A}\bsu=\bsu^*$ in $(L^p(I,X))^*$.
		\end{description}
	\end{defn}

	\begin{defn}[Bochner coercivity]\label{2.4}
			Let $(X,Y,j)$ be an evolution triple, $I:=\left(0,T\right)$, ${T<\infty}$, and $p\in \left(1,\infty\right)$. 
		Then, an operator $\bscal{A}:L^p(I,X)\cap_{\bsj}L^\infty(I,Y)
		\to(L^p(I,X))^*$ is said to be
		\begin{description}[{(ii)}]
			\item[(i)] \textbf{Bochner coercive with respect to $\bsu^*\in (L^p(I,X))^*$ and $u_0\in Y$}, 
			if~there~exists~a~constant $M:=M(\bsu^*,u_0,\bscal{A})>0$, 
			such that for every $\bsu\in L^p(I,X)\cap_{\bsj}L^\infty(I,Y)$ 
			from 
			\begin{align*}
			\frac{1}{2}\|(\bsj\bsu)(t)\|_Y^2+\langle\bscal{A}\bsu-
			\bsu^*,\bsu\chi_{\left[0,t\right]}\rangle_{L^p(I,X)}\leq \frac{1}{2}\|u_0\|_Y^2\quad
			\text{ for a.e. }t\in I,
			\end{align*}
			it follows that $\|\bsu\|_{L^p(I,X)\cap_{\bsj}L^\infty(I,Y)}\leq M$. 
			\item[(ii)] \textbf{Bochner coercive}, if it is Bochner coercive with 
			respect to  all $\bsu^*\in(L^p(I,X))^*$ and $u_0\in Y$.
		\end{description}
	\end{defn}

	\begin{thm}\label{2.9}
		Let $(X,Y,j)$ be an evolution triple, $I:=\left(0,T\right)$, ${T<\infty}$, and ${p\in \left(1,\infty\right)}$.  Moreover, let  $\bscal{A}:L^p(I,X)\cap L^\infty(I,Y)\to (L^p(I,X))^*$ be bounded, Bochner coercive with respect to $\bsu^*\in (L^p(I,X))^*$ and $u_0\in Y$, and satisfying the Bochner condition (M). Then, there exists a solution $\bsu\in W_e^{1,p,p'}(I,X,X^*)$ of the evolution equation
		\begin{align*}
		\begin{alignedat}{2}
		\frac{d_e\bsu}{dt}+\bscal{A}\bsu&=\bsu^*&&\quad\text{ in }(L^p(I,X))^*,\\
		(\bsj_c\bsu)(0)&=u_0&&\quad\text{ in }Y.
		\end{alignedat}
		\end{align*}
		Here, the initial condition has to be understood in the sense of the unique continuous representation $\bsj_c\bsu\in C^0(\overline{I},Y)$ (cf.~Proposition~\ref{2.2}~(i)).
	\end{thm}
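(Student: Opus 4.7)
The plan is a Galerkin approximation tailored to the Bochner setting, with the Bochner condition (M) serving to identify the nonlinear limit. Exploiting reflexivity (and, implicitly, separability) of $X$, I would fix a basis $(x_k)_{k\in\setN}\subseteq X$ whose nested spans $X_n:=\operatorname{span}\{x_1,\ldots,x_n\}$ are dense in $X$, together with $u_{0,n}\in jX_n$ satisfying $u_{0,n}\to u_0$ in $Y$. For each $n$ I seek a Galerkin solution $\bsu_n\colon I\to X_n$ of
\begin{align*}
\left\langle \tfrac{d_e\bsu_n}{dt}+\bscal{A}\bsu_n-\bsu^*,x_k\right\rangle_{X}=0\quad(k=1,\ldots,n),\qquad (\bsj\bsu_n)(0)=u_{0,n},
\end{align*}
via a Carath\'eodory-type existence argument, with global existence on $I$ supplied by the a priori estimate below. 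Testing with $\bsu_n\chi_{[0,t]}$ and invoking the integration-by-parts formula of Proposition~\ref{2.2}(ii) produces the energy identity
\begin{align*}
\tfrac{1}{2}\|(\bsj\bsu_n)(t)\|_Y^2+\langle\bscal{A}\bsu_n-\bsu^*,\bsu_n\chi_{[0,t]}\rangle_{L^p(I,X)}=\tfrac{1}{2}\|u_{0,n}\|_Y^2\quad\text{for a.e.\ }t\in I,
\end{align*}
so Bochner coercivity of $\bscal{A}$ with respect to $\bsu^*$ and $u_0$ delivers a uniform bound $\|\bsu_n\|_{L^p(I,X)\cap_{\bsj}L^\infty(I,Y)}\leq M$. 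Boundedness of $\bscal{A}$ then gives a uniform bound on $(\bscal{A}\bsu_n)$ in $(L^p(I,X))^*$, and after extracting a subsequence I obtain $\bsu_n\weakto\bsu$ in $L^p(I,X)$, $\bsj\bsu_n\overset{\ast}{\rightharpoondown}\bsj\bsu$ in $L^\infty(I,Y)$, and $\bscal{A}\bsu_n\weakto\bsu^{**}$ in $(L^p(I,X))^*$.

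Next, I would pass to the limit in the linear part: for each $x\in\bigcup_m X_m$ and $\varphi\in C_0^\infty(I)$ the scalar Galerkin equation passes to the limit and yields $-\int_I(\bsj\bsu,jx)_Y\varphi'\,dt=\langle\bsu^*-\bsu^{**},x\varphi\rangle_{L^p(I,X)}$, so by density $\bsu\in W_e^{1,p,p'}(I,X,X^*)$ with $\tfrac{d_e\bsu}{dt}=\bsu^*-\bsu^{**}$. To secure hypothesis~\eqref{eq:2.6} and the initial condition, I note that for each fixed $x\in\bigcup_m X_m$ the scalar map $t\mapsto(j\bsu_n(t),jx)_Y$ is bounded in $W^{1,p'}(I)$ \emph{uniformly in} $n$ (its derivative equals $\langle\bsu^*-\bscal{A}\bsu_n,x\rangle$); an Arzel\`a--Ascoli plus diagonal argument combined with the $L^\infty(I,Y)$-bound and density of $jX$ in $Y$ then yields $(\bsj\bsu_n)(t)\weakto(\bsj\bsu)(t)$ in $Y$ for \emph{every} $t\in\overline{I}$. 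Testing the Galerkin equation with $x\varphi$ for $\varphi\in C^\infty(\overline{I})$ satisfying $\varphi(T)=0$, $\varphi(0)=1$ and comparing with the integration-by-parts identity applied to the limit equation forces $(\bsj_c\bsu)(0)=u_0$.

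It remains to identify $\bsu^{**}=\bscal{A}\bsu$, which is where the Bochner condition (M) enters. Conditions \eqref{eq:2.4}--\eqref{eq:2.6} and \eqref{eq:2.8} are already in hand; for the limsup \eqref{eq:2.9} I test the Galerkin equation with $\bsu_n$ itself and apply Proposition~\ref{2.2}(ii) to obtain
\begin{align*}
\langle\bscal{A}\bsu_n,\bsu_n\rangle_{L^p(I,X)}=\langle\bsu^*,\bsu_n\rangle_{L^p(I,X)}-\tfrac{1}{2}\|(\bsj\bsu_n)(T)\|_Y^2+\tfrac{1}{2}\|u_{0,n}\|_Y^2.
\end{align*}
Taking $\limsup$, using weak lower semicontinuity of $\|\cdot\|_Y$ at $t=T$ together with $u_{0,n}\to u_0$ in $Y$, and then invoking Proposition~\ref{2.2}(ii) on the limit equation with $\bsv=\bsu$ together with the already established $(\bsj_c\bsu)(0)=u_0$, the right-hand side collapses exactly to $\langle\bsu^{**},\bsu\rangle_{L^p(I,X)}$, i.e.~\eqref{eq:2.9} holds. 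The Bochner condition (M) then yields $\bscal{A}\bsu=\bsu^{**}$, so that combined with $\tfrac{d_e\bsu}{dt}=\bsu^*-\bsu^{**}$ and $(\bsj_c\bsu)(0)=u_0$ the assertion follows.

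The most delicate step is the coupling between the initial condition and the limsup computation: closing the inequality $\limsup_{n\to\infty}\langle\bscal{A}\bsu_n,\bsu_n\rangle\leq\langle\bsu^{**},\bsu\rangle$ relies on the identity $\langle\bsu^{**},\bsu\rangle=\langle\bsu^*,\bsu\rangle-\tfrac{1}{2}\|(\bsj_c\bsu)(T)\|_Y^2+\tfrac{1}{2}\|u_0\|_Y^2$, which in turn requires knowing $(\bsj_c\bsu)(0)=u_0$ \emph{before} the limsup step. This forces the pointwise-in-time weak convergence $(\bsj\bsu_n)(t)\weakto(\bsj\bsu)(t)$ in $Y$, which is not automatic from the weak-$\ast$ convergence in $L^\infty(I,Y)$ and is the reason for the $W^{1,p'}$-equicontinuity argument above. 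Once this ordering is respected, the role of Bochner condition (M) is precisely to bypass any pointwise monotonicity requirement on $\bscal{A}$.
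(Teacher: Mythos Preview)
The paper does not give a self-contained proof here; it simply cites \cite[Theorem~6.1]{K19}. From the a~priori estimate onward, your outline matches essentially step by step the argument the paper carries out for the variable-exponent analogue Theorem~\ref{main}: Bochner coercivity yields the uniform bound in $L^p(I,X)\cap_{\bsj}L^\infty(I,Y)$, boundedness of $\bscal{A}$ and reflexivity give the weak limits, pointwise weak convergence $(\bsj\bsu_n)(t)\weakto(\bsj\bsu)(t)$ in $Y$ for every $t\in\overline{I}$ is established (your Arzel\`a--Ascoli route on the scalar functions is a legitimate variant of the paper's test-with-$\bfv\varphi\chi_{[0,t]}$ argument in step~3.3), the initial condition follows via the $\varphi(0)=1$, $\varphi(T)=0$ trick, and the $\limsup$ bound together with the Bochner condition~(M) closes the identification $\bscal{A}\bsu=\bsu^{**}$.

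There is, however, one genuine gap. Your ``Carath\'eodory-type existence argument'' for the Galerkin solutions tacitly assumes that $\bscal{A}$ is induced by a time-slice family $A(t):X\to X^*$, so that the Galerkin system reduces to an ODE in $\setR^n$. Theorem~\ref{2.9} makes no such assumption: $\bscal{A}:L^p(I,X)\cap_{\bsj} L^\infty(I,Y)\to(L^p(I,X))^*$ may be genuinely nonlocal in time, in which case the restricted problem on $X_n$ is an abstract evolution equation of the same type, not an ODE, and Carath\'eodory's theorem does not apply. This is not cosmetic: in the proof of Theorem~\ref{main} the paper obtains its own Galerkin solutions precisely by invoking Theorem~\ref{2.9} on the finite-dimensional triples $(X_n,Y_n,\textup{id}_{X_n})$, so the proof of Theorem~\ref{2.9} in \cite{K19} must use a different mechanism at this step (indeed, the cited result works with the stronger $C^0$-Bochner notions). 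If you want a self-contained argument, you need to supply an existence proof for the finite-dimensional Galerkin problem that does not rely on locality in time.
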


	\begin{proof}
		Follows from \cite[Theorem 6.1]{K19}, where the assertion is proved, using the even more general notions $C^0$--Bochner coercivity and $C^0$--Bochner condition (M).\hfill$\qed$\vspace*{-1cm}
	\end{proof}
	
	\subsection{Variable exponent spaces an their basic properties}

In this section we summarize all essential information
concerning variable exponent Lebesgue and Sobolev spaces, which
will find use in the hereinafter analysis. These spaces has
already been studied by Hudzik \cite{Hud80}, Musielak
\cite{Mus83}, Kov\'{a}\v{c}ik, R\'{a}kosn\'{\i}k \cite{KR91},
R\r{u}\v{z}i\v{c}ka \cite{Ru00} and many others. For an extensive presentation, including all results of this section, we want to refer to \cite{KR91,DHHR11,CUF13}.

Let $G\subseteq \setR^n$, $n\in \setN$, be a measurable set and $p:G\to\left[1,+\infty\right)$ a measurable function.
Then, $p$ will be called a \textbf{variable exponent} and we define  $\mathcal{P}(G)$ to be the set of all variable exponents.
Furthermore, we define the limit exponents
$p^-:=\essinf_{x\in G}{p(x)}$ and $p^+:=\esssup_{x\in G}{p(x)}$ as well as the set of \textbf{bounded variable exponents} $\mathcal{P}^\infty(G):=\{p\in \mathcal{P}(G)\fdg p^+<\infty\}$.

For $p\in \mathcal{P}^\infty(G)$, 
the \textbf{variable exponent Lebesgue space} $L^{p(\cdot)}(G)$ consists of all 
measurable functions $f:G\to \setR$ for which the \textbf{modular} $\rho_{p(\cdot)}(f):=\int_{G}{\vert f(x)\vert^{p(x)}\,dx}$ is finite. The so-called Luxemburg norm $\|f\|_{L^{p(\cdot)}(G)}:=
\inf\{\lambda>0\fdg \rho_{p(\cdot)}(\lambda^{-1}f)\leq 1\}$ 
turns $L^{p(\cdot)}(G)$~into~a~Banach~space. 

For $p\in \mathcal{P}^\infty(G)$ with $p^->1$, $p'\in
\mathcal{P}^\infty(G)$ is defined via
$p'(x):=\frac{p(x)}{p(x)-1}$ for almost~every~${x\in G}$. We
make frequent use of H\"older's inequality
\begin{align}
\|gf\|_{L^1(G)} \leq 2\, \|g\|_{p'(\cdot)}\|f\|_{p(\cdot)}, \label{hoelder}
\end{align}
valid for every $g\in
L^{p'(\cdot)}(G)$ and $f \in L^{p(\cdot)}(G)$. In particular, \eqref{hoelder} ensures the well-definedness of the product $(\cdot,\cdot)_{L^{p(\cdot)}(G)}:L^{p'(\cdot)}(G)\times L^{p(\cdot)}(G)\to \setR$, for every  $g\in L^{p'(\cdot)}(G)$ and $f\in L^{p(\cdot)}(G)$ given via\vspace*{-2mm} 
\begin{align*}
(g,f)_{L^{p(\cdot)}(G)}\!:=\!\int_{G}{g(x)f(x)\,dx}.
\end{align*}\\[-10pt]
The products $(\cdot,\cdot)_{L^{p(\cdot)}(G)^n}$ and $(\cdot,\cdot)_{L^{p(\cdot)}(G)^{n\times n}}$ are defined accordingly. 

For a bounded set $G\subseteq \setR^n$, $n\in \setN$, and  $q,p\in \mathcal{P}^\infty(G)$, satisfying $q\leq p$~almost~everywhere~in~$G$, every function $f\in L^{p(\cdot)}(G)$ also satisfies $f\in L^{q(\cdot)}(G)$ with
\begin{align}
	\|f\|_{L^{q(\cdot)}(G)}\leq 2(1+\vert G\vert)\|f\|_{L^{p(\cdot)}(G)}.\label{embed}
\end{align}

	For an open set $G\subseteq \setR^n$, $n\in \setN$, 
	and $q,p\in \mathcal{P}^\infty(G)$, we define the \textbf{variable exponent Sobolev spaces}
	\begin{align*}
	\widehat{X}^{q(\cdot),p(\cdot)}_{\nabla}(G)&:=\{f\in L^{q(\cdot)}(G)\fdg \nabla f\in L^{p(\cdot)}(G)^n\},\\
	\widehat{X}^{q(\cdot),p(\cdot)}_{\bfvarepsilon}(G)&:=\{\bff\in L^{q(\cdot)}(G)^n\fdg \bfvarepsilon(\bff)\in L^{p(\cdot)}(G,\mathbb{M}^{n\times n}_{\sym})\},
	\end{align*} 
	where $\nabla f$ and $\bfvarepsilon(\bff)$ have to be understood in a distributional sense. $\widehat{X}^{q(\cdot),p(\cdot)}_{\nabla}(G)$ and $\widehat{X}^{q(\cdot),p(\cdot)}_{\bfvarepsilon}(G)$  form Banach spaces (cf.~\cite[Proposition~2.13]{KR20}), if equipped with the  norms\vspace*{-1mm}
	\begin{align*}
	\|\cdot\|_{\widehat{X}^{q(\cdot),p(\cdot)}_{\nabla}(G)}&:=\|\cdot\|_{L^{q(\cdot)}(G)}
	+\|\nabla\cdot\|_{L^{p(\cdot)}(G)^n},\\
	\|\cdot\|_{\widehat{X}^{q(\cdot),p(\cdot)}_{\bfvarepsilon}(G)}&:=\|\cdot\|_{L^{q(\cdot)}(G)^n}
	+\|\bfvarepsilon(\cdot)\|_{L^{p(\cdot)}(G)^{n\times n}},
    \end{align*}
	respectively.
	We define $X^{q(\cdot),p(\cdot)}_{\nabla}(G)$ and $X^{q(\cdot),p(\cdot)}_{\bfvarepsilon}(G)$  to be  the closures of $C_0^\infty(G)$~and~$C_0^\infty(G)^n$  with respect to
	 the norms $\|\cdot\|_{\widehat{X}^{q(\cdot),p(\cdot)}_{\nabla}(G)}$ and $\|\cdot\|_{\widehat{X}^{q(\cdot),p(\cdot)}_{\bfvarepsilon}(G)}$, respectively.

We say that a bounded exponent $p\in \mathcal P^\infty (G)$ is locally
\textbf{$\log$--Hölder continuous}, if there is a constant $c_1>0$, such that
for all $x,y\in G$
\begin{align*}
\vert p(x)-p(y)\vert \leq \frac{c_1}{\log(e+1/\vert x-y\vert)}.
\end{align*}
We say that $p \in \mathcal P^\infty (G)$ satisfies the \textbf{$\log$--Hölder decay condition}, if there exist 
constant $c_2>0$ and $p_\infty\in \setR$, such that for all $x\in G$
\begin{align*}
\vert p(x)-p_\infty\vert \leq\frac{c_2}{\log(e+1/\vert x\vert)}.
\end{align*}\vspace*{-0.5cm}\newpage
We say that $p$ is \textbf{globally $\log$--Hölder continuous} on $G$, if it is locally 
$\log$--Hölder continuous and satisfies the $\log$--Hölder decay condition. Moreover,
we denote by $\mathcal{P}^{\log}(G)$ the set of  all
globally $\log$--Hölder continuous
variable exponents on $G$.

One pleasant property of globally $\log$--Hölder continuous exponents is their 
ability to admit extensions to the whole space $\setR^n$, $n\in\setN$, having similar characteristics.

\begin{prop}\label{2.13}
	Let $G\subsetneq \setR^n$, $n\in\setN$, be a domain. If $p\in \mathcal{P}^{\log}(G)$,  then it admits an extension
	 $\overline{p}\in \mathcal{P}^{\log}(\setR^n)$, i.e., 
	$\overline{p}=p$ in $G$, satisfying $\overline{p}^-=p^-$ and $\overline{p}^+=p^+$.
\end{prop}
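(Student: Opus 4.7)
The strategy is a McShane-type construction: first extend $p$ by continuity to the closure $\overline{G}$, then extend further to $\mathbb{R}^n$ by infimum-convolution with the $\log$--Hölder modulus, and finally truncate between $p^-$ and $p^+$ to preserve the bounds.

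I would start by introducing the modulus $\omega(r):=c_1/\log(e+1/r)$ for $r>0$, $\omega(0):=0$, so that the local $\log$--Hölder condition reads $|p(x)-p(y)|\le \omega(|x-y|)$ for $x,y\in G$. The function $\omega$ is continuous, nondecreasing, bounded by $c_1$, and concave near the origin, which together with $\omega(0)=0$ yields subadditivity $\omega(r+s)\le \omega(r)+\omega(s)$ on $[0,\infty)$; if direct subadditivity fails at large arguments, one replaces $\omega$ by its smallest concave majorant, which has the same asymptotics at $0$ and $\infty$. Since $\omega$ is a modulus of continuity on $G$ with $\omega(0)=0$, the function $p$ is uniformly continuous on $G$ and hence extends uniquely and continuously to $\overline{G}$ with the same bounds $p^-,p^+$ and the same estimate $|p(x)-p(y)|\le \omega(|x-y|)$ for all $x,y\in \overline{G}$.

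Next I would define
\begin{align*}
\tilde p(x):=\inf_{y\in \overline{G}}\bigl(p(y)+\omega(|x-y|)\bigr), \qquad x\in \mathbb{R}^n,
\end{align*}
and set $\overline p(x):=\max\bigl(p^-,\min(p^+,\tilde p(x))\bigr)$. The standard McShane argument, using subadditivity of $\omega$, shows $\tilde p(x)=p(x)$ for $x\in \overline{G}$ and $|\tilde p(x)-\tilde p(y)|\le \omega(|x-y|)$ for all $x,y\in \mathbb{R}^n$. Truncation by $\max(p^-,\cdot)$ and $\min(p^+,\cdot)$ is $1$-Lipschitz, hence preserves the local $\log$--Hölder estimate, produces a function that agrees with $p$ on $\overline G$ (since $p^-\le p\le p^+$ there), and enforces $\overline p^-=p^-$ and $\overline p^+=p^+$.

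It remains to verify the $\log$--Hölder decay condition for $\overline p$ with the same $p_\infty$. For any $x\in \mathbb{R}^n$ and any $y\in \overline{G}$ we have
\begin{align*}
|\overline p(x)-p_\infty|\le |p(y)-p_\infty|+\omega(|x-y|)\le \frac{c_2}{\log(e+1/|y|)}+\frac{c_1}{\log(e+1/|x-y|)}.
\end{align*}
If $x\in \overline G$ we choose $y=x$ and conclude immediately. If $x\notin \overline G$ and $\overline G$ has a point within distance $|x|$ of $x$, we pick such a $y$ and use monotonicity of $t\mapsto 1/\log(e+1/t)$ to bound both terms by $C/\log(e+1/|x|)$. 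The genuinely delicate case, which I regard as \emph{the main obstacle}, is when $\overline G$ is bounded and $|x|\to\infty$: here $\omega(|x-y|)$ is no longer small, but $1/\log(e+1/|x|)\to 1$, so any global bound of $\overline p-p_\infty$ by the constant $\max(p^+-p^-,1)\cdot \log(e+1/|x|)^{-1}$ suffices after enlarging $c_2$. Putting these cases together and taking the maximum of the resulting constants yields the decay bound, and thus $\overline p\in \mathcal{P}^{\log}(\mathbb{R}^n)$.
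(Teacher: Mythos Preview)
The paper does not actually prove this proposition; it is stated in the preliminaries section as a known result from the literature on variable exponent spaces (the standard reference being \cite{DHHR11}, Proposition~4.1.7), so there is no ``paper's proof'' to compare against. Your McShane-type construction is exactly the standard argument used to establish this fact, and the overall strategy---extend to $\overline{G}$ by uniform continuity, infimum-convolve with the $\log$--H\"older modulus, then truncate---is correct.

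Two places deserve tightening. First, the subadditivity of $\omega(r)=c_1/\log(e+1/r)$ is not automatic on all of $[0,\infty)$; your remark about passing to the least concave majorant is the right fix, but you should state explicitly that this majorant is still bounded and has the same asymptotics at $0$, so that the extended function remains locally $\log$--H\"older with a comparable constant. Second, your case analysis for the decay condition is incomplete: you treat ``$\overline G$ bounded and $|x|\to\infty$'' and ``some $y\in\overline G$ within distance $|x|$ of $x$'', but if $G$ is unbounded there may still be points $x$ with $\mathrm{dist}(x,\overline G)>|x|$ (take $G$ an unbounded domain avoiding a half-space containing the origin). In that situation your two cases do not cover $x$, and the monotonicity argument you sketch actually goes the wrong way when you try to bound $1/\log(e+1/|y|)$ with $|y|$ possibly as large as $2|x|$. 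The clean way out is to observe that $\overline p$ is globally bounded by $p^+-p^-$ away from $p_\infty$ and that $1/\log(e+1/|x|)$ is bounded below by a positive constant once $|x|\ge 1$, so the decay estimate for $|x|\ge 1$ is trivial after enlarging $c_2$; only $|x|<1$ requires the comparison with a nearby $y\in\overline G$, and there such a $y$ certainly exists since $G$ is nonempty.
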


A similar extension result holds for continuous exponents on closed sets, see e.g. \cite[Prop.~3.1]{Erc97}.

\begin{prop}\label{2.13.1}
	Let $G\subsetneq \setR^n$, $n\in\setN$, be closed. If  $p\in \mathcal{P}^{\infty}(G)\cap C^0(G)$, then it admits an extension $\overline{p}\in \mathcal{P}^{\infty}(\setR^n)\cap C^0(\setR^n)$, i.e., 
	$\overline{p}=p$ in $G$, satisfying $\overline{p}^-=p^-$ and $\overline{p}^+=p^+$.
\end{prop}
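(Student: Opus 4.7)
The plan is to reduce the claim to the classical Tietze extension theorem on metric spaces, used in its order-preserving (bounded) form. Since $p\in\mathcal{P}^{\infty}(G)\cap C^0(G)$, the exponent is a continuous function whose range is contained in the compact interval $[p^-,p^+]\subseteq[1,\infty)$, and this is exactly the structure Tietze is designed to extend.

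First I would invoke the bounded version of Tietze's extension theorem: as $\setR^n$ is metric (hence normal) and $G\subseteq\setR^n$ is closed, any continuous map $f:G\to[a,b]$ admits a continuous extension $\overline{f}:\setR^n\to[a,b]$. Applying this to $p$ with $a:=p^-$ and $b:=p^+$ produces $\overline{p}\in C^0(\setR^n)$ satisfying $\overline{p}|_G=p$ and $\overline{p}(\setR^n)\subseteq[p^-,p^+]$.

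The remaining verifications are then immediate. The pointwise inequality $1\leq p^-\leq\overline{p}\leq p^+<\infty$ on $\setR^n$ shows $\overline{p}\in\mathcal{P}^{\infty}(\setR^n)\cap C^0(\setR^n)$. For the equalities of the limit exponents, the inclusion $\overline{p}(\setR^n)\subseteq[p^-,p^+]$ gives $\overline{p}^-\geq p^-$ and $\overline{p}^+\leq p^+$, whereas $\overline{p}|_G=p$ together with $|G|>0$ (implicit in $p^\pm$ being well defined as essential bounds on $G$) yields the reverse inequalities $\overline{p}^-\leq p^-$ and $\overline{p}^+\geq p^+$.

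I do not anticipate any real obstacle; the only point requiring a moment's care is to appeal to the order-preserving rather than the plain Tietze extension, so that the prescribed upper and lower bounds are transferred to $\overline{p}$ for free. Alternative constructions, such as an explicit pointwise extension based on the distance function to $G$ or a Whitney-type extension on the open complement $\setR^n\setminus G$, would reach the same conclusion but with unnecessary technical overhead.
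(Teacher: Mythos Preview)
Your argument via the bounded Tietze extension theorem is correct and is the standard route to this result. The paper does not supply its own proof but simply refers to \cite[Prop.~3.1]{Erc97}; your self-contained Tietze argument is exactly how one proves such an extension, so there is nothing to compare beyond noting that you have filled in what the paper leaves to the literature.
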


	\section{Variable exponent Bochner--Lebesgue spaces}
	\label{sec:3}
	
	In this section we recall and discuss properties of variable exponent Bochner--Lebesgue spaces with a symmetric gradient structure, the appropriate substitute of usual Bochner--Lebesgue spaces for the treatment of unsteady problems in variable exponent spaces governed by the symmetric part~of~the~gradient, such as the model problem \eqref{eq:model}. Variable exponent Bochner--Lebesgue spaces incorporating only the symmetric part of the gradient have already been introduced and thoroughly~examined~in~\cite{KR20}. 
	
	Throughout the entire section, if nothing else is stated, let $\Omega\subseteq\setR^d$, $d\ge 2$, 
	be a bounded domain, $I:=\left(0,T\right)$, $T<\infty$,  $Q_T:=I\times \Omega$,
	and $q,p\in \mathcal{P}^\infty(Q_T)$.
	
	\begin{defn}[Time slice spaces]\label{3.1} We define for almost every $t\in I$, the 
		\textbf{time slice spaces}
		\begin{align*}
		X^{q,p}_{\bfvarepsilon}(t):= X^{q(t,\cdot),p(t,\cdot)}_{\bfvarepsilon}(\Omega).
		\end{align*}
		Furthermore, we define the \textbf{limiting time slice spaces} 
		\begin{align*}
			X^{q,p}_+:=X^{q^+,p^+}_{\bfvarepsilon}(\Omega),\qquad X^{q,p}_-:= X^{q^-,p^-}_{\bfvarepsilon}(\Omega).
		\end{align*}
	\end{defn}	
	
	\begin{defn}\label{3.3} We define the \textbf{variable exponent Bochner--Lebesgue space}
		\begin{align*}
		\bscal{X}_{\bfvarepsilon}^{q,p}(Q_T)
		:=\Big\{\bsu\in L^{q(\cdot,\cdot)}(Q_T)^d
		\;\big|\; \bfvarepsilon(\bsu)\in L^{p(\cdot,\cdot)}(Q_T,\mathbb{M}_{\sym}^{d\times d}),\;\bsu(t)\in X^{q,p}_{\bfvarepsilon}(t)\text{ for a.e. }t\in I\Big\}.
		\end{align*}
		Furthermore, we define the \textbf{limiting Bochner--Lebesgue spaces} 
		\begin{align*}
				\bscal{X}_+^{q,p}(Q_T):=L^{\max\{q^+,p^+\}}(I,X^{q,p}_+),\qquad \bscal{X}_-^{q,p}(Q_T):=L^{\max\{q^-,p^-\}}(I,X^{q,p}_-).
		\end{align*}
	\end{defn}
	
	\begin{prop}\label{3.4}
		The space  $\bscal{X}_{\bfvarepsilon}^{q,p}(Q_T)$ forms a separable Banach space,~if~equipped~with~the~norm
		\begin{align*}
		\|\cdot\|_{\bscal{X}_{\bfvarepsilon}^{q,p}(Q_T)}
		:=\|\cdot\|_{L^{q(\cdot,\cdot)}(Q_T)^d}
		+\|\bfvarepsilon(\cdot)\|_{L^{p(\cdot,\cdot)}(Q_T)^{d\times d}}.
		\end{align*}
		If $q^-,p^->1$, then $\bscal{X}_{\bfvarepsilon}^{q,p}(Q_T)$  is additionally reflexive.
	\end{prop}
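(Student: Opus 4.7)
The plan is to realize $\bscal{X}_{\bfvarepsilon}^{q,p}(Q_T)$ as a closed subspace of the product Banach space
\begin{align*}
\mathbf{Z} := L^{q(\cdot,\cdot)}(Q_T)^d\times L^{p(\cdot,\cdot)}(Q_T,\mathbb{M}_{\sym}^{d\times d})
\end{align*}
equipped with the canonical sum norm $\|(\bsu,\bfA)\|_{\mathbf{Z}}:=\|\bsu\|_{L^{q(\cdot,\cdot)}(Q_T)^d}+\|\bfA\|_{L^{p(\cdot,\cdot)}(Q_T)^{d\times d}}$. The summary of properties of variable exponent Lebesgue spaces recalled in Section~\ref{sec:2} guarantees that $\mathbf{Z}$ is separable Banach, and reflexive whenever $q^-,p^->1$. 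The map $\Phi:\bscal{X}_{\bfvarepsilon}^{q,p}(Q_T)\to \mathbf{Z}$ defined by $\Phi(\bsu):=(\bsu,\bfvarepsilon(\bsu))$ is linear and an isometry by the very definition of the norm on $\bscal{X}_{\bfvarepsilon}^{q,p}(Q_T)$. Once I show that $\Phi(\bscal{X}_{\bfvarepsilon}^{q,p}(Q_T))$ is closed in $\mathbf{Z}$, separability (from the corresponding property of $\mathbf{Z}$ and its hereditary behavior under closed subspaces) and reflexivity under $q^-,p^->1$ (by the analogous hereditary property) follow immediately, and completeness is simultaneously implied by isometric identification with a closed subspace of a Banach space.

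\textbf{Closedness of the image.} Given a sequence $(\bsu_n,\bfvarepsilon(\bsu_n))\in\Phi(\bscal{X}_{\bfvarepsilon}^{q,p}(Q_T))$ converging in $\mathbf{Z}$ to some $(\bsu,\bfE)$, the first step is to identify the limit tensor. Convergence in $L^{q(\cdot,\cdot)}(Q_T)^d$ and $L^{p(\cdot,\cdot)}(Q_T,\mathbb{M}_{\sym}^{d\times d})$ both imply convergence in $L^1_{\mathrm{loc}}(Q_T)^d$ and $L^1_{\mathrm{loc}}(Q_T,\mathbb{M}_{\sym}^{d\times d})$ respectively (using the local integrability of $L^{p(\cdot,\cdot)}$ functions on bounded sets via \eqref{embed}), hence in the distributional sense. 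Testing the symmetric-gradient relation $\bfvarepsilon(\bsu_n)=\bfE_n$ against test functions in $C_0^\infty(Q_T)^{d\times d}$ and passing to the limit yields $\bfE=\bfvarepsilon(\bsu)$ in the distributional sense, so $\bsu$ belongs to $L^{q(\cdot,\cdot)}(Q_T)^d$ with $\bfvarepsilon(\bsu)=\bfE\in L^{p(\cdot,\cdot)}(Q_T,\mathbb{M}_{\sym}^{d\times d})$.

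\textbf{Recovery of the time-slice constraint.} The remaining and most delicate step is to show that $\bsu(t)\in X^{q,p}_{\bfvarepsilon}(t)$ for a.e. $t\in I$. From convergence in $L^{q(\cdot,\cdot)}(Q_T)^d$ and $L^{p(\cdot,\cdot)}(Q_T,\mathbb{M}_{\sym}^{d\times d})$ I pass to a subsequence so that the modulars $\rho_{q(t,\cdot)}(\bsu_n(t)-\bsu(t))$ and $\rho_{p(t,\cdot)}(\bfvarepsilon(\bsu_n)(t)-\bfvarepsilon(\bsu)(t))$ tend to zero for a.e. $t\in I$; a Fubini/diagonal argument together with the unit-ball characterization of the Luxemburg norm then yields $\|\bsu_n(t)-\bsu(t)\|_{L^{q(t,\cdot)}(\Omega)^d}+\|\bfvarepsilon(\bsu_n)(t)-\bfvarepsilon(\bsu)(t)\|_{L^{p(t,\cdot)}(\Omega)^{d\times d}}\to 0$ for a.e. $t\in I$. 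Since $X^{q,p}_{\bfvarepsilon}(t)$ is by definition closed in $\widehat{X}^{q(t,\cdot),p(t,\cdot)}_{\bfvarepsilon}(\Omega)$, and each $\bsu_n(t)$ lies in $X^{q,p}_{\bfvarepsilon}(t)$, the limit $\bsu(t)$ does too, for a.e. $t\in I$. This closes the argument and gives $(\bsu,\bfE)\in\Phi(\bscal{X}_{\bfvarepsilon}^{q,p}(Q_T))$.

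\textbf{Main obstacle.} The subtle point is precisely the pointwise-in-$t$ recovery of the membership $\bsu(t)\in X^{q,p}_{\bfvarepsilon}(t)$, because the family $\{X^{q,p}_{\bfvarepsilon}(t)\}_{t\in I}$ is a moving scale of closures and there is no a priori uniform ambient space in which to take one global closure. The modular-to-norm passage at almost every time slice, carried out via a subsequence extraction and Fubini applied to $\rho_{p(\cdot,\cdot)}$ and $\rho_{q(\cdot,\cdot)}$, is the technical heart of the proof; everything else reduces to routine bookkeeping about closed subspaces of separable (resp.\ reflexive) Banach spaces.
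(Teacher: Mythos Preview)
Your proposal is correct and follows the standard route: realize $\bscal{X}_{\bfvarepsilon}^{q,p}(Q_T)$ as an isometric closed subspace of the separable (and, when $q^-,p^->1$, reflexive) product space $L^{q(\cdot,\cdot)}(Q_T)^d\times L^{p(\cdot,\cdot)}(Q_T,\mathbb{M}_{\sym}^{d\times d})$. The paper itself does not give a proof but cites \cite[Proposition 4.6 \& Proposition 4.12]{KR20}; your argument is precisely the expected one, and the a.e.-in-time subsequence extraction you use to recover the slice constraint $\bsu(t)\in X^{q,p}_{\bfvarepsilon}(t)$ is exactly the content of Corollary~\ref{3.5} (also imported from \cite{KR20}).
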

	
	\begin{proof}
		See~\cite[Proposition 4.6 \& Proposition 4.12]{KR20}.\hfill$\qed$
	\end{proof}
	
	\begin{cor}\label{3.5}
		Let
		$(\bsu_n)_{n\in\setN}
		\subseteq \bscal{X}^{q,p}_{\bfvarepsilon}(Q_T)$ 
		be a sequence,  such that $\bsu_n\to\bsu\text{ in }\bscal{X}^{q,p}_{\bfvarepsilon}(Q_T)$ ${(n\to\infty)}$.
		Then, there exists a subsequence $(\bsu_n)_{n\in\Lambda}\subseteq \bscal{X}^{q,p}_{\bfvarepsilon}(Q_T)$, 
		with $\Lambda\subseteq \setN$, such that 
		${\bsu_n(t)\to \bsu(t)}$ in $X^{q,p}_{\bfvarepsilon}(t)$ $(\Lambda\ni n\to\infty)$
		for almost every $t\in I$.
	\end{cor}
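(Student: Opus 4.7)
The plan is to reduce the claimed subsequential pointwise convergence to the classical fact that $L^1$--convergence admits an a.e.~convergent subsequence, applied to the modulars at each time slice.

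First I would translate the hypothesis $\bsu_n\to \bsu$ in $\bscal{X}_{\bfvarepsilon}^{q,p}(Q_T)$ into modular form. Since $q,p\in \mathcal{P}^\infty(Q_T)$ (so that $q^+,p^+<\infty$), norm convergence in $L^{q(\cdot,\cdot)}(Q_T)^d$ and $L^{p(\cdot,\cdot)}(Q_T,\mathbb{M}_{\sym}^{d\times d})$ is equivalent to convergence of the corresponding modulars. Hence
\begin{align*}
\int_{Q_T}\vert \bsu_n-\bsu\vert^{q(t,x)}\,dt\,dx\overset{n\to\infty}{\longrightarrow}0,\qquad \int_{Q_T}\vert \bfvarepsilon(\bsu_n-\bsu)\vert^{p(t,x)}\,dt\,dx\overset{n\to\infty}{\longrightarrow}0.
\end{align*}

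Next I would apply Fubini's theorem to rewrite these as integrals over $I$ of the time--sliced modulars:
\begin{align*}
\int_I\rho_{q(t,\cdot)}(\bsu_n(t)-\bsu(t))\,dt\overset{n\to\infty}{\longrightarrow}0,\qquad \int_I\rho_{p(t,\cdot)}(\bfvarepsilon(\bsu_n(t)-\bsu(t)))\,dt\overset{n\to\infty}{\longrightarrow}0.
\end{align*}
In other words, the scalar functions $t\mapsto \rho_{q(t,\cdot)}(\bsu_n(t)-\bsu(t))$ and $t\mapsto \rho_{p(t,\cdot)}(\bfvarepsilon(\bsu_n(t)-\bsu(t)))$ converge to $0$ in $L^1(I)$. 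By the standard Riesz-type result that $L^1$--convergence implies a.e.~convergence along a subsequence (applied twice, or once to the sum), there exists $\Lambda\subseteq\setN$ such that for almost every $t\in I$
\begin{align*}
\rho_{q(t,\cdot)}(\bsu_n(t)-\bsu(t))\to 0\quad\text{and}\quad \rho_{p(t,\cdot)}(\bfvarepsilon(\bsu_n(t)-\bsu(t)))\to 0\qquad (\Lambda\ni n\to\infty).
\end{align*}

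Finally, for each such $t\in I$ the exponents $q(t,\cdot)$ and $p(t,\cdot)$ lie in $\mathcal{P}^\infty(\Omega)$ (since $q(t,\cdot)^+\leq q^+<\infty$ and analogously for $p$), so the modular--norm equivalence in variable exponent Lebesgue spaces recalled in Section~\ref{sec:2} yields $\bsu_n(t)\to \bsu(t)$ in $L^{q(t,\cdot)}(\Omega)^d$ and $\bfvarepsilon(\bsu_n(t))\to \bfvarepsilon(\bsu(t))$ in $L^{p(t,\cdot)}(\Omega,\mathbb{M}_{\sym}^{d\times d})$ as $\Lambda\ni n\to\infty$. Since each $\bsu_n(t)$ and $\bsu(t)$ already lie in the closed subspace $X^{q,p}_{\bfvarepsilon}(t)$, this yields convergence with respect to the norm $\|\cdot\|_{L^{q(t,\cdot)}(\Omega)^d}+\|\bfvarepsilon(\cdot)\|_{L^{p(t,\cdot)}(\Omega)^{d\times d}}$, which is precisely convergence in $X^{q,p}_{\bfvarepsilon}(t)$. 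I do not expect a genuine obstacle here—the proof is essentially bookkeeping, and the only step that requires attention is the passage from global modular convergence to a.e.\ modular convergence of the time slices, which is handled by Fubini plus the $L^1$--subsequence principle.
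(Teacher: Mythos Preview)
Your argument is correct and is precisely the natural route: pass from norm convergence in $\bscal{X}^{q,p}_{\bfvarepsilon}(Q_T)$ to modular convergence via $q^+,p^+<\infty$, slice with Fubini--Tonelli, extract an a.e.\ convergent subsequence from the resulting $L^1(I)$--convergence, and translate back to norm convergence in each $X^{q,p}_{\bfvarepsilon}(t)$. The paper does not give its own proof here but cites \cite[Corollary~4.11]{KR20}; your argument is essentially what one finds there, so there is nothing to add.
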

	
	\begin{proof}
		See~\cite[Corollary 4.11]{KR20}.\hfill$\qed$
	\end{proof}

	Under additional assumptions on the regularity of the domain and the continuity of the variable exponents, the space $C^\infty_0(Q_T)^d\!:=\!\{\boldsymbol{\phi}\in C^\infty(Q_T)^d\mid \textup{supp}(\boldsymbol{\phi})\!\subset\subset\! Q_T\}$~lies~densely~in~$\bscal{X}_{\bfvarepsilon}^{q,p}(Q_T)$.

		\begin{prop}\label{3.4.1}
		If $\Omega\subseteq \mathbb{R}^d$, $d\ge 2$, is a bounded Lipschitz domain and $q,p\in \mathcal{P}^{\log}(Q_T)$ with $q^-,p^->1$, then $C^\infty_0(Q_T)^d$ is dense in $\bscal{X}_{\bfvarepsilon}^{q,p}(Q_T)$.
	\end{prop}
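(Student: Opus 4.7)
The plan is to adapt the standard \emph{cut-off, translate, mollify} density argument of classical Bochner--Lebesgue theory, verifying at each step that the $\log$--Hölder continuity of the exponents supplies the necessary convergence in the variable-exponent modular topology.

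\textbf{Setup and time truncation.} Using Proposition~\ref{2.13}, first extend $q,p$ to $\overline{q},\overline{p}\in\mathcal{P}^{\log}(\setR^{d+1})$ with the same infima and suprema. For $\bsu\in\bscal{X}^{q,p}_\bfvarepsilon(Q_T)$, the condition $\bsu(t)\in X^{q,p}_\bfvarepsilon(t)$ (closure of $C_0^\infty(\Omega)^d$ in the symmetric-gradient norm) ensures that the zero extension $\widetilde{\bsu}$ to $\setR^{d+1}$ satisfies $\bfvarepsilon(\widetilde{\bsu})=\widetilde{\bfvarepsilon(\bsu)}$ in $\mathcal{D}'(\setR^{d+1})^{d\times d}$, so $\widetilde{\bsu}\in L^{\overline{q}(\cdot,\cdot)}(\setR^{d+1})^d$ and $\bfvarepsilon(\widetilde{\bsu})\in L^{\overline{p}(\cdot,\cdot)}(\setR^{d+1},\mathbb{M}^{d\times d}_{\sym})$. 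Next pick cut-offs $\chi_k\in C_0^\infty(I)$ with $0\le\chi_k\le 1$ and $\chi_k\to 1$ pointwise in $I$. Dominated convergence on the modulars $\rho_{q(\cdot,\cdot)}(\bsu-\chi_k\bsu)$ and $\rho_{p(\cdot,\cdot)}((1-\chi_k)\bfvarepsilon(\bsu))$, combined with modular--norm equivalence for bounded variable exponents, yields $\chi_k\bsu\to\bsu$ in $\bscal{X}^{q,p}_\bfvarepsilon(Q_T)$, reducing the problem to the case $\textup{supp}_t(\bsu)\subset\subset I$.

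\textbf{Boundary shift and mollification.} The Lipschitz regularity of $\partial\Omega$ provides a finite open cover $\{U_i\}_{i=0}^N$ of $\overline{\Omega}$ with $U_0\subset\subset\Omega$ and, for each $i\ge 1$, a direction $\xi_i\in\setR^d$ and $\lambda_0>0$ with $(U_i\cap\Omega)-\lambda\xi_i\subset\subset\Omega$ for all $\lambda\in(0,\lambda_0)$. Fix a subordinate smooth partition of unity $\{\psi_i\}$ and decompose $\bsu=\sum_i\psi_i\bsu$; each summand again lies in $\bscal{X}^{q,p}_\bfvarepsilon(Q_T)$ by the product rule $\bfvarepsilon(\psi_i\bsu)=\psi_i\bfvarepsilon(\bsu)+\tfrac12(\nabla\psi_i\otimes\bsu+\bsu\otimes\nabla\psi_i)$, together with the embedding of the slice spaces into $L^{p(t,\cdot)}(\Omega)^d$ supplied by the Korn/Poincaré-type inequality available on $X^{q,p}_\bfvarepsilon(t)$. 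For $i\ge 1$ define the interior-shifted pieces $\bsu_{i,\lambda}(t,x):=(\psi_i\bsu)(t,x+\lambda\xi_i)$; continuity of translations in $L^{\overline{q}(\cdot,\cdot)}(\setR^{d+1})$ and $L^{\overline{p}(\cdot,\cdot)}(\setR^{d+1})$, valid under $\log$--Hölder regularity, gives $\bsu_{i,\lambda}\to\psi_i\bsu$ in $\bscal{X}^{q,p}_\bfvarepsilon(Q_T)$ as $\lambda\downarrow 0$. Each $\bsu_{i,\lambda}$ has support strictly inside $Q_T$, so for $\varepsilon\in(0,\varepsilon_0(\lambda))$ the convolution $\rho_\varepsilon\ast\bsu_{i,\lambda}$ with a standard $C_0^\infty(\setR^{d+1})$ kernel lies in $C_0^\infty(Q_T)^d$ and commutes with $\bfvarepsilon$. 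The classical convergence of mollifications in variable-exponent Lebesgue spaces under $\log$--Hölder regularity (cf.~\cite{DHHR11}) yields $\rho_\varepsilon\ast\bsu_{i,\lambda}\to\bsu_{i,\lambda}$ as $\varepsilon\downarrow 0$, and a diagonal selection in the three parameters $k,\lambda,\varepsilon$ produces the desired $C_0^\infty(Q_T)^d$-approximation of $\bsu$.

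\textbf{Main obstacle.} The technical heart of the argument is the translation and mollification convergence in norms whose exponent itself varies with $(t,x)$: the translate $\bsu(\cdot,\cdot+\lambda\xi)$ is tested against $p(t,x)$ rather than $p(t,x+\lambda\xi)$, and the same mismatch appears for the mollifier. The $\log$--Hölder oscillation estimate $|x-y|^{p(x)-p(y)}\le c$ for small $|x-y|$ on bounded sets is precisely what bridges this gap; without it neither convergence would hold in $L^{p(\cdot,\cdot)}$, which is where the assumption $p,q\in\mathcal{P}^{\log}(Q_T)$ enters essentially. A secondary subtlety is the need for $\bscal{X}^{q,p}_\bfvarepsilon(Q_T)\hookrightarrow L^{p(\cdot,\cdot)}(Q_T)^d$ in order to keep $\psi_i\bsu$ inside the space, which relies on a Korn-type estimate for the slice spaces $X^{q,p}_\bfvarepsilon(t)$ that is a non-trivial but available ingredient in this variable-exponent symmetric-gradient framework.
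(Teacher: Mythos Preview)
There is a genuine gap in the partition-of-unity step. You assert that each localized piece $\psi_i\bsu$ lies again in $\bscal{X}^{q,p}_{\bfvarepsilon}(Q_T)$, and for this you invoke ``the embedding of the slice spaces into $L^{p(t,\cdot)}(\Omega)^d$'' as well as ``$\bscal{X}^{q,p}_{\bfvarepsilon}(Q_T)\hookrightarrow L^{p(\cdot,\cdot)}(Q_T)^d$'' via a Korn/Poincar\'e-type inequality. But this embedding is \emph{false} in general: the paper's Remark~\ref{5.2} constructs an explicit exponent $p\in C^\infty(\overline{\Omega})$ (depending only on $x$) and a function $\varphi\bfu\in \bscal{X}^{\circ,p}_{\bfvarepsilon}(Q_T)=\bscal{X}^{p^-,p}_{\bfvarepsilon}(Q_T)$ with $\varphi\bfu\notin L^{p(\cdot,\cdot)}(Q_T)^d$. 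The slice-wise Poincar\'e/Korn estimate $\|\bsu(t)\|_{L^{p(t,\cdot)}(\Omega)^d}\le c\|\bfvarepsilon(\bsu)(t)\|_{L^{p(t,\cdot)}(\Omega)^{d\times d}}$ does hold for each fixed $t$, but a norm inequality at the slice level does not integrate to a modular inequality on $Q_T$; this is precisely the phenomenon behind Remark~\ref{5.2}. Consequently the cross term $\nabla\psi_i\otimes\bsu$ in $\bfvarepsilon(\psi_i\bsu)$ need not be in $L^{p(\cdot,\cdot)}(Q_T,\mathbb{M}^{d\times d}_{\sym})$, and the whole localization-then-translate-then-mollify scheme breaks down as written.

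The paper itself does not give a proof here (it cites \cite[Proposition~5.18]{KR20}), so a direct comparison of arguments is not possible. To repair your outline one would have to avoid the spatial partition of unity altogether---for instance by working with approximations built from tensor-type functions $\varphi(t)\bfv(x)$ with $\bfv\in C_0^\infty(\Omega)^d$, exploiting the slice condition $\bsu(t)\in X^{q,p}_{\bfvarepsilon}(t)$ directly---or by arranging that the cross terms $\sum_i\nabla\psi_i\otimes\bsu$ are handled collectively (they sum to zero) rather than piece by piece, which requires a substantially different bookkeeping than the classical scheme you sketched. A minor additional concern: continuity of translations in $L^{p(\cdot)}$ is more delicate than you suggest and is not a standard consequence of $\log$-H\"older regularity in the way mollifier convergence is; this step would also need care.
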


	\begin{proof}
		See~\cite[Proposition 5.18]{KR20}.\hfill$\qed$\vspace*{-0.6cm}
	\end{proof}
	\newpage

	We have the following characterization of duality in $\bscal{X}^{q,p}_{\bfvarepsilon}(Q_T)$.
	
	\begin{prop}[Characterization of 
		$\bscal{X}^{q,p}_{\bfvarepsilon}(Q_T)^*$]\label{3.7} Let $q,p\in\mathcal{P}^\infty(Q_T)$ with $q^-,p^->1$. Then, the operator
		${\bscal{J}_{\bfvarepsilon}:
		L^{q'(\cdot,\cdot)}(Q_T)^d\times 
		L^{p'(\cdot,\cdot)}(Q_T,\mathbb{M}_{\sym}^{d\times d})\to \bscal{X}^{q,p}_{\bfvarepsilon}(Q_T)^*}$,
		given via
		\begin{align*}
		\langle	\bscal{J}_{\bfvarepsilon}(\bsf,
		\bsF),\bsu\rangle
		_{\bscal{X}^{q,p}_{\bfvarepsilon}(Q_T)}&:=( \bsf,\bsu)_{L^{q(\cdot,\cdot)}(Q_T)^d}
		+(\bsF,
		\bfvarepsilon(\bsu))_{L^{p(\cdot,\cdot)}(Q_T)^{d\times d}}
		\end{align*}
		for every $\bsf\in L^{q'(\cdot,\cdot)}(Q_T)^d$, 
		$\bsF\in L^{p'(\cdot,\cdot)}(Q_T,\mathbb{M}_{\sym}^{d\times d})$ 
		and $\bsu\in\bscal{X}^{q,p}_{\bfvarepsilon}(Q_T)$, 
		is well-defined, linear, Lipschitz continuous with constant $2$ and surjective.
	\end{prop}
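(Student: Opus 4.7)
The plan is to identify $\bscal{X}^{q,p}_{\bfvarepsilon}(Q_T)$ with a closed subspace of a product of two variable exponent Lebesgue spaces and then transplant the known duality of such spaces via Hahn--Banach. Concretely, I would introduce the linear embedding $\bscal{E}\colon\bscal{X}^{q,p}_{\bfvarepsilon}(Q_T)\to L^{q(\cdot,\cdot)}(Q_T)^d\times L^{p(\cdot,\cdot)}(Q_T,\mathbb{M}^{d\times d}_{\sym})$ given by $\bscal{E}\bsu:=(\bsu,\bfvarepsilon(\bsu))$, which by the very definition of $\|\cdot\|_{\bscal{X}^{q,p}_{\bfvarepsilon}(Q_T)}$ is a linear isometry onto its image once the product space is endowed with the $\ell^{1}$-sum of the two component norms. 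Since $\bscal{X}^{q,p}_{\bfvarepsilon}(Q_T)$ is a Banach space by Proposition~\ref{3.4}, $R(\bscal{E})$ is a closed subspace of the product.

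For the well-definedness, linearity and Lipschitz continuity of $\bscal{J}_{\bfvarepsilon}$ I would simply apply H\"older's inequality \eqref{hoelder} componentwise: for $(\bsf,\bsF)$ in the prescribed product space and every $\bsu\in\bscal{X}^{q,p}_{\bfvarepsilon}(Q_T)$,
\[
|\langle \bscal{J}_{\bfvarepsilon}(\bsf,\bsF),\bsu\rangle_{\bscal{X}^{q,p}_{\bfvarepsilon}(Q_T)}|\le 2\|\bsf\|_{L^{q'(\cdot,\cdot)}(Q_T)^d}\|\bsu\|_{L^{q(\cdot,\cdot)}(Q_T)^d}+2\|\bsF\|_{L^{p'(\cdot,\cdot)}(Q_T)^{d\times d}}\|\bfvarepsilon(\bsu)\|_{L^{p(\cdot,\cdot)}(Q_T)^{d\times d}},
\]
and the right-hand side is bounded by $2\bigl(\|\bsf\|_{L^{q'(\cdot,\cdot)}(Q_T)^d}+\|\bsF\|_{L^{p'(\cdot,\cdot)}(Q_T)^{d\times d}}\bigr)\|\bsu\|_{\bscal{X}^{q,p}_{\bfvarepsilon}(Q_T)}$, which yields the asserted Lipschitz constant $2$ with respect to the $\ell^{1}$-product norm on the domain. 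Linearity in $(\bsf,\bsF)$ is immediate from the bilinearity of the integrals.

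For the surjectivity, given an arbitrary $F\in\bscal{X}^{q,p}_{\bfvarepsilon}(Q_T)^*$, I would define a bounded linear functional on the closed subspace $R(\bscal{E})$ by $\widetilde{F}(\bscal{E}\bsu):=\langle F,\bsu\rangle_{\bscal{X}^{q,p}_{\bfvarepsilon}(Q_T)}$; this is consistent and continuous precisely because $\bscal{E}$ is an isometric bijection onto $R(\bscal{E})$. By the Hahn--Banach theorem, $\widetilde{F}$ extends to a bounded linear functional $\overline{F}$ on the entire product space, and the standard identification
\[
\bigl(L^{q(\cdot,\cdot)}(Q_T)^d\times L^{p(\cdot,\cdot)}(Q_T,\mathbb{M}^{d\times d}_{\sym})\bigr)^*\;\cong\; L^{q'(\cdot,\cdot)}(Q_T)^d\times L^{p'(\cdot,\cdot)}(Q_T,\mathbb{M}^{d\times d}_{\sym}),
\]
valid under $q^-,p^->1$ and $q^+,p^+<\infty$, produces a representing pair $(\bsf,\bsF)$. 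Restricting $\overline{F}$ back to $R(\bscal{E})$ then yields $F=\bscal{J}_{\bfvarepsilon}(\bsf,\bsF)$, as required.

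The only point that needs genuine care is the duality identification of the product space just used; this reduces componentwise to the classical duality $(L^{p(\cdot,\cdot)}(Q_T))^*\cong L^{p'(\cdot,\cdot)}(Q_T)$, which holds under the stated bounds on the exponents (see e.g. \cite{KR91,DHHR11,CUF13}). Everything else is routine linear-functional-analytic manipulation with H\"older's inequality and Hahn--Banach, so no serious obstacle is anticipated.
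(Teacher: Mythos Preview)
Your proof is correct and follows the standard route for such duality characterizations: the isometric embedding $\bsu\mapsto(\bsu,\bfvarepsilon(\bsu))$ into the product of variable exponent Lebesgue spaces, H\"older's inequality for the Lipschitz bound, and Hahn--Banach together with the known duality $(L^{p(\cdot,\cdot)})^*\cong L^{p'(\cdot,\cdot)}$ for surjectivity. The paper does not give an in-text proof but refers to \cite[Proposition~4.13]{KR20}, where precisely this argument is carried out; your proposal is essentially that proof.
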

	
	\begin{proof}
		See~\cite[Proposition 4.13]{KR20}.\hfill$\qed$
	\end{proof}
	
	In the same manner, we can characterize the dual space of $X^{q(\cdot),p(\cdot)}_{\bfvarepsilon}(\Omega)$.
	
	\begin{cor}[Characterization of 
		$X^{q(\cdot),p(\cdot)}_{\bfvarepsilon}(\Omega)^*$]\label{3.8}
		Let $q,p\in \mathcal{P}^\infty(\Omega)$ with $q^-,p^->1$ and  $X_{\bfvarepsilon}^{q,p}:=X^{q(\cdot),p(\cdot)}_{\bfvarepsilon}(\Omega)$. Then,~the~operator  $\mathcal{J}_{\bfvarepsilon}:L^{q'(\cdot)}(\Omega)^d\times L^{p'(\cdot)}(\Omega,\mathbb{M}^{d\times d}_{\sym})\to (X_{\bfvarepsilon}^{q,p})^*$,~given~via 
		\begin{align*}
		\langle\mathcal{J}_{\bfvarepsilon}(\bff,\bfF),\bfu\rangle_{X_{\bfvarepsilon}^{q,p}}:=(\bff,\bfu)_{L^{q(\cdot)}(\Omega)^d}+( \bfF,\bfvarepsilon(\bfu))_{L^{p(\cdot)}(\Omega)^{d\times d}},
		\end{align*}
		for every ${\bff\in L^{q'(\cdot)}(\Omega)^d}$, $\bfF\in L^{p'(\cdot)}(\Omega,\mathbb{M}^{d\times d}_{\sym})$ and $\bfu\in X_{\bfvarepsilon}^{q,p}$,
		is well-defined, linear, Lipschitz continuous with constant $2$ and  surjective. In addition, $X_{\bfvarepsilon}^{q,p}$ is reflexive.
	\end{cor}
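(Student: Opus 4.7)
The plan is to reduce the claim to the corresponding characterization of the dual of a product of variable exponent Lebesgue spaces, following exactly the strategy used for Proposition~\ref{3.7} in the Bochner setting (which this corollary is essentially a time-frozen version of). The key object is the canonical isometric embedding
\begin{align*}
\iota_{\bfvarepsilon}:X_{\bfvarepsilon}^{q,p}\to L^{q(\cdot)}(\Omega)^d\times L^{p(\cdot)}(\Omega,\mathbb{M}_{\sym}^{d\times d}),\qquad \iota_{\bfvarepsilon}(\bfu):=(\bfu,\bfvarepsilon(\bfu)),
\end{align*}
where the target is endowed with the sum norm. Since $X_{\bfvarepsilon}^{q,p}$ is by definition the closure of $C_0^\infty(\Omega)^d$ in this norm, $\iota_{\bfvarepsilon}$ is an isometry onto a closed subspace of the product space (closedness uses that $\bfvarepsilon$ is continuous from $L^{q(\cdot)}(\Omega)^d$ into distributions).

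For well-definedness, linearity, and the Lipschitz bound with constant $2$, I would apply H\"older's inequality \eqref{hoelder} componentwise: for every $\bff\in L^{q'(\cdot)}(\Omega)^d$, $\bfF\in L^{p'(\cdot)}(\Omega,\mathbb{M}_{\sym}^{d\times d})$ and $\bfu\in X_{\bfvarepsilon}^{q,p}$,
\begin{align*}
\vert\langle\mathcal{J}_{\bfvarepsilon}(\bff,\bfF),\bfu\rangle_{X_{\bfvarepsilon}^{q,p}}\vert\leq 2\|\bff\|_{L^{q'(\cdot)}(\Omega)^d}\|\bfu\|_{L^{q(\cdot)}(\Omega)^d}+2\|\bfF\|_{L^{p'(\cdot)}(\Omega)^{d\times d}}\|\bfvarepsilon(\bfu)\|_{L^{p(\cdot)}(\Omega)^{d\times d}},
\end{align*}
which is bounded by $2(\|\bff\|_{L^{q'(\cdot)}(\Omega)^d}+\|\bfF\|_{L^{p'(\cdot)}(\Omega)^{d\times d}})\|\bfu\|_{X_{\bfvarepsilon}^{q,p}}$. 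Linearity is immediate from bilinearity of the duality pairings.

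For surjectivity, given $F\in (X_{\bfvarepsilon}^{q,p})^*$, I would transport $F$ through $\iota_{\bfvarepsilon}^{-1}$ to a continuous linear functional of the same norm on the closed subspace $\iota_{\bfvarepsilon}(X_{\bfvarepsilon}^{q,p})$, then invoke Hahn--Banach to extend it to a functional $\widetilde{F}$ on the whole product space. Because $q^-,p^->1$, the standard duality representation for variable exponent Lebesgue spaces (cf.~\cite{KR91,DHHR11}) identifies the dual of the product with $L^{q'(\cdot)}(\Omega)^d\times L^{p'(\cdot)}(\Omega,\mathbb{M}_{\sym}^{d\times d})$, so $\widetilde{F}$ is represented by a pair $(\bff,\bfF)$. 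Restriction back via $\iota_{\bfvarepsilon}$ yields $\mathcal{J}_{\bfvarepsilon}(\bff,\bfF)=F$.

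Reflexivity of $X_{\bfvarepsilon}^{q,p}$ then follows at once: the product space is reflexive as a finite product of reflexive spaces (again using $q^-,p^->1$), and $\iota_{\bfvarepsilon}$ identifies $X_{\bfvarepsilon}^{q,p}$ with a closed subspace of a reflexive Banach space. The only step where care is required is the Hahn--Banach extension together with the identification of the dual of the product, but both are routine once the basic variable exponent Lebesgue space machinery from Section~\ref{sec:2} is available; no new ingredient beyond what already appears in the proof of Proposition~\ref{3.7} is needed.
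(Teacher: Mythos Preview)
Your argument is correct and is the standard route: embed isometrically into the product $L^{q(\cdot)}(\Omega)^d\times L^{p(\cdot)}(\Omega,\mathbb{M}_{\sym}^{d\times d})$, read off the Lipschitz bound from H\"older's inequality, obtain surjectivity via Hahn--Banach together with the known duality $(L^{r(\cdot)})^*\cong L^{r'(\cdot)}$ for $r^->1$, and deduce reflexivity from closed subspaces of reflexive spaces. The paper itself does not give a proof here at all---it simply refers to \cite[Corollary~4.17]{KR20}---so there is nothing to compare against beyond noting that your sketch is precisely the argument one expects to find in that reference (and the same one underlying Proposition~\ref{3.7}). One small remark: you do not need the aside about $\bfvarepsilon$ being continuous into distributions to get closedness of the image; since $X_{\bfvarepsilon}^{q,p}$ is by definition a closure in a Banach space it is complete, and an isometry sends complete spaces to closed subspaces.
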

	
	\begin{proof}
		See~\cite[Corollary 4.17]{KR20}.\hfill$\qed$
	\end{proof}
	
	Proposition~\ref{3.7} and Corollary~\ref{3.8} enable us to give a  well-posed definition
	of a time evaluation for functionals in $\bscal{X}^{q,p}_{\bfvarepsilon}(Q_T)^*$.
	\begin{rmk}
		[Time slices of functionals $\bsu^*\in \bscal{X}^{q,p}_{\bfvarepsilon}(Q_T)^*$]
		\label{3.9} Let $q,p\!\in\! \mathcal{P}^\infty(Q_T)$~with~${q^-,p^-\!>\!1}$.
		\begin{description}[{(ii)}]
			\item[(i)] On the basis of Corollary~\ref{3.8}, there holds for every $\bsf\in
			L^{q'(\cdot,\cdot)}(Q_T)^d$, $\bsF\in
			L^{p'(\cdot,\cdot)}(Q_T,\mathbb{M}_{\sym}^{d\times
				d})$ and
			$\bsu\in
			\bscal{X}^{q,p}_{\bfvarepsilon}(Q_T)$ that
			$(t\mapsto\langle
			\mathcal{J}_{\bfvarepsilon}(\bsf(t),\bsF(t)),\bsu(t)\rangle_{X^{q,p}_{\bfvarepsilon}(t)})\in L^1(I)$. More precisely, we have
			\begin{align}
			\begin{split}
			\langle \bscal{J}_{\bfvarepsilon}(\bsf,\bsF),\bsu\rangle_{\bscal{X}^{q,p}_{\bfvarepsilon}(Q_T)}
			=\int_I{\langle\mathcal{J}_{\bfvarepsilon}(\bsf(t),\bsF(t)),\bsu(t)\rangle_{X^{q,p}_{\bfvarepsilon}(t)}\,dt}.
			\end{split}\label{eq:3.9.1}
			\end{align}
			\item[(ii)]  According  to Proposition~\ref{3.7}, for every functional $\bsu^*\in\bscal{X}^{q,p}_{\bfvarepsilon}(Q_T)^*$,  there exist functions $\bsf\in L^{q'(\cdot,\cdot)}(Q_T)^d$ and $\bsF\in L^{p'(\cdot,\cdot)}(Q_T,\mathbb{M}_{\sym}^{d\times d})$, such that $\bsu^*=\bscal{J}_{\bfvarepsilon}(\bsf,\bsF)$ in $\bscal{X}^{q,p}_{\bfvarepsilon}(Q_T)^*$. Therefore, we define
			\begin{align}
			\bsu^*(t):=\mathcal{J}_{\bfvarepsilon}(\bsf(t),\bsF(t))\quad\textup{ in }X^{q,p}_{\bfvarepsilon}(t)^*\quad\text{ for a.e. }t\in I.\label{eq:2.22.1}
			\end{align}
			This definition is independent of the choice of $\bsf\in L^{q'(\cdot,\cdot)}(Q_T)^d$ and $\bsF\in L^{p'(\cdot,\cdot)}(Q_T,\mathbb{M}_{\sym}^{d\times d})$ with $\bsu^*=\bscal{J}_{\bfvarepsilon}(\bsf,\bsF)$ in $\bscal{X}^{q,p}_{\bfvarepsilon}(Q_T)^*$ (cf.~\cite[Remark 4.18]{KR20}).
		\end{description}
	\end{rmk}

	\section{Generalized time derivative and formula of integration by parts }
	\label{sec:4}	
	Since the introduction of variable exponent Bochner--Lebesgue
	spaces was driven by the inability of usual Bochner--Lebesgue
	spaces to properly encode unsteady problems with variable
	exponent structure, such as the model problem
	\eqref{eq:model}, the notion of generalized time derivative from Definition~\ref{2.1},
	which lives in Bochner--Lebesgue spaces,  is likewise inadequate for the treatment of problems of this nature. Therefore, we next introduce 
	variable exponent Bochner--Sobolev spaces with a symmetric gradient structure, the
	appropriate substitute of customary Bochner--Sobolev spaces (cf.~Definition~\ref{2.1}) for treatment of unsteady problems in variable exponents spaces governed by the symmetric part of the gradient. 
	
	In what follows, let $\Omega\subseteq \setR^d$, $d\ge 2$, be a bounded Lipschitz domain, $I:=\left(0,T\right)$, $T<\infty$, $Q_T:=I\times \Omega$ and $q,p\in \mathcal{P}^{\log}(Q_T)$ with $q^-,p^->1$, such that ${X^{q,p}_-\embedding Y: =L^2(\Omega)^d}$, e.g., if ${p^-\ge \frac{2d}{d+2}}$ or simply $q^-\ge 2$.\newpage
	
	\begin{defn}\label{4.1}  A function $\bsu\in\bscal{X}^{q,p}_{\bfvarepsilon}(Q_T)$ possesses a \textbf{generalized time derivative in} $\bscal{X}^{q,p}_{\bfvarepsilon}(Q_T)^*$, if there exists a functional $\bsu^*\in \bscal{X}^{q,p}_{\bfvarepsilon}(Q_T)^*$,  such that for every $\bfphi\in C^\infty_0(Q_T)^d$ it holds
		\begin{align}
		-\int_I{(\bsu(t),\pa_t\bfphi(t))_Y\,dt}=\langle\bsu^*,\bfphi\rangle_{\bscal{X}^{q,p}_{\bfvarepsilon}(Q_T)}.\label{eq:4.1}
		\end{align}
		In this case, we define $\frac{\textbf{d}\bsu}{\textbf{dt}}:=\bsu^*$ in $\bscal{X}^{q,p}_{\bfvarepsilon}(Q_T)^*$.
	\end{defn}
	
	\begin{rmk}\label{4.2} The generalized time derivative in the sense of Definition~\ref{4.1} is unique, and thus well-defined (cf.~\cite[Lemma 6.3]{KR20}).
	\end{rmk}

	\begin{rmk}\label{4.2.1}
		Denote by $\mathcal{D}'(Q_T)^d$ the space of vector-valued distributions, i.e., the topological dual space of the locally convex Hausdorff vector space $C_0^\infty(Q_T)^d$ (cf.~\cite[Kap. II, 4.]{GGZ74}). Then, the generalized time derivative $\frac{\textbf{d}}{\textbf{dt}}$ from Definition~\ref{4.1} is related to the distributional time derivative $\partial_t$ in the following sense.
		If $\bsu\in\bscal{X}^{q,p}_{\boldsymbol{\varepsilon}}(Q_T)$ has a generalized time derivative $\frac{\bfd\bsu}{\bfd\bft}\in \bscal{X}^{q,p}_{\bfvarepsilon}(Q_T)^*$, then  Proposition~\ref{3.7} provides functions $\bsf\in L^{q'(\cdot,\cdot)}(Q_T)^d$ and $\bsF\in L^{p'(\cdot,\cdot)}(Q_T,\mathbb{M}^{d\times d}_{\sym})$, such that\linebreak${\frac{\bfd\bsu}{\bfd\bft}=\bscal{J}_{\bfvarepsilon}(\bsf,\bsF)}$ in $\bscal{X}^{q,p}_{\bfvarepsilon}(Q_T)^*$. Thus, we easily derive from \eqref{eq:4.1} that $\partial_t\bsu=\boldsymbol{f}-\textup{div}(\boldsymbol{F})$~in~$\mathcal{D}'(Q_T)^d$.
	\end{rmk}
	
	\begin{defn}\label{4.3}
		We define the \textbf{variable exponent Bochner--Sobolev space}
		\begin{align*}
		\bscal{W}^{q,p}_{\bfvarepsilon}(Q_T):=
		\bigg\{\bsu\in \bscal{X}^{q,p}_{\bfvarepsilon}(Q_T)\;\Big|\; \exists\frac{\bfd\bsu}{\mathbf{dt}}\in 	\bscal{X}^{q,p}_{\bfvarepsilon}(Q_T)^*\bigg\}.
		\end{align*}
	\end{defn}
	
	\begin{prop}\label{4.4}
		The space $\bscal{W}_{\bfvarepsilon}^{q,p}(Q_T)$ forms a  separable, reflexive Banach space, if equipped with the norm
		\begin{align*}
		\|\cdot\|_{\bscal{W}_{\bfvarepsilon}^{q,p}(Q_T)}:=\|\cdot\|_{\bscal{X}_{\bfvarepsilon}^{q,p}(Q_T)}+\left\|\frac{\bfd}{\mathbf{dt}}\cdot\right\|_{\bscal{X}_{\bfvarepsilon}^{q,p}(Q_T)^*}.
		\end{align*}
	\end{prop}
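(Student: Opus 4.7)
\medskip

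The plan is to realize $\bscal{W}^{q,p}_{\bfvarepsilon}(Q_T)$ as a closed linear subspace of the product $\bscal{Z}:=\bscal{X}^{q,p}_{\bfvarepsilon}(Q_T)\times \bscal{X}^{q,p}_{\bfvarepsilon}(Q_T)^*$, since then all three assertions (completeness, separability and reflexivity) follow for free from the corresponding properties of $\bscal{Z}$.

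\textbf{Step 1: The ambient product space.} Equip $\bscal{Z}$ with the sum norm. By Proposition~\ref{3.4}, $\bscal{X}^{q,p}_{\bfvarepsilon}(Q_T)$ is a separable, reflexive Banach space (using $q^-,p^->1$). Hence its dual is also separable and reflexive, and therefore so is the product $\bscal{Z}$.

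\textbf{Step 2: The isometric identification.} Define
\begin{align*}
\iota:\bscal{W}^{q,p}_{\bfvarepsilon}(Q_T)\to \bscal{Z},\qquad \iota(\bsu):=\Big(\bsu,\tfrac{\bfd\bsu}{\bfd\bft}\Big).
\end{align*}
The map $\iota$ is well-defined by the uniqueness statement in Remark~\ref{4.2}, linear by linearity of distributional identities, and by construction $\|\iota(\bsu)\|_{\bscal{Z}}=\|\bsu\|_{\bscal{W}^{q,p}_{\bfvarepsilon}(Q_T)}$, so $\iota$ is an isometric embedding.

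\textbf{Step 3: Closedness of the image — the key step.} I would argue that $\iota(\bscal{W}^{q,p}_{\bfvarepsilon}(Q_T))$ is closed in $\bscal{Z}$. Let $(\bsu_n,\bsu_n^*)_{n\in\setN}\subseteq \iota(\bscal{W}^{q,p}_{\bfvarepsilon}(Q_T))$ with $\bsu_n\to\bsu$ in $\bscal{X}^{q,p}_{\bfvarepsilon}(Q_T)$ and $\bsu_n^*=\frac{\bfd\bsu_n}{\bfd\bft}\to\bsu^*$ in $\bscal{X}^{q,p}_{\bfvarepsilon}(Q_T)^*$; I want $\bsu^*=\frac{\bfd\bsu}{\bfd\bft}$. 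For arbitrary $\bfphi\in C_0^\infty(Q_T)^d$, every $\bsu_n$ satisfies
\begin{align*}
-\int_I (\bsu_n(t),\pa_t\bfphi(t))_Y\,dt=\langle \bsu_n^*,\bfphi\rangle_{\bscal{X}^{q,p}_{\bfvarepsilon}(Q_T)}.
\end{align*}
The right-hand side converges to $\langle \bsu^*,\bfphi\rangle_{\bscal{X}^{q,p}_{\bfvarepsilon}(Q_T)}$ by duality. For the left-hand side, convergence $\bsu_n\to\bsu$ in $\bscal{X}^{q,p}_{\bfvarepsilon}(Q_T)$ yields convergence in $L^{q(\cdot,\cdot)}(Q_T)^d$; since $\pa_t\bfphi\in C_0^\infty(Q_T)^d$ has compact support and is bounded, inequality~\eqref{embed} together with Hölder's inequality~\eqref{hoelder} is sufficient to pass to the limit pointwise under the time integral, producing $-\int_I(\bsu(t),\pa_t\bfphi(t))_Y\,dt$. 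Thus $\bsu$ satisfies~\eqref{eq:4.1} with functional $\bsu^*$, and Definition~\ref{4.1} gives $\bsu\in\bscal{W}^{q,p}_{\bfvarepsilon}(Q_T)$ with $\frac{\bfd\bsu}{\bfd\bft}=\bsu^*$, i.e., $(\bsu,\bsu^*)\in\iota(\bscal{W}^{q,p}_{\bfvarepsilon}(Q_T))$.

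\textbf{Step 4: Conclusion.} Since $\iota$ is an isometry onto a closed subspace of the separable reflexive Banach space $\bscal{Z}$, the space $\bscal{W}^{q,p}_{\bfvarepsilon}(Q_T)$ inherits completeness, separability and reflexivity. The only non-cosmetic step is the closedness argument in Step~3; it is mild, and the sole potentially delicate point is justifying the passage to the limit in the $L^2$-type pairing against $\pa_t\bfphi$, which is handled by the embedding of $L^{q(\cdot,\cdot)}(Q_T)$ into $L^1$ on the bounded cylinder~$Q_T$ combined with the boundedness of the test function.
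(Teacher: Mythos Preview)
Your proof is correct and follows the standard graph-embedding argument: identify $\bscal{W}^{q,p}_{\bfvarepsilon}(Q_T)$ isometrically with a closed subspace of $\bscal{X}^{q,p}_{\bfvarepsilon}(Q_T)\times\bscal{X}^{q,p}_{\bfvarepsilon}(Q_T)^*$ and inherit completeness, separability and reflexivity from the product. The paper itself gives no self-contained proof here but simply cites \cite[Proposition~6.5]{KR20}; your argument is almost certainly what is carried out there, so there is no genuine difference in approach to compare. One cosmetic remark: in Step~3 you can streamline the left-hand side by writing $\int_I(\bsu_n(t),\pa_t\bfphi(t))_Y\,dt=\int_{Q_T}\bsu_n\cdot\pa_t\bfphi\,dt\,dx$ directly, after which convergence follows immediately from $\bsu_n\to\bsu$ in $L^{q(\cdot,\cdot)}(Q_T)^d\hookrightarrow L^1(Q_T)^d$ against the bounded test function---the phrase ``pointwise under the time integral'' is a little awkward and unnecessary.
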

	
	\begin{proof}
		See~\cite[Proposition 6.5]{KR20}.\hfill$\qed$
	\end{proof}
	
	We have the following equivalent characterization of $\bscal{W}^{q,p}_{\bfvarepsilon}(Q_T)$.
	\begin{prop}\label{4.5}
		Let $\bsu\in \bscal{X}^{q,p}_{\bfvarepsilon}(Q_T)$ be a function and $\bsu^*\in \bscal{X}^{q,p}_{\bfvarepsilon}(Q_T)^*$ a functional. Then, the following statements are equivalent:
		\begin{description}[{(ii)}]
			\item[(i)]  
			$\bsu\in \bscal{W}^{q,p}_{\bfvarepsilon}(Q_T)$ with $\frac{\bfd\bsu}{\mathbf{dt}}=\bsu^*$ in $\bscal{X}^{q,p}_{\bfvarepsilon}(Q_T)^*$.
			\item[(ii)] For every $\bfv\in X^{q,p}_+$ and
			$\varphi\in C^\infty_0(I)$ there holds
			\begin{align*}
			-\int_I{(\bsu(t),\bfv)_Y\varphi^\prime(t)\,dt}=\int_I{\langle \bsu^*(t),\bfv\rangle_{X^{q,p}_{\bfvarepsilon}(t)}\varphi(t)\,dt}.
			\end{align*}
		\end{description}
	\end{prop}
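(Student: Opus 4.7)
The plan is to establish each implication by a density argument bridged by the integral representation \eqref{eq:3.9.1} from Remark~\ref{3.9}: for (i)$\Rightarrow$(ii) I would approximate $\bfv\in X^{q,p}_+$ by elements of $C^\infty_0(\Omega)^d$ (possible because $X^{q,p}_+$ is by definition this closure), turning tensor products $\varphi\otimes\bfv$ into admissible test functions for \eqref{eq:4.1}; for (ii)$\Rightarrow$(i), I would approximate an arbitrary $\bfphi\in C^\infty_0(Q_T)^d$ by finite sums of tensor products $\varphi_i\otimes\bfv_i$ with $\varphi_i\in C^\infty_0(I)$ and $\bfv_i\in C^\infty_0(\Omega)^d\subseteq X^{q,p}_+$, and apply (ii) summand by summand. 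The $t$-uniformity of the embedding constant in \eqref{embed} is the decisive quantitative ingredient throughout.

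For (i)$\Rightarrow$(ii), I fix $\bfv\in X^{q,p}_+$ and $\varphi\in C^\infty_0(I)$, pick $\bfv_n\in C^\infty_0(\Omega)^d$ with $\bfv_n\to\bfv$ in $X^{q,p}_+$, and test \eqref{eq:4.1} with $\bfphi_n:=\varphi\otimes\bfv_n\in C^\infty_0(Q_T)^d$. Using \eqref{eq:3.9.1} the right-hand side equals $\int_I\varphi(t)\langle\bsu^*(t),\bfv_n\rangle_{X^{q,p}_{\bfvarepsilon}(t)}\,dt$. Since the embedding constant in \eqref{embed} is independent of $t$, one has $\|\bfv_n-\bfv\|_{X^{q,p}_{\bfvarepsilon}(t)}\le 2(1+|\Omega|)\|\bfv_n-\bfv\|_{X^{q,p}_+}$ uniformly in $t\in I$; combined with the representation $\bsu^*=\bscal{J}_{\bfvarepsilon}(\bsf,\bsF)$ furnished by Proposition~\ref{3.7}, this yields an $L^1(I)$-majorant and lets the right-hand side pass to the limit by dominated convergence. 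On the left, the same $t$-uniform embedding together with $X^{q,p}_+\embedding X^{q,p}_-\embedding Y$ giving $\bfv_n\to\bfv$ in $Y$ delivers a further $L^1(I)$-majorant and concludes the direction.

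For (ii)$\Rightarrow$(i), given $\bfphi\in C^\infty_0(Q_T)^d$, I would construct approximations of the form $\bfphi_n(t,x)=\sum_{i=1}^{N_n}\varphi_i^{(n)}(t)\bfv_i^{(n)}(x)$ with $\varphi_i^{(n)}\in C^\infty_0(I)$, $\bfv_i^{(n)}\in C^\infty_0(\Omega)^d$, supported in a common compact subset of $Q_T$, and such that $\bfphi_n\to\bfphi$ and $\partial_t\bfphi_n\to\partial_t\bfphi$ uniformly on $\overline{Q_T}$; this can be done by a time-Riemann-sum of $\bfphi(\cdot,x)$ against a smooth partition of unity in $I$, or equivalently by a Fourier expansion on a slightly enlarged time-cylinder. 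By linearity, (ii) applied summand by summand together with \eqref{eq:3.9.1} yields $-\int_I(\bsu(t),\partial_t\bfphi_n(t))_Y\,dt=\langle\bsu^*,\bfphi_n\rangle_{\bscal{X}^{q,p}_{\bfvarepsilon}(Q_T)}$ for every $n$; the uniform convergence, $\bsu\in L^{q(\cdot,\cdot)}(Q_T)^d$, and the continuity of $\bsu^*\in\bscal{X}^{q,p}_{\bfvarepsilon}(Q_T)^*$ then pass this to the limit, giving \eqref{eq:4.1}. The principal technical obstacle is precisely this tensor-product approximation: one must simultaneously control uniform convergence of $\bfphi_n$ and of $\partial_t\bfphi_n$ while keeping supports in a common compact set, which is standard but requires careful bookkeeping; the (i)$\Rightarrow$(ii) direction, by comparison, is a routine dominated convergence argument.
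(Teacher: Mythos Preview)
The paper does not give its own proof of this proposition; it simply refers to \cite[Proposition 6.8]{KR20}. Your argument is sound and would serve as a direct proof: both directions hinge on density --- of $C^\infty_0(\Omega)^d$ in $X^{q,p}_+$ for (i)$\Rightarrow$(ii), and of finite tensor sums from $C^\infty_0(I)\otimes C^\infty_0(\Omega)^d$ in $C^\infty_0(Q_T)^d$ for (ii)$\Rightarrow$(i) --- combined with the integral representation \eqref{eq:3.9.1}, and you have identified these ingredients correctly. One minor simplification for (i)$\Rightarrow$(ii): rather than extracting an explicit $L^1(I)$-majorant via Proposition~\ref{3.7}, you can observe that $\varphi\otimes\bfv_n\to\varphi\otimes\bfv$ in $\bscal{X}^{q,p}_+(Q_T)\embedding\bscal{X}^{q,p}_{\bfvarepsilon}(Q_T)$, so the right-hand side converges by continuity of $\bsu^*$ alone. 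The tensor-product approximation in the second direction is indeed classical and, as you note, requires only careful bookkeeping of supports and simultaneous $C^1$-convergence; it poses no genuine obstacle.
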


	\begin{proof}
		See \cite[Proposition 6.8]{KR20}.\hfill$\qed$
	\end{proof}

	The notion of generalized time derivative from Definition~\ref{4.1} admits an appropriate analogue of Proposition~\ref{2.2} 
	in the framework of $\bscal{W}_{\bfvarepsilon}^{q,p}(Q_T)$.
	
	\begin{prop}\label{4.6}
		Let $\Omega\subseteq\setR^d$, $d\ge 2$, be a bounded Lipschitz domain and $q,p\in \mathcal{P}^{\log}(Q_T)$ with $q^-,p^->1$, such that $X^{q,p}_-\hookrightarrow Y$.
		Moreover, let $\bscal{Y}^0(Q_T):=C^0(\overline{I},Y)$. Then, it holds:
		\begin{description}[{(ii)}]
			\item[(i)] Each function $\bsu\in \bscal{W}_{\bfvarepsilon}^{q,p}(Q_T)$ (defined almost everywhere) possesses a unique representation $\bsu_{c}\in \bscal{Y}^0(Q_T)$, and the resulting mapping $(\cdot)_{c}:\bscal{W}_{\bfvarepsilon}^{q,p}(Q_T)\to
			\bscal{Y}^0(Q_T)$ is an embedding.
			\item[(ii)] For every $\bsu,\bsv\in \bscal{W}_{\bfvarepsilon}^{q,p}(Q_T)$ and
			$t,t'\in \overline{I}$ with $t'\leq t$ there holds
			\begin{align*}
			\int_{t'}^{t}{\left\langle
				\frac{\bfd\bsu}{\mathbf{dt}}(s),\bsv(s)\right\rangle_{X^{q,p}_{\bfvarepsilon}(s)}\!\!ds}
			=\left[(\bsu_c(s), 
			\bsv_c(s))_Y\right]^{s=t}_{s=t'}-\int_{t'}^{t}{\left\langle
				\frac{\bfd\bsv}{\mathbf{dt}}(s),\bsu(s)\right\rangle_{X^{q,p}_{\bfvarepsilon}(s)}\!\!ds}.
			\end{align*}
		\end{description}  
	\end{prop}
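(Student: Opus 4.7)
The plan is to prove the variable-exponent analogue of Lions' classical integration-by-parts formula (cf. Proposition~\ref{2.2}), adapted to the space $\bscal{W}^{q,p}_{\bfvarepsilon}(Q_T)$. The central technique is a time-mollification argument: if one can approximate $\bsu \in \bscal{W}^{q,p}_{\bfvarepsilon}(Q_T)$ by a sequence $(\bsu^\varepsilon)$ of maps that are smooth in $t$ with values in $X^{q,p}_+$ and converge to $\bsu$ in $\bscal{W}^{q,p}_{\bfvarepsilon}(Q_T)$, then both the continuous representation (i) and the formula (ii) follow by passage to the limit.

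Concretely, I would first apply Proposition~\ref{3.4.1} to reduce by density to functions already smooth in $(t,x)$, then extend $\bsu$ to a neighborhood of $\overline{I}$ (via reflection together with a smooth cutoff, so as to preserve boundary values) and convolve in $t$ with a symmetric mollifier $\rho_\varepsilon$. For the resulting $\bsu^\varepsilon, \bsv^\varepsilon \in C^\infty(\overline{I}, X^{q,p}_+)$, the identity
\[
\int_{t'}^{t}\! \left\langle \tfrac{d\bsu^\varepsilon}{dt}(s), \bsv^\varepsilon(s) \right\rangle_{X^{q,p}_{\bfvarepsilon}(s)}\! ds = \bigl[(\bsu^\varepsilon(s), \bsv^\varepsilon(s))_Y\bigr]_{s=t'}^{s=t} - \int_{t'}^{t}\! \left\langle \tfrac{d\bsv^\varepsilon}{dt}(s), \bsu^\varepsilon(s) \right\rangle_{X^{q,p}_{\bfvarepsilon}(s)}\! ds
\]
is an immediate consequence of the fundamental theorem of calculus applied to the $C^1$ scalar function $s \mapsto (\bsu^\varepsilon(s), \bsv^\varepsilon(s))_Y$, combined with Remark~\ref{3.9}(i) to rewrite the duality pairings. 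Specializing to $\bsu^\varepsilon = \bsv^\varepsilon$ and averaging over $t' \in I$ yields a uniform bound $\|\bsu^\varepsilon\|_{C^0(\overline{I}, Y)} \leq C\|\bsu^\varepsilon\|_{\bscal{W}^{q,p}_{\bfvarepsilon}(Q_T)}$. Together with $\bsu^\varepsilon \to \bsu$ in $\bscal{W}^{q,p}_{\bfvarepsilon}(Q_T)$, this forces $(\bsu^\varepsilon)$ to be Cauchy in $C^0(\overline{I}, Y)$; its limit is the desired continuous representative $\bsu_c \in \bscal{Y}^0(Q_T)$, and boundedness of the mapping $\bsu \mapsto \bsu_c$ follows from the same estimate. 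The formula (ii) is then obtained by passing to the limit $\varepsilon \to 0$ in the mollified identity, using that $\bsu^\varepsilon(t) \to \bsu_c(t)$ in $Y$ for every $t \in \overline{I}$ and that Proposition~\ref{4.5}(ii) guarantees convergence of the duality pairings under integration.

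The principal obstacle is to verify that the time-mollification procedure respects the variable-exponent structure: one must show that $\bsu^\varepsilon(t) \in X^{q,p}_{\bfvarepsilon}(t)$ for a.e. $t \in I$, and that $\bsu^\varepsilon \to \bsu$ in $\bscal{X}^{q,p}_{\bfvarepsilon}(Q_T)$ together with $\tfrac{d\bsu^\varepsilon}{dt} \to \tfrac{\bfd\bsu}{\bfd\bft}$ in $\bscal{X}^{q,p}_{\bfvarepsilon}(Q_T)^*$. Because $X^{q,p}_{\bfvarepsilon}(t)$ depends on $t$ through the exponents $q(t,\cdot), p(t,\cdot)$, convolution in time produces a function whose spatial integrability corresponds to a local $t$-average of these exponents, and membership in $X^{q,p}_{\bfvarepsilon}(t)$ for each fixed $t$ requires quantitative continuity of $q,p$ in $t$. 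This is precisely where the hypothesis $q, p \in \mathcal{P}^{\log}(Q_T)$ enters, providing the classical convolution bounds and modular estimates in variable-exponent Lebesgue spaces, as well as the embedding $X^{q,p}_+ \embedding X^{q,p}_{\bfvarepsilon}(t)$ needed to locate $\bsu^\varepsilon(t)$ in the correct time-slice space. Boundary effects at $t = 0, T$, generated by the reflection/cutoff extension, are controlled via the continuous embedding $X^{q,p}_+ \embedding Y$ (a consequence of $X^{q,p}_- \embedding Y$ on the bounded domain $\Omega$) and an additional localization in time.
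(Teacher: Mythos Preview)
The paper does not actually prove this proposition; it simply cites \cite[Proposition~6.24]{KR20}. Your outline follows the standard time-mollification strategy that underlies such results, and the overall shape---extend by reflection, mollify in $t$, obtain the Cauchy property in $C^0(\overline{I},Y)$ from the mollified integration-by-parts identity, pass to the limit---is correct.

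One point deserves care. You invoke Proposition~\ref{3.4.1} to ``reduce by density to functions already smooth in $(t,x)$'', but that proposition gives density of $C_0^\infty(Q_T)^d$ in $\bscal{X}^{q,p}_{\bfvarepsilon}(Q_T)$, not in $\bscal{W}^{q,p}_{\bfvarepsilon}(Q_T)$; in particular, functions in $C_0^\infty(Q_T)^d$ vanish at $t=0,T$ and cannot recover the traces $\bsu_c(0),\bsu_c(T)$. So the density step as stated does not justify working with smooth $\bsu$ from the outset. The remedy is the one you already sketch afterward: mollify the given $\bsu\in\bscal{W}^{q,p}_{\bfvarepsilon}(Q_T)$ directly (after extending to a neighborhood of $\overline{I}$) and prove $\bsu^\varepsilon\to\bsu$ in $\bscal{W}^{q,p}_{\bfvarepsilon}(Q_T)$ by hand. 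The delicate part, as you correctly isolate, is that the time-slice spaces $X^{q,p}_{\bfvarepsilon}(t)$ vary with $t$, so $\bsu^\varepsilon(t)$ is an integral average of elements from \emph{different} spaces and need not lie in $X^{q,p}_+$; establishing $\bsu^\varepsilon\in\bscal{X}^{q,p}_{\bfvarepsilon}(Q_T)$ with the claimed convergence genuinely uses the $\log$-H\"older continuity of $q,p$ (this is precisely what \cite{KR20} works out). With that caveat, your plan is sound.
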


	\begin{proof}
		See~\cite[Proposition 6.24]{KR20}.\hfill$\qed$
	\end{proof}
	
	\newpage
	\section{Embedding theorems}
	\label{sec:5}
	
	This section is concerned with the proof of fundamental embedding and compactness theorems for variable exponent Bochner--Lebesgue spaces with a symmetric gradient structure. 
	In order to get an appreciation, where the difficulties of a proof of such embedding theorems are hidden in, let us resurvey  Poincar\'e's inequality in the context of variable~exponent~spaces.
	
	\begin{prop}[Poincar\'e's inequality]\label{5.1}
		Let $\Omega\subseteq \setR^d$, $d\ge 2$, be a bounded domain and ${p\in \mathcal{P}^{\infty}(\Omega)\cap C^0(\overline{\Omega})}$. Then, there exists a constant $c>0$, such that for every ${\bfu\in X^{p(\cdot),p(\cdot)}_{\nabla}(\Omega)^d}$
		\begin{align*}
		\|\bfu\|_{L^{p(\cdot)}(\Omega)^d}\leq c\|\nb\bfu\|_{L^{p(\cdot)}(\Omega)^{d\times d}}.
		\end{align*}
	\end{prop}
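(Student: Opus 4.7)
The plan is to reduce, via density and linearity, to a scalar statement on smooth compactly supported functions, and then combine the fundamental theorem of calculus along one coordinate direction with a slice-wise variable-exponent Hölder estimate.

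First, since $X^{p(\cdot),p(\cdot)}_{\nabla}(\Omega)$ is by definition the closure of $C_0^\infty(\Omega)$ in $\widehat{X}^{p(\cdot),p(\cdot)}_{\nabla}(\Omega)$, and both sides of the asserted inequality are continuous with respect to $\|\cdot\|_{\widehat{X}^{p(\cdot),p(\cdot)}_{\nabla}(\Omega)}$, it suffices to prove the inequality for $\bfu \in C_0^\infty(\Omega)^d$. Moreover, using $\|\bfu\|_{L^{p(\cdot)}(\Omega)^d} \leq \sum_{i=1}^d\|u_i\|_{L^{p(\cdot)}(\Omega)}$ together with $\|\nabla u_i\|_{L^{p(\cdot)}(\Omega)^d} \leq \|\nabla \bfu\|_{L^{p(\cdot)}(\Omega)^{d\times d}}$, it is enough to establish the scalar analogue: for every $u \in C_0^\infty(\Omega)$, $\|u\|_{L^{p(\cdot)}(\Omega)} \leq c\|\nabla u\|_{L^{p(\cdot)}(\Omega)^d}$.

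For such a $u$, I would extend it by zero to $\setR^d$ and choose $R>0$ with $\Omega \subseteq (-R,R)\times \setR^{d-1}$. By the fundamental theorem of calculus,
$$|u(x_1,\bar{x})| \leq \int_{-R}^{R}|\partial_1 u(t,\bar{x})|\chi_\Omega(t,\bar{x})\,dt \quad\text{ for every }(x_1,\bar{x})\in \Omega.$$
Applying Hölder's inequality \eqref{hoelder} slice-wise on $(-R,R)$ with exponent $p(\cdot,\bar{x})$, and using that $\|\chi_{(-R,R)}\|_{L^{p'(\cdot,\bar{x})}((-R,R))}$ is bounded uniformly in $\bar{x}$ by a constant $K=K(R,p^-,p^+)$ (since $p'\in [(p^+)',(p^-)']$ and the interval has finite length $2R$), one obtains the slice-wise pointwise bound
$$|u(x_1,\bar{x})| \leq 2K\,\|\partial_1 u(\cdot,\bar{x})\chi_\Omega(\cdot,\bar{x})\|_{L^{p(\cdot,\bar{x})}(\setR)}.$$

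The final—and decisive—step is to pass from this slice-wise bound to a global $L^{p(\cdot)}(\Omega)$ estimate. Since the right-hand side is independent of $x_1$ while the exponent $p(x_1,\bar{x})$ on the left still depends on $x_1$, I would work with the modular $\rho_{p(\cdot)}(u)$: after raising to the power $p(x_1,\bar{x})$ and exploiting the elementary inequality $a^{p(x_1,\bar{x})}\leq a^{p^-}+a^{p^+}$ for $a\geq 0$ (which absorbs the $x_1$-dependence at the cost of a controlled constant), integrate in $x_1$ over $(-R,R)$ and then in $\bar{x}$, and use Fubini together with the modular–Luxemburg-norm correspondence to translate the resulting modular inequality into the norm inequality $\|u\|_{L^{p(\cdot)}(\Omega)}\leq c\|\nabla u\|_{L^{p(\cdot)}(\Omega)^d}$. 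The main obstacle lies precisely here, namely in this conversion from a slice-wise pointwise bound to an $L^{p(\cdot)}$-norm bound when the exponent depends on the very variable that has been integrated out; the boundedness of $p$ (the role played by $p \in \mathcal{P}^\infty(\Omega)$) is what makes this passage admissible, while the continuity hypothesis together with Proposition~\ref{2.13.1} serves to treat $p$ as a continuous exponent on all of $\setR^d$, rendering the slicing argument above meaningful outside $\overline{\Omega}$.
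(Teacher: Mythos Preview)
The paper does not give its own proof of this proposition; it simply cites \cite[Theorem~3.10]{KR91} and \cite[Theorem~8.2.18.]{DHHR11}. Your attempt at a direct argument has a genuine gap at precisely the step you yourself flag as the ``main obstacle.''

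The reduction to $u\in C_0^\infty(\Omega)$ and the slice-wise pointwise bound $|u(x_1,\bar{x})|\leq 2K\,g(\bar{x})$, with $g(\bar{x}):=\|\partial_1 u(\cdot,\bar{x})\chi_\Omega(\cdot,\bar{x})\|_{L^{p(\cdot,\bar{x})}(\setR)}$, are fine. The conversion to a global bound is not. Your inequality $a^{p(x_1,\bar{x})}\le a^{p^-}+a^{p^+}$ yields
\[
\rho_{p(\cdot)}(u)\le 2R\int\big[(2Kg(\bar{x}))^{p^-}+(2Kg(\bar{x}))^{p^+}\big]\,d\bar{x},
\]
so you need $\int g(\bar{x})^{p^+}\,d\bar{x}$ controlled by $\|\nabla u\|_{L^{p(\cdot)}}$. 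Normalising $\|\nabla u\|_{L^{p(\cdot)}}=1$ gives, by Fubini for the \emph{modular}, only $\int h(\bar{x})\,d\bar{x}\le 1$ for the slice modulars $h(\bar{x}):=\rho_{p(\cdot,\bar{x})}(\partial_1 u(\cdot,\bar{x}))$. On $\{g\ge 1\}$ the norm--modular relation yields merely $g(\bar{x})^{p^+}\le h(\bar{x})^{p^+/p^-}$, and since $p^+/p^->1$ the functional $h\mapsto \int h^{p^+/p^-}$ is unbounded on $\{h\ge 0:\int h\le 1\}$ (concentrate $h$ on sets of small measure). The right-hand side above is therefore not uniformly bounded, and no modular--Luxemburg-norm correspondence repairs this. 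What you are running into is the well-known failure of Fubini-type decompositions for variable-exponent Luxemburg norms: one cannot in general reconstruct $\|f\|_{L^{p(\cdot)}(\Omega)}$ from the family of slice norms $\|f(\cdot,\bar{x})\|_{L^{p(\cdot,\bar{x})}}$.

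The proofs in the cited references do not slice in one direction. For merely continuous $p$ (no $\log$-H\"older assumption, so maximal-function or Riesz-potential bounds are unavailable) the continuity is used far more substantially than for mere extension: one covers $\overline{\Omega}$ by finitely many balls on which $p$ oscillates so little that classical constant-exponent Sobolev embeddings apply locally, and either patches these together directly or first establishes the compact embedding of $X^{p(\cdot),p(\cdot)}_{\nabla}(\Omega)$ into $L^{p(\cdot)}(\Omega)$ (cf.\ Proposition~\ref{5.12}) and then deduces Poincar\'e by the standard contradiction argument. The localisation pattern in the paper's own Lemma~\ref{5.6} is in the same spirit.
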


	\begin{proof}
		See \cite[Theorem 3.10]{KR91} or \cite[Theorem 8.2.18.]{DHHR11}.\hfill$\qed$
	\end{proof}
	
	Proposition \ref{5.1} can be interpreted as a minimal embedding result, as it also states that every function $\bfu\in X^{1,p(\cdot)}_{\nabla}(\Omega)^d$  also satisfies ${\bfu\in X^{p(\cdot),p(\cdot)}_{\nabla}(\Omega)^d}$,  i.e., we~additionally~have~${\bfu\in L^{p(\cdot)}(\Omega)^d}$. This  straightforwardly follows from Proposition \ref{5.1} and the density of $C_0^\infty(\Omega)^d$ in $X^{1,p(\cdot)}_{\nabla}(\Omega)^d$.
	In the case of a constant exponent $p\in\left(1,\infty\right)$, Proposition~\ref{5.1} yields
	for~${\bsu\in L^1(I,X^{1,p}_{\nabla}(\Omega)^d)}$ with $\nb\bsu\in L^{p}(Q_T)^{d\times d}$ that $\bsu\in L^p(Q_T)^d$, i.e., $\bsu\in L^p(I,W^{1,p}_0(\Omega)^d)$.
	Moreover, Proposition~\ref{5.1} guarantees that the gradient norm $\|\nb\cdot\|_{L^{p(\cdot)}(\Omega)^{d\times d}}$, and if additionally $p\in \mathcal{P}^{\log}(\Omega)$~with~${p^->1}$, by virtue of Korn's inequality (cf.~\cite[Thm.~5.5]{DR03}), also $\|\bfvarepsilon(\cdot)\|_{L^{p(\cdot)}(\Omega)^{d\times d}}$,~defines~an~equivalent~norm\\[-2pt] on $X^{p(\cdot),p(\cdot)}_{\nabla}(\Omega)^d$. Our purpose is to gain an analogue of Proposition~\ref{5.1} for variable exponent Bochner--Lebesgue spaces. In this context, we introduce for $p\in \mathcal{P}^{\infty}(Q_T)$ the special space
	\begin{align}
	\bscal{X}^{\circ,p}_{\bfvarepsilon}(Q_T):=\bscal{X}^{p^-,p}_{\bfvarepsilon}(Q_T).\label{eq:W0}
	\end{align}
	If in addition $p^->1$, then 
	by means of Poincar\'e's and Korn's inequality with respect to the constant exponent ${p^-\in \left(1,\infty\right)}$ and \eqref{embed}, we obtain for every $\bsu\in\bscal{X}^{\circ,p}_{\bfvarepsilon}(Q_T)$ the inequality
	\begin{align}
	\|\bsu\|_{L^{p^-}(Q_T)^d}\leq c_{p^-}\|\bfvarepsilon(\bsu)\|_{L^{p^-}(Q_T)^{d\times d}}\leq c_{p^-}2(1+\vert Q_T\vert)\|\bfvarepsilon(\bsu)\|_{L^{p(\cdot,\cdot)}(Q_T)^{d\times d}},\label{eq:W1}
	\end{align}
	i.e., $\|\bfvarepsilon(\cdot)\|_{L^{p(\cdot,\cdot)}(Q_T)^{d\times d}}$ defines an equivalent norm on $\bscal{X}^{\circ,p}_{\bfvarepsilon}(Q_T)$. We emphasize that~inequality \eqref{eq:W1} already resembles an analogue of Proposition~\ref{5.1} for variable exponent Bochner--Lebesgue spaces, though it is much weaker, as we actually desire an inequality of type \eqref{eq:W1} in which the constant exponent $p^-\in \left(1,\infty\right)$ is replaced by the variable exponent $p\in \mathcal{P}^{\log}(Q_T)$, i.e., the embedding $\bscal{X}^{\circ,p}_{\bfvarepsilon}(Q_T)\embedding L^{p(\cdot,\cdot)}(Q_T)^d$. Regrettably, we cannot hope for such an inequality, even if the variable exponent is smooth and not depending on time. 
	
	\begin{rmk}[Invalidity of Poincar\'e's inequality on 	$\bscal{X}^{\circ,p}_{\bfvarepsilon}(Q_T)$]\label{5.2}
		Let $\Omega\subseteq \setR^d$, $d\ge 2$, be an arbitrary bounded  domain and $I:=\left(0,T\right)$, $T<\infty$, and $Q_T:=I\times \Omega$. Moreover, let
		\begin{align*}
		\bscal{F}_{\textup{Poin}}:=\big\{(\bfu,\nb \bfu)^\top\mid \bfu\in C_0^\infty(\Omega)^d\big\}\subseteq C_0^\infty(\Omega)^d\times C_0^\infty(\Omega)^{d\times d}.
		\end{align*}
		Let $\bfu\in C_0^\infty(\Omega)^d$ with $\bfu\equiv \bfa$ in $G$, where $G\subset\subset \Omega$ is a domain and $\bfa\in \setR^d\setminus\{\mathbf{0}\}$. Then, we have
		$(\bfu,\nb\bfu)^\top\in \bscal{F}_{\textup{Poin}}$ with $\nb\bfu\not\equiv\mathbf{0}$ and  $\textup{int}(\textup{supp}(\bfu))\setminus\textup{supp}(\nb\bfu)\neq \emptyset$. In consequence, according to \cite[Proposition 4.4]{KR20}, there exists  an exponent $p\in C^\infty(\setR^d)$ with $p^->1$, which does not admit a constant $c>0$, such that for every $\bfphi\in C_0^\infty(Q_T)^d$ there holds
		\begin{align*}
		\|\bfphi\|_{L^{p(\cdot)}(Q_T)^d}\leq c\|\nb\bfphi\|_{L^{p(\cdot)}(Q_T)^{d\times d}}.
		\end{align*}
		In addition, \cite[Proposition 4.4]{KR20} provides for every $\varphi\in L^{p^-}(I)\setminus L^{p^+}(I)$ that $\varphi\bfu\in L^{p^-}(Q_T)^d$ and $\varphi\nabla\bfu\in L^{p(\cdot)}(Q_T)^{d\times d}$, but $\varphi\bfu\notin L^{p(\cdot)}(Q_T)^d$, i.e., we have $\varphi\bfu\in \bscal{X}^{\circ,p}_{\bfvarepsilon}(Q_T)\setminus L^{p(\cdot)}(Q_T)^d$. Therefore, we have  $\bscal{X}^{\circ,p}_{\bfvarepsilon}(Q_T)\not\subseteq L^{p(\cdot)}(Q_T)^d$.
		
		The situation is illustrated in Figure~\ref{poincare} for $\Omega=B_{2.5}^2(0)$, $G=B_{0.6}^2(0)$, $\bfu=\bfa\eta\in C_0^\infty(\Omega)^2$, where  $\bfa=\mathbf{e}_1=(1,0)^\top\in \setR^2$ and $\eta=\chi_{B_1^2(0)}\ast \omega_\vep\in C_0^\infty(\Omega)$ for $\vep=0.4$. Here, $ \omega_\vep\in C_0^\infty(B_\vep^2(0))$ denotes the scaled standard mollifier, which is for every $x\in \setR^2$ given via
		$\omega_\vep(x):=\frac{1}{\vep^2}\omega(\frac{x}{\vep})$, where $\omega(x):=\exp\big(\frac{-1}{1-\vert x\vert^2}\big)$, if $x\in B_1^2(0)$, and $\omega(x):=0$, if $x\in B_1^2(0)^c$.\vspace*{-1cm}
	\end{rmk}
	\newpage
	\begin{figure}[ht]
		\centering
		\vspace*{-2mm}
		\hspace*{-5mm}\includegraphics[width=15.2cm]{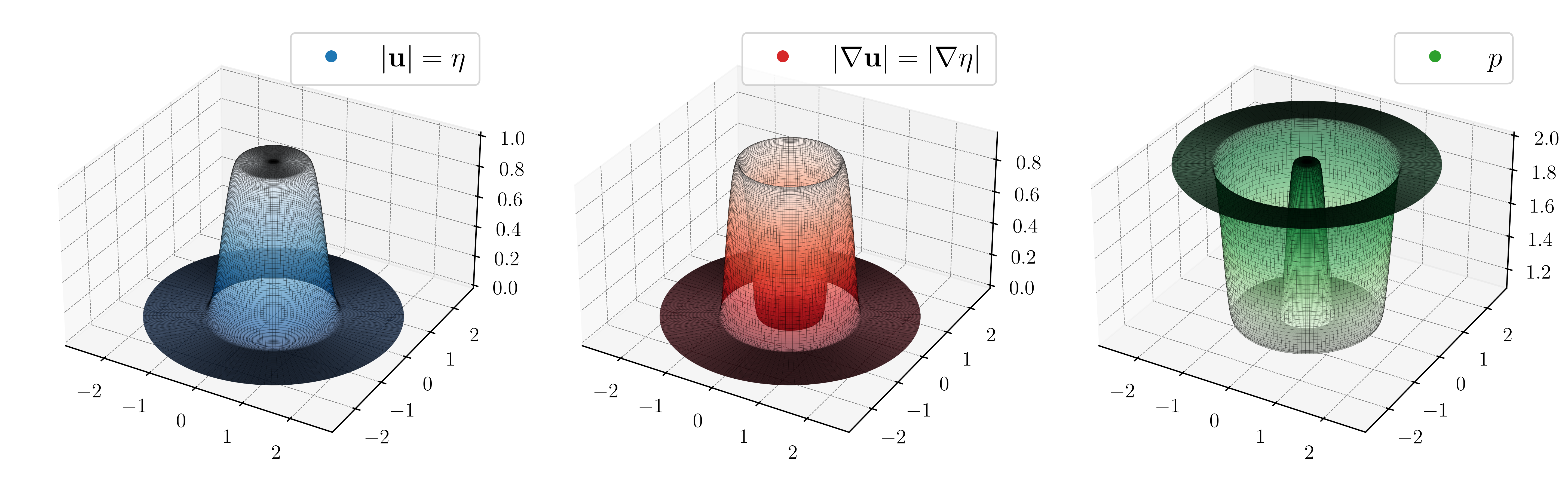}
		\caption{Plots of $\vert \bfu\vert =\eta\in C_0^\infty(\Omega)$ (blue/left), $\vert \nabla\bfu\vert =\vert\nabla\eta\vert\in C_0^\infty(\Omega)$ (red/middle) and $p\in C^\infty(\overline{\Omega})$ (green/right) constructed according to \cite[Proposition 4.4]{KR20} for $d=2$, ${p^-=1.1}$~and~${p^+=2}$.}
		\label{poincare}
	\end{figure}

	Fortunately, in unsteady problems, even with variable exponent structure, the appearance of a time derivative usually provides some additional regularity in time. In fact, one can expect a solution to lie additionally in the space
	\begin{align}
	\bscal{Y}^0(Q_T):=C^0(\overline{I},Y)\qquad\text{ or at least }\qquad\bscal{Y}^\infty(Q_T):=L^\infty(I,Y),\label{eq:c0y}
	\end{align}
	where $Y:=L^2(\Omega)^d$.
	Taking into account this additional regularity beforehand, we will manage to gain an embedding result, which provides the desired extension of Proposition~\ref{5.1} to the framework of variable exponent Bochner--Lebesgue spaces. An essential tool for the incorporation of the additional  $\bscal{Y}^\infty(Q_T)$--information is the following  Gagliardo--Nirenberg~interpolation~inequality.
	
	\begin{lem}[Gagliardo--Nirenberg interpolation inequality]\label{5.3}
		Let $\Omega\subseteq \setR^d$, $d\ge 2$, be a bounded Lipschitz domain and $s,r \in\left[1,\infty\right]$. Then, for every $\bfu\in L^s(\Omega)^d$ with $\nb \bfu\in L^r(\Omega)^{d\times d}$ it holds $\bfu\in L^q(\Omega)^d$~with
		\begin{align}
		\| \bfu\|_{L^q(\Omega)^d}\leq c_1\|\nb \bfu\|_{L^r(\Omega)^{d\times d}}^\theta\|\bfu\|_{L^s(\Omega)^d}^{1-\theta}+c_2\|\bfu\|_{L^s(\Omega)^d},\label{GN}
		\end{align}
		where $c_1=c_1(s,r,\Omega),c_2=c_2(s,r,\Omega)>0$ and $\frac{1}{q}=\theta(\frac{1}{r}-\frac{1}{d})+(1-\theta)\frac{1}{s}$
		for every $\theta\in \left[0,1\right]$, unless $1<r<\infty$ and $1-\frac{d}{r}\in \mathbb{N}\cup\{0\}$, in which \eqref{GN} only holds for $\theta<1$.
	\end{lem}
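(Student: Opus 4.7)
The plan is to reduce the statement on the bounded Lipschitz domain $\Omega$ to the analogous statement on all of $\setR^d$ via a Stein-type extension operator, and then to deduce the latter from the classical Sobolev embedding combined with Hölder's interpolation inequality. The argument is essentially the one in Nirenberg's original proof.

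First I would establish \eqref{GN} on $\setR^d$ in the main case $1 \leq r < d$. By density it suffices to consider $\bfu \in C_0^\infty(\setR^d)^d$. The classical Sobolev--Gagliardo--Nirenberg embedding gives
\begin{align*}
\|\bfu\|_{L^{r^*}(\setR^d)^d} \leq c(d,r)\,\|\nabla\bfu\|_{L^r(\setR^d)^{d\times d}}, \qquad r^*:=\tfrac{dr}{d-r}.
\end{align*}
A routine computation shows that the exponent relation $\tfrac{1}{q}=\theta(\tfrac{1}{r}-\tfrac{1}{d})+(1-\theta)\tfrac{1}{s}$ is equivalent to $\tfrac{1}{q}=\theta\,\tfrac{1}{r^*}+(1-\theta)\tfrac{1}{s}$, so Hölder's inequality with conjugate exponents $\tfrac{1}{\theta}$ and $\tfrac{1}{1-\theta}$ yields
\begin{align*}
\|\bfu\|_{L^q(\setR^d)^d} \leq \|\bfu\|_{L^{r^*}(\setR^d)^d}^{\theta}\,\|\bfu\|_{L^s(\setR^d)^d}^{1-\theta} \leq c(d,r)^\theta\,\|\nabla\bfu\|_{L^r(\setR^d)^{d\times d}}^{\theta}\,\|\bfu\|_{L^s(\setR^d)^d}^{1-\theta}.
\end{align*}
The boundary cases $r \geq d$ and $r = \infty$ are handled analogously after replacing the Sobolev embedding by Morrey's embedding or a localized version, and the borderline constellation $1<r<\infty$ with $1-d/r\in\setN\cup\{0\}$ is precisely the case where the Sobolev target fails to be $L^\infty$, which is why the statement excludes $\theta=1$ in this situation.

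Next I would transfer the whole-space inequality to $\Omega$. Since $\Omega$ is a bounded Lipschitz domain, there exists a linear, bounded extension operator $E:W^{1,r}(\Omega)^d\cap L^s(\Omega)^d \to W^{1,r}(\setR^d)^d\cap L^s(\setR^d)^d$ with $(E\bfu)|_\Omega = \bfu$ almost everywhere in $\Omega$. Applying the $\setR^d$-version to $E\bfu$ and restricting yields
\begin{align*}
\|\bfu\|_{L^q(\Omega)^d} \leq \|E\bfu\|_{L^q(\setR^d)^d} \leq c\,\bigl(\|\nabla\bfu\|_{L^r(\Omega)^{d\times d}}+\|\bfu\|_{L^r(\Omega)^d}\bigr)^{\theta}\,\|\bfu\|_{L^s(\Omega)^d}^{1-\theta}.
\end{align*}
The subadditivity $(a+b)^\theta \leq a^\theta+b^\theta$ separates the right-hand side, and the term $\|\bfu\|_{L^r(\Omega)^d}$ is absorbed into $\|\bfu\|_{L^s(\Omega)^d}$ using Hölder's inequality on the bounded domain $\Omega$ when $r\leq s$ (and via interpolation with $L^\infty$ otherwise). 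This produces precisely the additive correction $c_2\,\|\bfu\|_{L^s(\Omega)^d}$ appearing in \eqref{GN}.

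The main obstacle I expect is not the analytic content, which is entirely classical, but the careful bookkeeping of the admissible range of $\theta$, the identification of the exceptional constellation where $\theta=1$ must be excluded, and a clean treatment of the edge cases $r,s\in\{1,\infty\}$ together with the absorption step used to pass from the extended function to $\bfu$ on $\Omega$. In practice a published proof would likely just cite Nirenberg's original paper, and I would expect the authors here to do the same rather than reproduce the argument in full.
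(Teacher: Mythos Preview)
Your expectation in the last paragraph is exactly what happens: the paper does not prove this lemma at all but simply refers to \cite{Gag59}, \cite{Nir59} and \cite{Nir66}. Your sketch via Stein extension plus Sobolev embedding and H\"older interpolation on $\setR^d$ is the standard route and is essentially Nirenberg's argument; the only step that would need more care in a full write-up is the absorption of $\|\bfu\|_{L^r(\Omega)^d}$ when $r>s$, where ``interpolation with $L^\infty$'' is not quite the right mechanism---one rather interpolates $L^r$ between $L^s$ and $L^q$ and absorbs the resulting $L^q$-contribution into the left-hand side via Young's inequality.
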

	
	\begin{proof}
			See \cite{Gag59}, \cite{Nir59} or \cite{Nir66}.\hfill$\qed$
	\end{proof}
	
 	Because the Gagliardo--Nirenberg interpolation inequality only takes the full gradient into account, but we have solely control over the symmetric part of the gradient, we need to  switch locally from the full gradient to the symmetric gradient via Korn's second inequality.
	
	\begin{lem}[Korn's second inequality]\label{5.4}
		Let $\Omega\subseteq\setR^d$, $d\ge 2$, be a bounded Lipschitz domain and $s\in \left(1,\infty\right)$. Then, there exists a constant $c>0$, such that for every $\bfu\in L^s(\Omega)^d$ with $\boldsymbol{\varepsilon}(\mathbf{u})\in L^s(\Omega)^{d\times d}$  it holds $\bfu\in W^{1,s}(\Omega)^d$ with
		\begin{align*}
		\|\bfu\|_{W^{1,s}(\Omega)^d}\leq c\big(\|\bfu\|_{L^s(\Omega)^d}+\|\boldsymbol{\varepsilon}(\mathbf{u})\|_{L^s(\Omega)^{d\times d}}\big).
		\end{align*}
	\end{lem}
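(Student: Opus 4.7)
The plan is to reduce Korn's second inequality to Nečas's negative-norm theorem on Lipschitz domains via the classical algebraic identity expressing the Hessian of $\bfu$ through first derivatives of $\bfvarepsilon(\bfu)$. Since the result is completely classical, the paper itself will presumably just cite it (e.g., Nečas 1966 or Duvaut--Lions), but I sketch the strategy for a direct argument below.

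First I would record the distributional identity
$$\partial_i \partial_j u_k = \partial_j [\bfvarepsilon(\bfu)]_{ik} + \partial_i [\bfvarepsilon(\bfu)]_{jk} - \partial_k [\bfvarepsilon(\bfu)]_{ij}\qquad\textup{in }\mathcal{D}'(\Omega),$$
which is valid for any $\bfu\in \mathcal{D}'(\Omega)^d$ and any indices $i,j,k\in\{1,\ldots,d\}$, and follows by a one-line calculation from $2[\bfvarepsilon(\bfu)]_{ij}=\partial_i u_j+\partial_j u_i$. Consequently, if $\bfvarepsilon(\bfu)\in L^s(\Omega)^{d\times d}$, then $\nabla(\partial_j u_k)\in W^{-1,s}(\Omega)^d$ with
$$\|\nabla(\partial_j u_k)\|_{W^{-1,s}(\Omega)^d}\leq c\,\|\bfvarepsilon(\bfu)\|_{L^s(\Omega)^{d\times d}}$$
for each pair $(j,k)$, the constant depending only on $d$.

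Next I would invoke Nečas's negative-norm lemma on the bounded Lipschitz domain $\Omega$: for $s\in(1,\infty)$, any $T\in\mathcal{D}'(\Omega)$ with $T\in W^{-1,s}(\Omega)$ and $\nabla T\in W^{-1,s}(\Omega)^d$ actually satisfies $T\in L^s(\Omega)$ with
$$\|T\|_{L^s(\Omega)}\leq c\bigl(\|T\|_{W^{-1,s}(\Omega)}+\|\nabla T\|_{W^{-1,s}(\Omega)^d}\bigr),$$
where $c=c(\Omega,s)>0$. Applied to $T=\partial_j u_k$, and using $\bfu\in L^s(\Omega)^d$ to estimate $\|T\|_{W^{-1,s}(\Omega)}\leq \|\bfu\|_{L^s(\Omega)^d}$ trivially, this produces $\partial_j u_k\in L^s(\Omega)$ together with the bound
$$\|\partial_j u_k\|_{L^s(\Omega)}\leq c\bigl(\|\bfu\|_{L^s(\Omega)^d}+\|\bfvarepsilon(\bfu)\|_{L^s(\Omega)^{d\times d}}\bigr).$$
Summation over $(j,k)$ and combination with the given $L^s$-bound on $\bfu$ yields the full $W^{1,s}$-estimate claimed in the lemma.

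The main obstacle is Nečas's negative-norm lemma itself, whose proof for $1<s<\infty$ on a general bounded Lipschitz domain is non-trivial: one locally flattens the boundary, extends the distribution across a smoothened half-space, and applies Calderón--Zygmund type estimates to the Newtonian potential of $\nabla T$. Once this ingredient is available, the remainder of the argument reduces to the short distributional manipulation sketched above, and no additional regularity on $\bfu$ beyond the two hypotheses of the lemma is required.
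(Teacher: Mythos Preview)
Your anticipation was exactly right: the paper does not prove this lemma at all but simply cites \cite[Lem.~3]{Fu94}, \cite[Thm.~1.10]{MNRR96}, \cite[Thm.~5.17]{DRS10} and \cite[Thm.~14.3.23]{DHHR11}. Your sketch via the Hessian identity and Ne\v{c}as's negative-norm lemma is correct and is in fact the classical route underlying several of those references, so nothing further is needed.
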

	
	\begin{proof}
		See \cite[Lem. 3]{Fu94}, \cite[Thm. 1.10]{MNRR96}, \cite[Thm. 5.17]{DRS10} or \cite[Thm. 14.3.23]{DHHR11}.\hfill$\qed$
	\end{proof}
	
	By means of Lemma~\ref{5.3} and Lemma~\ref{5.4} we are able to prove the main result of this section.
	
	\begin{prop}[Poincar\'e's inequality for $\bscal{X}^{2,p}_{\bfvarepsilon}(Q_T)\cap\bscal{Y}^\infty(Q_T)$]\label{5.5}
		Let $\Omega\subseteq \setR^d$, $d\ge 2$, be a bounded domain, $I:=\left(0,T\right)$, $T<\infty$, $Q_T:=I\times \Omega$ and $p\in C^0(\overline{Q_T})$ with $p^->1$. Then,  there exists a constant $c>0$, such that for every $\bsu\in \bscal{X}^{2,p}_{\bfvarepsilon}(Q_T)\cap\bscal{Y}^\infty(Q_T)$~it~holds~$\bsu\in L^{p(\cdot,\cdot)}(Q_T)^d$~with
		\begin{align}
		\|\bsu\|_{L^{p(\cdot,\cdot)}(Q_T)^d}\leq c\big(\|\bfvarepsilon(\bsu)\|_{L^{p(\cdot,\cdot)}(Q_T)^{d\times d}}+\|\bsu\|_{\bscal{Y}^\infty(Q_T)}\big),\label{eq:2.64.a}
		\end{align}
		i.e., there holds the embedding  $\bscal{X}^{2,p}_{\bfvarepsilon}(Q_T)\cap\bscal{Y}^\infty(Q_T)\embedding L^{p(\cdot,\cdot)}(Q_T)^d$.
	\end{prop}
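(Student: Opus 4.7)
The plan is to localize the estimate to finitely many cylinders on which $p$ has arbitrarily small oscillation, and on each such cylinder combine Korn's second inequality (Lemma~\ref{5.4}) with the Gagliardo--Nirenberg interpolation inequality (Lemma~\ref{5.3}) with $s=2$. The $\bscal{Y}^\infty(Q_T)$-regularity is precisely the $L^2$-anchor in Gagliardo--Nirenberg, and is exactly the extra input that, by Remark~\ref{5.2}, makes such a Poincar\'e-type estimate possible despite being false for pure $\bscal{X}^{\circ,p}_{\bfvarepsilon}(Q_T)$.

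\textbf{Step 1 (localization).} Since $p\in C^0(\overline{Q_T})$ and $\overline{Q_T}$ is compact, $p$ is uniformly continuous. I fix $\delta>0$ so small that for every pair $p_k^-\leq p_k^+$ in $[p^-,p^+]$ with $p_k^+-p_k^-\leq\delta$ there exists $\theta\in(0,1)$ with
\begin{align*}
\tfrac{1}{p_k^+}=\theta\bigl(\tfrac{1}{p_k^-}-\tfrac{1}{d}\bigr)+(1-\theta)\tfrac{1}{2},
\end{align*}
i.e., $p_k^+$ lies strictly below the Sobolev exponent of $p_k^-$ whenever $p_k^-<d$. By compactness I then cover $\overline{Q_T}$ by finitely many open cylinders $Q_k:=I_k\times B_k$, $k=1,\dots,N$, on each of which $p_k^\pm:=\max/\min_{\overline{Q_k}}p$ satisfy $p_k^+-p_k^-\leq\delta$.

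\textbf{Step 2 (localized linear estimate).} Fix $k$. Exploiting that $\bsu(t)\in X^{2,p(t,\cdot)}_{\bfvarepsilon}(\Omega)$ lies in the closure of $C_0^\infty(\Omega)^d$ for a.e.\ $t\in I$, I extend $\bsu(t)$ by zero to a ball $B_R\supset\overline{\Omega}$; the symmetric gradient of the extension is just the zero extension of $\bfvarepsilon(\bsu(t))$. Combining Korn's second inequality (Lemma~\ref{5.4}) on the Lipschitz domain $B_R$ with Gagliardo--Nirenberg (Lemma~\ref{5.3}) applied with $s=2$, $r=p_k^-$, $q=p_k^+$, and then absorbing the crossed $L^{p_k^+}$-term into the left-hand side via Young's inequality, I arrive at
\begin{align*}
\|\bsu(t)\|_{L^{p_k^+}(B_k\cap\Omega)^d}\leq c\bigl(\|\bfvarepsilon(\bsu(t))\|_{L^{p_k^-}(\Omega)^{d\times d}}+\|\bsu(t)\|_Y\bigr)\qquad\text{for a.e. }t\in I.
\end{align*}

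\textbf{Step 3 (reassembly) and main obstacle.} Raising to the power $p_k^+$ and integrating in time over $I_k\cap I$, using $\|\bsu(t)\|_Y\leq\|\bsu\|_{\bscal{Y}^\infty(Q_T)}$ together with the embedding~\eqref{embed} twice---first in space to dominate $L^{p_k^-}(B_k\cap\Omega)$ by $L^{p(t,\cdot)}(B_k\cap\Omega)$ (valid since $p_k^-\leq p(t,x)$ on $Q_k$), and then in $(t,x)$ to pass from the constant exponent $L^{p_k^+}(Q_k\cap Q_T)^d$-norm to the variable-exponent $L^{p(\cdot,\cdot)}(Q_k\cap Q_T)^d$-norm---yields~\eqref{eq:2.64.a} locally on each $Q_k\cap Q_T$, and a final summation via the triangle inequality over the finite cover produces the global estimate. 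The delicate point throughout is to keep the bound genuinely \emph{linear} in $\|\bfvarepsilon(\bsu)\|_{L^{p(\cdot,\cdot)}(Q_T)^{d\times d}}$ and $\|\bsu\|_{\bscal{Y}^\infty(Q_T)}$ rather than picking up a spurious power; this is feasible precisely because the oscillation $\delta$ can be taken arbitrarily small, so that both the Young absorption in Step~2 and the norm/modular comparisons in Step~3 cost only harmless constants. A secondary subtlety is the legitimacy of the zero-extension at almost every $t\in I$, for which a continuous extension of $p$ to a neighborhood of $\overline{\Omega}$ via Proposition~\ref{2.13.1} is convenient.
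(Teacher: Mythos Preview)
Your overall strategy—localize to cylinders of small oscillation, combine Gagliardo--Nirenberg with Korn's second inequality, and reassemble—is exactly the route the paper takes. However, there are two problems with the execution.

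\textbf{Minor inconsistency.} In Step~2 you apply Korn and Gagliardo--Nirenberg on the large ball $B_R\supset\overline{\Omega}$, so the right-hand side of your displayed slice estimate carries $\|\bfvarepsilon(\bsu(t))\|_{L^{p_k^-}(\Omega)}$ over \emph{all} of $\Omega$. In Step~3 you then invoke~\eqref{embed} on $B_k\cap\Omega$, which does not control the full-$\Omega$ norm since $p_k^-$ may exceed $p(t,\cdot)$ outside $B_k$. The fix is easy: apply Korn and Gagliardo--Nirenberg on the ball $B_k$ itself (this is what the paper does), so the right-hand side is local from the start.

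\textbf{Main gap: time integration of a norm inequality.} Your Step~3 raises the slice inequality to the power $p_k^+$ and integrates in $t$. This produces on the right-hand side
\[
\int_{I_k}\|\bfvarepsilon(\bsu(t))\|_{L^{p_k^-}(B_k\cap\Omega)}^{p_k^+}\,dt
=\int_{I_k}\bigl(\rho_{p_k^-}(\bfvarepsilon(\bsu(t))\chi_{B_k})\bigr)^{p_k^+/p_k^-}\,dt,
\]
i.e.\ a \emph{superlinear} power of the spatial modular. This quantity is \emph{not} controlled by $\rho_{p(\cdot,\cdot)}(\bfvarepsilon(\bsu))=\int_I\rho_{p(t,\cdot)}(\bfvarepsilon(\bsu(t)))\,dt$ (or any power thereof): think of $g(t):=\rho_{p(t,\cdot)}(\bfvarepsilon(\bsu(t)))=n\chi_{[0,1/n]}$, for which $\int g=1$ but $\int g^{1+\epsilon}=n^\epsilon\to\infty$. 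Taking $\delta$ small makes $p_k^+/p_k^-$ close to $1$ but never equal to $1$, so the obstruction persists. In short, slice-wise \emph{norm} estimates do not integrate to a variable-exponent space--time estimate.

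The paper circumvents this by working with \emph{modulars} from the outset (Lemma~\ref{5.6}). The key is the sharper localization condition $p_{ij}^+<p_{ij}^-(1+\tfrac{2}{d})$, which guarantees $\theta_{ij}p_{ij}^+<p_{ij}^-$ in the Gagliardo--Nirenberg exponent. Young's inequality with exponent $\rho_{ij}=p_{ij}^-/(\theta_{ij}p_{ij}^+)>1$ then lands the symmetric-gradient contribution at exactly the power $p_{ij}^-$, i.e.\ as the modular $\rho_{p_{ij}^-}(\bfvarepsilon(\bfu)\chi_{B_j})$ with coefficient $\varepsilon$, which can be absorbed. The resulting slice estimate
\[
\rho_{p(t,\cdot)}(\bfu)\le c\bigl[1+\rho_{p(t,\cdot)}(\bfvarepsilon(\bfu))+\|\bfu\|_Y^{\gamma}\bigr]
\]
is linear in the $\bfvarepsilon$-modular and integrates cleanly in $t$ by Fubini, since $\int_I\rho_{p(t,\cdot)}(\cdot)\,dt=\rho_{p(\cdot,\cdot)}(\cdot)$. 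The norm inequality~\eqref{eq:2.64.a} then follows from the modular bound. Your argument is salvageable precisely by making this switch from norms to modulars in Steps~2--3.
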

	
	More precisely, Proposition~\ref{5.5} is basically  a consequence of the following stronger lemma.
	
	\begin{lem}\label{5.6}
		Let $\Omega\subseteq \setR^d$, $d\ge 2$, be a bounded domain, $I:=\left(0,T\right)$, $T<\infty$, ${Q_T:=I\times\Omega}$ and $p\in C^0(\overline{Q_T})$ with $p^->1$. Then, exist  constants $c:=c(p,\Omega,I),\gamma:=\gamma(p,\Omega,I)>0$ (both not depending on $t\in I$), such that for every $t\in I$ and $\bfu\in X^{2,p}_{\boldsymbol{\varepsilon}}(t)$ it holds $\bfu\in L^{p(t,\cdot)}(\Omega)^d$ with
		\begin{align}
		\rho_{p(t,\cdot)}(\bfu)\leq c\big[1+\rho_{p(t,\cdot)}(\bfvarepsilon(\bfu))+\|\bfu\|_Y^{\gamma}\big].\label{eq:2.64.b}
		\end{align}
	\end{lem}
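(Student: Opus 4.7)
The plan is to use uniform continuity of $p$ on the compactum $\overline{Q_T}$ to reduce the variable‑exponent estimate to finitely many nearly constant‑exponent estimates on small balls, and to combine the Gagliardo--Nirenberg interpolation (Lemma~\ref{5.3}) with Korn's inequality (Lemma~\ref{5.4}) on each ball, closed by a short bootstrap when $p^->2$.

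First, since $p\in C^0(\overline{Q_T})$ is uniformly continuous, I would fix a small oscillation $\alpha>0$ (to be determined in terms of $p^-$ and $d$) and choose $\delta>0$ so that $\vert p(t,x)-p(t,y)\vert<\alpha$ for $x,y\in\overline{\Omega}$ with $\vert x-y\vert<\delta$, uniformly in $t\in\overline{I}$. Cover $\overline{\Omega}$ by finitely many Lipschitz balls $B_k:=B_{\delta/2}(x_k)$, $k=1,\ldots,N$, with $N=N(p,\Omega)$. Setting $p_k^{\pm}(t):=(\sup/\inf)_{B_k\cap\overline{\Omega}}p(t,\cdot)$, one has $p_k^+(t)-p_k^-(t)\leq\alpha$ and $p^-\leq p_k^-(t)\leq p_k^+(t)\leq p^+$. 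Using the elementary pointwise bound $a^q\leq 1+a^r$ for $0\leq q\leq r$ and $a\geq 0$, decompose
\begin{equation*}
\rho_{p(t,\cdot)}(\bfu)\leq\sum_{k=1}^N\bigl(\vert B_k\vert+\|\bfu\|_{L^{p_k^+(t)}(B_k)}^{p_k^+(t)}\bigr),\qquad \|\bfvarepsilon(\bfu)\|_{L^{p_k^-(t)}(B_k)}^{p_k^-(t)}\leq \vert B_k\vert+\rho_{p(t,\cdot)}(\bfvarepsilon(\bfu)).
\end{equation*}

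Next, since $\bfu\in X^{2,p}_{\bfvarepsilon}(t)$ is a limit of $C_0^\infty(\Omega)^d$-functions, Korn's first inequality on $\setR^d$ applied to the approximants shows that the zero extension $\tilde\bfu$ belongs to $W^{1,p^-}(\setR^d)^d$. On each Lipschitz ball $B_k$, I would apply Lemma~\ref{5.4} with $s=p_k^-(t)>1$ and then Lemma~\ref{5.3} with $s=2$, $r=p_k^-(t)$, $q=p_k^+(t)$. The interpolation exponent $\theta_k(t)\in[0,1)$ is determined by $\frac{1}{p_k^+(t)}=\theta_k(t)\bigl(\frac{1}{p_k^-(t)}-\frac{1}{d}\bigr)+(1-\theta_k(t))\frac{1}{2}$, and a direct computation shows that the subcritical scaling condition $\theta_k(t)\,p_k^+(t)\leq p_k^-(t)$ is equivalent to $p_k^+(t)\leq\frac{d+2}{d}p_k^-(t)$, which holds once $\alpha\leq\frac{2p^-}{d}$. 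Raising the Gagliardo--Nirenberg estimate to the power $p_k^+(t)$ and applying Young's inequality then yields, with constants $C,\gamma_1>0$ independent of $t$ and $k$,
\begin{equation*}
\|\bfu\|_{L^{p_k^+(t)}(B_k)}^{p_k^+(t)}\leq C\bigl(1+\rho_{p(t,\cdot)}(\bfvarepsilon(\bfu))+\|\bfu\|_{L^{p_k^-(t)}(B_k)}^{p_k^-(t)}+\|\bfu\|_Y^{\gamma_1}\bigr).
\end{equation*}

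The main obstacle is controlling the residual term $\|\bfu\|_{L^{p_k^-(t)}(B_k)}^{p_k^-(t)}$ when $p_k^-(t)>2$, since a priori $\bfu\in L^2(\Omega)$ only. If $p_k^-(t)\leq 2$, this term is dominated by $c\|\bfu\|_Y^{p_k^-(t)}$ and the argument closes immediately. Otherwise, I would iterate the local Korn--Sobolev step on each ball: the Sobolev gain $r\mapsto r^*=\frac{dr}{d-r}$ (or $+\infty$ for $r\geq d$) produces a strictly increasing sequence of exponents starting from $2$ which exceeds $p^+$ after a number $K=K(p^-,p^+,d)$ of iterations, and at each iteration the power of $\|\bfu\|_Y$ appearing on the right-hand side grows by a bounded factor, leading to a final exponent $\gamma=\gamma(p^-,p^+,d,\Omega)$. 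Summing the resulting estimate over $k$ (which absorbs the factor $N=N(p,\Omega)$ into the constant) then yields the asserted modular inequality, with constants $c,\gamma$ independent of $t$ thanks to the uniform continuity of $p$ and the continuous dependence of the Korn and Gagliardo--Nirenberg constants on a fixed ball $B_k$ with respect to the exponents, which vary in the compact subinterval $[p^-,p^+]\subset(1,\infty)$.
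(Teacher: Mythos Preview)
Your localization strategy and the combination of Gagliardo--Nirenberg with Korn's second inequality on each ball are exactly what the paper does; the oscillation condition $p_k^+(t)\leq\frac{d+2}{d}\,p_k^-(t)$ (equivalently $\theta_k(t)\,p_k^+(t)\leq p_k^-(t)$) is also the paper's key constraint. The gap is in how you close the estimate once the Korn residual $\|\bfu\|_{L^{p_k^-(t)}(B_k)}^{\,p_k^-(t)}$ appears. Your proposed Korn--Sobolev bootstrap will not preserve the \emph{linear} dependence on $\rho_{p(t,\cdot)}(\bfvarepsilon(\bfu))$ required by the lemma. Indeed, one bootstrap step gives
\[
\|\bfu\|_{L^{r^*}(B_k)}\leq c\bigl(\|\bfu\|_{L^{r}(B_k)}+\|\bfvarepsilon(\bfu)\|_{L^{r}(B_k)}\bigr),
\]
and since $\|\bfvarepsilon(\bfu)\|_{L^{r}(B_k)}^{\,r}\leq 1+\rho_{p(t,\cdot)}(\bfvarepsilon(\bfu))$, after raising to the power $p_k^-(t)$ you inevitably pick up $\bigl(1+\rho_{p(t,\cdot)}(\bfvarepsilon(\bfu))\bigr)^{p_k^-(t)/r}$ with $p_k^-(t)/r>1$. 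This superlinear exponent survives the final summation and destroys the statement: the lemma is used in Proposition~\ref{5.5} by integrating in $t$, and $\int_I \rho_{p(t,\cdot)}(\bfvarepsilon(\bsu(t)))^\beta\,dt$ is \emph{not} controlled by $\rho_{p(\cdot,\cdot)}(\bfvarepsilon(\bsu))$ for $\beta>1$.

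The paper's remedy is simpler and avoids any iteration: in your Young step, use the $\varepsilon$-Young inequality instead (this is admissible precisely because $\theta_k(t)\,p_k^+(t)<p_k^-(t)$ strictly---take $\alpha<\tfrac{2p^-}{d}$). Then the Korn residual $\rho_{p_k^-(t)}(\bfu\chi_{B_k})$ carries a small prefactor $c\varepsilon$. Estimating $\rho_{p_k^-(t)}(\bfu\chi_{B_k})\leq \vert B_k\vert+\rho_{p_k^+(t)}(\bfu\chi_{B_k})$ and choosing $\varepsilon$ small, you absorb this term into the left-hand side in one stroke, closing the argument with $\rho_{p(t,\cdot)}(\bfvarepsilon(\bfu))$ appearing linearly, as required.
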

	
	\begin{proof}
		We split the proof into two main steps.
		
		\textbf{Step 1:} 
		Let $\overline{p}\in C^0(\setR^{d+1})$ be an extension of $p\in C^0(\overline{Q_T})$ with $p^-\leq \overline{p}\leq p^+$ in~$\setR^{d+1}$ (cf.~Proposition~\ref{2.13.1}). We construct a finite open covering of $\overline{Q_T}$~by~cylinders~${(Q_{ij})_{i,j=1,...,m}}$,~${m\!\in\! \setN}$, where $Q_{ij}=I_i\times B_j$ for open intervals $I_i$, $i=1,...,m$, and open balls $B_j$, $j=1,...,m$, such that the local exponents 
		$p^+_{ij}:=\sup_{(s,y)^\top\in Q_{ij}}{\overline{p}(s,y)}$ and $p^-_{ij}:=\inf_{(s,y)^\top\in Q_{ij}}{\overline{p}(s,y)}$ satisfy
		\begin{align}
		p^+_{ij}<p^-_{ij}\Big(1+\frac{2}{d}\Big)\quad\text{ for all }i,j=1,...,m.\label{eq:5.6.1}
		\end{align}
		Since  $\overline{p}\in C^0(\setR^{d+1})$, i.e., $\overline{p}:\overline{Q_T}\to \left(1,\infty\right)$ is uniformly continuous, there exists a~radius~${\rho>0}$, such that for every
		$z:=(t,x)^\top\in \overline{Q_T}$ the local exponents $p_z^+:=\sup_{(s,y)^\top\in Q_z^\rho}{\overline{p}(s,y)}$ and ${p_z^-:=\inf_{(s,y)^\top\in Q_z^\rho}{\overline{p}(s,y)}}$, where $Q_z^\rho:=B^1_{\rho}(t)\times B^d_{\rho}(x)$, satisfy $p_z^+<p_z^-(1+\frac{2}{d})$.~On~the~other~hand, as $\overline{I}$ and $\overline{\Omega}$ are compact, there exist finitely many $t_i\in \overline{I}$, $i=1,...,m$,  $m\in \setN$, and $x_j\in \overline{\Omega}$, $j=1,...,m$, such that the invervals $I_i:=B_\rho^1(t_i)$, $i=1,...,m$, form an open covering of $\overline{I}$ and  the balls $B_j=B_{\rho}^d(x_i)$, $j=1,...,m$, form an open covering of $\overline{\Omega}$. Putting all together, the cylinders $Q_{ij}:=I_i\times B_j$, $i,j=1,...,m$, form an open covering of $\overline{Q_T}$, such that the local exponents $p^+_{ij}:=\sup_{(s,y)^\top\in Q_{ij}}{\overline{p}(s,y)}$ and $p^-_{ij}:=\inf_{(s,y)^\top\in Q_{ij}}{\overline{p}(s,y)}$  satisfy \eqref{eq:5.6.1}.
		
		\textbf{Step 2:} We fix an arbitrary $t\in I$. Then, there exists some index $i=1,...,m$,~such~that~$t\in I_i$. Due to the density of $C_0^\infty(\Omega)^d$ in $X^{2,p}_{\boldsymbol{\varepsilon}}(t)$, it suffices to consider $\bfu\in C_0^\infty(\Omega)^d$. Let us fix an arbitrary ball $B_j$  from step~\textbf{1} for some $j=1,...,m$. There are two possibilities. First, we consider the case $p_{ij}^+\leq 2$. Then, using that $a^{\overline{p}(t,x)}\leq (1+a)^2\leq 2+2a^2$ for all $a\ge 0$ and $x\in B_j$, and $\|\bfu\|_{L^2(B_j)^d}\leq\|\bfu\|_Y$,~we~obtain 
		\begin{align}
		\rho_{\overline{p}(t,\cdot)}(\bfu\chi_{B_j})\leq 2\vert B_j\vert +2\|\bfu\|_{L^2(B_j)^d}^2\leq 2\vert \Omega\vert +2\|\bfu\|_Y^2.\label{eq:5.main.2}
		\end{align}
		Next, assume that $p_{ij}^+> 2$. Then, exploiting that $p_{ij}^->p_{ij}^+\frac{d}{d+2}>\frac{2d}{d+2}$ (cf.~\eqref{eq:5.6.1}), i.e., $\frac{d-p_{ij}^-}{dp_{ij}^-}<\frac{1}{2}$,\\[-2pt] there holds
		\begin{align}
		0<\theta_{ij}:=\frac{\frac{1}{2}-\frac{1}{p_{ij}^+}}{\frac{1}{2}-\frac{d-p_{ij}^-}{dp_{ij}^-}}=\frac{p_{ij}^-}{p_{ij}^+}\frac{d(p_{ij}^+-2)}{d(p_{ij}^--2)+2p_{ij}^-}<\frac{p_{ij}^-}{p_{ij}^+}\frac{d(p_{ij}^-+\frac{2p_{ij}^-}{d}-2)}{d(p_{ij}^--2)+2p_{ij}^-}=\frac{p_{ij}^-}{p_{ij}^+}\leq 1.\label{eq:5.main.3}
		\end{align}
		By  Gagliardo--Nirenberg's interpolation inequality (cf.~Lemma~\ref{5.3}), because ${\theta_{ij}< 1}$~(cf.~\eqref{eq:5.main.3}),\\[-1pt] we obtain  constants $c_{ij}=c_{ij}(p^-_{ij},\Omega), \theta_{ij}:=\theta_{ij}(p^-_{ij},\Omega)>0$,~such~that
	\begin{align}
		\rho_{p_{ij}^+}(\bfu\chi_{B_j})\leq c_{ij}\|\nb\bfu\|^{p_{ij}^+\theta_{ij}}_{L^{p_{ij}^-}(B_j)^{d\times d}}\|\bfu\|^{p_{ij}^+(1-\theta_{ij})}_{L^2(B_j)^d}+c_{ij}\|\bfu\|^{p_{ij}^+}_{L^2(B_j)^d}.\label{eq:5.main.4}
	\end{align}
		Since $\pa B_j\in C^\infty$, Korn's second inequality (cf.~Lemma~\ref{5.4}) yields a  constant $c_{ij}=c_{ij}(p^-_{ij},\Omega)>0$, such that 
	\begin{align}
		\|\nb\bfu\|_{L^{p_{ij}^-}(B_j)^{d\times d}}\leq c_{ij}\Big[\|\bfu\|_{L^{p_{ij}^-}(B_j)^d}+\|\bfvarepsilon(\bfu)\|_{L^{p_{ij}^-}(B_j)^{d\times d}}\Big].\label{eq:5.main.5}
	\end{align}
		\newpage
		
		\hspace*{-5mm}By inserting \eqref{eq:5.main.5} in \eqref{eq:5.main.4}, also using anew that $\|\bfu\|_{L^2(B_j)^d}\leq\|\bfu\|_Y$, we  gain
		\begin{align}
		\begin{split}
		\rho_{p_{ij}^+}(\bfu\chi_{B_j})\leq c_{ij}\Big[\|\bfu\|_{L^{p_{ij}^-}(B_j)^d}+\|\bfvarepsilon(\bfu)\|_{L^{p_{ij}^-}(B_j)^{d\times d}}\Big]^{p_{ij}^+\theta_{ij}}\|\bfu\|_Y^{p_{ij}^+(1-\theta_{ij})}+c_{ij}\|\bfu\|_Y^{p_{ij}^+}.
		\end{split}\label{eq:5.main.6}
		\end{align}
		Since by construction $p_{ij}^+\theta_{ij}<p_{ij}^-$ (cf.~\eqref{eq:5.main.3}), we can apply the $\vep$--Young inequality with respect to the exponent $\rho_{ij}:=\frac{p_{ij}^-}{p_{ij}^+\theta_{ij}}>1$ with constant $c_{ij}(\vep):=\frac{(\rho_{ij}\vep)^{1-\rho_{ij}'}}{\rho_{ij}'}>0$~for~all~$\vep>0$~in~\eqref{eq:5.main.6}. In doing so, using  ${(a+b)^{p^-_{ij}}\!\leq\! 2^{p^+}(a^{p^-_{ij}}+b^{p^-_{ij}})}$ and $a^{p^-_{ij}}\leq 2^{p^+}(1+a^{p^+_{ij}})$ for all $a,b\ge 0$, we deduce 
		\begin{align}
		\rho_{p_{ij}^+}(\bfu\chi_{B_j})&\leq c_{ij}\vep\Big[\|\bfu\|_{L^{p_{ij}^-}(B_j)^d}+\|\bfvarepsilon(\bfu)\|_{L^{p_{ij}^-}(B_j)^{d\times d}}\Big]^{p_{ij}^-}+c_{ij}(\vep)\|\bfu\|_Y^{p_{ij}^+(1-\theta_{ij})\rho_{ij}'}+c_{ij}\|\bfu\|_Y^{p_{ij}^+}\notag\\[-5pt]&\leq c_{ij}\vep 2^{p^+}\big[\rho_{p_{ij}^-}(\bfu\chi_{B_j})+\rho_{p_{ij}^-}(\bfvarepsilon(\bfu)\chi_{B_j})\big]+c_{ij}(\vep)\|\bfu\|_Y^{p_{ij}^+(1-\theta_{ij})\rho_{ij}'}+c_{ij}\|\bfu\|_Y^{p_{ij}^+}
		\label{eq:5.main.7}\\&\leq c_{ij}\vep 2^{2p^+}\big[\vert B_j\vert+\rho_{p_{ij}^+}(\bfu\chi_{B_j})+\rho_{p_{ij}^-}(\bfvarepsilon(\bfu)\chi_{B_j})\big]+ c_{ij}(\vep)\|\bfu\|_Y^{p_{ij}^+(1-\theta_{ij})\rho_{ij}'}+c_{ij}\|\bfu\|_Y^{p_{ij}^+}.\notag
		\end{align}
		We set $c_0:=\max_{i,j=1,...,m}{c_{ij}}$ and $c(\vep):=\max_{i,j=1,...,m}{c_{ij}(\vep)}$. Then, choosing $\vep:=\frac{2^{-2p^+-1}}{c_0}>0$ and absorbing $c_{ij}\vep 2^{2p^+}\rho_{p_{ij}^+}(\bfu\chi_{B_j})\!\leq\! \frac{1}{2}\rho_{p_{ij}^+}(\bfu\chi_{B_j})$ in the left-hand side in \eqref{eq:5.main.7}, we infer~from~\eqref{eq:5.main.7}
		\begin{align}
		\rho_{p_{ij}^+}(\bfu\chi_{B_j})\leq 
		\vert \Omega\vert +\rho_{p_{ij}^-}(\bfvarepsilon(\bfu)\chi_{B_j})+2c(\vep)\|\bsu\|_Y^{p_{ij}^+(1-\theta_{ij})\rho_{ij}'}+2c_0\|\bsu\|_Y^{p_{ij}^+}.\label{eq:5.main.8}
		\end{align}
		If we set $\gamma\!:=\!\max_{i,j=1,...,m}{p_{ij}^+(1-\theta_{ij})\rho_{ij}'+p_{ij}^+}$ and use $\alpha^{p_{ij}^+(1-\theta_{ij})\rho_{ij}'}+\alpha^{p_{ij}^+}\!\leq\!  2^{\gamma+1}(1+\alpha^\gamma)$~for~${\alpha\!\ge \! 0}$, $\rho_{p(t,\cdot)}(\bfu\chi_{B_j\cap \Omega})\!=\!\rho_{\overline{p}(t,\cdot)}(\bfu\chi_{B_j})\!\leq\! 2^{p^+}(\vert\Omega\vert+\rho_{p_{ij}^+}(\bfu\chi_{B_j}))$,  $\rho_{p_{ij}^-}(\bfvarepsilon(\bfu)\chi_{B_j})
		\!\leq\! 2^{p^+}(\vert\Omega\vert +\rho_{\overline{p}(t,\cdot)}(\bfvarepsilon(\bfu)\chi_{B_j}))$ and 
		$\rho_{\overline{p}(t,\cdot)}(\bfvarepsilon(\bfu)\chi_{B_j})\leq \rho_{p(t,\cdot)}(\bfvarepsilon(\bfu))$ in \eqref{eq:5.main.8}, then we further deduce
		\begin{align}
			\begin{split}
		\rho_{p(t,\cdot)}(\bfu\chi_{B_j\cap \Omega})&\leq 2^{p^+}\big(\vert\Omega\vert+\rho_{p_{ij}^+}(\bfu\chi_{B_j})\big)\\[-3pt]&
		\leq 2^{p^+}\big(2\vert\Omega\vert+\rho_{p_{ij}^-}(\bfvarepsilon(\bfu)\chi_{B_j})+2c(\vep)\|\bfu\|_Y^{p_{ij}^+(1-\theta_{ij})\rho_{ij}'}+2c_0\|\bfu\|_Y^{p_{ij}^+}\big)
		\\&
		\leq 2^{p^+}\big(2\vert\Omega\vert +2^{p^+}\big(\vert\Omega\vert +\rho_{p(t,\cdot)}(\bfvarepsilon(\bfu))\big)+(c(\vep)+c_0)2^{\gamma+2}(1+\|\bfu\|_Y^{\gamma})\big).	\end{split}\label{eq:5.main.9}
		\end{align}
		Eventually, if we sum up the inequalities \eqref{eq:5.main.2} and \eqref{eq:5.main.9} with respect to $j=1,...,m$, then we conclude for every $t\in I$ the desired inequality \eqref{eq:2.64.b}.\hfill$\qed$
	\end{proof}

	\begin{proof} (of Proposition~\ref{5.5}) 
		Let $\bsu\in \bscal{X}^{2,p}_{\bfvarepsilon}(Q_T)\cap\bscal{Y}^\infty(Q_T)$, i.e., we have $\bsu(t)\in X^{2,p(t,\cdot)}_{\bfvarepsilon}(\Omega)$ for almost every $t\in I$.  Therefore, Lemma \ref{5.6} provides constants $c,\gamma>0$ (not depending on $t\in I$), such that for almost every $t\in I$ there holds $\bsu(t)\in L^{p(t,\cdot)}(\Omega)^d$ with
		\begin{align}
			\rho_{p(t,\cdot)}(\bsu(t))\leq c\big[1+\rho_{p(t,\cdot)}(\bfvarepsilon(\bsu)(t))+\|\bsu(t)\|_Y^\gamma\big].\label{eq:5.5.1}
		\end{align}
		By the Fubini--Tonelli theorem (cf.~\cite[§2., Satz 2.1,~(a)]{Els05}) the function~${(t\mapsto \rho_{p(t,\cdot)}(\bsu(t)))\!:\!I\!\to\! \mathbb{R}_{\ge 0}}$ is Lebesgue measurable. Moreover,
		due to  \eqref{eq:5.5.1} it has an $L^1(I)$--integrable majorant. Therefore, we have $(t\mapsto \rho_{p(t,\cdot)}(\bsu(t)))\in L^1(I)$ with
		\begin{align}
					\int_I{\int_{\Omega}{\vert \boldsymbol{u}(t,x)\vert^{p(t,x)}\,dx}\,dt}=\int_I{\rho_{p(t,\cdot)}(\bsu(t))\,dt}\leq c\big[1+\rho_{p(\cdot,\cdot)}(\bfvarepsilon(\bsu))+T\|\bsu\|_{\bscal{Y}^\infty(Q_T)}^\gamma\big].\label{eq:5.5.2}
		\end{align}
		The Fubini--Tonelli theorem (cf.~\cite[§2., Satz 2.1,~(c)]{Els05}) applied on \eqref{eq:5.5.2} yields ${\vert \boldsymbol{u}\vert^{p(\cdot,\cdot)}\in L^1(Q_T)}$, i.e., $\bsu\!\in\! L^{p(\cdot,\cdot)}(Q_T)^d$ with $\rho_{p(\cdot,\cdot)}(\bsu)\!\leq\! c\big[1+\rho_{p(\cdot,\cdot)}(\bfvarepsilon(\bsu))+T\|\bsu\|_{\bscal{Y}^\infty(Q_T)}^\gamma\big]$. In other words,~there~holds\\[-1pt] the embedding $\bscal{X}^{\circ,p}_{\bfvarepsilon}(Q_T)\cap\bscal{Y}^\infty(Q_T)\embedding L^{p(\cdot,\cdot)}(Q_T)^d$. In particular, there exists a constant $c>0$, such that the inequality \eqref{eq:2.64.a} holds, for which we also apply~the~inequality~\eqref{eq:W1}.\hfill$\qed$
	\end{proof}
	
	Proposition~\ref{5.5} can also be interpreted as a variable exponent version of the well-known parabolic interpolation inequality, which likewise exploits additional $\bscal{Y}^\infty(Q_T)$--information to improve parabolic embedding results via interpolation arguments.\vspace*{-0.5cm}
	\newpage
	
	\begin{defn}\label{5.7}
		For $d\!\in\!\setN$ and $p\!\in\! \left[1,\infty\right)$, the \textbf{parabolic interpolation exponent}~is~defined~by
		\begin{align}
		p_*:=\begin{rcases}\begin{dcases}
		p\frac{d+2}{d}&\textup{ if }p<d\\
		p+2&\textup{ if }p\ge d
		\end{dcases}\end{rcases}\in\left(1,\infty\right).\label{eq:pie}
		\end{align}
	\end{defn}
	
	\begin{prop}[Parabolic interpolation inequality]\label{5.8}
		Let $\Omega\subseteq \setR^d$, $d\ge 2$, be a bounded domain, $I:=\left(0,T\right)$, $T<\infty$, $Q_T:=I\times \Omega$ and $p\in \left(1,\infty\right)$ constant. 
		If we have $p\neq d$, then for every $s\in \left[1,p_*\right]$ there exist constants $c=c(d,p,s,\Omega),\gamma=\gamma(d,p,s)>0$, such that for  every ${\bsu\in L^p(I,W^{1,p}_0(\Omega)^d)\cap L^\infty(I,L^2(\Omega)^d)}$ it holds $\bsu\in L^s(Q_T)^d$ with 
		\begin{align}
		\|\bsu\|_{L^{s}(Q_T)^d}^s\leq c \|\bsu\|_{L^p(I,W^{1,p}_0(\Omega)^d)}^p\|\bsu\|_{L^\infty(I,L^2(\Omega)^d)}^\gamma.\label{eq:5.8.a}
		\end{align}
		If we have $p= d$, then for every $s\in\left[1,p^*\right)$ there exist constants $c=c(d,p,\Omega),\gamma=\gamma(d,p,s)>0$, such that \eqref{eq:5.8.a} holds, unless $p=d=2$,
		in which \eqref{eq:5.8.a} holds for every $s\in\left[1,p^*\right]$. In addition, if $\partial\Omega\in C^1$, then all statements hold true with $W^{1,p}_0(\Omega)^d$ replaced by $W^{1,p}(\Omega)^d$.
	\end{prop}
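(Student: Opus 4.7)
The plan is to apply the Gagliardo--Nirenberg interpolation inequality (Lemma~\ref{5.3}) pointwise in $t\in I$ and then integrate in time, exploiting the uniform bound $\|\bsu(t)\|_{L^2(\Omega)^d}\leq \|\bsu\|_{L^\infty(I,L^2(\Omega)^d)}$ to factor the $L^2$-norm out of the time integral.

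I would first treat the anchor case $s=p_*$. If $p<d$, I extend $\bsu(t)\in W^{1,p}_0(\Omega)^d$ by zero to $\mathbb{R}^d$ and apply the classical homogeneous Gagliardo--Nirenberg inequality on $\mathbb{R}^d$ with interpolation parameter $\theta=d/(d+2)$, obtaining
\begin{align*}
\|\bsu(t)\|_{L^{p_*}(\Omega)^d}\leq c\,\|\nabla\bsu(t)\|_{L^p(\Omega)^{d\times d}}^{d/(d+2)}\,\|\bsu(t)\|_{L^2(\Omega)^d}^{2/(d+2)}.
\end{align*}
The identities $p_*\cdot d/(d+2)=p$ and $p_*\cdot 2/(d+2)=2p/d$, together with raising to the $p_*$-th power and integrating in $t$, deliver the claim with $\gamma=2p/d$. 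If $p>d$, I instead use the Morrey embedding $W^{1,p}_0(\Omega)\hookrightarrow L^\infty(\Omega)$ combined with the elementary interpolation $\|\bsu(t)\|_{L^{p+2}}\leq \|\bsu(t)\|_{L^\infty}^{p/(p+2)}\|\bsu(t)\|_{L^2}^{2/(p+2)}$; the same time integration gives the claim with $\gamma=2$. For $p=d>2$, Sobolev only yields $W^{1,d}_0(\Omega)\hookrightarrow L^q(\Omega)$ for $q<\infty$, which is precisely why $s=p_*=d+2$ must be excluded: for each $s\in[2,p_*)$ the choice $q:=2d/(2+d-s)\in(2,\infty)$ together with $\|\bsu(t)\|_{L^s}\leq \|\bsu(t)\|_{L^q}^{d/s}\|\bsu(t)\|_{L^2}^{(s-d)/s}$ again recovers the inequality, now with $\gamma=s-d$. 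The borderline $p=d=2$, $s=p_*=4$ is Ladyzhenskaya's inequality $\|\bsu(t)\|_{L^4}^4\leq c\,\|\nabla\bsu(t)\|_{L^2}^2\|\bsu(t)\|_{L^2}^2$, a specific two-dimensional instance of the homogeneous Gagliardo--Nirenberg inequality.

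For arbitrary $s$ in the admissible range I interpolate between the anchor case and the trivial Poincar\'e-type bound $\|\bsu\|_{L^p(Q_T)}^p\leq c\,\|\nabla\bsu\|_{L^p(Q_T)}^p$. For $s\in[p,p_*]$, log-convexity yields $\|\bsu\|_{L^s(Q_T)}\leq \|\bsu\|_{L^p(Q_T)}^\alpha\|\bsu\|_{L^{p_*}(Q_T)}^{1-\alpha}$ with $1/s=\alpha/p+(1-\alpha)/p_*$, so that the algebraic identity $s\alpha+p\,s(1-\alpha)/p_*=p$ forces the exponent of $\|\nabla\bsu\|_{L^p(Q_T)}$ to balance exactly to $p$, with $\gamma$ determined as a linear function of $s$ through this interpolation identity. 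For $s\in[1,p)$, H\"older's inequality on the bounded cylinder $Q_T$ gives $\|\bsu\|_{L^s(Q_T)}\leq c_s\|\bsu\|_{L^p(Q_T)}$ and reduces the claim to the case $s=p$. The final statement, with $W^{1,p}(\Omega)$ in place of $W^{1,p}_0(\Omega)$ under $\partial\Omega\in C^1$, follows by composing with a Stein-type extension operator $E:W^{1,p}(\Omega)\to W^{1,p}(\mathbb{R}^d)$, which reduces matters to a function supported in some larger ball where the interior arguments apply verbatim.

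The principal subtlety is the treatment of the additive $c_2\|\bsu(t)\|_{L^2}$ contribution in Lemma~\ref{5.3}: a direct pointwise application of that estimate produces a residual term of the form $c\,|I|\,\|\bsu\|_{L^\infty(I,L^2(\Omega)^d)}^{p_*}$, which lacks the factor $\|\nabla\bsu\|_{L^p(Q_T)^{d\times d}}^p$ required by the product form of the claim and hence destroys its homogeneity. Exploiting the vanishing boundary trace of $W^{1,p}_0(\Omega)$ to pass to the genuinely homogeneous Gagliardo--Nirenberg inequality on $\mathbb{R}^d$ is what eliminates this residual cleanly; the remaining arithmetic with the interpolation exponents, together with the bookkeeping for the $p=d$ and $W^{1,p}$ versions, is then routine.
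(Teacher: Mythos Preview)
Your approach is essentially the same as the paper's: establish a pointwise-in-$t$ interpolation estimate and then integrate in time. The paper, however, bypasses the additive $c_2\|\bfu\|_{L^2}$ term from Lemma~\ref{5.3} in a slightly more direct way than your zero-extension device: for $p<d$ it simply combines the Sobolev embedding $W^{1,p}_0(\Omega)\hookrightarrow L^{p^*}(\Omega)$ with the log-convex interpolation $\|\bfu\|_{L^{p_*}}\leq \|\bfu\|_{L^{p^*}}^{1-\theta}\|\bfu\|_{L^2}^{\theta}$, $\theta=\tfrac{2}{d+2}$, which is already homogeneous; for $p=d$ it uses $W^{1,d}_0(\Omega)\hookrightarrow L^q(\Omega)$ with $q=\tfrac{2p}{2-s+p}$ and the same kind of interpolation; for $p>d$ it uses Morrey as you do; and for $p=d=2$ it cites Ladyzhenskaya's inequality. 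The paper then treats the remaining values of $s$ as routine, whereas you spell out the interpolation between $s=p$ and $s=p_*$ explicitly. One small wording issue in your $p=d>2$ case: the interpolation $\|\bfu\|_{L^s}\leq \|\bfu\|_{L^q}^{d/s}\|\bfu\|_{L^2}^{(s-d)/s}$ requires $s\geq d$, not $s\geq 2$; the smaller values of $s$ are of course covered by your later H\"older step, so this is only a matter of stating the correct range.
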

	
	\begin{proof}
		First, let $\bfu\in W^{1,p}_0(\Omega)^d\cap L^2(\Omega)^d$ be arbitrary. We distinguish between four cases:
		
		\textbf{(p<d)} We have $p_*=p\frac{d+2}{d}$. By interpolation with $\frac{1}{p_*}=\frac{1-\theta}{p^*}+\frac{\theta}{2}$, where~${\theta=\frac{2}{d+2}}$~and~${p^*=\frac{dp}{d-p}}$, also using the embedding $W^{1,p}_0(\Omega)^d\embedding L^{p^*}(\Omega)^d$, we obtain 
		\begin{align}
		\|\bfu\|_{L^{p_*}(\Omega)^d}^{p_*}\leq \|\bfu\|_{L^{p^*}(\Omega)^d}^{p_*\frac{d}{d+2}}\|\bfu\|_{L^2(\Omega)^d}^{p_*\frac{2}{d+2}}\leq c_p\|\bfu\|_{W^{1,p}_0(\Omega)^d}^p\|\bfu\|_{L^2(\Omega)^d}^{\frac{2p}{d}}.\label{eq:5.8.1}
		\end{align}
		
		\textbf{(p=d)} We have $p_*=p+2$. Let $s\in \left(p,p+2\right)$. By interpolation with $\frac{1}{s}=\frac{1-\theta}{q}+\frac{\theta}{2}$, where $\theta=1-\frac{p}{s}$ and $q=\frac{2p}{2-s+p}$, also using the embedding $W^{1,p}_0(\Omega)^d\embedding L^q(\Omega)^d$, we obtain
		\begin{align}
		\|\bfu\|_{L^s(\Omega)^d}^s\leq \|\bfu\|_{L^q(\Omega)^d}^p\|\bfu\|_{L^2(\Omega)^d}^{s-p}\leq c_p\|\bfu\|_{W^{1,p}_0(\Omega)^d}^p\|\bfu\|_{L^2(\Omega)^d}^{s-p}.\label{eq:5.8.2}
		\end{align}
		
		\textbf{(p=d=2)} We have $p_*=4$. Due to \cite[Proposition III.2.35 \& Remark III.2.17]{BF13}, there exists a constant $c_2>0$, such that
		\begin{align}
			\|\bfu\|_{L^4(\Omega)^d}^4\leq c_2\|\bfu\|_{W^{1,2}_0(\Omega)^d}^2\|\bfu\|_{L^2(\Omega)^d}^2.\label{eq:5.8.2.1}\vspace*{-1mm} 
		\end{align}
		
		\textbf{(p>d)} We have $p_*=p+2$. By the embedding $W^{1,p}_0(\Omega)^d\embedding L^\infty(\Omega)^d$~we~immediately~obtain
		\begin{align}
		\|\bfu\|_{L^{p_*}(\Omega)^d}^{p_*}\leq\|\bfu\|_{L^\infty(\Omega)}^p\|\bfu\|_{L^2(\Omega)^d}^2\leq c_p\|\bfu\|_{W^{1,p}_0(\Omega)^d}^p\|\bfu\|_{L^2(\Omega)^d}^2.\label{eq:5.8.3}
		\end{align}
		Thus, the assertion for $\bsu\in L^p(I,W^{1,p}_0(\Omega)^d)\cap L^\infty(I,L^2(\Omega)^d)$ easily follows from \eqref{eq:5.8.1}--\eqref{eq:5.8.3}, respectively, by integration  with respect to $t\in I$. \hfill$\qed$
	\end{proof}
	
	Needless to say, we should not be satisfied with Proposition~\ref{5.5}, as it provides no higher integrability than the given variable exponent $p\in \mathcal{P}^{\log}(Q_T)$, whereas Proposition~\ref{5.8} improves the integrability at least up to the constant interpolation exponent $p_*\ge \frac{d+2}{d}p> p$. Therefore, our next objective is to prove an analogue of Proposition~\ref{5.8} involving variable exponents. To this end, let us first introduce a variable exponent version of the parabolic interpolation exponent.
	
	\begin{defn}\label{5.9}
		Let $\Omega\subseteq \setR^d$, $d\ge 2$, be a bounded domain, $I:=\left(0,T\right)$,  $T<\infty$, $Q_T:=I\times \Omega$ and $p\in \mathcal{P}^{\infty}(Q_T)$. Let the mapping $(\cdot)_*:\left(1,\infty\right)\to \setR$ be defined through \eqref{eq:pie}.
		Then, we denote by $p_*:=(\cdot)_*\circ p\in \mathcal{P}^{\infty}(Q_T)$ the \textbf{variable parabolic interpolation exponent}.
	\end{defn}
	
	\begin{prop}[Interpolation inequality for $\bscal{X}^{2,p}_{\bfvarepsilon}(Q_T)\cap \bscal{Y}^\infty(Q_T)$]\label{5.10}
		Let $\Omega\subseteq \setR^d$, $d\ge 2$, be a bounded domain, $I:=\left(0,T\right)$, $T<\infty$, $Q_T:=I\times \Omega$ and $p\in C^0(\overline{Q_T})$ with $p^->1$. Then, for every $\vep\in (0,(p^-)_*-1]$ there exists a constant $c_\vep=c(\vep,d,p,\Omega)>0$, such that for every $\bsu\in \bscal{X}^{2,p}_{\bfvarepsilon}(Q_T)\cap \bscal{Y}^\infty(Q_T)$ it holds $\bsu\in L^{p_*(\cdot,\cdot)-\vep}(Q_T)^d$ with 
		\begin{align*}
		\|\bsu\|_{L^{p_*(\cdot,\cdot)-\vep}(Q_T)^d}\leq c_\vep\big( \|\bfvarepsilon(\bsu)\|_{L^{p(\cdot,\cdot)}(Q_T)^{d\times d}}+\|\bsu\|_{\bscal{Y}^\infty(Q_T)}\big),
		\end{align*}
		i.e., there holds the embedding  $\bscal{X}^{2,p}_{\bfvarepsilon}(Q_T)\cap \bscal{Y}^\infty(Q_T)\embedding L^{p_*(\cdot,\cdot)-\vep}(Q_T)^d$ for all $\vep\in (0,(p^-)_*-1]$.
	\end{prop}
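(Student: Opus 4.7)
My plan is to mirror the proof of Lemma~\ref{5.6} but, on each local cylinder of a suitable covering of $\overline{Q_T}$, replace the crude bound $a^{p(t,x)}\le 2+2a^2$ (used in the subcritical branch $p_{ij}^+\le 2$ of that proof) by the full strength of the constant-exponent parabolic interpolation inequality from Proposition~\ref{5.8}. This is the natural way to upgrade the target integrability from $p(\cdot,\cdot)$ to $p_*(\cdot,\cdot)-\vep$, and Proposition~\ref{5.5} can then be recycled at the end to absorb the lower-order $L^{p(\cdot,\cdot)}$-terms that appear.

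Concretely, given $\vep\in (0,(p^-)_*-1]$, I would first extend $p$ continuously to $\overline{p}\in C^0(\setR^{d+1})$ via Proposition~\ref{2.13.1}. Uniform continuity of $(\cdot)_*\circ\overline{p}$ on the compact set $\overline{Q_T}$ then allows me to choose a radius $\rho>0$ so small that, on any parabolic cylinder $Q_z^\rho=B_\rho^1(t)\times B_\rho^d(x)$ with $z\in\overline{Q_T}$, the oscillation of $(\cdot)_*\circ\overline{p}$ is strictly less than $\vep$. Covering $\overline{Q_T}$ by finitely many such cylinders $Q_{ij}=I_i\times B_j$, $i,j=1,\dots,m$, the corresponding local exponents $p_{ij}^\pm:=\sup/\inf_{Q_{ij}}\overline{p}$ satisfy $(p_{ij}^+)_*-(p_{ij}^-)_*<\vep$. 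Setting $s_{ij}:=(p_{ij}^+)_*-\vep$, the decisive inequality $s_{ij}<(p_{ij}^-)_*$ thus holds \emph{strictly}, so Proposition~\ref{5.8} applies on the smooth ball $B_j$ with the constant exponent $p_{ij}^-$ even in the borderline case $p_{ij}^-=d\neq 2$.

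Fixing $t\in I_i$ and applying Proposition~\ref{5.8} to $\bsu(t)$ on $B_j$, combined with Korn's second inequality (Lemma~\ref{5.4}) to replace $\nabla\bsu(t)$ by $\bfvarepsilon(\bsu)(t)$, and using $\|\bsu(t)\|_{L^2(B_j)^d}\le\|\bsu\|_{\bscal{Y}^\infty(Q_T)}$ uniformly in $t$, I obtain a $t$-pointwise bound for $\int_{B_j}|\bsu(t)|^{s_{ij}}\,dx$ in terms of $\|\bsu(t)\|_{L^{p_{ij}^-}(B_j)^d}$, $\|\bfvarepsilon(\bsu)(t)\|_{L^{p_{ij}^-}(B_j)^{d\times d}}$ and $\|\bsu\|_{\bscal{Y}^\infty(Q_T)}$. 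Integrating in $t\in I_i$, applying Fubini--Tonelli, and exploiting the elementary pointwise inequalities $a^{p_{ij}^-}\le 1+a^{p(t,x)}$ on $Q_{ij}$ (since $p_{ij}^-\le p(t,x)$ there) together with $|\bsu|^{p_*(t,x)-\vep}\le 1+|\bsu|^{s_{ij}}$ on $Q_{ij}$ (since $p_*(t,x)-\vep\le s_{ij}$ there), I transfer the local constant-exponent modulars into variable exponent modulars. Summing the resulting estimates over $i,j=1,\dots,m$ and invoking Proposition~\ref{5.5} to absorb $\rho_{p(\cdot,\cdot)}(\bsu)$ into $\rho_{p(\cdot,\cdot)}(\bfvarepsilon(\bsu))$ plus a power of $\|\bsu\|_{\bscal{Y}^\infty(Q_T)}$ yields a modular bound of the form
\begin{align*}
\rho_{p_*(\cdot,\cdot)-\vep}(\bsu)\le c_\vep\bigl(1+\rho_{p(\cdot,\cdot)}(\bfvarepsilon(\bsu))+\|\bsu\|_{\bscal{Y}^\infty(Q_T)}^{\widetilde\gamma}\bigr)
\end{align*}
for some $\widetilde\gamma=\widetilde\gamma(d,p,\Omega,\vep)>0$, from which the claimed norm inequality and embedding follow by the standard norm--modular equivalence in $L^{p_*(\cdot,\cdot)-\vep}(Q_T)^d$.

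The main technical obstacle is Step~1: the covering must be fine enough to guarantee $s_{ij}<(p_{ij}^-)_*$ \emph{strictly}, which is what allows Proposition~\ref{5.8} to be used uniformly across all local pieces, in particular when some $p_{ij}^-$ happens to coincide with the critical value $d$. A secondary, more book-keeping, difficulty will be collecting the different powers $\gamma_{ij}$ of $\|\bsu\|_{\bscal{Y}^\infty(Q_T)}$ that come out of the four cases of Proposition~\ref{5.8} into one controllable exponent $\widetilde\gamma$, and controlling the resulting constants uniformly in $t\in I$ via the compactness of $\overline{Q_T}$.
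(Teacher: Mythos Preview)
Your proposal is correct and follows essentially the same route as the paper: the paper isolates the time-slice estimate in Lemma~\ref{5.11} (covering with $(p_{ij}^+)_*-\tfrac{\vep}{2}<(p_{ij}^-)_*$, applying the slice inequalities \eqref{eq:5.8.1}--\eqref{eq:5.8.3} on each $B_j$ in the $W^{1,p_{ij}^-}(B_j)$-version, Korn's second inequality, and Lemma~\ref{5.6} to absorb $\rho_{p(t,\cdot)}(\bfu)$) and then integrates in $t$ via Fubini--Tonelli exactly as in the proof of Proposition~\ref{5.5}. Your choice $s_{ij}=(p_{ij}^+)_*-\vep$ with the oscillation condition $(p_{ij}^+)_*-(p_{ij}^-)_*<\vep$ is an equivalent parametrization of the same covering, and your use of Proposition~\ref{5.5} in place of the slice Lemma~\ref{5.6} is merely an organizational variant; the only point worth making explicit is that Proposition~\ref{5.8} must be invoked in its $W^{1,p}(B_j)$-form (valid since $\partial B_j\in C^\infty$), because $\bsu(t)|_{B_j}$ does not vanish on $\partial B_j$.
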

	
	Similar to Proposition~\ref{5.5}, Proposition~\ref{5.10} follows from the following stronger lemma.
	
	\begin{lem}\label{5.11}
		Let $\Omega\subseteq \setR^d$, $d\ge 2$, be a bounded domain, $I:=\left(0,T\right)$,  $T<\infty$,~${Q_T:=I\times\Omega}$,  \linebreak and $p\in C^0(\overline{Q_T})$ with $p^->1$. Then, for every $\vep\in (0,(p^-)_*-1]$ there exist constants \linebreak$c_{\vep}=c(\vep,d,p,\Omega),\gamma_{\vep}= \gamma(\vep,d,p,\Omega)>0$ (both not depending on $t\in I$),~such~that~for~every~${t\in I}$ and $\bfu\in X^{2,p}_{\bfvarepsilon}(t)$ it holds $\bfu\in L^{p_*(t,\cdot)-\vep}(\Omega)^d$ with 
		\begin{align}
		\rho_{p_*(t,\cdot)-\vep}(\bfu)\leq c_{\vep}\big[1+ \rho_{p(t,\cdot)}(\bfvarepsilon(\bfu))+\|\bfu\|_{Y}^{\gamma_{\vep}}\big]\big(1+\|\bfu\|_{Y}^{\gamma_{\vep}}\big).\label{eq:5.11.0}
		\end{align}
	\end{lem}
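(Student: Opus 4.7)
The proof will mirror that of Lemma~\ref{5.6}, with the target exponent raised from $p(t,\cdot)$ to $p_*(t,\cdot)-\vep$. Since $p\in C^0(\overline{Q_T})$ is uniformly continuous on the compact set $\overline{Q_T}$, and since the map $(\cdot)_*\in W^{1,\infty}(1,\infty)$ from Definition~\ref{5.7} is globally Lipschitz continuous on $(1,\infty)$ (the two analytic branches glue continuously at $s=d$, both carrying bounded derivatives), the composition $p_*=(\cdot)_*\circ p\in C^0(\overline{Q_T})$ is also uniformly continuous. Fixing $\vep\in (0,(p^-)_*-1]$, I would extend $p$ and $p_*$ to $\setR^{d+1}$ via Proposition~\ref{2.13.1} and then construct, exactly as in Step~1 of Lemma~\ref{5.6}, a finite open covering $(Q_{ij})_{i,j=1,\dots,m}$ of $\overline{Q_T}$ of the form $Q_{ij}=I_i\times B_j$ whose local exponents $p^{\pm}_{ij}$ fulfil both
\begin{align*}
p^+_{ij}<p^-_{ij}\bigl(1+\tfrac{2}{d}\bigr)\qquad\text{and}\qquad (p^+_{ij})_*-\vep<p^-_{ij}\,\tfrac{d+2}{d}.
\end{align*}
The first condition makes Lemma~\ref{5.6} applicable uniformly on the cover; the second (enabled by uniform continuity of $p_*$ and by $\vep>0$) is what will render the subsequent Gagliardo--Nirenberg interpolation effective at the critical exponent.

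Next, I would fix $t\in I$, pick $i$ with $t\in I_i$, and by density of $C_0^\infty(\Omega)^d$ in $X^{2,p}_{\bfvarepsilon}(t)$ reduce to $\bfu\in C_0^\infty(\Omega)^d$. On each ball $B_j$ two cases arise. If $(p^+_{ij})_*-\vep\leq 2$, the estimate $\rho_{(p^+_{ij})_*-\vep}(\bfu\chi_{B_j})\leq 2|\Omega|+2\|\bfu\|_Y^2$ follows verbatim from the first case of Lemma~\ref{5.6}. Otherwise, I would apply Gagliardo--Nirenberg (Lemma~\ref{5.3}) with $r=p^-_{ij}$, $s=2$, target $q=(p^+_{ij})_*-\vep$. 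A short computation shows that $q=p^-_{ij}(d+2)/d$ is precisely the value for which the interpolation parameter $\theta$ satisfies $q\theta=p^-_{ij}$; hence the cover condition $q<p^-_{ij}(d+2)/d$ guarantees $\theta_{ij}\in(0,1)$ and, more importantly, $q\theta_{ij}<p^-_{ij}$. Korn's second inequality (Lemma~\ref{5.4}) on the smooth ball $B_j$ then converts $\|\nabla\bfu\|_{L^{p^-_{ij}}(B_j)^{d\times d}}$ into $\|\bfu\|_{L^{p^-_{ij}}(B_j)^d}+\|\bfvarepsilon(\bfu)\|_{L^{p^-_{ij}}(B_j)^{d\times d}}$.

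To close the argument, I would use the elementary bound $a^{q\theta_{ij}}\leq 1+a^{p^-_{ij}}$ (valid since $q\theta_{ij}<p^-_{ij}$) to pass from the $q\theta_{ij}$-th power of the gradient norm to its $p^-_{ij}$-th power, then insert Lemma~\ref{5.6} together with \eqref{embed} to bound $\|\bfu\|^{p^-_{ij}}_{L^{p^-_{ij}}(B_j)^d}$ and $\|\bfvarepsilon(\bfu)\|^{p^-_{ij}}_{L^{p^-_{ij}}(B_j)^{d\times d}}$ by $c(1+\rho_{p(t,\cdot)}(\bfvarepsilon(\bfu))+\|\bfu\|_Y^\gamma)$, using also $\|\bfu\|_{L^2(B_j)^d}\leq\|\bfu\|_Y$. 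Applying the pointwise comparison $a^{p_*(t,x)-\vep}\leq 1+a^{(p^+_{ij})_*-\vep}$ on $B_j\cap\Omega$, summing over $j=1,\dots,m$ and taking maxima of the finitely many constants $c_{ij}$ and exponents $p^+_{ij}(1-\theta_{ij})\rho_{ij}'$, $p^+_{ij}$ produces the claimed estimate \eqref{eq:5.11.0} with $c_\vep$, $\gamma_\vep$ independent of $t\in I$.

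The hard part will be the construction of the cover, specifically ensuring that the Gagliardo--Nirenberg exponent $q=(p^+_{ij})_*-\vep$ remains strictly below the critical threshold $p^-_{ij}(d+2)/d$ \emph{uniformly} in $i,j$. This is where the positive buffer $\vep$ is consumed, and one must exploit the Lipschitz regularity of $(\cdot)_*$ near the transition $s=d$, where its two analytic branches meet, to prevent the constants from degenerating on cylinders that straddle the level set $\{p=d\}$. Once this geometric input is fixed, the remaining steps are a routine combination of Gagliardo--Nirenberg, Korn and the absorption mechanism already used in Lemma~\ref{5.6}.
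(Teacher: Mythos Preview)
Your proof is correct and follows essentially the same route as the paper: a uniform finite covering of $\overline{Q_T}$ by cylinders on which the oscillation of $p$ is small, followed by Gagliardo--Nirenberg interpolation between $W^{1,p^-_{ij}}(B_j)$ and $L^2(B_j)$, Korn's second inequality on the smooth balls, and finally Lemma~\ref{5.6} to absorb the $\rho_{p(t,\cdot)}(\bfu)$ term. The paper packages the interpolation step inside Proposition~\ref{5.8} (the constant-exponent estimates \eqref{eq:5.8.1}--\eqref{eq:5.8.3}) and targets the slightly higher intermediate exponent $(p^-_{ij})_*-\tfrac{\vep}{2}$, with cover condition $(p^+_{ij})_*-\tfrac{\vep}{2}<(p^-_{ij})_*$, before descending; you instead aim directly at $(p^+_{ij})_*-\vep$ with cover condition $(p^+_{ij})_*-\vep<p^-_{ij}\tfrac{d+2}{d}$ and a bare Gagliardo--Nirenberg call. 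Both choices are equivalent after noting that $(p^-_{ij})_*\leq p^-_{ij}\tfrac{d+2}{d}$ always, and your concern about the covering step is unwarranted: once the cylinders are small enough that $(p^+_{ij})_*-(p^-_{ij})_*<\vep$, the inequality $(p^+_{ij})_*-\vep<(p^-_{ij})_*\leq p^-_{ij}\tfrac{d+2}{d}$ is automatic, with no degeneration near $\{p=d\}$.
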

	
	\begin{proof}
		We fix an arbitrary $\vep\in  (0,(p^-)_*-1]$ and split the proof into two steps:
		
		\textbf{Step 1:} Let $\overline{p}\in C^0(\overline{Q_T})$ be an extension of $p\in C^0(\overline{Q_T})$ with $p^-\leq \overline{p}\leq p^+$ in~$\setR^{d+1}$ (cf.~Proposition~\ref{2.13.1}). Since $\overline{p}:\overline{Q_T}\to \left(1,\infty\right)$ and $(\cdot)_*:\left[1,\infty\right)\to\left(1,\infty\right)$ are uniformly continuous, the same argumentation as in step \textbf{1} of the proof of Lemma \ref{5.6} shows that there exist finitely many $t_i\in \overline{I}$, $i=1,...,m$, $m\in \setN$, and $x_j\in \overline{\Omega}$, $j=1,...,m$, such that the intervals $I_i:=B_\rho^1(t_i)$, $i=1,...,m$, form an open covering of $\overline{I}$, the balls $B_j=B_{\rho}^d(x_j)$, $j=1,...,m$, form an open~covering~of~$\overline{\Omega}$ and the local exponents $p_{ij}^+:=\sup_{(s,y)^\top\in Q_{ij}}{\overline{p}(s,y)}$ and $p_{ij}^-:=\inf_{(s,y)^\top\in Q_{ij}}{\overline{p}(s,y)}$, where ${Q_{ij}:=I_i\times B_j}$,  satisfy ${(p_{ij}^+)_*-\frac{\vep}{2}< (p_{ij}^-)_*}$ for all $i,j=1,...,m$..
		
		\textbf{Step 2:} We fix an arbitrary $t\in I$.	Then, there exists some index $i=1,...,m$, such~that~${t\in I_i}$. It suffices anew to consider $\bfu\in C_0^\infty(\Omega)^d$.\; Let us fix an arbitrary ball~$B_j$~for~some~${j=1,...,m}$.\\[-2pt] Hence, Lemma~\ref{5.4}, \eqref{eq:5.8.1}--\eqref{eq:5.8.3}~with~${s=(p_{ij}^-)_*-\frac{\vep}{2}}$~and~$W_0^{1,p_{ij}^-}(B_j)^d$ replaced by $W^{1,p_{ij}^-}(B_j)^d$,\\[-1pt] as $\partial B_j\in C^\infty$,  Lemma~\ref{5.6} and the estimate $\rho_{p_{ij}^-}(\bfvarepsilon(\bfu)\chi_{B_j})\leq 2^{p_{ij}^+}(\vert\Omega\vert+\rho_{p(t,\cdot)}(\bfvarepsilon(\bfu)))$, provide constants $c_{ij}^{\vep}=c_{ij}(\vep,p_{ij}^-,B_j),\gamma_{ij}^{\vep}=\gamma_{ij}(\vep,p_{ij}^-,B_j)\!>\!0$ (not depending on $t\in I$),~such~that
		\begin{align}
		\begin{split}
		\rho_{(p_{ij}^-)_*-\frac{\vep}{2}}(\bfu\chi_{B_j})&\leq c_{ij}^{\vep} \big[\rho_{p_{ij}^-}(\bfu\chi_{B_j})+\rho_{p_{ij}^-}(\nb\bfu\chi_{B_j})\big]\|\bfu\|_{L^2(B_j)^d}^{\gamma_{ij}^{\vep}}
		\\&\leq c_{ij}^{\vep} \big[\rho_{p_{ij}^-}(\bfu\chi_{B_j})+\rho_{p_{ij}^-}(\bfvarepsilon(\bfu)\chi_{B_j})\big]\|\bfu\|_{L^2(B_j)^d}^{\gamma_{ij}^{\vep}}\\&\leq c_{ij}^{\vep} \big[1+\rho_{p(t,\cdot)}(\bfu)+\rho_{p(t,\cdot)}(\bfvarepsilon(\bfu))\big]\|\bfu\|_Y^{\gamma_{ij}^{\vep}}\\&
		\leq c_{ij}^{\vep} \big[1+\rho_{p(t,\cdot)}(\bfvarepsilon(\bfu))+\|\bfu\|_Y^{\gamma}\big]\|\bfu\|_Y^{\gamma_{ij}^{\vep}}.
		\end{split}\label{eq:5.11.2}
		\end{align}
		We set $\gamma_{\vep}:=\max_{i,j=1,...,m}{\gamma_{ij}^{\vep}+\gamma}>0$ and $c_\vep:=\max_{i,j=1,...,m}{c_{ij}^{\vep}}>0$.
		Because $(\cdot)_*:\left[1,\infty\right)\to \setR$ is non-decreasing, one can readily see that ${(\overline{p}_*)^+_{ij}:=\sup_{(s,y)^\top\in Q_{ij}}{\overline{p}_*(s,y)}\leq(p^+_{ij})_*}$.
		Therefore, also using that $\rho_{\overline{p}_*(t,\cdot)-\vep}(\bfu\chi_{B_j})\leq 2^{(p^+)_*}(\vert\Omega \vert+\rho_{(p^-_{ij})_*-\frac{\vep}{2}}(\bfu\chi_{B_j}))$, as $\overline{p}_*(t,x)-\vep\leq (p^+_{ij})_*-\vep<(p^-_{ij})_*-\frac{\vep}{2}$ for every $x\in B_j$ (cf.~step~\textbf{1}), and $a^{\gamma}+a^{\gamma_{ij}^{\vep}}\leq 2^{\gamma_{\vep}+1}(1+a^{\gamma_{\vep}})$ for all $a\ge 0$,  we obtain 
		\begin{align}
		\begin{split}
		\rho_{p_*(t,\cdot)-\vep}(\bfu\chi_{B_j\cap \Omega})=\rho_{\overline{p}_*(t,\cdot)-\vep}(\bfu\chi_{B_j})\leq c_{\vep}\big[1+ \rho_{p(t,\cdot)}(\bfvarepsilon(\bfu))+\|\bfu\|_Y^{\gamma_{\vep}}\big]\big(1+\|\bfu\|_Y^{\gamma_{\vep}}\big).
		\end{split}
		\label{eq:5.11.3}
		\end{align}
		Eventually, summing \eqref{eq:5.11.3} with respect to $j=1,...,m$, we conclude \eqref{eq:5.11.0}.\hfill$\qed$
	\end{proof}

	\begin{proof} (of Proposition~\ref{5.10}) 
		We proceed as in the proof of Proposition~\ref{5.5}, but now we apply Lemma~\ref{5.6}.\hfill$\qed$
	\end{proof}
	\newpage 
	
	Let us next prove an  unsteady analogue of the following steady compactness result.
	
	\begin{prop}\label{5.12}
		Let $\Omega\subseteq \setR^d$, $d\ge 2$, be a bounded domain and $p\in C^0(\overline{\Omega})$ with ${1\!<\!p^-\!\leq \!p^+\!<\!d}$. Moreover, denote by $p^*:=\frac{dp}{d-p}\in C^0(\overline{\Omega})$ the \textbf{Sobolev conjugate exponent}.~Then,~there~holds\\[-3pt] for every ${\vep\in \left(0,d'\right)}$ the compact embedding $W_0^{1,p(\cdot)}(\Omega)^d\embedding\embedding L^{p^*(\cdot)-\vep}(\Omega)^d$.
		In addition, if $p\in \mathcal{P}^{\log}(\Omega)$ with ${1<\!p^-\!\leq \!p^+\!<\!d}$, then ${W_0^{1,p(\cdot)}(\Omega)^d\embedding\embedding L^{p^*(\cdot)}(\Omega)^d}$.
	\end{prop}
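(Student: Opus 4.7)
My plan is to prove the compact embedding by localization: reduce the variable-exponent statement to a finite collection of constant-exponent Rellich--Kondrachov compactness results, glued together via inequality \eqref{embed} and the uniform continuity of the Sobolev-conjugate map $(\cdot)^{*}$ on $[p^{-},p^{+}]$. Fix $\vep\in(0,d')$. First, I extend $p$ to $\overline{p}\in C^0(\setR^d)\cap \mathcal{P}^{\infty}(\setR^d)$ via Proposition~\ref{2.13.1}, preserving $p^{\pm}$. Since $\overline{p}$ is uniformly continuous on $\overline{\Omega}$ and $(\cdot)^{*}\colon[p^{-},p^{+}]\to(1,\infty)$ is continuous, I pick a radius $\rho>0$ and finitely many balls $B_j:=B_\rho^d(x_j)$, $j=1,\ldots,m$, covering $\overline{\Omega}$, such that the local extrema $p_j^{+}:=\sup_{B_j}\overline{p}$ and $p_j^{-}:=\inf_{B_j}\overline{p}$ satisfy $(p_j^{+})^{*}-(p_j^{-})^{*}<\vep/2$ on each $B_j$. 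This mirrors step~\textbf{1} of Lemma~\ref{5.6}.

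Now let $(\bfu_n)_{n\in\setN}\subseteq W^{1,p(\cdot)}_{0}(\Omega)^d$ be bounded. Since $W^{1,p(\cdot)}_{0}(\Omega)^d$ is the closure of $C_0^\infty(\Omega)^d$, the zero extensions $\widetilde{\bfu}_n$ lie in $W^{1,\overline{p}(\cdot)}(\setR^d)^d$ and are uniformly bounded there. Applying \eqref{embed} on each ball $B_j$ with the constant target exponent $p_j^{-}\leq\overline{p}$ (both for the functions and their distributional gradients), the restrictions $(\widetilde{\bfu}_n|_{B_j})_{n\in\setN}$ form a bounded sequence in $W^{1,p_j^{-}}(B_j)^d$ with $p_j^{-}\leq p^{+}<d$. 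The classical Rellich--Kondrachov theorem on the smooth ball $B_j$ then yields a subsequence converging in $L^{(p_j^{-})^{*}-\vep/2}(B_j)^d$. A diagonal extraction over $j=1,\ldots,m$ produces one subsequence, not relabeled, converging in $L^{(p_j^{-})^{*}-\vep/2}(B_j)^d$ for every $j$.

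On $B_j\cap\Omega$ the pointwise bound $p^{*}(x)-\vep\leq(p_j^{+})^{*}-\vep<(p_j^{-})^{*}-\vep/2$ together with \eqref{embed} applied in the reverse direction gives $L^{(p_j^{-})^{*}-\vep/2}(B_j\cap\Omega)^d\embedding L^{p^{*}(\cdot)-\vep}(B_j\cap\Omega)^d$. Hence the diagonal subsequence converges in $L^{p^{*}(\cdot)-\vep}(B_j\cap\Omega)^d$ for every $j$, and summing the modulars $\rho_{p^{*}(\cdot)-\vep}(\cdot)$ over the finite cover (picking up at most a constant factor depending on $m$) upgrades this to convergence in $L^{p^{*}(\cdot)-\vep}(\Omega)^d$, which yields compactness. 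The log-Hölder addendum reduces to the classical variable-exponent Sobolev embedding $W^{1,p(\cdot)}_{0}(\Omega)^d\embedding L^{p^{*}(\cdot)}(\Omega)^d$, which follows from Riesz-potential estimates depending essentially on the log-Hölder modulus of $p$ (cf.~\cite[Thm.~8.3.1]{DHHR11}), combined with the compactness already proved (via interpolation against the $L^{p^{*}(\cdot)-\vep}$-convergence just obtained).

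The principal difficulty is the simultaneous bookkeeping of three small parameters, namely the target loss $\vep$, the oscillation bound $\vep/2$ of $(\cdot)^{*}$ over each ball, and the $\vep/2$-margin used by constant-exponent Rellich--Kondrachov: they must be coupled so that the chain $W^{1,\overline{p}(\cdot)}(B_j)^d\embedding W^{1,p_j^{-}}(B_j)^d\embedding\embedding L^{(p_j^{-})^{*}-\vep/2}(B_j)^d\embedding L^{p^{*}(\cdot)-\vep}(B_j\cap\Omega)^d$ closes. The restriction $\vep<d'$ keeps the target exponent admissible since $p^{*}(x)>d'$ whenever $p^{-}>1$, and the legitimacy of the zero extension in the variable-exponent Sobolev space rests on the density of $C_0^\infty(\Omega)^d$ in $W^{1,p(\cdot)}_{0}(\Omega)^d$ together with the compatibility of $\overline{p}$ with $p$ inside $\Omega$.
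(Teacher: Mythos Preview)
The paper does not give an independent proof of this proposition; it simply refers to \cite[Thm.~3.3 \& Thm.~3.8]{KR91} and \cite[Thm.~8.3.1 \& Cor.~8.4.4]{DHHR11}. Your localization argument for the sub-critical compactness $W_0^{1,p(\cdot)}(\Omega)^d\hookrightarrow\hookrightarrow L^{p^*(\cdot)-\vep}(\Omega)^d$ is correct and is, in fact, precisely the strategy of Kov\'a\v{c}ik--R\'akosn\'{\i}k: freeze the exponent on small balls via uniform continuity, invoke constant-exponent Rellich--Kondrachov on each ball, and reassemble using \eqref{embed}. Your bookkeeping of the $\vep/2$-margins is clean, and the zero-extension step is justified by the very definition of $W_0^{1,p(\cdot)}(\Omega)^d$ as the closure of $C_0^\infty(\Omega)^d$.

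Your treatment of the log-H\"older addendum, however, has a genuine gap. You argue that the continuous embedding $W_0^{1,p(\cdot)}(\Omega)^d\hookrightarrow L^{p^*(\cdot)}(\Omega)^d$ together with the sub-critical compactness yields, ``via interpolation'', compactness in $L^{p^*(\cdot)}(\Omega)^d$. This does not follow: boundedness in $L^{p^*(\cdot)}$ plus strong convergence in $L^{p^*(\cdot)-\vep}$ for every $\vep>0$ does \emph{not} imply strong convergence in $L^{p^*(\cdot)}$. The constant-exponent case already kills this: for fixed $p\in(1,d)$ (which is trivially in $\mathcal{P}^{\log}(\Omega)$) the critical Sobolev embedding $W_0^{1,p}(\Omega)\hookrightarrow L^{p^*}(\Omega)$ is \emph{not} compact, as the scaling family $\bfu_\lambda(x):=\lambda^{(d-p)/p}\bfu(\lambda x)$ shows. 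Hence the second assertion of the proposition, read literally with $\hookrightarrow\hookrightarrow$, cannot hold without further hypotheses; the cited result \cite[Thm.~8.3.1]{DHHR11} gives only the \emph{continuous} embedding at the critical exponent, and \cite[Cor.~8.4.4]{DHHR11} concerns the sub-critical range. Your citation of \cite[Thm.~8.3.1]{DHHR11} for the continuous embedding is correct; just drop the interpolation claim and do not attempt to upgrade it to compactness.
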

	
	\begin{proof}
		See \cite[Theorem 3.3 \& Theorem 3.8]{KR91} or \cite[Theorem 8.3.1 \&  Corollary 8.4.4]{DHHR11}.\hfill$\qed$
	\end{proof}
	
	Motivated by a comparison of Proposition~\ref{5.10} and Proposition~\ref{5.12}, we next prove a variable exponent Bochner--Lebesgue version of  Proposition~\ref{5.12}. This result will be based on the following compactness principle due to Landes and Mustonen~\cite{LM87}.
	
	\begin{prop}[Landes' and Mustonen's compactness principle]\label{5.13}
		Let $\Omega\subseteq \setR^d$, $d\ge 2$, be a bounded domain, $I:=\left(0,T\right)$, $T<\infty$, $Q_T:=I\times \Omega$ and $p\in \left(1,\infty\right)$. Then, for a bounded sequence $(\bsu_n)_{n\in \setN}\subseteq L^p(I,W^{1,p}_0(\Omega)^d)\cap L^\infty(I,L^1(\Omega)^d)$ from 
		\begin{align}
		\begin{split}
		\begin{alignedat}{2}
		\bsu_n&\overset{n\to\infty}{\weakto }\bsu&&\quad\textup{ in } L^p(I,W^{1,p}_0(\Omega)^d),\\
		\bsu_n(t)&\overset{n\to\infty}{\weakto }\bsu(t)&&\quad\textup{ in }L^1(\Omega)^d\quad\textup{for a.e. }t\in I,
		\end{alignedat}
		\end{split}\label{eq:5.13.a}
		\end{align}
		it follows that $\bsu_n\to \bsu$ in $L^p(I,L^p(\Omega)^d)$ $(n\to \infty)$.
	\end{prop}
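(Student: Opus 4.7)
The plan is to reduce strong convergence in $L^p(Q_T)^d$ to pointwise-in-time strong convergence in $L^1(\Omega)^d$ via an Ehrling-type inequality, and then upgrade the pointwise weak $L^1$-convergence hypothesis to strong $L^1$-convergence through a subsequence argument based on the Rellich--Kondrachov compact embedding. By linearity of both conditions in \eqref{eq:5.13.a}, I may assume $\bsu=\boldsymbol{0}$ throughout.

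First, I would invoke the compact embedding $W^{1,p}_0(\Omega)^d\embedding\embedding L^p(\Omega)^d$ (Rellich--Kondrachov) together with the continuous embedding $L^p(\Omega)^d\embedding L^1(\Omega)^d$ to deduce, via Ehrling's lemma, that for every $\eta>0$ there exists $C_\eta>0$ such that
\begin{align*}
\|\bfu\|_{L^p(\Omega)^d}\leq \eta\|\bfu\|_{W^{1,p}_0(\Omega)^d}+C_\eta\|\bfu\|_{L^1(\Omega)^d}
\end{align*}
for every $\bfu\in W^{1,p}_0(\Omega)^d$. Applying this pointwise to $\bsu_n(t)$ for a.e.~$t\in I$, raising to the $p$-th power, and integrating over $I$ gives
\begin{align*}
\|\bsu_n\|_{L^p(Q_T)^d}^p\leq 2^{p-1}\eta^p\|\bsu_n\|_{L^p(I,W^{1,p}_0(\Omega)^d)}^p+2^{p-1}C_\eta^p\int_I\|\bsu_n(t)\|_{L^1(\Omega)^d}^p\,dt.
\end{align*}
The first summand is at most $C\eta^p$ uniformly in $n$ by the assumed boundedness, so after fixing $\eta>0$ small it remains to show that the second summand tends to $0$ as $n\to\infty$.

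Since $\|\bsu_n(t)\|_{L^1(\Omega)^d}^p$ is uniformly dominated by $\|\bsu_n\|_{L^\infty(I,L^1(\Omega)^d)}^p$, Lebesgue's dominated convergence theorem reduces matters to proving $\bsu_n(t)\to\boldsymbol{0}$ strongly in $L^1(\Omega)^d$ for a.e.~$t\in I$. To upgrade the pointwise weak $L^1$-convergence to strong convergence, I would use the subsequence principle in the metric space $L^1(\Omega)^d$: for any subsequence of $(\bsu_n(t))_n$, Fatou's lemma applied to the integral bound on $\|\bsu_{n_k}(\cdot)\|_{W^{1,p}_0(\Omega)^d}^p$ yields $\liminf_{k\to\infty}\|\bsu_{n_k}(t)\|_{W^{1,p}_0(\Omega)^d}<\infty$ for a.e.~$t$; extracting a further subsequence bounded in $W^{1,p}_0(\Omega)^d$ and invoking Rellich--Kondrachov produces strong $L^p(\Omega)^d$- (hence $L^1$-) convergence along a sub-sub-subsequence; uniqueness of the weak $L^1$-limit then identifies this strong limit with $\boldsymbol{0}$.

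The main obstacle is making this subsequence extraction compatible with the ``a.e.~$t$'' quantifier, since the exceptional null set produced by Fatou's lemma depends a priori on the chosen subsequence. I expect this is handled by fixing at the outset a countable dense family of test functions in $L^\infty(\Omega)^d$ and performing a single diagonal extraction outside one common null set, outside of which the weak $L^1$-pairings against all chosen test functions and all relevant Fatou-type statements hold simultaneously. Sending $\eta\to 0$ after $n\to\infty$ in the Ehrling estimate then yields the desired strong convergence of the full sequence $\bsu_n$ in $L^p(Q_T)^d$.
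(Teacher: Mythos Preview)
Your Ehrling reduction is correct and is the right first move: it suffices to show $\int_I\|\bsu_n(t)\|_{L^1(\Omega)^d}^p\,dt\to 0$. However, your plan to deduce this via dominated convergence from the \emph{pointwise} claim $\bsu_n(t)\to\boldsymbol{0}$ in $L^1(\Omega)^d$ for a.e.\ $t$ cannot work --- not merely for the quantifier reason you flag, but because that intermediate claim is false under the hypotheses. Take $\bsu_n(t,\cdot)=\chi_{E_n}(t)\,\boldsymbol{\psi}_n$, where $(E_n)_n$ is the dyadic typewriter sequence on $I$ and $\boldsymbol{\psi}_n\in W^{1,p}_0(\Omega)^d$ are oscillating profiles with $\|\boldsymbol{\psi}_n\|_{L^1}\equiv c>0$, $\boldsymbol{\psi}_n\rightharpoonup\boldsymbol{0}$ in $L^1$, and $\|\boldsymbol{\psi}_n\|_{W^{1,p}_0}\sim|E_n|^{-1/p}$. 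All hypotheses of the proposition hold and indeed $\|\bsu_n\|_{L^p(Q_T)}\to 0$, yet for a.e.\ $t$ one has $\|\bsu_n(t)\|_{L^1}=c$ for infinitely many $n$, so $\bsu_n(t)\not\to\boldsymbol{0}$ in $L^1$. Your proposed diagonal fix cannot rescue this; there is nothing to extract.

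The paper does not give a proof but cites \cite{LM87}; the argument there bypasses a.e.\ strong convergence through a level-set splitting. For $R>0$ set $I_n^R:=\{t\in I:\|\bsu_n(t)\|_{W^{1,p}_0}\le R\}$. Chebyshev gives $|I\setminus I_n^R|\le CR^{-p}$, so the contribution of $I\setminus I_n^R$ to $\int_I\|\bsu_n(t)\|_{L^1}^p\,dt$ is at most $M^pCR^{-p}$. On $I_n^R$ one has $\bsu_n(t)\in B_R$, the closed ball in $W^{1,p}_0$, which is compact in $L^1$ by Rellich. For a.e.\ fixed $t$, the subsequence indexed by $\{n:t\in I_n^R\}$ lies entirely in the $L^1$-compact set $B_R$ and inherits weak $L^1$-convergence to $\boldsymbol{0}$; on a compact set weak convergence upgrades to strong, so $\chi_{I_n^R}(t)\|\bsu_n(t)\|_{L^1}\to 0$ for a.e.\ $t$ with a single null set independent of any subsequence choice. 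Dominated convergence then yields $\int_{I_n^R}\|\bsu_n(t)\|_{L^1}^p\,dt\to 0$ for each fixed $R$, and sending first $n\to\infty$, then $R\to\infty$, then $\eta\to 0$ in your Ehrling estimate completes the proof.
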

	
	\begin{proof}
		See \cite[Proposition 1]{LM87}.\hfill$\qed$
	\end{proof}	

	Combining the parabolic interpolation inequality (cf.~Proposition~\ref{5.8}) and Landes' and Mustonen's compactness principle, we gain the following interpolated compactness principle.
	
	\begin{cor}\label{5.13.1}
		Let $\Omega\subseteq \mathbb{R}^d$, $d\ge 2$, be a bounded domain, $I:=\left(0,T\right)$, $T<\infty$, $Q_T:=I\times \Omega$ and $p\in \left(1,\infty\right)$. Then, for a bounded sequence $(\boldsymbol{u}_n)_{n\in \setN}\subseteq L^p(I,W^{1,p}_0(\Omega)^d)\cap L^\infty(I,L^2(\Omega)^d)$ from \eqref{eq:5.13.a}, 
		it follows that $\boldsymbol{u}_n\to \boldsymbol{u}$ in $L^s(Q_T)^d$ $(n\to\infty)$ for all $s\in \left[1,p_*\right)$. 
	\end{cor}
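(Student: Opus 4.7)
The plan is to combine Landes' and Mustonen's compactness principle (Proposition~\ref{5.13}) with the parabolic interpolation inequality (Proposition~\ref{5.8}) via a standard Lebesgue interpolation argument. First, I would observe that a bounded sequence in $L^\infty(I,L^2(\Omega)^d)$ is, in particular, bounded in $L^\infty(I,L^1(\Omega)^d)$, since $\Omega$ is bounded. Together with the hypotheses \eqref{eq:5.13.a}, this places us exactly in the setting of Proposition~\ref{5.13}, which yields the strong convergence
\begin{align*}
\boldsymbol{u}_n\to \boldsymbol{u}\quad\text{in }L^p(Q_T)^d\quad(n\to\infty).
\end{align*}

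Next, I would upgrade this strong convergence from $L^p(Q_T)^d$ to $L^s(Q_T)^d$ for every $s\in [1,p_*)$. The case $s\in[1,p]$ is immediate from the embedding $L^p(Q_T)^d\hookrightarrow L^s(Q_T)^d$, valid because $Q_T$ has finite measure. For the interesting case $s\in(p,p_*)$, fix any $r\in(s,p_*)$. Since $r<p_*$, Proposition~\ref{5.8} applies to the exponent $r$ in every one of the four situations $p<d$, $p=d=2$, $p=d\neq2$, $p>d$ (note that the case $p=d\neq 2$ is precisely why we avoid the endpoint $s=p_*$), and it yields a constant $c>0$ and an exponent $\gamma>0$, depending only on $d,p,s,\Omega$, such that
\begin{align*}
\|\boldsymbol{v}\|_{L^r(Q_T)^d}^{r}\le c\, \|\boldsymbol{v}\|_{L^p(I,W^{1,p}_0(\Omega)^d)}^{p}\|\boldsymbol{v}\|_{L^\infty(I,L^2(\Omega)^d)}^{\gamma}
\end{align*}
for every admissible $\boldsymbol{v}$. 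Applying this to each $\boldsymbol{u}_n$, using the uniform bounds in $L^p(I,W^{1,p}_0(\Omega)^d)$ and $L^\infty(I,L^2(\Omega)^d)$, shows that $(\boldsymbol{u}_n)_{n\in\mathbb{N}}$ is bounded in $L^r(Q_T)^d$; applying it to $\boldsymbol{u}$ (which lies in $L^p(I,W^{1,p}_0(\Omega)^d)\cap L^\infty(I,L^2(\Omega)^d)$ by weak, resp.\ weak-$*$, lower semicontinuity of the norms) yields $\boldsymbol{u}\in L^r(Q_T)^d$.

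Finally, I would conclude by the standard Lebesgue interpolation. Choosing $\theta\in(0,1)$ by $\frac{1}{s}=\frac{1-\theta}{p}+\frac{\theta}{r}$, Hölder's inequality gives
\begin{align*}
\|\boldsymbol{u}_n-\boldsymbol{u}\|_{L^s(Q_T)^d}\le \|\boldsymbol{u}_n-\boldsymbol{u}\|_{L^p(Q_T)^d}^{1-\theta}\,\|\boldsymbol{u}_n-\boldsymbol{u}\|_{L^r(Q_T)^d}^{\theta}.
\end{align*}
The second factor is uniformly bounded by the previous step, while the first factor tends to $0$ by the Landes--Mustonen step. Hence $\boldsymbol{u}_n\to\boldsymbol{u}$ in $L^s(Q_T)^d$, as required. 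I expect no serious obstacle: the only delicate point is to remember that in the excluded case $p=d\neq 2$ the inequality of Proposition~\ref{5.8} fails at the endpoint $s=p_*$, which is precisely why the statement of the corollary is restricted to $s\in[1,p_*)$ and why the auxiliary exponent $r$ must be chosen strictly below $p_*$.
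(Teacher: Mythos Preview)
Your proof is correct and is precisely the approach the paper intends: the corollary is stated without an explicit proof, introduced only by the remark that it follows from ``combining the parabolic interpolation inequality (cf.~Proposition~\ref{5.8}) and Landes' and Mustonen's compactness principle'', and your argument is a clean execution of that combination via Lebesgue interpolation. The only slightly informal point is the justification that $\boldsymbol{u}\in L^\infty(I,L^2(\Omega)^d)$, but this follows from boundedness of $(\boldsymbol{u}_n)_{n\in\mathbb{N}}$ in $L^\infty(I,L^2(\Omega)^d)$, weak-$*$ compactness, and identification of the limit via the already known convergence in $L^p(Q_T)^d$.
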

	
	\begin{prop}[Compactness principle for $\bscal{X}^{2,p}_{\bfvarepsilon}(Q_T)\cap\bscal{Y}^\infty(Q_T)$]\label{5.14}
		Let $\Omega\subseteq \setR^d$, $d\ge 2$, be a bounded  domain, $I:=\left(0,T\right)$, $T<\infty$, $Q_T:=I\times \Omega$ and $p\in C^0(\overline{Q_T})$~with~${p^->1}$. Moreover, let  $(\bsu_n)_{n\in \setN}\subseteq \bscal{X}^{2,p}_{\bfvarepsilon}(Q_T)\cap\bscal{Y}^\infty(Q_T)$ be a sequence, such that
		\begin{alignat*}{2}
		\bsu_n&\overset{n\to\infty}{\weakto}\bsu\quad&&\quad\textup{ in }\bscal{X}^{2,p}_{\bfvarepsilon}(Q_T),\\
		\bsu_n&\;\;\overset{\ast}{\rightharpoondown}\;\;\bsu&&\quad\textup{ in }\bscal{Y}^\infty(Q_T)\quad(n\to\infty),\\
		\bsu_n(t)&\overset{n\to\infty}{\weakto}\bsu(t)&&\quad\textup{ in }Y\quad\textup{for a.e. }t\in I.
		\end{alignat*}
		Then, there holds $\bsu_n\to\bsu$ in $L^{\max\{2,p_*(\cdot,\cdot)\}-\vep}(Q_T)^d$ $(n\to\infty)$ for every $\vep\in (0,(p^-)_*-1]$.
	\end{prop}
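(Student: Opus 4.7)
The plan is to combine a Korn-based reduction to a constant-exponent Sobolev setting with the interpolated Landes--Mustonen compactness principle (Corollary~\ref{5.13.1}), the higher integrability provided by Proposition~\ref{5.10}, and a Vitali convergence argument.

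First, I would set $\tilde p:=\min\{2,p^-\}\in (1,2]$. Using the uniform boundedness of $(\bfvarepsilon(\bsu_n))_{n\in\setN}$ in $L^{p(\cdot,\cdot)}(Q_T)^{d\times d}$ together with the embedding~\eqref{embed}, and the $L^2(Q_T)^d\hookrightarrow L^{\tilde p}(Q_T)^d$ control inherited from the $\bscal{X}^{2,p}_{\bfvarepsilon}(Q_T)$-bound, Korn's second inequality (Lemma~\ref{5.4}) yields uniform boundedness of $(\bsu_n)_{n\in\setN}$ in $L^{\tilde p}(I,W^{1,\tilde p}_0(\Omega)^d)$; the zero trace is inherited from the fact that the time slice spaces $X^{2,p}_{\bfvarepsilon}(t)$ are defined as closures of $C_0^\infty(\Omega)^d$. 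Combined with the $\bscal{Y}^\infty(Q_T)$-bound and the pointwise weak convergence $\bsu_n(t)\weakto \bsu(t)$ in $Y\hookrightarrow L^1(\Omega)^d$ for almost every $t\in I$, Corollary~\ref{5.13.1} applied with the constant exponent $\tilde p$ then delivers $\bsu_n\to \bsu$ in $L^{s_0}(Q_T)^d$ for every $s_0\in[1,\tilde p_*)$. After passing to a suitable subsequence I may assume in addition that $\bsu_n\to \bsu$ pointwise almost everywhere in $Q_T$.

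Next, Proposition~\ref{5.10} applied with $\vep/2\in (0,(p^-)_*-1]$ furnishes the uniform higher-integrability bound $\|\bsu_n\|_{L^{p_*(\cdot,\cdot)-\vep/2}(Q_T)^d}\le C$. Setting $q(\cdot,\cdot):=\max\{2,p_*(\cdot,\cdot)\}-\vep$, a short case distinction on whether $(p^-)_*\ge 2$ or $(p^-)_*<2$ shows $q^-\ge 1$, so the Luxemburg norm is well-defined. I would then establish the target claim $\rho_{q(\cdot,\cdot)}(\bsu_n-\bsu)\to 0$ along the subsequence via Vitali's convergence theorem. Splitting $Q_T=A\cup B$ with $A:=\{(t,x)^\top\in Q_T\mid p_*(t,x)\ge 2\}$, on which $q(t,x)=p_*(t,x)-\vep$, and $B:=Q_T\setminus A$, on which $q\equiv 2-\vep$ is constant, equi-integrability of $|\bsu_n-\bsu|^{q(\cdot,\cdot)}\chi_A$ is derived from the higher-integrability bound through the gap $\vep/2$ between $q$ and $p_*(\cdot,\cdot)-\vep/2$ via a standard truncation at a large level $M$, while equi-integrability of $|\bsu_n-\bsu|^{2-\vep}\chi_B$ follows analogously from the uniform $L^2(Q_T)^d$-bound through the gap $\vep$ between $2-\vep$ and $2$. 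The resulting modular convergence along the subsequence is promoted to the full sequence by the usual subsequence principle.

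The main obstacle is this last interpolation step: the target exponent $q(\cdot,\cdot)$ is not dominated by the higher-integrability exponent $p_*(\cdot,\cdot)-\vep/2$ on the set $B$, so the two regions of $Q_T$ have to be treated with two distinct reservoirs of higher integrability, namely the variable exponent $p_*(\cdot,\cdot)-\vep/2$ on $A$ and the constant $2$ on $B$. Once this split is correctly identified, the remainder reduces to routine truncation and Vitali bookkeeping in the variable-exponent framework.
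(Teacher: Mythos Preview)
Your approach is essentially the same as the paper's: extract pointwise almost-everywhere convergence via the Landes--Mustonen principle, invoke the higher integrability from Proposition~\ref{5.10} together with the $L^2(Q_T)^d$ bound inherited from $\bscal{Y}^\infty(Q_T)$ to secure uniform integrability in $L^{\max\{2,p_*(\cdot,\cdot)\}-\vep}(Q_T)$, and conclude via Vitali and the standard subsequence principle. Your $A/B$ split is in fact more explicit than the paper, which simply asserts the $L^{\max\{2,p_*(\cdot,\cdot)\}-\vep}(Q_T)$--uniform integrability in one line; your treatment makes transparent why two different reservoirs of higher integrability ($p_*(\cdot,\cdot)-\vep/2$ on $A$ and the constant $2$ on $B$) are needed.

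One minor gap: you invoke Lemma~\ref{5.4} (Korn's second inequality), which requires $\Omega$ to be a bounded \emph{Lipschitz} domain, whereas Proposition~\ref{5.14} only assumes a bounded domain. The paper avoids this by appealing directly to the embedding $\bscal{X}^{2,p}_{\bfvarepsilon}(Q_T)\hookrightarrow L^{p^-}(I,W^{1,p^-}_0(\Omega)^d)$, which rests on Korn's \emph{first} inequality for functions in the closure of $C_0^\infty(\Omega)^d$ and holds on any bounded domain (cf.~the reasoning behind \eqref{eq:W1}). Since you already note that the zero trace is inherited from the density of $C_0^\infty(\Omega)^d$ in $X^{2,p}_{\bfvarepsilon}(t)$, the fix is immediate: replace Lemma~\ref{5.4} by the first Korn inequality applied to $C_0^\infty(\Omega)^d$ and pass to the closure. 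With this adjustment your argument is complete and coincides with the paper's.
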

	
	\begin{proof}
	Due to the embeddings $ \bscal{X}^{2,p}_{\bfvarepsilon}(Q_T)\embedding L^{p^-}(I,W^{1,p^-}_0(\Omega)^d)$, $\bscal{Y}^\infty(Q_T)\embedding L^\infty(I,L^1(\Omega)^d)$ and $Y\embedding L^1(\Omega)^d$, Proposition~\ref{5.13} yields a subsequence $(\bsu_n)_{n\in \Lambda}$, with $\Lambda\subseteq\setN$, such that $\bsu_n\to\bsu$ in $\setR^d$ $(\Lambda \ni n\to \infty)$ almost everywhere in $Q_T$.
	Owning to Proposition~\ref{5.10}, the sequence $(\bsu_n)_{n\in \Lambda}\subseteq L^{\max\{2,p_*(\cdot,\cdot)\}-\vep}(Q_T)^d$ is $L^{\max\{2,p_*(\cdot,\cdot)\}-\vep}(Q_T)$--uniformly integrable for every $\vep\in (0,(p^-)_*-1]$. Therefore, Vitali's convergence theorem for variable exponent Lebesgue spaces (cf.~\cite[Thm.~3.8]{Gut09})    and the standard convergence principle 
	(cf.~\cite[Kap.~I, Lem.~5.4]{GGZ74}) yield the assertion.\hfill$\qed$
	\end{proof}
	
	\begin{rmk}\label{general}
		Lemma \ref{5.6} and Lemma \ref{5.11} also hold, if for every $t\in I$ only $\mathbf{u}\in X^{p,p}_{\boldsymbol{\varepsilon}}(t)\cap Y$. Note that Lemma~\ref{5.6} implies the inclusion  $X^{2,p}_{\boldsymbol{\varepsilon}}(t)\subseteq X^{p,p}_{\boldsymbol{\varepsilon}}(t)\cap Y$ for every $t\in I$.~However, if only ${p \in  C^0(\overline{Q_T})}$, we cannot say that $X^{p,p}_{\boldsymbol{\varepsilon}}(t)\cap Y\subseteq X^{2,p(\cdot)}_{\boldsymbol{\varepsilon}}(t)$ for every $t\in I$, because it is not clear, whether
		$C_0^\infty(\Omega)^d$ is dense in $X^{p,p}_{\boldsymbol{\varepsilon}}(t)\cap Y$ for every $t\in I$, without imposing further assumptions on the regularity of the domain or the continuity of the variable exponents. In fact, using \cite[Corollary 5.15]{KR20}, it is readily seen that $C_0^\infty(\Omega)^d$ is dense in $X^{p,p}_{\boldsymbol{\varepsilon}}(t)\cap Y$ for every $t\in I$, if $\Omega\subseteq \mathbb{R}^d$, $d\ge 2$, is a bounded Lipschitz domain and $q,p\in \mathcal{P}^{\log}(Q_T)$ with $q^-,p^->1$. In addition, if ${p\in \mathcal{P}^{\log}(Q_T)}$ with $p^->1$, then also~${\bscal{X}^{\circ,p}(Q_T)\cap \bscal{Y}^\infty(Q_T)\!\hookrightarrow\! L^{\max\{2,p_*(\cdot,\cdot)\}-\vep}(Q_T)^d}$~for~every~${\vep\!\in\! \left(0,(p^-)_*-1\right]}$, which extends Propositon~\ref{5.10}, as  we cannot say  ${\bscal{X}^{\circ,p}(Q_T)\cap \bscal{Y}^\infty(Q_T)=\bscal{X}^{2,p}(Q_T)\cap \bscal{Y}^\infty(Q_T)}$ beforehand. For proofs of all these extensions, we refer to the thesis \cite{K21}.\vspace*{-1cm}
	\end{rmk}
	\newpage

	\section{Bochner pseudo-monotonicity, Bochner condition (M) and Bochner coercivity}
	\label{sec:6}
	
	In this section we extend the notions  Bochner pseudo-monotonicity, Bochner condition (M) and Bochner coercivity to the framework of variable exponent Bochner--Lebesgue spaces. 
	Throughout the entire section, if nothing else is stated, let $\Omega\subseteq \setR^d$, $d\ge 2$, be a bounded domain, $I:=\left(0,T\right)$, $T<\infty$, $Q_T:=I\times \Omega$ and $q,p\in \mathcal{P}^{\infty}(Q_T)$ with $q^-,p^->1$.
	
	\begin{defn}\label{main.1}
		An operator
		$\bscal{A}:\bscal{X}^{q,p}_{\bfvarepsilon}(Q_T)\cap
		\bscal{Y}^\infty(Q_T)\to\bscal{X}^{q,p}_{\bfvarepsilon}(Q_T)^*$
		is said to 
		\begin{description}[{(ii)}]
			\item[(i)] be \textbf{Bochner pseudo-monotone}, if for a sequence
			$(\bsu_n)_{n\in\setN}\subseteq
			\bscal{X}^{q,p}_{\bfvarepsilon}(Q_T)\cap
			\bscal{Y}^\infty(Q_T)$ from
			\begin{alignat}{2}
			\bsu_n\overset{n\to\infty}{\weakto}
			&\bsu\quad &&\text{ in }\bscal{X}^{q,p}_{\bfvarepsilon}(Q_T)\label{eq:6.1.a},
			\\\bsu_n\;\;\overset{\ast}{\rightharpoondown}\;\;
			&\bsu&&\text{ in }
			\bscal{Y}^\infty(Q_T) \quad (n\to\infty),
			\label{eq:6.1.b}
			\\
			\bsu_n(t)\overset{n\to\infty}{\weakto}
			&\bsu(t)\quad &&\text{ in }Y\quad\text{for a.e. }t\in I,\label{eq:6.1.c}
			\end{alignat}
			and
			\begin{align}
			\limsup_{n\to\infty}{\langle \bscal{A}\bsu_n,\bsu_n-\bsu\rangle_{\bscal{X}^{q,p}_{\bfvarepsilon}(Q_T)}}\leq 0,\label{eq:6.1.d}
			\end{align}
			it follows that $
			{\langle \bscal{A}\bsu,\bsu-\bsv\rangle_{\bscal{X}^{q,p}_{\bfvarepsilon}(Q_T)}\!\leq\!	\liminf_{n\to\infty}{\langle \bscal{A}\bsu_n,\bsu_n-\bsv\rangle_{\bscal{X}^{q,p}_{\bfvarepsilon}(Q_T)}}}$~for~all~${\bsv\in\bscal{X}^{q,p}_{\bfvarepsilon}(Q_T)}$.
			\item[(ii)] satisfy the \textbf{Bochner condition (M)}, if for a sequence
			$(\bsu_n)_{n\in\setN}\subseteq
			\bscal{X}^{q,p}_{\bfvarepsilon}(Q_T)\cap
			\bscal{Y}^\infty(Q_T)$ from \eqref{eq:6.1.a}--\eqref{eq:6.1.c} and
			\begin{align}
			\bscal{A}\bsu_n\overset{n\to\infty}{\weakto}\bsu^*\quad\text{ in }\bscal{X}^{q,p}_{\bfvarepsilon}(Q_T)^*,\label{eq:6.1.e}\\
			\limsup_{n\to\infty}{\langle \bscal{A}\bsu_n,\bsu_n\rangle_{\bscal{X}^{q,p}_{\bfvarepsilon}(Q_T)}}\leq \langle \bsu^*,\bsu\rangle_{\bscal{X}^{q,p}_{\bfvarepsilon}(Q_T)},\label{eq:6.1.f}
			\end{align}
			it follows that $\bscal{A}\bsu=\bsu^*$ in $\bscal{X}^{q,p}_{\bfvarepsilon}(Q_T)^*$.
		\end{description}
	\end{defn}

	We will see in the proof of Theorem~\ref{main} that \eqref{eq:6.1.a}--\eqref{eq:6.1.d} are natural properties of a sequence $(\bsu_n)_{n\in \setN}\subseteq \bscal{X}^{q,p}_{\bfvarepsilon}(Q_T)\cap
	\bscal{Y}^\infty(Q_T)$ coming from an appropriate Galerkin approximation. In fact, \eqref{eq:6.1.a} usually is a consequence of the coercivity of $\bscal{A}$, \eqref{eq:6.1.b} stems from the time derivative, while \eqref{eq:6.1.c} and \eqref{eq:6.1.d} follow directly from the Galerkin approximation. The following proposition clarifies the relations between these new notions, also in comparison to the standard notion.
	
	\begin{prop}\label{6.2}
		The following statements hold true:
		\begin{description}[{(ii)}]
			\item[(i)] If $\bscal{A}:\bscal{X}^{q,p}_{\bfvarepsilon}(Q_T)\to\bscal{X}^{q,p}_{\bfvarepsilon}(Q_T)^*$ is pseudo-monotone (or satisfies the condition (M)), then it is Bochner pseudo-monotone (or satisfies the Bochner condition (M)).
			\item[(ii)] If $\bscal{A}:\bscal{X}^{q,p}_{\bfvarepsilon}(Q_T)\cap
			\bscal{Y}^0(Q_T)\to\bscal{X}^{q,p}_{\bfvarepsilon}(Q_T)^*$ is Bochner pseudo-monotone, then it satisfies the Bochner condition (M).
		\end{description}
	\end{prop}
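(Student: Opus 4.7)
The plan for part (i) is essentially bookkeeping. Standard pseudo-monotonicity of $\bscal{A}:\bscal{X}^{q,p}_{\bfvarepsilon}(Q_T)\to\bscal{X}^{q,p}_{\bfvarepsilon}(Q_T)^*$ demands the conclusion from the hypotheses \eqref{eq:6.1.a} and \eqref{eq:6.1.d} alone; Bochner pseudo-monotonicity imposes the additional assumptions \eqref{eq:6.1.b} and \eqref{eq:6.1.c}. Thus, for any sequence $(\bsu_n)\subseteq \bscal{X}^{q,p}_{\bfvarepsilon}(Q_T)\cap\bscal{Y}^\infty(Q_T)$ verifying \eqref{eq:6.1.a}--\eqref{eq:6.1.d}, standard pseudo-monotonicity directly yields the conclusion. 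The same strategy handles the condition (M) assertion: the hypotheses in Definition \ref{main.1}(ii) strengthen those of the classical condition (M) by adding \eqref{eq:6.1.b} and \eqref{eq:6.1.c}, so restriction of the classical property to sequences satisfying the extra hypotheses is automatic.

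For part (ii), I plan to mimic the well-known passage from pseudo-monotonicity to condition (M). Fix a sequence $(\bsu_n)$ verifying \eqref{eq:6.1.a}--\eqref{eq:6.1.c} and \eqref{eq:6.1.e}--\eqref{eq:6.1.f}. The first step is to test \eqref{eq:6.1.e} against $\bsu\in\bscal{X}^{q,p}_{\bfvarepsilon}(Q_T)$ to get $\langle\bscal{A}\bsu_n,\bsu\rangle_{\bscal{X}^{q,p}_{\bfvarepsilon}(Q_T)}\to\langle\bsu^*,\bsu\rangle_{\bscal{X}^{q,p}_{\bfvarepsilon}(Q_T)}$, which combined with \eqref{eq:6.1.f} yields $\limsup_{n\to\infty}\langle\bscal{A}\bsu_n,\bsu_n-\bsu\rangle_{\bscal{X}^{q,p}_{\bfvarepsilon}(Q_T)}\leq 0$, i.e., hypothesis \eqref{eq:6.1.d} is verified. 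Bochner pseudo-monotonicity then supplies
\begin{align*}
\langle\bscal{A}\bsu,\bsu-\bsv\rangle_{\bscal{X}^{q,p}_{\bfvarepsilon}(Q_T)}\leq\liminf_{n\to\infty}\langle\bscal{A}\bsu_n,\bsu_n-\bsv\rangle_{\bscal{X}^{q,p}_{\bfvarepsilon}(Q_T)}\qquad\text{for all }\bsv\in\bscal{X}^{q,p}_{\bfvarepsilon}(Q_T).
\end{align*}

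Specializing to $\bsv=\bsu$ gives $0\leq\liminf\langle\bscal{A}\bsu_n,\bsu_n-\bsu\rangle$, which together with the $\limsup\leq 0$ bound yields $\lim\langle\bscal{A}\bsu_n,\bsu_n\rangle=\langle\bsu^*,\bsu\rangle$. Inserting this limit back into the Bochner pseudo-monotonicity inequality for arbitrary $\bsv$ produces $\langle\bscal{A}\bsu,\bsu-\bsv\rangle\leq\langle\bsu^*,\bsu-\bsv\rangle$; applying the same bound with $\bsv$ replaced by $2\bsu-\bsv$ gives the reverse inequality, so $\langle\bscal{A}\bsu-\bsu^*,\bsw\rangle=0$ for all $\bsw\in\bscal{X}^{q,p}_{\bfvarepsilon}(Q_T)$, i.e., $\bscal{A}\bsu=\bsu^*$ in $\bscal{X}^{q,p}_{\bfvarepsilon}(Q_T)^*$.

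The only subtlety I anticipate concerns the domain in part (ii): the operator acts on $\bscal{X}^{q,p}_{\bfvarepsilon}(Q_T)\cap\bscal{Y}^0(Q_T)$, while the weak-$*$ convergence in $\bscal{Y}^\infty(Q_T)$ only guarantees $\bsu\in\bscal{Y}^\infty(Q_T)$. I read the statement as asserting that Bochner condition (M) is verified at those limits $\bsu$ which again lie in $\bscal{X}^{q,p}_{\bfvarepsilon}(Q_T)\cap\bscal{Y}^0(Q_T)$ (so that $\bscal{A}\bsu$ makes sense), and this is exactly the situation of interest for the Galerkin limits considered later. Apart from this bookkeeping about $\bscal{Y}^0$ versus $\bscal{Y}^\infty$, the argument is an entirely formal duality manipulation requiring no new analytic input beyond Definition \ref{main.1}.
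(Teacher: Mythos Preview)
Your argument is correct and matches the paper's own proof, which for (i) simply says ``follows right from the definitions'' and for (ii) refers to \cite[Proposition 3.3 (i)]{K19}, where exactly the duality manipulation you carry out is performed. Your observation about the $\bscal{Y}^0(Q_T)$ versus $\bscal{Y}^\infty(Q_T)$ domain discrepancy is a genuine subtlety that the paper does not comment on; your reading (that the conclusion is meant for limits $\bsu$ lying in the domain of $\bscal{A}$, as is the case for the Galerkin limits in Theorem~\ref{main}) is the sensible one.
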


	\begin{proof}
		\textbf{(i)} follows right from the definitions. For \textbf{(ii)} we proceed as in \cite[Proposition 3.3 (i)]{K19}.\hfill$\qed$
	\end{proof}
	
	Next, we introduce a relaxed notion of coercivity, which, in contrast to the usual notion of coercivity (cf.~\cite[Kap. I, Def. 1.1 (iv)]{Ru04}), takes the additional information provided by the generalized time derivative $\frac{\bfd}{\bfd\bft}$ into account.

	\begin{defn}\label{main.3}
		An operator
		$\bscal{A}:\bscal{X}^{q,p}_{\bfvarepsilon}(Q_T)\cap
		\bscal{Y}^\infty(Q_T)\to\bscal{X}^{q,p}_{\bfvarepsilon}(Q_T)^*$
		is said to be
		\begin{description}[{(ii)}]
			\item[(i)]  \textbf{Bochner coercive with respect to
				$\bsu^*\in\bscal{X}^{q,p}_{\bfvarepsilon}(Q_T)^*$ and $\bfu_0\in Y$}, if there
			exists a constant \mbox{$M:=M(\bsu^*,\bfu_0,\bscal{A})>0$}, such
			that for every
			$\bsu\in\bscal{X}^{q,p}_{\bfvarepsilon}(Q_T)\cap \bscal{Y}^\infty(Q_T)$
			from
			\begin{align}
			\frac{1}{2}\|\bsu(t)\|_Y^2
			+\langle\bscal{A}\bsu
				-\bsu^*,\bsu\chi_{\left[0,t\right]}\rangle_{\bscal{X}^{q,p}_{\bfvarepsilon}(Q_T)}
			\leq \frac{1}{2}\|\bfu_0\|_Y^2\quad\text{ for a.e. }t\in I,\label{eq:5.11}
			\end{align}
			it follows that $\|\bsu\|_{\bscal{X}^{q,p}_{\bfvarepsilon}(Q_T)\cap
				\bscal{Y}^\infty(Q_T)}\leq M$.
			\item[(ii)] \textbf{Bochner coercive}, if it is Bochner coercive with
			respect to  all
			$\bsu^*\in\bscal{X}^{q,p}_{\bfvarepsilon}(Q_T)^*$ and $\bfu_0\in Y$.
		\end{description}
	\end{defn}
	\newpage

	Note that Bochner coercivity, similar to semi-coercivity
	(cf.~\cite{Rou05}) in conjunction with Gr\"onwall's inequality, takes
	into account the information of both the operator and the time
	derivative. In fact, Bochner coercivity is a more general property. Bochner coercivity is phrased in the spirit of a
	local coercivity\footnote{$A:D(A)\subseteq X\to X^*$ is said to be
		locally coercive (cf.~\cite[§32.4.]{Zei90B}) with respect to $f\in X^*$, if
		$D(A)$ is unbounded and there exists a constant $M>0$, such that for every
		$x\in X$ from $\langle Ax,x\rangle_X\leq \langle f,x\rangle_X$ it
		follows that $\|x\|_X\leq M$.} type
	condition of
	${\frac{\textbf{d}}{\textbf{dt}}+\bscal{A}:\bscal{W}^{q,p}_{\bfvarepsilon}(Q_T)\subseteq \bscal{X}^{q,p}_{\bfvarepsilon}(Q_T)\to
	\bscal{X}^{q,p}_{\bfvarepsilon}(Q_T)^*}$. To be more precise, suppose that the assumptions of Proposition \ref{4.6} are satisfied and 
that the operator 
	$\bscal{A}:\bscal{X}^{q,p}_{\bfvarepsilon}(Q_T)\cap\bscal{Y}^\infty(Q_T)\to
	\bscal{X}^{q,p}_{\bfvarepsilon}(Q_T)^*$ is Bochner coercive with~respect~to~${\bsu^*\in\bscal{X}^{q,p}_{\bfvarepsilon}(Q_T)^*}$ and $\bfu_0\in Y$, then
	for $\bsu\in \bscal{W}^{q,p}_{\bfvarepsilon}(Q_T)$ from
	${\|\bsu_c(0)\|_Y\leq
		\|\bfu_0\|_Y}$, i.e., 
	${\langle
	\frac{\textbf{d}\bsu}{\textbf{dt}},\bsu\rangle_{\bscal{X}^{q,p}_{\bfvarepsilon}(Q_T)}\ge
	-\frac{1}{2}\|\bfu_0\|_Y^2}$, and
	\begin{align}
	\bigg\langle \frac{\textbf{d}\bsu}{\textbf{dt}}+\bscal{A}\bsu,\bsu\chi_{\left[0,t\right]}\bigg\rangle_{\bscal{X}^{q,p}_{\bfvarepsilon}(Q_T)}\leq \langle \bsu^*,\bsu\chi_{\left[0,t\right]}\rangle_{\bscal{X}^{q,p}_{\bfvarepsilon}(Q_T)}\quad\text{ for a.e. }t\in I,\label{eq:5.13}
	\end{align}
	it follows that $\|\bsu\|_{\bscal{X}^{q,p}_{\bfvarepsilon}(Q_T)\cap\bscal{Y}^\infty(Q_T)}\leq M $, because \eqref{eq:5.13} is just \eqref{eq:5.11} (cf.~Proposition \ref{4.6}~(ii)). In other words, if the image of $\bsu\in\bscal{W}^{q,p}_{\bfvarepsilon}(Q_T)$ with respect to $\frac{\textbf{d}}{\textbf{dt}}$ and $\bscal{A}$ is bounded by the data $\bfu_0$, $\bsu^*$ in this weak sense, then $\bsu$ is contained in a fixed ball in $\bscal{X}^{q,p}_{\bfvarepsilon}(Q_T)\cap\bscal{Y}^\infty(Q_T)$. We chose \eqref{eq:5.11} instead of \eqref{eq:5.13} in Definition~\ref{main.3}, since $\bsu\in \bscal{X}^{q,p}_{\bfvarepsilon}(Q_T)\cap\bscal{Y}^\infty(Q_T)$~is~not~admissible~in~\eqref{eq:5.13}.
	
	\section{Hirano--Landes approach}
	\label{sec:7}
	This section is concerned with the so-called \textbf{Hirano--Landes approach}, which - in the broadest sense - can be traced back to contributions of Landes \cite{Lan86} and Hirano \cite{Hir1,Hir2}, see also \cite{Pap97,Rou05,BR17,BR20,KR19,BKR20} for more recent developments, and states, roughly speaking, that if an operator ${\bscal{A}:\bscal{X}^{q,p}_{\bfvarepsilon}(Q_T)\cap\bscal{Y}^\infty(Q_T)\!\to\!\bscal{X}^{q,p}_{\bfvarepsilon}(Q_T)^*}$ is \textbf{induced} by a  family ${A(t):X^{q,p}_{\bfvarepsilon}(t)\to X^{q,p}_{\bfvarepsilon}(t)^*}$, $t\in I$, i.e., for $\bsu\in \bscal{X}^{q,p}_{\bfvarepsilon}(Q_T)\cap\bscal{Y}^\infty(Q_T)$ and $\bsv\in \bscal{X}^{q,p}_{\bfvarepsilon}(Q_T)$ there holds
	\begin{align}
	\langle \bscal{A}\bsu,\bsv\rangle_{\bscal{X}^{q,p}_{\bfvarepsilon}(Q_T)}:=\int_I{\langle A(t)(\bsu(t)),\bsv(t)\rangle_{X^{q,p}_{\bfvarepsilon}(t)}\,dt},\label{eq:induced}
	\end{align}
	or equivalently $(\bscal{A}\bsu)(t):=A(t)(\bsu(t))$ in $X^{q,p}_{\bfvarepsilon}(t)^*$ for almost every $t\in I$ by virtue of Remark~\ref{3.9},
	then the question, whether $\bscal{A}$ is Bochner pseudo-monotone, satisfies the Bochner condition (M) or is Bochner coercive, can be traced back to the properties of the inducing~family~of~operators $A(t):X^{q,p}_{\bfvarepsilon}(t)\to X^{q,p}_{\bfvarepsilon}(t)^*$, $t\in I$, in each single time slice $t\in I$, i.e., to those properties which are usually already well-known from the corresponding steady problem. We will give sufficient conditions on such operator families, such that the induced operator satisfies these new concepts.
	
	Throughout the entire section, let $\Omega\subseteq \setR^d$, $d\ge 2$, be a bounded Lipschitz domain, $I:=\left(0,T\right)$, $T<\infty$, $Q_T:=I\times \Omega$ and $q,p\in \mathcal{P}^{\log}(Q_T)$ with $q^-,p^->1$. 
	
	Let us first examine which assumptions on a family of operators  ${A(t):X^{q,p}_{\bfvarepsilon}(t)\to X^{q,p}_{\bfvarepsilon}(t)^*}$,~${t\!\in\! I}$, are sufficient, in order to guarantee the well-definedness and boundedness of the corresponding induced operator $ \bscal{A}:\bscal{X}^{q,p}_{\bfvarepsilon}(Q_T)\cap\bscal{Y}^\infty(Q_T)\to\bscal{X}^{q,p}_{\bfvarepsilon}(Q_T)^*$, given trough \eqref{eq:induced}.
	
	\begin{prop}\label{7.1}
		Let $A(t):X^{q,p}_{\bfvarepsilon}(t)\to X^{q,p}_{\bfvarepsilon}(t)^*$, $t\in I$, be a family with the following
		properties:
		\begin{description}[{\textbf{(C.3)}}]
			\item[\textbf{(C.1)}] \hypertarget{C.1}
			$A(t):X^{q,p}_{\bfvarepsilon}(t)\to X^{q,p}_{\bfvarepsilon}(t)^*$ is demi-continuous for
			almost every $t\in I$.
			\item[\textbf{(C.2)}] \hypertarget{C.2}
			$(t\mapsto \langle A(t)\bfu,\bfv\rangle_{X^{q,p}_{\bfvarepsilon}(t)}):I\to \setR$ is Lebesgue measurable for every $\bfu,\bfv\in X^{q,p}_+$.
			\item[\textbf{(C.3)}] \hypertarget{C.3} There exist a function $\alpha\in L^1(I,\setR_{\ge 0})$ and
			a non-decreasing function
			$\mathscr{B}:\setR_{\ge 0}\to \setR_{\ge 0}$, such that for almost every $t\in I$ and every $\bfu,\bfv\in X^{q,p}_{\bfvarepsilon}(t)$ it holds
			\begin{align*}
			\vert\langle A(t)\bfu,\bfv\rangle_{X^{q,p}_{\bfvarepsilon}(t)}\vert\leq\mathscr{B}(\|\bfu\|_Y)\big(\alpha(t)+\rho_{q(t,\cdot)}(\bfu)+\rho_{p(t,\cdot)}(\bfvarepsilon(\bfu))+\rho_{q(t,\cdot)}(\bfv)+\rho_{p(t,\cdot)}(\bfvarepsilon(\bfv))\big).
			\end{align*}
		\end{description}
		Then, $\bscal{A}:\bscal{X}^{q,p}_{\bfvarepsilon}(Q_T)\cap\bscal{Y}^\infty(Q_T)\to\bscal{X}^{q,p}_{\bfvarepsilon}(Q_T)^*$, given through \eqref{eq:induced}, is well-defined and bounded.
	\end{prop}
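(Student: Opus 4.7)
I want to verify three things in order: measurability of the integrand $t\mapsto \langle A(t)(\bsu(t)),\bsv(t)\rangle_{X^{q,p}_{\bfvarepsilon}(t)}$ for every $\bsu\in\bscal{X}^{q,p}_{\bfvarepsilon}(Q_T)\cap\bscal{Y}^\infty(Q_T)$ and $\bsv\in\bscal{X}^{q,p}_{\bfvarepsilon}(Q_T)$; $L^1(I)$-integrability of this map with a quantitative bound via (\hyperlink{C.3}{C.3}) and Fubini; and linearity together with continuity of the resulting functional $\bsv\mapsto\int_I\langle A(t)(\bsu(t)),\bsv(t)\rangle_{X^{q,p}_{\bfvarepsilon}(t)}\,dt$ on $\bscal{X}^{q,p}_{\bfvarepsilon}(Q_T)$, with a norm bound that is uniform over bounded subsets of $\bscal{X}^{q,p}_{\bfvarepsilon}(Q_T)\cap\bscal{Y}^\infty(Q_T)$.

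\textbf{Measurability (the main step).} For simple functions $\bar{\bsu}(t)=\sum_{i=1}^N\chi_{E_i}(t)\bfu_i$ and $\bar{\bsv}(t)=\sum_{j=1}^M\chi_{F_j}(t)\bfv_j$ with $\bfu_i,\bfv_j\in X^{q,p}_+$ and $E_i,F_j\subseteq I$ Lebesgue measurable, the integrand reduces to the finite sum $\sum_{i,j}\chi_{E_i\cap F_j}(t)\,\langle A(t)\bfu_i,\bfv_j\rangle_{X^{q,p}_{\bfvarepsilon}(t)}$, each term being measurable by (\hyperlink{C.2}{C.2}); here I use the continuous embedding $X^{q,p}_+\hookrightarrow X^{q,p}_{\bfvarepsilon}(t)$ for a.e.~$t\in I$. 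For general $\bsu,\bsv$, I would approximate them strongly in $\bscal{X}^{q,p}_{\bfvarepsilon}(Q_T)$ by such simple functions — first using Proposition~\ref{3.4.1} to approximate by $C_0^\infty(Q_T)^d$-functions, which are then discretized in time into step functions with $X^{q,p}_+$-valued steps — extract via Corollary~\ref{3.5} a subsequence converging a.e. in the time slice $X^{q,p}_{\bfvarepsilon}(t)$, and invoke (\hyperlink{C.1}{C.1}): demi-continuity of $A(t)$ gives $A(t)(\bsu_n(t))\rightharpoonup A(t)(\bsu(t))$ in $X^{q,p}_{\bfvarepsilon}(t)^*$ for a.e.~$t$, which against the strong slicewise convergence of $\bsv_n(t)$ yields pointwise a.e.~convergence of the dualities. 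The integrand is therefore a pointwise a.e.~limit of measurable functions, hence measurable.

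\textbf{Integrability, linearity, continuity, boundedness.} The $\bscal{Y}^\infty(Q_T)$-bound $\|\bsu(t)\|_Y\leq\|\bsu\|_{\bscal{Y}^\infty(Q_T)}$ a.e. and the monotonicity of $\mathscr{B}$ convert (\hyperlink{C.3}{C.3}) into
\begin{align*}
|\langle A(t)(\bsu(t)),\bsv(t)\rangle_{X^{q,p}_{\bfvarepsilon}(t)}|
&\leq \mathscr{B}(\|\bsu\|_{\bscal{Y}^\infty(Q_T)})\big(\alpha(t)+\rho_{q(t,\cdot)}(\bsu(t))+\rho_{p(t,\cdot)}(\bfvarepsilon(\bsu(t)))\\
&\quad+\rho_{q(t,\cdot)}(\bsv(t))+\rho_{p(t,\cdot)}(\bfvarepsilon(\bsv(t)))\big).
\end{align*}
Integrating in $t$ and applying Fubini together with $\alpha\in L^1(I)$ delivers $L^1(I)$-integrability of the integrand and bounds the integral by $\mathscr{B}(\|\bsu\|_{\bscal{Y}^\infty(Q_T)})\,(\|\alpha\|_{L^1(I)}+\rho_{q(\cdot,\cdot)}(\bsu)+\rho_{p(\cdot,\cdot)}(\bfvarepsilon(\bsu))+\rho_{q(\cdot,\cdot)}(\bsv)+\rho_{p(\cdot,\cdot)}(\bfvarepsilon(\bsv)))$. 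Linearity of $\bsv\mapsto\langle\bscal{A}\bsu,\bsv\rangle_{\bscal{X}^{q,p}_{\bfvarepsilon}(Q_T)}$ is inherited from the slicewise dual pairing. To obtain continuity, equivalently boundedness of this linear functional, I restrict to $\|\bsv\|_{\bscal{X}^{q,p}_{\bfvarepsilon}(Q_T)}\leq 1$ and invoke the standard Luxemburg-norm/modular relation in variable exponent Lebesgue spaces, which controls the two $\bsv$-modulars by the constant $1$; estimating the $\bsu$-modulars analogously in terms of $\|\bsu\|_{\bscal{X}^{q,p}_{\bfvarepsilon}(Q_T)}$ then gives a bound on $\|\bscal{A}\bsu\|_{\bscal{X}^{q,p}_{\bfvarepsilon}(Q_T)^*}$ depending only on $\|\bsu\|_{\bscal{X}^{q,p}_{\bfvarepsilon}(Q_T)\cap\bscal{Y}^\infty(Q_T)}$, $\|\alpha\|_{L^1(I)}$, $\mathscr{B}$ and the exponents.

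\textbf{Main obstacle.} The critical difficulty lies in the measurability step: the time slice spaces $X^{q,p}_{\bfvarepsilon}(t)$ genuinely vary with $t$ through $q(t,\cdot),p(t,\cdot)$, so neither $\bsu$ nor $\bsv$ takes values in a fixed Banach space, and (\hyperlink{C.2}{C.2}) is only a priori available on the common subspace $X^{q,p}_+$. The bridge between (\hyperlink{C.1}{C.1}), (\hyperlink{C.2}{C.2}) and the measurability of the integrand is exactly the combination of Proposition~\ref{3.4.1} (smooth density), Corollary~\ref{3.5} (a.e.~slicewise convergence along a subsequence) and the demi-continuity in (\hyperlink{C.1}{C.1}).
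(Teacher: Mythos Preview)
Your proposal is correct and follows essentially the same route as the paper's proof: density of $X^{q,p}_+$-valued simple functions in $\bscal{X}^{q,p}_{\bfvarepsilon}(Q_T)$ (via Proposition~\ref{3.4.1}), Corollary~\ref{3.5} for a.e.\ slicewise convergence along a subsequence, demi-continuity (\hyperlink{C.1}{C.1}) paired with strong slicewise convergence of the test functions to pass to the limit in the duality, and then (\hyperlink{C.3}{C.3}) with the unit-ball modular bound for integrability and boundedness. The only cosmetic difference is that the paper invokes density of simple functions in $\bscal{X}^{q,p}_+(Q_T)$ directly rather than discretizing $C_0^\infty(Q_T)^d$-functions in time, and records the $\bsv$-modular bound as $2$ rather than $1$ (two modulars, each $\leq 1$).
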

	
	\newpage
	\begin{proof}
		
		\textbf{1. Well-definedness:}
		Let $\bsu_1\in
		\bscal{X}^{q,p}_{\bfvarepsilon}(Q_T)\cap\bscal{Y}^\infty(Q_T)$ and $\bsu_2\in \bscal{X}^{q,p}_{\bfvarepsilon}(Q_T)$.~Since~the~set~of simple functions $\bscal{S}(I,X^{q,p}_+)$ (cf.~\cite[Kap. II, Def. 1.1]{Ru04}) is dense in $\bscal{X}^{q,p}_+(Q_T)$ (cf.~\cite[Kap.~II, Lem.~1.25~(i)]{Ru04}) and $C^\infty_0(Q_T)^d$ is dense in $\bscal{X}^{q,p}_{\bfvarepsilon}(Q_T)$ (cf.~Prop.~\ref{3.4.1}), $\bscal{S}(I,X^{q,p}_+)$~is~dense~in~$\bscal{X}^{q,p}_{\bfvarepsilon}(Q_T)$.\\[-1pt] Thus, there exist 
		$(\boldsymbol{s}^m_n)_{n\in\setN}\subseteq \bscal{S}(I,X^{q,p}_+)$, $m=1,2$, i.e.,  $\boldsymbol{s}^m_n(\cdot)=\sum_{i=1}^{k^m_n}{\textbf{s}^m_{n,i}\chi_{E^m_{n,i}}}(\cdot)$,~${m=1,2}$,~where\\[-1pt] $\textbf{s}^m_{n,i}\in X^{q,p}_+$, $k^m_n\in\setN$ and    $E^m_{n,i}\subseteq I$ is  measurable with $\bigcup_{i=1}^{k_n^m}{E_{n,i}^m}=I$ and~${E^m_{n,i}\cap E^m_{n,j}=\emptyset}$~for~${i\neq j}$, such that $\boldsymbol{s}^m_n\to\bsu_m$ in $\bscal{X}^{q,p}_{\bfvarepsilon}(Q_T)$ ${(n\to \infty)}$ for $m=1,2$. Hence, Corollary~\ref{3.5} yields subsequences $(\boldsymbol{s}^m_n)_{n\in\Lambda}$, $m=1,2$, with a cofinal\footnote{Let $(A,\leq )$ be a partially ordered set. Then,  $B\subseteq A$ is cofinal, if for every $a\in A$ there exists~a~${b\in B}$~with~${a\leq b}$.\vspace*{-1cm}} subset $\Lambda\subseteq \setN$, such that for $m=1,2$ it holds ${\boldsymbol{s}^m_n(t)\to \bsu_m(t)}$ in $X^{q,p}_{\bfvarepsilon}(t)$ ${(\Lambda\ni n\to \infty)}$ for almost every $t\in I$. 	Due to (\hyperlink{C.1}{C.1}), we infer from this that $\langle A(t)\boldsymbol{s}_n^1(t),\boldsymbol{s}_n^2(t)\rangle_{X^{q,p}_{\bfvarepsilon}(t)}\to \langle A(t)\bsu_1(t),\bsu_2(t)\rangle_{X^{q,p}_{\bfvarepsilon}(t)}$ $(\Lambda\ni n\to \infty)$ for almost every $t\in I$. Thus, since ${(t\mapsto\langle A(t)\boldsymbol{s}_n^1(t),\boldsymbol{s}_n^2(t)\rangle_{X^{q,p}_{\bfvarepsilon}(t)}):I\to \setR}$, $n\in \setN$, are Lebesgue~measurable,~because
		\begin{align*}
			\langle A(t)\boldsymbol{s}_n^1(t),\boldsymbol{s}_n^2(t)\rangle_{X^{q,p}_{\bfvarepsilon}(t)}=\sum_{i=1}^{k^1_n}{\sum_{j=1}^{k^2_n}{\langle A(t)\textbf{s}^1_{n,i},\textbf{s}^2_{n,j}\rangle_{X^{q,p}_{\bfvarepsilon}(t)}\chi_{E^1_{n,i}\cap E^2_{n,j}}(t)}}\quad\text{ for a.e. }t\in I,
		\end{align*}
		and the functions
		$(t\to\langle A(t)\textbf{s}^1_{n,i},\textbf{s}^2_{n,j}\rangle_{X^{q,p}_{\bfvarepsilon}(t)}):I\to \setR$, $i=1,...,k_n^1$, $j=1,...,k_n^2$, $n\in\setN$, 
		are Lebesgue measurable (cf.~(\hyperlink{C.2}{C.2})), we conclude that $(t\mapsto\langle A(t)\bsu_1(t),\bsu_2(t)\rangle_{X^{q,p}_{\bfvarepsilon}(t)}):I\to \setR$ is Lebesgue measurable. Hence,~we~can~inspect~the~function~${(t\mapsto\langle A(t)\bsu_1(t),\bsu_2(t)\rangle_{X^{q,p}_{\bfvarepsilon}(t)}):I\to \setR}$ for integrability. In doing so, using (\hyperlink{C.3}{C.3}), we obtain
		\begin{align}
		\int_I{\vert\langle A(t)\bsu_1(t),\bsu_2(t)\rangle_{X^{q,p}_{\bfvarepsilon}(t)}\vert\,dt}&\leq \mathscr{B}(\|\bsu_1\|_{\bscal{Y}^\infty(Q_T)})\big(\rho_{q(\cdot,\cdot)}(\bsu_1)+\rho_{p(\cdot,\cdot)}(\bfvarepsilon(\bsu_1))\big)\label{eq:7.1.1}
		\\[-2pt]&\quad+\mathscr{B}(\|\bsu_1\|_{\bscal{Y}^\infty(Q_T)})\big(\|\alpha\|_{L^1(I)}+\rho_{q(\cdot,\cdot)}(\bsu_2)+\rho_{p(\cdot,\cdot)}(\bfvarepsilon(\bsu_2))\big),\notag
		\end{align}
		i.e., $\bscal{A}:\bscal{X}^{q,p}_{\bfvarepsilon}(Q_T)\cap
		\bscal{Y}^\infty(Q_T)\to \bscal{X}^{q,p}_{\bfvarepsilon}(Q_T)^*$ is well-defined.
		
		\textbf{2. Boundedness:} Since $\|\bsu_2\|_{\bscal{X}^{q,p}_{\bfvarepsilon}(Q_T)}\leq 1$ implies $\rho_{q(\cdot,\cdot)}(\bsu_2)+\rho_{p(\cdot,\cdot)}(\bfvarepsilon(\bsu_2))\leq 2$ for every $\bsu_2\in \bscal{X}^{q,p}_{\bfvarepsilon}(Q_T)$, we infer from \eqref{eq:7.1.1} that for every $\bsu_1\in \bscal{X}^{q,p}_{\bfvarepsilon}(Q_T)\cap
		\bscal{Y}^\infty(Q_T)$ it holds
		\begin{align*}
		\|\bscal{A}\bsu_1\|_{\bscal{X}^{q,p}_{\bfvarepsilon}(Q_T)^*}&=\sup_{\substack{\bsu_2\in \bscal{X}^{q,p}_{\bfvarepsilon}(Q_T)\\\|\bsu_2\|_{\bscal{X}^{q,p}_{\bfvarepsilon}(Q_T)}\leq 1}}{\langle \bscal{A}\bsu_1,\bsu_2\rangle_{\bscal{X}^{q,p}_{\bfvarepsilon}(Q_T)}}\\&\leq 	\mathscr{B}(\|\bsu_1\|_{\bscal{Y}^\infty(Q_T)})\big(\|\alpha\|_{L^{1}(I)}+\rho_{q(\cdot,\cdot)}(\bsu_1)+\rho_{p(\cdot,\cdot)}(\bfvarepsilon(\bsu_1))+2\big),
		\end{align*}
		i.e., $\bscal{A}:\bscal{X}^{q,p}_{\bfvarepsilon}(Q_T)\cap
		\bscal{Y}^\infty(Q_T)\to \bscal{X}^{q,p}_{\bfvarepsilon}(Q_T)^*$ is bounded.
		\hfill$\qed$
	\end{proof}

	\begin{prop}\label{7.2}
	Let $q,p\in \mathcal{P}^{\log}(Q_T)$ with $p^->1$ and  $2\leq q\leq\max\{2,p_*-\vep\}$  in $Q_T$ for some $\vep\in (0,(p^-)_*-1]$. Moreover, let ${A(t):X^{q,p}_{\bfvarepsilon}(t)\to X^{q,p}_{\bfvarepsilon}(t)^*}$, $t\in I$,  be a family of operators fulfilling (\hyperlink{C.1}{C.1})--(\hyperlink{C.3}{C.3}) and additionally satisfying:
		\begin{description}[{\textbf{(C.6)}}]
			\item[\textbf{(C.4)}] \hypertarget{C.4}{} $A(t):X^{q,p}_{\bfvarepsilon}(t)\to X^{q,p}_{\bfvarepsilon}(t)^*$ is pseudo-monotone for almost every $t\in I$.
			\item[\textbf{(C.5)}] \hypertarget{C.5}{} There exist a constant $c_0>0$ and
			functions $c_1,c_2\in L^1(I,\setR_{\ge 0})$, such that  for almost every $t\in I$ and every $\bfu\in X^{q,p}_{\bfvarepsilon}(t)$ it holds
			\begin{align*}
			\langle A(t)\bfu,\bfu\rangle_{X^{q,p}_{\bfvarepsilon}(t)}\ge c_0\rho_{p(t,\cdot)}(\bfvarepsilon(\bfu))-c_1(t)\|\bfu\|_Y^2-c_2(t).
			\end{align*}
			\item[\textbf{(C.6)}] \hypertarget{C.6}{} There exist a function $\alpha\in L^1(I,\setR_{\ge 0})$, a non-decreasing function $\mathscr{B}:\setR_{\ge 0}\to \setR_{\ge 0}$ and  a function $c:\left(0,\vep_0\right)\to \setR_{\ge 0}$, where $\vep_0>0$,  such that  for almost every $t\in I$, every $\vep\in \left(0,\vep_0\right)$ and ${\bfu,\bfv\in X^{q,p}_{\bfvarepsilon}(t)}$ it holds
			\begin{align*}
			\vert\langle A(t)\bfu,\bfv\rangle_{X^{q,p}_{\bfvarepsilon}(t)}\vert\leq \mathscr{B}(\|\bfu\|_Y)\big(\alpha(t)+\vep\rho_{p(t,\cdot)}(\bfvarepsilon(\bfu))+c(\vep)\rho_{p(t,\cdot)}(\bfvarepsilon(\bfv))\big).
			\end{align*}
		\end{description}
		Then, the induced operator $\bscal{A}:\bscal{X}^{q,p}_{\bfvarepsilon}(Q_T)\cap
		\bscal{Y}^\infty(Q_T)\to\bscal{X}^{q,p}_{\bfvarepsilon}(Q_T)^*$ satisfies:
		\begin{description}[{(ii)}]
			\item[(i)] $\bscal{A}:\bscal{X}^{q,p}_{\bfvarepsilon}(Q_T)\cap
			\bscal{Y}^\infty(Q_T)\to\bscal{X}^{q,p}_{\bfvarepsilon}(Q_T)^*$ is Bochner pseudo-monotone.
			\item[(ii)]  $\bscal{A}:\bscal{X}^{q,p}_{\bfvarepsilon}(Q_T)\cap
			\bscal{Y}^\infty(Q_T)\to\bscal{X}^{q,p}_{\bfvarepsilon}(Q_T)^*$ is Bochner coercive with respect to all $\bfu_0\in Y$ and $\bscal{J}_{\bfvarepsilon}(\bsf,\bsF)\in \bscal{X}^{q,p}_{\bfvarepsilon}(Q_T)^*$, where $\bsf\in L^{\min\{2,(p^-)'\}}(Q_T)^d$ and $\bsF\in L^{p'(\cdot,\cdot)}(Q_T,\mathbb{M}_{\sym}^{d\times d})$.
		\end{description}
	\end{prop}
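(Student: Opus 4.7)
My plan is a Hirano--Landes reduction to the slice-wise properties of the family $A(t)$, combined with a modular Grönwall estimate for part (ii). Throughout I would use the fiber representation $(\bscal{A}\bsu)(t)=A(t)(\bsu(t))$ in $X^{q,p}_{\bfvarepsilon}(t)^*$ for a.e.\ $t\in I$ built into Proposition~\ref{7.1}, and rewrite $\bscal{X}^{q,p}_{\bfvarepsilon}(Q_T)$-dualities as fibered integrals via \eqref{eq:3.9.1}.

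For \textbf{(ii)}, take any $\bsu\in\bscal{X}^{q,p}_{\bfvarepsilon}(Q_T)\cap\bscal{Y}^\infty(Q_T)$ satisfying \eqref{eq:5.11} with $\bsu^*=\bscal{J}_{\bfvarepsilon}(\bsf,\bsF)$. Inserting the slice coercivity (\hyperlink{C.5}{C.5}) into \eqref{eq:5.11} yields, for a.e.\ $t\in I$,
\begin{align*}
\tfrac{1}{2}\|\bsu(t)\|_Y^2 + c_0\int_0^t\rho_{p(s,\cdot)}(\bfvarepsilon(\bsu(s)))\,ds \leq C_0 + \int_0^t c_1(s)\|\bsu(s)\|_Y^2\,ds + \langle\bsu^*,\bsu\chi_{[0,t]}\rangle_{\bscal{X}^{q,p}_{\bfvarepsilon}(Q_T)},
\end{align*}
with $C_0:=\tfrac{1}{2}\|\bfu_0\|_Y^2+\|c_2\|_{L^1(I)}$. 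The $\bsF$-piece of the last duality is estimated via variable-exponent Hölder and $\vep$-Young and absorbed into $\tfrac{c_0}{2}\int_0^t\rho_{p(s,\cdot)}(\bfvarepsilon(\bsu(s)))\,ds$. The $\bsf$-piece splits into cases: when $p^-\leq 2$, $\bsf\in L^2(Q_T)^d$ and Cauchy--Schwarz together with $\bsu\in\bscal{Y}^\infty(Q_T)$ feeds a term $\int_0^t\|\bsf(s)\|_{L^2}\|\bsu(s)\|_Y\,ds$ into Grönwall; when $p^-\geq 2$, $\bsf\in L^{(p^-)'}(Q_T)^d$ and Hölder combined with \eqref{eq:W1} transfer the $\bsu$-factor to $\|\bfvarepsilon(\bsu)\|_{L^{p(\cdot,\cdot)}(Q_T)^{d\times d}}$, again absorbable. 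What remains is a Grönwall inequality with $L^1(I)$-weight $c_1$, yielding $\|\bsu\|_{\bscal{Y}^\infty(Q_T)}\leq C$ and, upon reinsertion, $\|\bfvarepsilon(\bsu)\|_{L^{p(\cdot,\cdot)}(Q_T)^{d\times d}}\leq C$. Proposition~\ref{5.10} with \eqref{embed} and the hypothesis $q\leq\max\{2,p_*-\vep\}$ then delivers $\|\bsu\|_{L^{q(\cdot,\cdot)}(Q_T)^d}\leq C$, whence $\|\bsu\|_{\bscal{X}^{q,p}_{\bfvarepsilon}(Q_T)\cap\bscal{Y}^\infty(Q_T)}\leq M$.

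For \textbf{(i)}, given $(\bsu_n)$ satisfying \eqref{eq:6.1.a}--\eqref{eq:6.1.d}, Proposition~\ref{5.14} yields a subsequence with $\bsu_n\to\bsu$ in $L^{\max\{2,p_*(\cdot,\cdot)\}-\vep}(Q_T)^d$ and a.e.\ in $Q_T$. Writing the duality as a fibered integral, the slice bound (\hyperlink{C.6}{C.6}) with $\vep$-Young plus the $\bscal{X}^{q,p}_{\bfvarepsilon}(Q_T)$- and $\bscal{Y}^\infty(Q_T)$-boundedness of $(\bsu_n)$ supplies an $n$-uniform $L^1(I)$-integrable lower bound for the slice integrand $s\mapsto\langle A(s)\bsu_n(s),\bsu_n(s)-\bsv(s)\rangle_{X^{q,p}_{\bfvarepsilon}(s)}$, making Fatou applicable. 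Since $s\mapsto\rho_{p(s,\cdot)}(\bfvarepsilon(\bsu_n(s)))$ and $s\mapsto\rho_{q(s,\cdot)}(\bsu_n(s))$ are uniformly $L^1(I)$-bounded, for a.e.\ $t$ I can extract a $t$-dependent subsequence along which $\|\bsu_n(t)\|_{X^{q,p}_{\bfvarepsilon}(t)}$ stays bounded; combined with \eqref{eq:6.1.c} and the dual characterization Corollary~\ref{3.8}, this identifies the weak limit and yields $\bsu_n(t)\rightharpoonup\bsu(t)$ in $X^{q,p}_{\bfvarepsilon}(t)$ a.e. The integral assumption \eqref{eq:6.1.d}, reused with the Fatou lower bound, forces $\limsup_n\langle A(t)\bsu_n(t),\bsu_n(t)-\bsu(t)\rangle_{X^{q,p}_{\bfvarepsilon}(t)}\leq 0$ a.e.; slice-wise pseudo-monotonicity (\hyperlink{C.4}{C.4}) then delivers the pointwise inequality $\liminf_n\langle A(t)\bsu_n(t),\bsu_n(t)-\bsv(t)\rangle\geq\langle A(t)\bsu(t),\bsu(t)-\bsv(t)\rangle$ a.e., and integrating with Fatou concludes.

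The main obstacle is arranging, on a single set of full measure in $I$, the compatibility of the slice-wise weak $X^{q,p}_{\bfvarepsilon}(t)$-convergence with the slice-wise ``$\limsup\leq 0$'': neither property descends automatically from the weak $\bscal{X}^{q,p}_{\bfvarepsilon}(Q_T)$-convergence, and the $t$-dependence of the slice space $X^{q,p}_{\bfvarepsilon}(t)$ precludes a verbatim transcription of \cite{KR19,K19}. I expect this step to combine Egorov-type extractions with subsequence arguments tracking both modulars simultaneously, so that both properties are secured on the same set of full measure before invoking (\hyperlink{C.4}{C.4}).
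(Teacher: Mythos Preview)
Your argument for part \textbf{(ii)} is essentially the paper's: insert (\hyperlink{C.5}{C.5}), absorb the $\bsF$-term by a pointwise $\vep$-Young inequality, control the $\bsf$-term via $L^{\max\{2,p^-\}}$, apply Gr\"onwall, and finish with Proposition~\ref{5.10}. The paper treats both regimes of $p^-$ at once by working with the exponent $\nu=\max\{2,p^-\}$ and using Poincar\'e--Korn in $L^{p^-}$ (cf.~\eqref{eq:W1}) to convert $\rho_{p^-}(\bsu(s))$ into $\rho_{p(s,\cdot)}(\bfvarepsilon(\bsu(s)))$, but your case split is equivalent.

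Part \textbf{(i)}, however, contains a genuine gap at exactly the step you flag. The implication ``$\limsup_n\int_I h_n\leq 0$ and $h_n\ge -\mu\in L^1$ force $\limsup_n h_n(t)\leq 0$ a.e.'' is \emph{false}: a typewriter-type sequence $h_n$ oscillating between a fixed positive value on a shrinking moving interval and a small negative value elsewhere satisfies $\int_I h_n\to 0$ and $h_n\ge -1$, yet $\limsup_n h_n(t)>0$ for every $t$. So Fatou plus the integral bound does not deliver the slice-wise $\limsup$ condition you need to trigger (\hyperlink{C.4}{C.4}). Your proposed fix via Egorov-type extractions does not address this; the difficulty is not about uniformity of convergence but about the \emph{direction} of the inequality you can extract pointwise from an integral bound.

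The paper's resolution (packaged as Lemma~\ref{7.3}) is a two-pass bootstrap. \emph{First pass:} for each fixed $t$, restrict to the index set $\Lambda_t:=\{n:h_n(t)<0\}$; on $\Lambda_t$ the lower bound $h_n(t)\ge \mathscr{K}(t)(\bsu_n(t))-\mu(t)$, built from (\hyperlink{C.5}{C.5}) and (\hyperlink{C.6}{C.6}), forces $\sup_{n\in\Lambda_t}\mathscr{K}(t)(\bsu_n(t))<\infty$, hence $X^{q,p}_{\bfvarepsilon}(t)$-boundedness, hence (with \eqref{eq:6.1.c}) weak convergence in $X^{q,p}_{\bfvarepsilon}(t)$ along $\Lambda_t$; pseudo-monotonicity then gives $\liminf_{\Lambda_t\ni n} h_n(t)\ge 0$, and since $h_n(t)\ge 0$ on $\Lambda_t^c$ one concludes $\liminf_n h_n(t)\ge 0$ for a.e.\ $t$. \emph{Second pass:} now Fatou yields $0\le\int\liminf h_n\le\limsup\int h_n\le 0$, so $\int h_n\to 0$; since $h_n^-\to 0$ a.e.\ and $|h_n^-|\le\mu$, dominated convergence gives $h_n^-\to 0$ in $L^1$, whence $h_n\to 0$ in $L^1$. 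Extract a single subsequence with $h_n(t)\to 0$ a.e.; along it the same coercivity bound now gives slice boundedness and slice weak convergence on one common full-measure set, and (\hyperlink{C.4}{C.4}) applies pointwise. Note that the paper never invokes Proposition~\ref{5.14}; the compactness there is a red herring for this argument, since all that is needed at the slice level is \eqref{eq:6.1.c} together with the a~posteriori $X^{q,p}_{\bfvarepsilon}(t)$-boundedness produced by the bootstrap.
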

	
	\newpage
	The core of the Hirano--Landes approach, which is also an essential ingredient in the proof of Proposition~\ref{7.2} (i), forms the following abstract lemma. In fact, it is formulated in a more abstract manner as actually necessary for our approaches. The reason for this is that the author conjectures that the following lemma and an adapted version of Proposition~\ref{7.2}, might also have impact in FSI  problems in this general form.
	
	\begin{lem}[Hirano--Landes]\label{7.3}
		Let $(X(t),Y(t))$, $t\in I$, be a family of compatible couples\footnote{Two Banach spaces $X$, $Y$ form a compatible couple, if the exists a Hausdorff vector space $Z$, such that ${X,Y\embedding Z}$. Then, the $X\cap Y$ in $Z$ is well-defined and with the norm $\|\cdot\|_{X\cap Y}:=\|\cdot\|_X+\|\cdot\|_Y$~a~Banach~space.}, ${A(t):X(t)\cap Y(t)\to (X(t)\cap Y(t))^*}$, $t\in I$, a family of operators, ${(\bsu_n(t))_{n\in\setN}\subseteq
			X(t)\cap Y(t)}$,~${t\!\in\! I}$, families of sequences, $\bsu(t)\in X(t)\cap Y(t)$, $t\in I$, a family of elements and $\mu\in L^1(I)$. Moreover, we assume that for almost every $t\in I$
		the following time slice conditions are satisfied:
		\begin{description}[{(iii)}]
			\item[(i)] $\bsu_n(t)\weakto
			\bsu(t)\text{ in }Y(t)$ $(n\to \infty)$.
			\item[(ii)] $A(t):X(t)\cap Y(t)\to (X(t)\cap Y(t))^*$ is pseudo-monotone and $X(t)$ reflexive.
			\item[(iii)] $\langle A(t)(\bsu_n(t)),\bsu_n(t)-\bsu(t)\rangle_{X(t)\cap Y(t)}\ge \mathscr{K}(t)(\bsu_n(t))-\mu(t)$ for all $n\in \setN$, where the mapping $\mathscr{K}(t):X(t)\to\setR_{\ge 0}$ is weakly coercive, i.e., from $\|x\|_{X(t)}\to\infty$ it follows that $\mathscr{K}(t)(x)\to \infty$.
		\end{description}
		Suppose that for $(h_n(t))_{n\in\setN}:=(\langle A(t)(\bsu_n(t)),\bsu_n(t)-\bsu(t)\rangle_{X(t)\cap Y(t)})_{n\in\setN}$, $t\in I$, the following integrability conditions hold:
		\begin{description}
			\item[(iv)] $(h_n)_{n\in\setN}\subseteq L^1(I)$ with $\limsup_{n\to\infty}{\int_I{h_n(s)\,ds}}\leq 0$.
		\end{description}
		Then, there exists a cofinal subset $\Lambda\subseteq \setN$ (not depending on $t\in I$), such that for almost~every~${t\!\in\! I}$ and  every $v\in X(t)\cap Y(t)$ there holds
		\begin{align*}
		\langle A(t)(\bsu(t)),\bsu(t)-v\rangle_{X(t)\cap Y(t)}\leq	\liminf_{\Lambda\ni n\to\infty}{\langle A(t)(\bsu_n(t)),\bsu_n(t)-v\rangle_{X(t)\cap Y(t)}}.
		\end{align*}
	\end{lem}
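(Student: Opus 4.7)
The plan is to reduce the pointwise conclusion to the time-slice pseudo-monotonicity of $A(t)$ from hypothesis (ii), by extracting a single cofinal subsequence $\Lambda\subseteq \setN$ along which the hypotheses of pseudo-monotonicity are simultaneously verified for almost every $t\in I$. Throughout, set $h_n(t):=\langle A(t)(\bsu_n(t)),\bsu_n(t)-\bsu(t)\rangle_{X(t)\cap Y(t)}$, as in (iv). From (iii) and the non-negativity of $\mathscr{K}(t)$ we have $h_n(t)+\mu(t)\ge \mathscr{K}(t)(\bsu_n(t))\ge 0$ almost everywhere in $I$, so $(h_n+\mu)_{n\in \setN}$ is a non-negative sequence in $L^1(I)$. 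Combined with (iv), Fatou's lemma yields $\int_I \liminf_{n\to\infty}h_n\,dt\leq 0$.

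The core technical step is to extract, by a Chacon biting-type diagonalization applied to the non-negative $L^1$-sequence $(h_n+\mu)_{n\in\setN}$, a cofinal subsequence $\Lambda\subseteq \setN$ (independent of $t$) such that $\limsup_{\Lambda\ni n\to\infty}h_n(t)\leq 0$ for almost every $t\in I$. Once this is in hand, (iii) together with weak coercivity of $\mathscr{K}(t)$ shows that $(\bsu_n(t))_{n\in\Lambda}$ is bounded in $X(t)$ for almost every such $t$.

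Fix a good $t\in I$. By reflexivity of $X(t)$ from (ii), some sub-subsequence of $(\bsu_n(t))_{n\in\Lambda}$ converges weakly in $X(t)$; the compatible couple structure, together with the $Y(t)$-weak convergence from (i), forces this $X(t)$-weak limit to coincide with $\bsu(t)$. A standard subsequence-of-subsequence argument then gives $\bsu_n(t)\weakto \bsu(t)$ in $X(t)\cap Y(t)$ along the entire $\Lambda$. Pseudo-monotonicity of $A(t)$ from (ii), applied with the $\limsup$-control from the previous paragraph, finally produces
\begin{align*}
\langle A(t)(\bsu(t)),\bsu(t)-v\rangle_{X(t)\cap Y(t)}\leq \liminf_{\Lambda\ni n\to\infty}\langle A(t)(\bsu_n(t)),\bsu_n(t)-v\rangle_{X(t)\cap Y(t)}
\end{align*}
for every $v\in X(t)\cap Y(t)$, which is the claim.

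The main obstacle is the universal-in-$t$ subsequence extraction: Fatou alone delivers only $\int\liminf h_n\,dt\leq 0$, whereas pseudo-monotonicity requires the sharper $\limsup h_n(t)\leq 0$ pointwise and along a single $t$-independent subsequence. The Hirano--Landes device overcomes this by exploiting the sign structure $h_n+\mu\ge 0$ (a consequence of $\mathscr{K}(t)\ge 0$ in (iii)) to run a biting argument on common "good sets" exhausting $I$, thereby converting integral control into pointwise $\limsup$-control along a common $\Lambda$. All remaining bookkeeping — boundedness in $X(t)$, reflexive weak extraction, identification of the weak limit via the compatible couple, and the final invocation of pseudo-monotonicity — is routine once $\Lambda$ has been fixed.
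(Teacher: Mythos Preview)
Your extraction step is where the argument breaks. You claim that a ``Chacon biting-type diagonalization'' on the non-negative sequence $(h_n+\mu)_{n\in\setN}$ yields a cofinal $\Lambda$ with $\limsup_{\Lambda\ni n\to\infty}h_n(t)\leq 0$ almost everywhere. This is false at the level of generality you are invoking: take $I=[0,1]$, $\mu\equiv 1$, and $h_n=r_n$ the Rademacher functions. Then $h_n+\mu\ge 0$ and $\int_I h_n=0$ for all $n$, yet along \emph{every} subsequence one has $\limsup r_n(t)=1$ for a.e.\ $t$. The biting lemma delivers only weak-$L^1$ compactness on large sets, never pointwise $\limsup$ control; the sign structure and the integral bound alone are not enough.

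The paper's proof avoids this by using the pseudo-monotonicity structure \emph{twice}, and the first use is precisely what you are missing. Before any common subsequence is extracted, one shows $\liminf_{n\to\infty}h_n(t)\ge 0$ for a.e.\ $t$: on the (possibly $t$-dependent) index set $\Lambda_t=\{n:h_n(t)<0\}$ condition (iii) gives $\mathscr{K}(t)(\bsu_n(t))\leq\mu(t)$, hence $X(t)$-boundedness, hence --- via reflexivity, (i), and pseudo-monotonicity of $A(t)$ --- $\liminf_{\Lambda_t\ni n}h_n(t)\ge 0$; combined with $h_n(t)\ge 0$ on $\Lambda_t^c$ this yields $\liminf_n h_n(t)\ge 0$. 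Only \emph{then} does Fatou (with the minorant $-\mu$) together with (iv) force $\int_I h_n\to 0$; since $\liminf_n h_n\ge 0$ implies $h_n^-\to 0$ a.e.\ and $0\ge h_n^-\ge -\mu$ allows Vitali, one gets $h_n^-\to 0$ in $L^1(I)$ and thus $h_n=|h_n|+2h_n^-\to 0$ in $L^1(I)$. From this one extracts the common $\Lambda$ with $h_n(t)\to 0$ a.e. Your final paragraph --- boundedness in $X(t)$, weak convergence identification, second invocation of pseudo-monotonicity --- is correct, but it rests on a $\Lambda$ that your proposed biting argument cannot produce.
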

	
	\begin{proof} We define $\mathcal{S}:=
		\{t\in I \mid \textbf{(i)}-\textbf{(iv)}\text{ and }\vert\mu(t)\vert<\infty\text{ holds for }t\}$. Apparently, $I\setminus\mathcal{S}$~is~a~null~set. Our first objective is to verify for every $t\in\mathcal{S}$ that
		\begin{align}
			\liminf_{n\to\infty}
			{h_n(t)}\ge 0.\tag*{$(\ast )_{t}$}
		\end{align}
		To this end, let us fix an arbitrary
		$t\in\mathcal{S}$ and define the set $\Lambda_t:=\{n\in\setN\mid
		h_n(t)< 0\}$.
		We assume without loss of generality that $\Lambda_t$
		is not finite. Otherwise, $(\ast )_{t}$ would already hold true
		for this specific $t\in\mathcal{S}$ and nothing would be left to
		do. But if $\Lambda_t$ is not finite, then 
		\begin{align}
			\limsup_{\Lambda_t\ni n\to\infty}
			{h_n(t)}\leq 0.\label{eq:7.3.1}
		\end{align}
		Using the definition of $\Lambda_t$ in \textbf{(iii)}, we obtain $\sup_{n\in \Lambda_t}{\mathscr{K}(t)(\bsu_n(t))}\leq\vert\mu(t)\vert
		<\infty$,
		i.e., $(\bsu_n(t))_{n\in \Lambda_t}$ is bounded in $X(t)$, because $\mathscr{K}:X(t)\to\setR_{\ge 0}$ is weakly coercive (cf.~\textbf{(iii)}). Therefore, as $X(t)$ is reflexive (cf.~\textbf{(ii)}) and due to \textbf{(i)}, the sequence $(\bsu_n(t))_{n\in \Lambda_t}\subseteq X(t)\cap Y(t)$ is sequentially weakly compact in $X(t)\cap Y(t)$. \textbf{(i)} and the injectivity of the embedding ${X(t)\cap Y(t)\embedding Y(t)}$ further yields that
		$\bsu(t)\in X(t)\cap Y(t)$ is the only weak accumulation point  of ${(\bsu_n(t))_{n\in \Lambda_t}}$ in $ X(t)\cap Y(t)$. As a result, from the standard convergence principle (cf.~\cite[Kap. I, Lem. 5.4]{GGZ74}),~we~conclude~that
		\begin{align}
			\bsu_n(t)\overset{n\to\infty}{\weakto}
			\bsu(t)\quad\text{ in }X(t)\cap Y(t)\quad(n\in \Lambda_t).\label{eq:7.3.3}
		\end{align}
		Due to \eqref{eq:7.3.1} and \eqref{eq:7.3.3}, the pseudo-monotonicity of $A(t):X(t)\cap Y(t)\to (X(t)\cap Y(t))^*$ (cf.~\textbf{(ii)})
		yields $\liminf_{\Lambda_t\ni n\to\infty}
		{h_n(t)}\ge 0$. Since
		$h_n(t)\ge 0$ for every
		$n\in\Lambda_t^c$, we conclude~$(\ast)_t$~for~every~${t\in\mathcal{S}}$. Thanks to
		$h_n(t)\ge -\mu(t)$ for every $t\in\mathcal{S}$ and 
		$n\in\setN$ (cf.~\textbf{(iii)}), a generalized version of Fatou's lemma (cf.~\cite[Theorem 1.18]{Rou05}) is
		applicable. It yields, also using $(\ast )_{t}$ and \textbf{(iv)}, that
		\begin{align}
			\begin{split}
				0\leq
				\int_I{\liminf_{n\to\infty}
					{h_n(s)\,ds}}\leq
				\liminf_{n\to\infty}{\int_I{h_n(s)\,ds}}\leq
				\limsup_{n\rightarrow\infty}{\int_I{h_n(s)\,ds}}\leq  0,
			\end{split}\label{eq:7.3.4}
		\end{align}
		\pagebreak
		i.e., $\lim_{n\to\infty}{\int_I{h_n(s)\,ds}}=0$. Since $(s\mapsto s^-:=\min\{0,s\}):\setR\to \setR_{\leq 0}$ is continuous and non-decreasing, we
		deduce from $(\ast )_{t}$ that ${0\ge\limsup_{n\to\infty}
		{h_n(t)^-}\ge \min\{0,
		\liminf_{n\to\infty}{h_n(t)}\}=0}$,
		i.e., $h_n(t)^-\to 0$ $(n\to\infty)$, for every $t\in \mathcal{S}$. Because of
		$0\ge h_n(t)^-\ge -\mu(t)$ for every $t\in\mathcal{S}$, Vitali's theorem yields
		$h_n^-\to 0$ in $L^1(I)$ $(n\to\infty)$.  From
		the latter, $\vert h_n\vert=h_n-2h_n^-$ in $L^1(I)$ for every $n\in \setN$ and $\lim_{n\to\infty}{\int_I{h_n(s)\,ds}}=0$, we
		conclude that $h_n\to 0$ in
		$L^1(I)$ $(n\to\infty)$. Thus, there exists  a cofinal subset $\Lambda\subseteq\setN$ (not depending on $t\in I$), such
		that for almost every $t\in I$
		\begin{align}
			\lim_{\Lambda\ni n\to\infty}{h_n(t)}= 0.\label{eq:7.3.5}
		\end{align}
		Using \eqref{eq:7.3.5} in \textbf{(iii)}, we obtain ${\limsup_{\Lambda \ni n\to\infty}
		{\mathscr{K}(t)(\bsu_n(t))}\leq\vert\mu(t)\vert<\infty}$~for~almost~every~${t \in I}$, 
		which leads as before for almost every $t\in I$ to 
		\begin{align}
			\bsu_n(t)\overset{n\to\infty}
			{\weakto}\bsu(t)\quad\text{ in }
			X(t)\cap Y(t)\quad(n\in \Lambda).\label{eq:7.3.6}
		\end{align} 
		From \eqref{eq:7.3.5}, \eqref{eq:7.3.6} and \textbf{(ii)}, we finally conclude for almost every $t\in I$ and every  $v\in X(t)\cap  Y(t)$
		\begin{align*}
			\langle A(t)(\bsu(t)),\bsu(t)-
			v\rangle_{X(t)\cap Y(t)}\leq
			\liminf_{\Lambda\ni n\to\infty}
			{\langle A(t)(\bsu_n(t)),\bsu_n(t)
				-v\rangle_{X(t)\cap Y(t)}}.\tag*{$\qed$}
		\end{align*}
	\end{proof}

	\begin{proof} (of Proposition~\ref{7.2}) 	\textbf{ad (i)} Let
		$(\bsu_n)_{n\in\setN}\subseteq
		\bscal{X}^{q,p}_{\bfvarepsilon }(Q_T)\cap
		\bscal{Y}^\infty(Q_T)$ be a sequence satisfying \eqref{eq:6.1.a}--\eqref{eq:6.1.d}. From the boundedness of $\bscal{A}:\bscal{X}_{\bfvarepsilon }^{q,p}(Q_T)\cap
		\bscal{Y}^\infty(Q_T)\to\bscal{X}^{q,p}_{\bfvarepsilon }(Q_T)^*$ (cf.~Proposition~\ref{7.1}) and reflexivity of $\bscal{X}^{q,p}_{\bfvarepsilon }(Q_T)^*$ (cf.~Proposition~\ref{3.4}) we obtain a subsequence
		$(\bsu_n)_{n\in\Lambda}$, with
		${\Lambda\subseteq\setN}$, and a weak limit $\bsu^*\in\bscal{X}^{q,p}_{\bfvarepsilon }(Q_T)^*$, such that
		${\bscal{A}\bsu_n\weakto\bsu^*\text{ in }\bscal{X}^{q,p}_{\bfvarepsilon }(Q_T)^*}$~$(\Lambda\ni n\to \infty)$~and $\langle \bscal{A}\bsu_n,\bsu_n\rangle_{\bscal{X}^{q,p}_{\bfvarepsilon }(Q_T)}\to
		\liminf_{n\to\infty}{\langle\bscal{A}
			\bsu_n,\bsu_n\rangle_{\bscal{X}^{q,p}_{\bfvarepsilon }(Q_T)}}$ $(\Lambda\ni n\to\infty)$, which implies that for every ${\bsv\in \bscal{X}^{q,p}_{\bfvarepsilon }(Q_T)}$ there holds 
		\begin{align}
			\lim_{\Lambda\ni n\to\infty}{
				\langle \bscal{A}\bsu_n,\bsu_n-\bsv\rangle_{\bscal{X}_{\bfvarepsilon }^{q,p}(Q_T)}}\leq
			\liminf_{n\to\infty}{\langle\bscal{A}
				\bsu_n,\bsu_n-\bsv
				\rangle_{\bscal{X}_{\bfvarepsilon }^{q,p}(Q_T)}}.\label{eq:7.2.1}
		\end{align}
		We intend to apply Lemma~\ref{7.3}, with compatible couples $(X(t),Y(t))\!=\!(X^{q,p}_{\bfvarepsilon}(t),Y)$, $t\in I$, and operator family $A(t):X^{q,p}_{\bfvarepsilon}(t)\to X^{q,p}_{\bfvarepsilon}(t)^*$, $t\in I$, whereby we exploit that ${X^{q,p}_{\bfvarepsilon}(t)=X^{q,p}_{\bfvarepsilon}(t)\cap Y}$  with norm equivalence for every ${t\in I}$ due to $q\ge 2$ in $Q_T$. It is not difficult to see that the conditions \textbf{(i)}, \textbf{(ii)} and \textbf{(iv)} of Lemma~\ref{7.3} are already satisfied.~So,~let~us~check~\textbf{(iii)}. For each $\bsv\in \bscal{X}^{q,p}_{\bfvarepsilon }(Q_T)\cap
		\bscal{Y}^\infty(Q_T)$, we define  ${K_{\bsv}:=\sup_{n\in \setN}{\|\bsu_n\|_{\bscal{Y}^\infty(Q_T)}}+\|\bsv\|_{\bscal{Y}^\infty(Q_T)}<\infty}$~(cf.~\eqref{eq:6.1.b}), 
		${\vep_{\bsv}:=\min\{\frac{c_0}{2\mathscr{B}(K_{\bsv})},\frac{\vep_0}{2}\}}$ and $\mu_{\bsv}(t):=(1+c_1(t))K_{\bsv}^2+c_2(t)+\mathscr{B}(K_{\bsv})(\alpha(t)+c(\vep_{\bsv})\rho_{p(t,\cdot)}(\bfvarepsilon(\bsv(t))))\in L^1(I)$, use the conditions (\hyperlink{C.5}{C.5}) and (\hyperlink{C.6}{C.6}), to obtain for almost every $t\in I$ and every $n\in\Lambda$
		\begin{align}
			\langle
			A(t)(\bsu_n(t)),\bsu_n(t)-
			\bsv(t)\rangle_{X^{q,p}_{\bfvarepsilon}(t)}\ge
			\frac{c_0}{2}\rho_{p(t,\cdot)}(\bfvarepsilon(\bsu_n(t)))+\|\bsu_n(t)\|_Y^2
			-\mu_{\bsv}(t).\label{eq:7.2.2}
		\end{align}
		Due to the weak coercivity of the mappings ${\mathscr{K}(t):=\frac{c_0}{2}\rho_{p(t,\cdot)}(\bfvarepsilon(\cdot))+\|\cdot\|_Y^2:X^{q,p}_{\bfvarepsilon}(t)\to \setR_{\ge 0}}$,~${t\!\in\! I}$, which 
		is basically a consequence of the norm equivalence ${\|\cdot\|_{X^{2,p}_{\boldsymbol{\varepsilon}}(t)}\sim \|\cdot\|_{X^{q,p}_{\boldsymbol{\varepsilon}}(t)}}$~for~every~${t\in I}$, that results from  Lemma~\ref{5.11} and $2\leq q\leq \max\{2,p_*-\vep\}$ in $Q_T$, \eqref{eq:7.2.2}~yields~just~condition~\textbf{(iii)}. Hence, Lemma~\ref{7.3} is applicable and provides a cofinal subset $\Lambda_0\subseteq \Lambda$ (not depending on $t\in I$), such that for every  $\bsv\in \bscal{X}^{q,p}_{\bfvarepsilon }(Q_T)\cap
		\bscal{Y}^\infty(Q_T)$ and almost every ${t\in I}$ there holds
		\begin{align}
			\langle A(t)(\bsu(t)),\bsu(t)-\bsv(t)\rangle_{X^{q,p}_{\bfvarepsilon}(t)}\leq\liminf_{\Lambda_0\ni n\to\infty}{\langle A(t)(\bsu_n(t)),\bsu_n(t)-\bsv(t)\rangle_{X^{q,p}_{\bfvarepsilon}(t)}}.\label{eq:7.2.3}
		\end{align}
		We integrate \eqref{eq:7.2.3} with respect to $t\in I$, apply a generalized version of Fatou's lemma (cf.~\cite[Theorem 1.18]{Rou05}), which is allowed since $\langle A(t)(\bsu_n(t)),\bsu_n(t)-\bsv(t)\rangle_{X^{q,p}_{\bfvarepsilon}(t)}\ge -\mu_{\bsv}(t)$ for almost every $t\in I$ and all $n\in \Lambda$ (cf.~\eqref{eq:7.2.2}), and exploit \eqref{eq:7.2.1},~to~arrive~for~every~${\bsv\in \bscal{X}^{q,p}_{\bfvarepsilon }(Q_T)\cap \bscal{Y}^\infty(Q_T)}$~at
		\begin{align}
				\langle \bscal{A}\bsu,\bsu
				-\bsv\rangle_{\bscal{X}^{q,p}_{\bfvarepsilon }(Q_T)}
				&\leq
				\int_I{\liminf_{\Lambda_0\ni n\to\infty}
					{\langle A(s)(\bsu_n(s)),\bsu_n(s)
						-\bsv(s)\rangle_{X^{q,p}_{\bfvarepsilon}(s)}}\,ds}\notag
				\\
				&\leq \liminf_{\Lambda_0\ni n\to\infty}
				{\int_I{\langle A(s)(\bsu_n(s)),\bsu_n(s)
						-\bsv(s)\rangle_{X^{q,p}_{\bfvarepsilon}(s)}\,ds}}\label{eq:7.2.5.1}
				\\
				&=\lim_{\Lambda\ni n\to\infty}
				{\langle \bscal{A}\bsu_n,\bsu_n
					-\bsv\rangle_{\bscal{X}^{q,p}_{\bfvarepsilon }(Q_T)}}
				\leq \liminf_{n\to\infty}{\langle \bscal{A}
					\bsu_n,\bsu_n-\bsv\rangle_{\bscal{X}^{q,p}_{\bfvarepsilon }(Q_T)}}.\notag
		\end{align}
		As $C^\infty_{0}(Q_T)^d\subseteq \bscal{X}^{q,p}_{\bfvarepsilon }(Q_T)\cap
		\bscal{Y}^\infty(Q_T)$ is dense in $\bscal{X}^{q,p}_{\bfvarepsilon }(Q_T)$ (cf.~Proposition~\ref{3.4.1}), \eqref{eq:7.2.5.1} holds for every ${\bsv\in \bscal{X}^{q,p}_{\bfvarepsilon }(Q_T)}$, i.e., $\bscal{A}:\bscal{X}^{q,p}_{\bfvarepsilon }(Q_T)\cap
		\bscal{Y}^\infty(Q_T)\to \bscal{X}^{q,p}_{\bfvarepsilon }(Q_T)^*$ is Bochner pseudo-monotone.\vspace*{-0.75cm}
		
		\newpage
		\textbf{ad (ii)} 
		Let $\bsf\in L^{\nu'}(Q_T)^d$, where $\nu:=\max\{2,p^-\}$, $\bsF\in L^{p'(\cdot,\cdot)}(Q_T,\mathbb{M}_{\sym}^{d\times d})$ and $\bfu_0\in Y$ be arbitrary and assume that
		$\bsu\in\bscal{X}^{q,p}_{\bfvarepsilon }(Q_T)\cap
		\bscal{Y}^\infty(Q_T)$
		satisfies for almost every $t\in I$
		\begin{align}
			\frac{1}{2}\|\bsu(t)\|_Y^2+\int_0^t{\langle
				A(s)(\bsu(s))-\mathcal{J}_{\bfvarepsilon}(\bsf(s),\bsF(s)),\bsu(s)\rangle_{X^{q,p}_{\bfvarepsilon}(s)}\,ds}\leq \frac{1}{2}\|\bfu_0\|_Y^2.\label{eq:7.2.4} 
		\end{align}\\[-5pt]
		Using (\hyperlink{C.5}{C.5}) and \eqref{eq:3.9.1} in \eqref{eq:7.2.4}, we get for almost every $t\in I$
		\begin{align}
			\begin{split}
				\frac{1}{2}\|\bsu(t)\|_Y^2&+c_0\int_{0}^{t}{\rho_{p(s,\cdot)}(\bfvarepsilon(\bsu)(s))\,ds}\leq
				\frac{1}{2}\|\bfu_0\|_Y^2+ \|c_2\|_{L^1(I)}+\int_0^t{\vert
					c_1(s)\vert\|\bsu(s)\|_Y^2\,ds}\\&\quad+\int_0^t{\int_{\Omega}{\vert\bsf(s,y)\vert\vert\bsu(s,y)\vert+\vert\bsF(s,y)\vert\vert\bfvarepsilon(\bsu)(s,y)\vert\,dy}\,ds}.
			\end{split}\label{eq:7.2.5} 
		\end{align}
		Applying the weighted $\vep$--Young inequality with respect to the exponent $\nu$, with the constant $c_\nu(\vep):=(\nu\vep)^{1-\nu'}(\nu')^{-1}$ for every $\vep>0$, on the first summand in the last intergral in \eqref{eq:7.2.5} and pointwise for almost every  $(t,x)^\top\in Q_T$
		with respect to the exponent $p(t,x)$, with the constant $c_{p(t,x)}(\vep):=(p(t,x)\vep)^{1-p'(t,x)}(p'(t,x))^{-1}$ for every $\vep \in (0,(p^-)^{-1})$, on the second summand, also\\[-1pt] using that $c_{p(t,x)}(\vep)\leq c_p(\vep):=(p^-\vep)^{1-(p^-)'}((p^+)')^{-1}$ for every $\vep \in (0,(p^-)^{-1})$ and $(t,x)^\top\in Q_T$,  we deduce for every $t\in \overline{I}$
		\begin{align}
			\begin{split}
				\int_0^t\int_{\Omega}\;\vert\bsf(s,y)\vert\vert\bsu(s,y)\vert&+\vert\bsF(s,y)\vert\vert\bfvarepsilon(\bsu)(s,y)\vert\,dy\,ds\leq c_\nu(\vep)\rho_{\nu'}(\bsf)+ c_p(\vep)\rho_{p'(\cdot,\cdot)}(\bsF)\\&\quad+\vep	\int_0^t{\rho_\nu(\bsu(s))+\rho_{p(s,\cdot)}(\bfvarepsilon(\bsu)(s))\,ds}.
			\end{split}
			\label{eq:7.2.6}
		\end{align}
		Then, using that $\rho_\nu(\bsu(t))\leq \|\bsu(t)\|_Y^2+\rho_{p^-}(\bsu(t))$  for almost every $t\in I$ and  additionally that $\rho_{p^-}(\bsu(t))\leq c_{p^-}\rho_{p^-}(\bfvarepsilon(\bsu)(t))\leq c_{p^-}2^{p^+}(\vert \Omega\vert +\rho_{p(t,\cdot)}(\bfvarepsilon(\bsu)(t)))$  for almost every $t\in I$, due to Poincar\'e's and Korn's inequality with respect to  $p^-\in\left(1,\infty\right)$,  we obtain for every ${t\in \overline{I}}$
		\begin{align}
			\begin{split}
			\int_0^t\int_{\Omega}\;\vert\bsf(s,y)\vert&\vert\bsu(s,y)\vert+\vert\bsF(s,y)\vert\vert\bfvarepsilon(\bsu)(s,y)\vert\,dy\,ds\leq c_\nu(\vep)\rho_{\nu'}(\bsf)+c_p(\vep)\rho_{p'(\cdot,\cdot)}(\bsF)\\&\quad+\vep c_{p^-}2^{p^+}\vert Q_T\vert+\vep	\int_0^t{\|\bsu(s)\|_{Y}^2+\big(1+c_{p^-}2^{p^+}\big)\rho_{p(s,\cdot)}(\bfvarepsilon(\bsu)(s))\,ds}.
		\end{split}\label{eq:7.2.7}
		\end{align}
		Therefore, if we set $M_0:=\frac{1}{2}\|\bfu_0\|_Y^2+\|c_2\|_{L^1(I)}+c_\nu(\vep)\rho_{\nu'}(\bsf)+c_p(\vep)\rho_{p'(\cdot,\cdot)}(\bsF)+\vep c_{p^-}2^{p^+}\vert Q_T\vert\!>\!0$, \linebreak $\bsy:=\frac{1}{2}\|\bsu(\cdot)\|_Y^2\in L^\infty(I)$, $\bsa:=2\vep+2\vert c_1(\cdot)\vert\in L^1(I)$ and $\vep:=\min\big\{\frac{c_0}{2(1+c_{p^-}2^{p^+})},\frac{1}{2p^-}\big\}\,>\,0$, \,then \\[-3pt]we deduce from \eqref{eq:7.2.5}--\eqref{eq:7.2.7} that for almost every $t\in I$ it holds
		\begin{align}
			\begin{split}
				\bsy(t)&+\frac{c_0}{2}\int_{0}^{t}{\rho_{p(s,\cdot)}(\bfvarepsilon(\bsu)(s))\,ds}\leq
				M_0+\int_0^t{\bsa(s)\bsy(s)\,ds}.
			\end{split}\label{eq:7.2.9} 
		\end{align}
		If we apply Grönwall's inequality (cf.~\cite[Lemma II.4.10]{BF13}) on \eqref{eq:7.2.9}, then~we~further~infer~that  ${\|\bsu\|_{\bscal{Y}^\infty(Q_T)}^2\!=\!2\|\bsy\|_{L^\infty(I)}\!\leq\! 2M_0\exp(\|\bsa\|_{L^1(I)})\!=:\!M_1}$, which, looking back to \eqref{eq:7.2.9}, implies that ${\rho_{p(\cdot,\cdot)}(\bfvarepsilon(\bsu))\!\leq\! \frac{2}{c_0}\big(M_0+\|\bsa\|_{L^1(I)}\frac{M_1}{2}\big)\!=:\!M_2}$. In virtue of the parabolic interpolation inequality for $\bscal{X}^{2,p}_{\bfvarepsilon}(Q_T)\cap\bscal{Y}^\infty(Q_T)$ (cf.~Proposition~\ref{5.10}), also using that  ${2\leq q\leq\max\{2,p_*-\vep\}}$ in $Q_T$ and \cite[Cor. 3.34]{DHHR11}, there exists a constant ${c_\vep>0}$, not depending on $ {\bsu\in\bscal{X}^{q,p}_{\bfvarepsilon }(Q_T)\cap
		\bscal{Y}^\infty(Q_T)}$,~such~that
		\begin{align*}
			\|\bsu\|_{L^{q(\cdot,\cdot)}(Q_T)^d}\leq c_\vep\big(\|\bfvarepsilon(\bsu)\|_{L^{p(\cdot,\cdot)}(Q_T)^{d\times d}}+\|\bsu\|_{\bscal{Y}^\infty(Q_T)}\big)\leq c_\vep\Big((M_2+1)^{\frac{1}{p^-}}+M_1^{\frac{1}{2}}\Big)=:M_3.
		\end{align*}
		Altogether,  we proved the estimate $\|\bsu\|_{\bscal{X}^{q,p}_{\bfvarepsilon }(Q_T)\cap\bscal{Y}^\infty(Q_T)}\leq M_3+(M_2+1)^{\frac{1}{p^-}}+M_1^{\frac{1}{2}}=:M$, i.e.,  $\bscal{A}:\bscal{X}^{q,p}_{\bfvarepsilon }(Q_T)\cap
		\bscal{Y}^\infty(Q_T)\to\bscal{X}^{q,p}_{\bfvarepsilon }(Q_T)^*$~is~Bochner~coercive with respect to all $\bfu_0\in Y$ and $\bscal{J}_{\bfvarepsilon}(\bsf,\bsF)\in \bscal{X}^{q,p}_{\bfvarepsilon }(Q_T)^*\!$, where $\bsf\in L^{\min\{2,(p^-)'\}}(Q_T)^d$ and ${\bsF\in L^{p'(\cdot,\cdot)}(Q_T,\mathbb{M}^{d\times d}_{\textup{sym}})}$.\hfill$\qed$
	\end{proof}
	\newpage
	\section{Existence theorem}
	\label{sec:8}
	
	Now we can formulate and prove the main result.\vspace*{-1mm}
	
	\begin{thm}[Main theorem]\label{main}
		let $\Omega\subseteq \setR^d$, $d\ge 2$, be a bounded Lipschitz~domain,~${I:=\left(0,T\right)}$, $T<\infty$, $Q_T:=I\times\Omega$ and $q,p\in \mathcal{P}^{\log}(Q_T)$ with $q^-,p^->1$, such that $X^{q,p}_-\embedding Y$. Moreover, let 
		 $\bscal{A}:\bscal{X}^{q,p}_{\bfvarepsilon}(Q_T)\cap \bscal{Y}^\infty(Q_T)\to \bscal{X}^{q,p}_{\bfvarepsilon}(Q_T)^*$ be bounded, Bochner coercive with respect to ${\bsu^*\in\bscal{X}^{q,p}_{\bfvarepsilon}(Q_T)^*}$ and $\bfu_0\in Y$, and satisfying  the Bochner condition (M). Then, there exists a solution $\bsu\in \bscal{W}^{q,p}_{\bfvarepsilon}(Q_T)$ with a representation $\bsu_c\in \bscal{Y}^0(Q_T)$ of\vspace*{-1mm} 
		\begin{alignat*}{2}
		\frac{\textbf{d}\bsu}{\textbf{dt}}+\bscal{A}\bsu&=\bsu^*&&\quad\text{ in }\bscal{X}^{q,p}_{\bfvarepsilon}(Q_T)^*,\\
		\bsu_c(0)&=\bfu_0&&\quad\text{ in }Y.
		\end{alignat*}
	\end{thm}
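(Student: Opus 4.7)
The approach is Galerkin approximation, imitating Theorem \ref{2.9} within the variable exponent framework of $\bscal{W}^{q,p}_{\bfvarepsilon}(Q_T)$. The main obstacle will be the $\limsup$-inequality \eqref{eq:6.1.f} needed to invoke the Bochner condition (M) in order to identify the weak limit of $(\bscal{A}\bsu_n)_{n\in\setN}$; all other ingredients are either standard or already prepared in Sections~\ref{sec:3}--\ref{sec:6}.

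First, I would fix a Galerkin basis $(\bfv_i)_{i\in\setN}\subseteq X^{q,p}_+$ whose linear span is dense in $X^{q,p}_+$ and which is orthonormal in $Y$. Such a basis exists because $X^{q,p}_+$ is separable and reflexive (Corollary \ref{3.8}) and, by \eqref{embed} together with the hypothesis $X^{q,p}_-\embedding Y$, embeds densely into the separable Hilbert space $Y$. Writing $V_n:=\textup{span}\{\bfv_1,\ldots,\bfv_n\}$, $P_n:Y\to V_n$ for the $Y$--orthogonal projection, and $\bsu_n(t)=\sum_{j=1}^n c_j^n(t)\bfv_j$, the Galerkin problem is
\begin{align*}
	(\bsu_n'(t),\bfv_j)_Y+\langle (\bscal{A}\bsu_n)(t),\bfv_j\rangle_{X^{q,p}_{\bfvarepsilon}(t)}=\langle \bsu^*(t),\bfv_j\rangle_{X^{q,p}_{\bfvarepsilon}(t)}\quad\text{for a.e. }t\in I,
\end{align*}
with $\bsu_n(0)=P_n\bfu_0$, where the time slices $(\bscal{A}\bsu_n)(t)$ and $\bsu^*(t)$ are interpreted in the sense of Remark \ref{3.9}. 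Peano-type arguments yield a local Carath\'eodory solution, and testing with $c_j^n(t)\bfv_j$, summing in $j$ and integrating over $(0,t)$ gives, for a.e.\ $t\in I$,
\begin{align*}
	\tfrac{1}{2}\|\bsu_n(t)\|_Y^2+\langle\bscal{A}\bsu_n-\bsu^*,\bsu_n\chi_{[0,t]}\rangle_{\bscal{X}^{q,p}_{\bfvarepsilon}(Q_T)}\leq \tfrac{1}{2}\|P_n\bfu_0\|_Y^2\leq \tfrac{1}{2}\|\bfu_0\|_Y^2,
\end{align*}
which is exactly the hypothesis of Definition~\ref{main.3}(i). Bochner coercivity then gives $\|\bsu_n\|_{\bscal{X}^{q,p}_{\bfvarepsilon}(Q_T)\cap\bscal{Y}^\infty(Q_T)}\leq M$ uniformly, hence global existence of $\bsu_n$, and boundedness of $\bscal{A}$ yields $\sup_{n}\|\bscal{A}\bsu_n\|_{\bscal{X}^{q,p}_{\bfvarepsilon}(Q_T)^*}<\infty$.

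Second, extracting a subsequence (not relabelled), Propositions~\ref{3.4} and \ref{3.7} provide $\bsu\in\bscal{X}^{q,p}_{\bfvarepsilon}(Q_T)\cap\bscal{Y}^\infty(Q_T)$ and $\chi\in\bscal{X}^{q,p}_{\bfvarepsilon}(Q_T)^*$ with $\bsu_n\weakto\bsu$ in $\bscal{X}^{q,p}_{\bfvarepsilon}(Q_T)$, $\bsu_n\;\overset{\ast}{\rightharpoondown}\;\bsu$ in $\bscal{Y}^\infty(Q_T)$ and $\bscal{A}\bsu_n\weakto\chi$ in $\bscal{X}^{q,p}_{\bfvarepsilon}(Q_T)^*$. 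Passing to the limit in the Galerkin equation against $\bfv_j\varphi(t)$ with $\varphi\in C^\infty_0(I)$ and applying Proposition~\ref{4.5}, one identifies $\bsu\in\bscal{W}^{q,p}_{\bfvarepsilon}(Q_T)$ with $\frac{\bfd\bsu}{\bfd\bft}=\bsu^*-\chi$, so Proposition~\ref{4.6}(i) yields a continuous representation $\bsu_c\in\bscal{Y}^0(Q_T)$. A standard equicontinuity argument, based on the uniform bound $\sup_n\|\bscal{A}\bsu_n-\bsu^*\|_{\bscal{X}^{q,p}_{\bfvarepsilon}(Q_T)^*}<\infty$ together with $V_n$-testing, provides the pointwise convergence $\bsu_n(t)\weakto\bsu_c(t)$ in $Y$ for every $t\in\overline{I}$; in particular $\bsu_c(0)=\lim_n P_n\bfu_0=\bfu_0$ in $Y$, so the initial condition is attained. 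This also secures \eqref{eq:6.1.c}, so together with the above convergences, the hypotheses \eqref{eq:6.1.a}--\eqref{eq:6.1.c} and \eqref{eq:6.1.e} of the Bochner condition (M) are in place.

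The main obstacle is the $\limsup$-inequality \eqref{eq:6.1.f}. For this, integrating the Galerkin equation tested with $\bsu_n$ over $I$ (chain rule for $\|\bsu_n(\cdot)\|_Y^2$) gives
\begin{align*}
	\langle\bscal{A}\bsu_n,\bsu_n\rangle_{\bscal{X}^{q,p}_{\bfvarepsilon}(Q_T)}=\langle\bsu^*,\bsu_n\rangle_{\bscal{X}^{q,p}_{\bfvarepsilon}(Q_T)}+\tfrac{1}{2}\|P_n\bfu_0\|_Y^2-\tfrac{1}{2}\|\bsu_n(T)\|_Y^2.
\end{align*}
Passing to $\limsup_{n\to\infty}$ and using weak lower semicontinuity $\|\bsu_c(T)\|_Y\leq\liminf_n\|\bsu_n(T)\|_Y$, together with the integration by parts formula of Proposition~\ref{4.6}(ii) applied to $\bsu=\bsv$, which yields $\tfrac{1}{2}\|\bsu_c(T)\|_Y^2-\tfrac{1}{2}\|\bfu_0\|_Y^2=\langle\frac{\bfd\bsu}{\bfd\bft},\bsu\rangle_{\bscal{X}^{q,p}_{\bfvarepsilon}(Q_T)}$, one obtains
\begin{align*}
	\limsup_{n\to\infty}\langle\bscal{A}\bsu_n,\bsu_n\rangle_{\bscal{X}^{q,p}_{\bfvarepsilon}(Q_T)}\leq\big\langle\bsu^*-\tfrac{\bfd\bsu}{\bfd\bft},\bsu\big\rangle_{\bscal{X}^{q,p}_{\bfvarepsilon}(Q_T)}=\langle\chi,\bsu\rangle_{\bscal{X}^{q,p}_{\bfvarepsilon}(Q_T)}.
\end{align*}
Thus \eqref{eq:6.1.f} holds, the Bochner condition (M) forces $\bscal{A}\bsu=\chi$ in $\bscal{X}^{q,p}_{\bfvarepsilon}(Q_T)^*$, and altogether $\frac{\bfd\bsu}{\bfd\bft}+\bscal{A}\bsu=\bsu^*$ with $\bsu_c(0)=\bfu_0$, as claimed.
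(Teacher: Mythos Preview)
Your overall strategy---Galerkin approximation, a-priori bounds via Bochner coercivity, weak compactness, identification of the time derivative via Proposition~\ref{4.5}, pointwise weak convergence in $Y$, and the $\limsup$-inequality feeding into the Bochner condition (M)---is exactly the paper's approach, and your handling of the $\limsup$-inequality in the final step is correct and matches the paper's argument in Step~3.4.

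There is, however, a genuine gap in your existence step for the Galerkin solutions. You write the Galerkin problem as an ODE for the coefficients $c_j^n$, with right-hand side $\langle (\bscal{A}\bsu_n)(t),\bfv_j\rangle_{X^{q,p}_{\bfvarepsilon}(t)}$, and appeal to Peano. But $\bscal{A}$ is an \emph{abstract} operator on $\bscal{X}^{q,p}_{\bfvarepsilon}(Q_T)\cap\bscal{Y}^\infty(Q_T)$, not assumed to be induced by a family $A(t)$ acting slicewise; the time slice $(\bscal{A}\bsu_n)(t)$ in the sense of Remark~\ref{3.9} is defined only \emph{after} $\bsu_n$ is fixed as an element of $\bscal{X}^{q,p}_{\bfvarepsilon}(Q_T)$, and there is no Carath\'eodory structure in the coefficient vector $(c_1^n,\ldots,c_n^n)$ that would allow Peano-type ODE theory. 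The paper avoids this by observing that the restriction $\bscal{A}_n:=(\mathbf{id}_{\bscal{X}_n})^*\bscal{A}:\bscal{X}_n\cap\bscal{Y}_n\to\bscal{X}_n^*$ (where $\bscal{X}_n=L^{\max\{q^+,p^+\}}(I,V_n)$) inherits boundedness, Bochner coercivity and the Bochner condition (M) in the classical sense of Definitions~\ref{2.3}--\ref{2.4}, and then invokes Theorem~\ref{2.9} to produce $\bsu_n\in\bscal{W}_n$. This is the right substitute for your ODE argument. A minor related point: the paper first reduces to $\bsu^*=\mathbf{0}$, and derives the initial condition $\bsu_c(0)=\bfu_0$ by testing with $\bfv\varphi$ for $\varphi\in C^\infty(\overline{I})$ with $\varphi(T)=0$, $\varphi(0)=1$ and integrating by parts (rather than via pointwise convergence at $t=0$); your route through pointwise convergence also works once the latter is properly established as in the paper's Step~3.3.
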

	
		By means of Proposition~\ref{7.1} and Proposition~\ref{7.2}, we conclude  the following less abstract and thus potentially more applicable version of Theorem~\ref{main}.\vspace*{-1mm}
	
	\begin{cor}\label{main2}
		Let $\Omega\subseteq \setR^d$, $d\ge 2$, be a bounded Lipschitz domain, $I:=\left(0,T\right)$, $T<\infty$, ${Q_T:=I\times\Omega}$, $q,p\in \mathcal{P}^{\log}(Q_T)$ with $p^->1$ and $2\leq q\leq \max\{2,p_*-\vep\}$~in~$Q_T$~for~${\vep\in (0,(p^-)_*-1]}$.  Moreover, let $A(t):X^{q,p}_{\bfvarepsilon}(t)\to X^{q,p}_{\bfvarepsilon}(t)^*$, $t\in I$, be family of operators satisfying \textup{(\hyperlink{C.1}{C.1})}--\textup{(\hyperlink{C.6}{C.6})}. Then, for  $\bscal{J}_{\bfvarepsilon}(\bsf,\bsF)\in \bscal{X}^{q,p}_{\bfvarepsilon }(Q_T)^*\!$, where $\bsf\in L^{\min\{2,(p^-)'\}}(Q_T)^d$ and ${\bsF\in L^{p'(\cdot,\cdot)}(Q_T,\mathbb{M}^{d\times d}_{\textup{sym}})}$, and $\bfu_0\!\in\! Y$, there exists $\bsu\!\in\! \bscal{W}^{q,p}_{\bfvarepsilon}(Q_T)$ with a representation $\bsu_c\!\in \!\bscal{Y}^0(Q_T)$, such that ${\bsu_c(0)=\bfu_0}$~in~$Y$ and~for~every~${\bfphi\in C^\infty_0(Q_T)^d}$ there holds\vspace*{-1mm}
		\begin{align*}
		\int_I{\bigg\langle \frac{\mathbf{d}\bsu}{\mathbf{dt}}(t),\bfphi(t)\bigg\rangle_{X^{q,p}_{\bfvarepsilon}(t)}\!\! dt}+\int_I{\langle A(t)(\bsu(t)),\bfphi(t)\rangle_{X^{q,p}_{\bfvarepsilon}(t)}\,dt}=\int_I{\langle\bsu^*(t),\bfphi(t)\rangle_{X^{q,p}_{\bfvarepsilon}(t)}\,dt}.
		\end{align*}
	\end{cor}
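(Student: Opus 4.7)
The plan is to reduce Corollary~\ref{main2} to Theorem~\ref{main} by verifying that the induced operator $\bscal{A}: \bscal{X}^{q,p}_{\bfvarepsilon}(Q_T) \cap \bscal{Y}^\infty(Q_T) \to \bscal{X}^{q,p}_{\bfvarepsilon}(Q_T)^*$, defined through \eqref{eq:induced} from the given family $A(t)$, $t\in I$, satisfies all hypotheses of the main theorem. First, I would observe that the assumption $2\leq q$ in $Q_T$ together with $p^->1$ and the boundedness of $\Omega$ ensure $X^{q,p}_- \hookrightarrow Y$, so that Theorem~\ref{main} is in principle applicable. The data $\bsu^*:=\bscal{J}_{\bfvarepsilon}(\bsf,\bsF)\in \bscal{X}^{q,p}_{\bfvarepsilon}(Q_T)^*$ is admissible thanks to Proposition~\ref{3.7} and the integrability hypotheses on $\bsf$ and $\bsF$.

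Next, I would invoke the results of Section~\ref{sec:7} in direct sequence. Proposition~\ref{7.1} applied to the family $A(t)$, $t\in I$, which satisfies (\hyperlink{C.1}{C.1})--(\hyperlink{C.3}{C.3}), yields that $\bscal{A}$ is well-defined and bounded. Proposition~\ref{7.2}, which requires the full set of conditions (\hyperlink{C.1}{C.1})--(\hyperlink{C.6}{C.6}) together with $2\leq q\leq \max\{2,p_*-\vep\}$ in $Q_T$ for some $\vep\in(0,(p^-)_*-1]$, then yields both the Bochner pseudo-monotonicity of $\bscal{A}$ and its Bochner coercivity with respect to any $\bfu_0\in Y$ and any $\bscal{J}_{\bfvarepsilon}(\bsf,\bsF)$ with $\bsf\in L^{\min\{2,(p^-)'\}}(Q_T)^d$ and $\bsF\in L^{p'(\cdot,\cdot)}(Q_T,\mathbb{M}^{d\times d}_{\sym})$, which are precisely the class of data admitted in the corollary.

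The remaining ingredient needed for Theorem~\ref{main} is the Bochner condition (M). Here Proposition~\ref{6.2}~(ii) is stated for operators defined on $\bscal{X}^{q,p}_{\bfvarepsilon}(Q_T)\cap \bscal{Y}^0(Q_T)$, but its proof relies only on the standard argument: given a sequence satisfying \eqref{eq:6.1.a}--\eqref{eq:6.1.c}, \eqref{eq:6.1.e} and \eqref{eq:6.1.f}, one derives $\limsup_n \langle \bscal{A}\bsu_n, \bsu_n-\bsu\rangle_{\bscal{X}^{q,p}_{\bfvarepsilon}(Q_T)} \leq 0$ from the weak convergence $\bscal{A}\bsu_n \weakto \bsu^*$, applies Bochner pseudo-monotonicity with test element $\bsv = \bsu\pm \bfw$ for arbitrary $\bfw$, and deduces $\bscal{A}\bsu=\bsu^*$. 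This argument is insensitive to whether the ambient auxiliary space is $\bscal{Y}^0(Q_T)$ or $\bscal{Y}^\infty(Q_T)$, so I would briefly note that the same proof transfers verbatim to our setting.

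At this point Theorem~\ref{main} applies and furnishes a solution $\bsu\in\bscal{W}^{q,p}_{\bfvarepsilon}(Q_T)$ with continuous representation $\bsu_c\in\bscal{Y}^0(Q_T)$ satisfying $\bsu_c(0)=\bfu_0$ in $Y$ and $\frac{\bfd\bsu}{\bfd\bft}+\bscal{A}\bsu = \bscal{J}_{\bfvarepsilon}(\bsf,\bsF)$ in $\bscal{X}^{q,p}_{\bfvarepsilon}(Q_T)^*$. To conclude the stated integral identity, I would test this abstract equation against arbitrary $\bfphi\in C^\infty_0(Q_T)^d\subseteq \bscal{X}^{q,p}_{\bfvarepsilon}(Q_T)$ and unfold each duality pairing via identity \eqref{eq:3.9.1} of Remark~\ref{3.9}, noting that $(\bscal{A}\bsu)(t)=A(t)(\bsu(t))$ in $X^{q,p}_{\bfvarepsilon}(t)^*$ for a.e.~$t\in I$ by construction, and that $\bscal{J}_{\bfvarepsilon}(\bsf,\bsF)(t) = \mathcal{J}_{\bfvarepsilon}(\bsf(t),\bsF(t))$ in $X^{q,p}_{\bfvarepsilon}(t)^*$ for a.e.~$t\in I$ by Remark~\ref{3.9}~(ii). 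No real obstacle arises; the main content of the corollary has already been absorbed into the statements of Propositions~\ref{7.1} and \ref{7.2} and Theorem~\ref{main}, and the remaining work is bookkeeping.
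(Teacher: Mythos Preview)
Your proposal is correct and follows essentially the same approach as the paper, which presents Corollary~\ref{main2} as an immediate consequence of Theorem~\ref{main} via Proposition~\ref{7.1} and Proposition~\ref{7.2}. You spell out in more detail the passage from Bochner pseudo-monotonicity to the Bochner condition~(M) via Proposition~\ref{6.2}~(ii) and the unfolding of the integral identity through Remark~\ref{3.9}, both of which the paper leaves implicit.
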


	\begin{proof}(of Theorem~\ref{main})
		\textbf{0. Reduction of assumptions:} It suffices to treat the case~${\bsu^*=\boldsymbol{0}}$. Otherwise, we consider  $\bscal{A}-\bsu^*:\bscal{X}^{q,p}_{\bfvarepsilon}(Q_T)\cap \bscal{Y}^\infty(Q_T)\to \bscal{X}^{q,p}_{\bfvarepsilon}(Q_T)^*$, which is again~bounded,~Bochner coercive with respect to  $\boldsymbol{0}\in\bscal{X}^{q,p}_{\bfvarepsilon}(Q_T)^*$ and $\bfu_0\in Y$, and  satisfies the~Bochner~condition~(M).
		
		\textbf{1. Galerkin approximation:}
		On the basis of the separability of $X^{q,p}_+$ there exists a sequence $(\bfv_i)_{i\in \setN}\subseteq X^{q,p}_+$,
		which is dense in $X^{q,p}_+$. Due to the density of $X^{q,p}_+$ in $Y$ and the Gram--Schmidt process, we can 
	 assume that $(\bfv_i)_{i\in \setN}$ is dense and orthonormal in $Y$.~We~set~${X_n:=\text{span}\{\bfv_1,...,\bfv_n\}}$, if equipped 
		with $\|\cdot\|_{X^{q,p}_+}$, and $Y_n:=\text{span}\{\bfv_1,...,\bfv_n\}$, if equipped with $(\cdot,\cdot)_Y$. Then,  the triple $(X_n,Y_n,\mathbf{id}_{X_n})$
		is an evolution triple with canonical embedding $e_n:=(\mathbf{id}_{X_n})^*R_n\mathbf{id}_{X_n}:X_n\to X_n^*$, where $R_n:Y_n\to Y_n^*$ denotes the corresponding Riesz isomorphism with respect to $Y_n$ and $(\mathbf{id}_{X_n})^*$ the adjoint operator of $\mathbf{id}_{X_n}:X_n\to Y$.
		Moreover, we introduce the spaces
		\begin{align*}
		\bscal{X}_n:=L^{\max\{q^+,p^+\}}(I,X_n),\;\;
		\bscal{W}_n:=W_{e_n}^{1,\max\{q^+,p^+\},\min\{(q^+)',(p^+)'\}}(I,X_n,X_n^*),\;\;\bscal{Y}_n:=C^0(\overline{I},Y_n).
		\end{align*}
		Then, we have $\bscal{X}_n\embedding \bscal{X}^{q,p}_{\bfvarepsilon}(Q_T)$ and Proposition~\ref{2.2} provides an embedding  $(\cdot)_c:\bscal{W}_n\to \bscal{Y}_n$, which yields for every $\bsu\in \bscal{W}_n$ the existence of a unique representation $\bsu_c\in \bscal{Y}_n$, and a formula of
		integration by parts for $\bscal{W}_n$ (cf.~Proposition~\ref{2.2}~(ii)).
		
		We are seeking
		approximative solutions
		$\bsu_n\in \bscal{W}_n$, $n\in \setN$, which solve the evolution equations
		\begin{align}
		\begin{split}
		\begin{alignedat}{2}
		\frac{d_{e_n}\bsu_n}{dt}+\bscal{A}_n\bsu_n&=\boldsymbol{0}&&\quad\text{ in }\bscal{X}_n^*,\\ 
		(\bsu_n)_c(0)&=\bfu_0^n&&\quad\text{ in }Y_n.
		\end{alignedat}
		\end{split}\label{eq:main.2}
		\end{align}
		where $\bscal{A}_n:=(\mathbf{id}_{\bscal{X}_n})^*\bscal{A}:\bscal{X}_n\cap \bscal{Y}_n\to \bscal{X}_n^*$ and $\bfu_0^n:=\sum_{i=1}^{n}{(\bfu_0,\bfv_i)_Y\bfv_i}\in Y_n$ for every $n\in\setN$.
		
		\textbf{2. Existence of Galerkin solutions:}
		We intend to apply Theorem~\ref{2.9}. Thus, we have to check for every $n\in \setN$, whether $\bscal{A}_n:\bscal{X}_n\cap \bscal{Y}_n\to \bscal{X}_n^*$ is bounded, Bochner coercive with respect to $\boldsymbol{0}\in \bscal{X}_n^*$ and $\bfu_0^n\in Y_n$ in the sense of Definition~\ref{2.4}, and satisfies the Bochner condition (M) in the sense of Definition~\ref{2.3}. In fact, since  $\bscal{A}:\bscal{X}^{q,p}_{\bfvarepsilon}(Q_T)\cap\bscal{Y}^\infty(Q_T)\to \bscal{X}^{q,p}_{\bfvarepsilon}(Q_T)^*$ is bounded, Bochner coercive with respect to  $\boldsymbol{0}\in\bscal{X}^{q,p}_{\bfvarepsilon}(Q_T)^*$ and $\bfu_0\in Y$ in the sense of Definition~\ref{main.3}, satisfies the Bochner condition (M) in the sense of Definition~\ref{main.1}, and $\sup_{n\in \setN}{\|\bfu_0^n\|_Y}\leq \|\bfu_0\|_Y$, 
		 the same holds  in the sense of Definition~\ref{2.3} and Definition~\ref{2.4} for its restrictions ${\bscal{A}_n:\bscal{X}_n\cap \bscal{Y}_n\to \bscal{X}_n^*}$,~${n\in \setN}$.
		Therefore, Theorem~\ref{2.9} yields the existence of 	$\bsu_n\in \bscal{W}_n$, $n\in \setN$, solving \eqref{eq:main.2}.\vspace*{-1.5cm}
		
		\newpage
		Moreover, if we test $\eqref{eq:main.2}_1$ for every $n\in \mathbb{N}$ with $\bsu_n\chi_{\left[0,t\right]}\in\bscal{X}_n$, for arbitrary $t\in \left(0,T\right]$,~apply the formula of integration by parts for
		$\bscal{W}_n$ (cf.~Proposition~\ref{2.2}~(ii)) and exploit $\eqref{eq:main.2}_2$ for every ${n\in \setN}$ as well as $\sup_{n\in \setN}{\|\bfu_0^n\|_Y}\leq \|\bfu_0\|_Y$, we obtain for every $t\in \overline{I}$ and $n\in \setN$ the estimate
		\begin{align}
		\frac{1}{2}\|(\bsu_n)_c(t)\|_Y^2+\int_{0}^{t}{\langle A(s)(\bsu_n(s)),\bsu_n(s)\rangle_{X^{q,p}_{\bfvarepsilon}(s)}\,ds}\leq \frac{1}{2}\|\bfu_0\|_Y^2.\label{eq:main.3}
		\end{align}
		From \eqref{eq:main.3} and  the Bochner coercivity of $\bscal{A}:\bscal{X}^{q,p}_{\bfvarepsilon}(Q_T)\cap\bscal{Y}^\infty(Q_T)\to \bscal{X}^{q,p}_{\bfvarepsilon}(Q_T)^*$ with respect to  $\boldsymbol{0}\in\bscal{X}^{q,p}_{\bfvarepsilon}(Q_T)^*$ and $\bfu_0\in Y$, as well as its boundedness, we derive the existence of a constant ${M>0}$, which does not depend on $n\in\setN$, such that for every ${n\in \setN}$ there holds
		\begin{align}
		\|\bsu_n\|_{\bscal{X}^{q,p}_{\bfvarepsilon}(Q_T)\cap\bscal{Y}^\infty(Q_T)}+\|\bscal{A}\bsu_n\|_{\bscal{X}^{q,p}_{\bfvarepsilon}(Q_T)^*}\leq
		M.\label{eq:main.4} 
		\end{align}
		
		\textbf{3. Passage to the limit:}
		
		\textbf{3.1 Convergence of the Galerkin solutions:}
		From the a-priori estimate \eqref{eq:main.4}, the reflexivity of both $\bscal{X}^{q,p}_{\bfvarepsilon}(Q_T)$ and $\bscal{X}^{q,p}_{\bfvarepsilon}(Q_T)^*$ (cf.~Proposition~\ref{3.4}), and the existing separable pre-dual $L^1(I,Y)$ of $\bscal{Y}^\infty(Q_T)$, we obtain a not relabelled
		subsequence $(\bsu_n)_{n\in \setN}\subseteq
		\bscal{X}^{q,p}_{\bfvarepsilon}(Q_T)\cap\bscal{Y}^\infty(Q_T)$ as well as elements
		$\bsu\in\bscal{X}^{q,p}_{\bfvarepsilon}(Q_T)\cap\bscal{Y}^\infty(Q_T)$ and $\bfxi\in \bscal{X}^{q,p}_{\bfvarepsilon}(Q_T)^*$, such that
		\begin{align}
		\begin{alignedat}{2}
		\bsu_n&\overset{n\to\infty}{\weakto}\bsu&\quad
		&\text{ in }\bscal{X}^{q,p}_{\bfvarepsilon}(Q_T),\\
		\bsu_n&\;\;\overset{\ast}{\rightharpoondown}\;\;\bsu&&\text{
			in }\bscal{Y}^\infty(Q_T)\quad (n\to\infty),\\
		\bscal{A}\bsu_n&\overset{n\to\infty}{\weakto}\bfxi&&\text{
			in }\bscal{X}^{q,p}_{\bfvarepsilon}(Q_T)^*.
		\end{alignedat}\label{eq:main.5}
		\end{align}
		
		\textbf{3.2 Regularity and trace of the weak limit:} Let $\bfv\in X_k$, where $k\in \setN$ is arbitrary, and
		$\varphi\in C^\infty(\overline{I})$ with $\varphi(T)=0$. Testing
		$\eqref{eq:main.2}_1$ for every $n\in \setN$ with $n\ge k$ by
		$\bfv\varphi\in \bscal{X}_k\subseteq \bscal{X}_n$, a
		subsequent application of the formula of integration by parts formula
		for $\bscal{W}_n$ (cf.~Proposition~\ref{2.2}~(ii)) and exploiting $\eqref{eq:main.2}_2$ for every $n\in \setN$, yields for every $n\in \setN$ with $n\ge k$
		\begin{align}
		\langle \bscal{A}\bsu_n,\bfv\varphi\rangle_{\bscal{X}^{q,p}_{\bfvarepsilon}(Q_T)} =\int_I{(\bsu_n(s),\bfv)_Y\varphi^\prime(s)\,ds}
		+(\bfu_0^n,\bfv)_Y\varphi(0).\label{eq:main.6}
		\end{align}
		By passing  for $n\to \infty$ in \eqref{eq:main.6}, using \eqref{eq:main.5} and
		$\bfu_0^n\to\bfu_0$ in
		$Y$ $(n\to \infty)$ in doing so, we obtain
		\begin{align}
		\langle \bfxi,\bfv\varphi\rangle_{\bscal{X}^{q,p}_{\bfvarepsilon}(Q_T)}=\int_I{(\bsu(s),\bfv)_Y\varphi^\prime(s)\,ds}
		+(\bfu_0,\bfv)_Y\varphi(0)\label{eq:main.7} 
		\end{align}
		for every $\bfv\in \bigcup_{k\in\setN}{X_k}$ and
		$\varphi\in C^\infty(\overline{I})$ with $\varphi(T)=0$. In the case
		$\varphi\in C_0^\infty(I)$ in \eqref{eq:main.6}, Proposition~\ref{4.5}
		proves, due to the density of $\bigcup_{k\in\setN}{X_k}$ in $X^{q,p}_+$, that $\bsu\in \bscal{W}^{q,p}_{\bfvarepsilon}(Q_T)$ with a representation $\bsu_c\in \bscal{Y}^0(Q_T)$ (cf.~Proposition~\ref{4.6}~(i)) and 
		\begin{align}\label{eq:main.8}
		\frac{\textbf{d}\bsu}{\textbf{dt}}=-\bfxi\quad\textrm{ in }\bscal{X}^{q,p}_{\bfvarepsilon}(Q_T)^*.
		\end{align}
		Thus, we are able to apply the formula of integration by parts for $\bscal{W}^{q,p}_{\bfvarepsilon}(Q_T)$ (cf.~Proposition~\ref{4.6}) in
		\eqref{eq:main.7} for $\varphi\in C^\infty(\overline{I})$ with
		$\varphi(T)=0$ and $\varphi(0)=1$, which yields, also using \eqref{eq:main.8}, that
		\begin{align}
		(\bsu_c(0)-\bfu_0,\bfv)_Y=0 \label{eq:main.9}
		\end{align}
		for every $\bfv\in
		\bigcup_{k\in\setN}{X_k}$ . Because $\bigcup_{k\in\setN}{X_k}$ is dense in $X^{q,p}_+$ and $X^{q,p}_+$ is dense in $Y$, we conclude from \eqref{eq:main.9} the desired initial condition, i.e., that
		\begin{align}
			\bsu_c(0)=\bfu_0\quad\text{ in }Y.\label{eq:main.10}
		\end{align}
		\newpage
		
		\textbf{3.3 Pointwise weak convergence in $Y$:}
		Next, we show that $(\bsu_n)_c(t)\weakto \bsu_c(t)$ in $Y$ ${(n\to \infty)}$ for every $t\in \overline{I}$. To this end, let us fix an arbitrary
		$t\in\left(0,T\right]$. From the a-priori estimate
		$\sup_{n\in \setN}{\|(\bsu_n)_c(t)\|_Y}\leq M$ (cf.~\eqref{eq:main.4}) we
		obtain the existence of a subsequence
		$((\bsu_n)_c(t))_{n\in\Lambda_t}\subseteq
		Y$, with a cofinal subset $\Lambda_t\subseteq\setN$, which initially
		depends on this fixed $t$, and an element
		$\bfu_{\Lambda_t}\in Y$, such that
		\begin{align}
		(\bsu_n)_c(t)
		\overset{n\to\infty}{\weakto}
		\bfu_{\Lambda_t}\quad\text{ in }Y\quad(n\in
		\Lambda_t).\label{eq:main.11}  
		\end{align}
		For $\bfv\in X_k$, where $k\in\Lambda_t$ is arbitrary, and
		$\varphi\in C^\infty(\overline{I})$ with $\varphi(0)=0$ and
		$\varphi(t)=1$, we test $\eqref{eq:main.2}_1$ for every $n\in \Lambda_t$ with $n\ge k$
		 by
		$\bfv\varphi\chi_{\left[0,t\right]}\in \bscal{X}_k\subseteq
		\bscal{X}_n$ and use the
		formula of integration by parts for $\bscal{W}_n$ (cf.~Proposition~\ref{2.2}~(ii)), to arrive for every $n\in \Lambda_t$ with 
		$n\ge k$  at
		\begin{align}
		\langle \bscal{A}\bsu_n,\bfv\varphi\chi_{\left[0,t\right]}\rangle_{\bscal{X}^{q,p}_{\bfvarepsilon}(Q_T)}=\int_0^t{(\bsu_n(s),
			\bfv)_Y\varphi^\prime(s)\,ds}-((\bsu_n)_c(t),\bfv)_Y.\label{eq:main.12}
		\end{align}
		By passing  for  $\Lambda_t \ni n\to \infty$ in \eqref{eq:main.12}, using \eqref{eq:main.5} and \eqref{eq:main.11}, we obtain for every $\bfv\in \bigcup_{k\in\Lambda_t}{X_k}$
		\begin{align}
		\langle \bfxi,\bfv\varphi\chi_{\left[0,t\right]}\rangle_{\bscal{X}^{q,p}_{\bfvarepsilon}(Q_T)}=\int_0^t{(\bsu(s),
			\bfv)_Y\varphi^\prime(s)\,ds}-(\bfu_{\Lambda_t},\bfv)_Y. \label{eq:main.13}
		\end{align}
		From \eqref{eq:main.13}, \eqref{eq:main.8} and the formula of integration by parts for $\bscal{W}^{q,p}_{\bfvarepsilon}(Q_T)$ (cf.~Proposition~\ref{4.6}~(ii)), we also obtain
		\begin{align}
		(\bsu_c(t)-
		\bfu_{\Lambda_t},\bfv)_Y=0\label{eq:main.14} 
		\end{align}
		for every $\bfv\in\bigcup_{k\in\Lambda_t}{X_k}$. Thanks to
		$X_k\subseteq X_{k+1}$ for every $k\in\setN$, it holds
		${\bigcup_{k\in\Lambda_t}{X_k}=\bigcup_{k\in\setN}{X_k}}$. Therefore,
		$\bigcup_{k\in\Lambda_t}{X_k}$ is dense in $Y$ and
		\eqref{eq:main.14} yields that
		$\bsu_c(t)=\bfu_{\Lambda_t}$
		in $Y$, i.e., we have
		\begin{align}
		(\bsu_n)_c(t)\overset{n\to\infty}
		{\weakto}\bsu_c(t)\quad\text{
			in }Y\quad(n\in\Lambda_t).\label{eq:main.15} 
		\end{align}
		As this argumentation stays valid for each weakly convergent subsequence of
		$((\bsu_n)_c(t))_{n\in\setN}\subseteq Y$,
		$\bsu_c(t)\in Y$ is a weak accumulation point of
		each subsequence of
		$((\bsu_n)_c(t))_{n\in\setN}\subseteq
		Y$. The standard convergence principle \cite[Kap. I, Lem. 5.4]{GGZ74} yields that \eqref{eq:main.15} even holds if
		$\Lambda_t=\setN$, i.e., we have
		\begin{align}
		(\bsu_n)_c(t)\overset{n\to \infty}{\weakto}\bsu_c(t)\quad\text{ in }Y\quad\text{ for all }t\in \overline{I}.\label{eq:main.16.0}
		\end{align}
		Therefore, due to $\bsu_c(t)=\bsu(t)$ in $Y$ for almost every $t\in I$, we conlcude from \eqref{eq:main.16.0} that 
		\begin{align}
			\bsu_n(t)\overset{n\to \infty}{\weakto}\bsu(t)\quad\text{ in }Y\quad\text{ for a.e. }t\in I.\label{eq:main.16}
		\end{align}
		\textbf{3.4 Identification of $\bscal{A}\bsu$ and $\bfxi$:}
		According  to \eqref{eq:main.3} in the case $t=T$, also using \eqref{eq:main.10}, we have for every $n\in \setN$
		\begin{align}
		\langle
		\bscal{A}\bsu_n,\bsu_n\rangle_{\bscal{X}^{q,p}_{\bfvarepsilon}(Q_T)}\leq
		-\frac{1}{2}\|(\bsu_n)_c(T)\|_Y^2+
		\frac{1}{2}\|\bsu_c(0)\|_Y^2.\label{eq:main.17}
		\end{align}
		The limit superior with respect to $n\to\infty$ on both sides in \eqref{eq:main.17}, \eqref{eq:main.16.0} in the case $t=T$, the weak lower
		semi-continuity of $\|\cdot\|_Y$, the formula of integration by parts for $\bscal{W}^{q,p}_{\bfvarepsilon}(Q_T)$ (cf.~Proposition~\ref{4.6}~(ii)) and \eqref{eq:main.8}
		yield
		\begin{align}\begin{split}
		\limsup_{n\to\infty}{ \langle
			\bscal{A}\bsu_n,\bsu_n\rangle_{\bscal{X}^{q,p}_{\bfvarepsilon}(Q_T)}}
		&\leq
		-\frac{1}{2}\|\bsu_c(T)\|_Y^2+\frac{1}{2}\|\bsu_c(0)\|_Y^2\\&=-\left\langle
		\frac{\textbf{d}\bsu}{\textbf{dt}},\bsu\right\rangle_{\bscal{X}^{q,p}_{\bfvarepsilon}(Q_T)}=\langle
		\bfxi,\bsu\rangle_{\bscal{X}^{q,p}_{\bfvarepsilon}(Q_T)}.
		\end{split}\label{eq:main.18}
		\end{align}
		By virtue of the Bochner condition (M) of $\bscal{A}:\bscal{X}^{q,p}_{\bfvarepsilon}(Q_T)\cap \bscal{Y}^\infty(Q_T)\to \bscal{X}^{q,p}_{\bfvarepsilon}(Q_T)^*$, we finally conclude from \eqref{eq:main.5}, \eqref{eq:main.16} and \eqref{eq:main.18} that   $\bscal{A}\bsu=\bfxi$ in $\bscal{X}^{q,p}_{\bfvarepsilon}(Q_T)^*$. This completes the proof of Theorem~\ref{main}.\hfill$\qed$
	\end{proof}

	\section{Application}
	\label{sec:9}
	At long last, we apply the developed theory on the model problem \eqref{eq:model}. In doing so, we always assume that $\Omega\subseteq \setR^d$, $d\ge 2$, is a bounded Lipschitz domain, $I:=\left(0,T\right)$, $T<\infty$, $Q_T:=I\times \Omega$ and $q,p\in \mathcal{P}^{\log}(Q_T)$ with $p^->1$ and $2\leq q\leq \max\{2,p_*-\vep\}$ in $Q_T$ for some $\vep\in (0,(p^-)_*-1]$.
	
	\begin{prop}\label{9.1}
		Let
		${\bfS:Q_T\times\mathbb{M}^{d\times d}_{\sym}\to \mathbb{M}^{d\times d}_{\sym}}$ be a mapping satisfying (\hyperlink{S.1}{S.1})--(\hyperlink{S.4}{S.4}) with  respect to $p$. Then, the family of operators  ${S(t):X^{q,p}_{\bfvarepsilon}(t)\to X^{q,p}_{\bfvarepsilon}(t)^*}$, $t\in I$, for almost every $t\in I$ and every $\bfu,\bfv\in X^{q,p}_{\bfvarepsilon}(t)$ given via
		\begin{align*}
		\langle S(t)\bfu,\bfv\rangle_{X^{q,p}_{\bfvarepsilon}(t)}:=\langle \mathcal{J}_{\bfvarepsilon}(\mathbf{0},\bfS(t,\cdot,\bfvarepsilon(\bfu))),\bfv\rangle_{X^{q,p}_{\bfvarepsilon}(t)}=(\bfS(t,\cdot,\bfvarepsilon(\bfu)),\bfvarepsilon(\bfv))_{L^{p(t,\cdot)}(\Omega)^{d\times d}},
		\end{align*} 
		 satisfies  (\hyperlink{C.1}{C.1})--(\hyperlink{C.6}{C.6}). In particular, the induced operator $\bscal{S}\!:\!\bscal{X}^{q,p}_{\bfvarepsilon }(Q_T)\!\to\! \bscal{X}^{q,p}_{\bfvarepsilon }(Q_T)^*$,~given~via~\eqref{eq:induced}, is bounded, Bochner pseudo-monotone and Bochner coercive with respect to all $\bfu_0\in Y$ and ${\bscal{J}_{\bfvarepsilon}(\bsf,\bsF)\in \bscal{X}^{q,p}_{\bfvarepsilon }(Q_T)^*}$, where $\bsf\in L^{\min\{2,(p^-)'\}}(Q_T)^d$ and $\bsF\in L^{p'(\cdot,\cdot)}(Q_T,\mathbb{M}^{d\times d}_{\sym})$.
	\end{prop}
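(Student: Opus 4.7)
The plan is to verify the six structural conditions (\hyperlink{C.1}{C.1})--(\hyperlink{C.6}{C.6}) for the family $\{S(t)\}_{t\in I}$ one by one, and then read off the remaining assertions from Proposition~\ref{7.1} (well-definedness and boundedness of $\bscal{S}$) and Proposition~\ref{7.2} (Bochner pseudo-monotonicity and Bochner coercivity of $\bscal{S}$). So the real work is reduced to checking (\hyperlink{C.1}{C.1})--(\hyperlink{C.6}{C.6}); everything afterwards is a direct appeal to the two propositions.

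First, (\hyperlink{C.2}{C.2}) is immediate from (\hyperlink{S.1}{S.1}): for fixed $\bfu,\bfv\in X^{q,p}_+$ the map $(t,x)\mapsto\bfS(t,x,\bfvarepsilon(\bfu)(x)):\bfvarepsilon(\bfv)(x)$ is Carath\'eodory, hence jointly measurable, and the Fubini--Tonelli theorem yields measurability in $t$. Conditions (\hyperlink{C.3}{C.3}) and (\hyperlink{C.5}{C.5}) are essentially the H\"older/Young--companions of (\hyperlink{S.2}{S.2}) and (\hyperlink{S.3}{S.3}): for (\hyperlink{C.3}{C.3}) I would estimate $|\bfS(t,\cdot,\bfvarepsilon(\bfu)):\bfvarepsilon(\bfv)|$ using (\hyperlink{S.2}{S.2}) together with the pointwise Young inequality in the exponent $p(t,x)$, absorbing the $\beta(t,x)|\bfvarepsilon(\bfv)|$-term by a further Young step against $\beta(t,x)^{p'(t,x)}\in L^1(\Omega)$, and integrating, which yields a bound of the required modular form with $\alpha(t):=\|\beta(t,\cdot)\|_{L^{p'(t,\cdot)}(\Omega)}^{p'^-}+1$ and $\mathscr{B}\equiv\mathrm{const}$. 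Condition (\hyperlink{C.5}{C.5}) drops out of (\hyperlink{S.3}{S.3}) after discarding the non-negative term $c_0\int_{\Omega}(\delta+|\bfvarepsilon(\bfu)|)^{p(t,\cdot)-2}|\bfvarepsilon(\bfu)|^2-(\delta+|\bfvarepsilon(\bfu)|)^{p(t,\cdot)}$ (which is bounded below by $-c\,\delta^{p(t,\cdot)}$ and can be absorbed into $c_2$); the coercivity constant $c_0$ and functions $c_1\equiv 0$, $c_2(t):=\|c_1(t,\cdot)\|_{L^1(\Omega)}+c\delta^{p^+}|\Omega|$ then do the job. For (\hyperlink{C.6}{C.6}), starting again from (\hyperlink{S.2}{S.2}), the pointwise Young inequality with parameter $\vep$ applied to $|\bfS(t,x,\bfvarepsilon(\bfu))|\,|\bfvarepsilon(\bfv)|$ in the conjugate exponents $p(t,x)$ and $p'(t,x)$ produces the split $\vep\rho_{p(t,\cdot)}(\bfvarepsilon(\bfu))+c(\vep)\rho_{p(t,\cdot)}(\bfvarepsilon(\bfv))$ required.

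The two non-routine conditions are (\hyperlink{C.1}{C.1}) and (\hyperlink{C.4}{C.4}). For (\hyperlink{C.1}{C.1}) I would fix $t$ outside the nullset of (\hyperlink{S.1}{S.1})--(\hyperlink{S.4}{S.4}), take $\bfu_n\to\bfu$ in $X^{q,p}_{\bfvarepsilon}(t)$, extract by Corollary~\ref{3.5} (applied to the single time slice) a subsequence with $\bfvarepsilon(\bfu_n)\to\bfvarepsilon(\bfu)$ a.e.\ in $\Omega$ and use (\hyperlink{S.1}{S.1}) to obtain $\bfS(t,\cdot,\bfvarepsilon(\bfu_n))\to\bfS(t,\cdot,\bfvarepsilon(\bfu))$ a.e.\ in $\Omega$; the growth bound (\hyperlink{S.2}{S.2}) together with the modular convergence of $(\bfvarepsilon(\bfu_n))_{n\in\setN}$ in $L^{p(t,\cdot)}$ provides the uniform integrability hypothesis for Vitali's theorem in variable exponent Lebesgue spaces (cf.~\cite[Thm.~3.8]{Gut09}), so $\bfS(t,\cdot,\bfvarepsilon(\bfu_n))\to\bfS(t,\cdot,\bfvarepsilon(\bfu))$ in $L^{p'(t,\cdot)}(\Omega,\mathbb{M}^{d\times d}_{\sym})$; a subsequence principle then upgrades this to convergence of the full sequence and $S(t)\bfu_n\to S(t)\bfu$ in $X^{q,p}_{\bfvarepsilon}(t)^*$, which gives even strong, hence demi-, continuity. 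For (\hyperlink{C.4}{C.4}), (\hyperlink{S.4}{S.4}) says that $S(t)$ is monotone, and every demi-continuous monotone operator on a reflexive Banach space is pseudo-monotone (cf.\ e.g.~\cite[Kap.~III, Lem.~2.4]{Ru04}); reflexivity of $X^{q,p}_{\bfvarepsilon}(t)$ is Corollary~\ref{3.8}.

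I expect the main obstacle to lie in (\hyperlink{C.1}{C.1}): uniform $L^{p'(t,\cdot)}$--integrability of the Nemytski\u{\i} images, which must be extracted from the modular growth bound (\hyperlink{S.2}{S.2}) rather than an $L^\infty$--type domination as in the classical setting, and which requires a careful application of the variable exponent Vitali theorem. Once (\hyperlink{C.1}{C.1})--(\hyperlink{C.6}{C.6}) are in place, boundedness of the induced operator $\bscal{S}$ is Proposition~\ref{7.1}; Proposition~\ref{7.2}~(i) supplies Bochner pseudo-monotonicity, and Proposition~\ref{7.2}~(ii) supplies Bochner coercivity with respect to any $\bfu_0\in Y$ and any $\bscal{J}_{\bfvarepsilon}(\bsf,\bsF)\in\bscal{X}^{q,p}_{\bfvarepsilon}(Q_T)^*$ with $\bsf\in L^{\min\{2,(p^-)'\}}(Q_T)^d$ and $\bsF\in L^{p'(\cdot,\cdot)}(Q_T,\mathbb{M}^{d\times d}_{\sym})$, completing the proof.
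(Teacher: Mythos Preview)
Your proposal is correct and follows essentially the same route as the paper: verify (\hyperlink{C.1}{C.1})--(\hyperlink{C.6}{C.6}) using (\hyperlink{S.1}{S.1})--(\hyperlink{S.4}{S.4}) together with pointwise Young inequalities in the variable exponent $p(t,x)$, Vitali's theorem for (\hyperlink{C.1}{C.1}), and monotonicity plus continuity for (\hyperlink{C.4}{C.4}), then invoke Propositions~\ref{7.1} and~\ref{7.2}. One small slip: Corollary~\ref{3.5} is about time-slice convergence from $\bscal{X}^{q,p}_{\bfvarepsilon}(Q_T)$-convergence and does not apply to a single slice; you just need the standard fact that $L^{p(t,\cdot)}$-convergence of $\bfvarepsilon(\bfu_n)$ yields an a.e.\ convergent subsequence in $\Omega$.
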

	
	\begin{proof}
		\textbf{ad (C.1)} Using Fubini's theorem in conjunction with (\hyperlink{S.1}{S.1}) and (\hyperlink{S.2}{S.2}), it is readily seen  that $\bfS(t,\cdot,\cdot)\!:\!\Omega\times \mathbb{M}_{\sym}^{d\times d}\!\to\! \mathbb{M}_{\sym}^{d\times d}$ is for almost every $t\in I$ a Carath\'eodory mapping.
		We~fix~${t\!\in\! I}$,~such~that $\bfS(t,\cdot,\cdot)\!:\!\Omega\times \mathbb{M}_{\sym}^{d\times d}\!\to\! \mathbb{M}_{\sym}^{d\times d}$  is a Carath\'eodory mapping and (\hyperlink{S.2}{S.2}) holds for $t$ with ${\beta(t,\cdot)\!\in\! L^{p'(t,\cdot)}(\Omega)}$. Let us first prove~that~${S(t):X^{q,p}_{\bfvarepsilon}(t)\to X^{q,p}_{\bfvarepsilon}(t)^*}$~is~well-defined. To this end,~we~consider~${\bfu \!\in\! X^{q,p}_{\bfvarepsilon}(t)}$. Since $\bfS(t,\cdot,\cdot):\Omega\times \mathbb{M}_{\sym}^{d\times d}\to \mathbb{M}_{\sym}^{d\times d}$ is a Carath\'eodory~mapping,~${(x\mapsto \bfS(t,x,\bfvarepsilon(\bfu)(x))):\Omega\to \mathbb{M}^{d\times d}_{\sym}}$ is Lebesgue measurable. Therefore, we can inspect it for integrability. In doing so, using (\hyperlink{S.2}{S.2}) and repeatedly that $(a+b)^s\leq 2^s(a^s+b^s)$ for all $a,b\ge 0$ and $s>0$, we obtain~for~almost~every~${x\in \Omega}$
			\begin{align}
				\begin{split}
			\vert\bfS(t,x,\bfvarepsilon(\bfu)(x))\vert^{p'(t,x)}&\leq \big[\alpha( \delta+\vert\bfvarepsilon(\bfu)(x)\vert)^{p(t,x)-1}+\beta(t,x)\big]^{p'(t,x)}
			\\&\leq 2^{p'(t,x)}\big[\alpha^{p'(t,x)}(\delta+\vert\bfvarepsilon(\bfu)(x)\vert)^{p(t,x)}+\beta(t,x)^{p'(t,x)}\big]
			\\&\leq 2^{(p^-)'}\big[\alpha^{(p^-)'}2^{p^+}\big(\delta^{p(t,x)}+\vert\bfvarepsilon(\bfu)(x)\vert^{p(t,x)}\big)+\beta(t,x)^{p'(t,x)}\big],
			\end{split}\label{eq:9.1.1}
		\end{align}
		i.e., $\bfS(t,\cdot,\bfvarepsilon(\bfu))\in L^{p'(t,\cdot)}(\Omega,\mathbb{M}^{d\times d}_{\sym})$. Thus, we conclude that $S(t)\bfu\in X^{q,p}_{\bfvarepsilon}(t)^*$ (cf.~Corollary~\ref{3.8}), i.e., $S(t):X^{q,p}_{\bfvarepsilon}(t)\to X^{q,p}_{\bfvarepsilon}(t)^*$ is well-defined. 
		Next, let us show that $S(t):X^{q,p}_{\bfvarepsilon}(t)\to X^{q,p}_{\bfvarepsilon}(t)^*$  is continuous. To this end, let  $(\bfu_n)_{n\in \setN}\subseteq X^{q,p}_{\bfvarepsilon}(t)$ be a sequence, such that ${\bfu_n\!\to\! \bfu}$~in~$X^{q,p}_{\bfvarepsilon}(t)$~${(n\!\to\! \infty)}$. Then, it holds
		\begin{align}
			\bfvarepsilon(\bfu_n)\overset{n\to \infty}{\to}	\bfvarepsilon(\bfu)\quad\text{ in }L^{p(t,\cdot)}(\Omega,\mathbb{M}^{d\times d}_{\text{sym}}).\label{eq:9.1.4}
		\end{align}
		In particular, there exist a subsequence $(\bfu_n)_{n\in \Lambda}$, with a cofinal subset $\Lambda\subseteq \setN$, such that
		\begin{align}
			\bfvarepsilon(\bfu_n)\overset{n\to \infty}{\to}	\bfvarepsilon(\bfu)\quad\text{ in }\mathbb{M}^{d\times d}_{\text{sym}}\quad(n\in \Lambda)\quad\text{ a.e. in }\Omega.\label{eq:9.1.5}
		\end{align}
		Since $\bfS(t,\cdot,\cdot):\Omega\times \mathbb{M}_{\sym}^{d\times d}\to \mathbb{M}_{\sym}^{d\times d}$ is a Carath\'eodory mapping, \eqref{eq:9.1.5} also leads to 
		\begin{align}
			\bfS(t,\cdot,\bfvarepsilon(\bfu_n))\overset{n\to \infty}{\to}\bfS(t,\cdot,\bfvarepsilon(\bfu))\quad\text{ in }\mathbb{M}^{d\times d}_{\text{sym}}\quad(n\in \Lambda)\quad\text{ a.e. in }\Omega.\label{eq:9.1.6}
		\end{align}
		On the other hand, we have by \eqref{eq:9.1.1} for almost every $x\in\Omega$ and every $n\in \Lambda$
		\begin{align*}
			\vert\bfS(t,x,\bfvarepsilon(\bfu_n)(x))\vert^{p'(t,x)}\leq 2^{(p^-)'}\big[\alpha^{(p^-)'}2^{p^+}\big( \delta^{p(t,x)}+\vert\bfvarepsilon(\bfu_n)(x)\vert^{p(t,x)}\big)+\beta(t,x)^{p'(t,x)}\big],
		\end{align*}
		i.e., $(\bfS(t,\cdot,\bfvarepsilon(\bfu_n)))_{n\in\Lambda}\!\subseteq\! L^{p'(t,\cdot)}(\Omega,\mathbb{M}^{d\times d}_{\text{sym}})$ is $L^{p'(t,\cdot)}(\Omega)$--uniformly integrable (cf.~\cite{Gut09})~due~to~\eqref{eq:9.1.4}. As a result, also using \eqref{eq:9.1.6}, Vitali's convergence theorem~(cf.~\cite[Theorem 3.8]{Gut09})~yields
		\begin{align}
			\bfS(t,\cdot,\bfvarepsilon(\bfu_n))\overset{n\to \infty}{\to}\bfS(t,\cdot,\bfvarepsilon(\bfu))\quad\text{ in }L^{p'(t,\cdot)}(\Omega,\mathbb{M}^{d\times d}_{\text{sym}})\quad(n\in \Lambda).\label{eq:9.1.8}
		\end{align}
		The standard convergence principle (cf.~\cite[Kap. I, Lem. 5.4]{GGZ74}) yields that \eqref{eq:9.1.8}~even~holds~if~${\Lambda\!=\!\setN}$. Because of the continuity of  $\mathcal{J}_{\bfvarepsilon}:L^{q'(t,\cdot)}(\Omega)^d\times L^{p'(t,\cdot)}(\Omega,\mathbb{M}^{d\times d}_{\text{sym}})\to X^{q,p}_{\bfvarepsilon}(t)^*$ (cf.~Corollary~\ref{3.8}), we conclude from \eqref{eq:9.1.8} with $\Lambda=\setN$ that
		$S(t)\bfu_n\!\to\! S(t)\bfu$ in $X^{q,p}_{\bfvarepsilon}(t)^*$~${(n\!\to\! \infty)}$. Altogether, the operator ${S(t):X^{q,p}_{\bfvarepsilon}(t)\!\to\! X^{q,p}_{\bfvarepsilon}(t)^*}$~is~indeed~continuous. Since $t\! \in\! I$ was chosen arbitrarily,  such that $\bfS(t,\cdot,\cdot)\!:\!\Omega\times \mathbb{M}_{\sym}^{d\times d}\!\to\! \mathbb{M}_{\sym}^{d\times d}$  is a Carath\'eodory mapping and (\hyperlink{S.2}{S.2}) holds for~$t$~with~${\beta(t,\cdot)\!\in\! L^{p'(t,\cdot)}(\Omega)}$, we thus conclude that condition (\hyperlink{C.1}{C.1}) is satisfied.\vspace*{-0.5cm}
			
		\newpage
		\textbf{ad (C.2)} Inequality \eqref{eq:9.1.1}  in conjunction with Hölder's inequality (cf.~\eqref{hoelder}), guarantees for every $\bfu,\bfv\in X_+^{q,p}$ that  $\big((t,x)^\top\mapsto \bfS(t,x,\bfvarepsilon(\bfu)(x)):\bfvarepsilon(\bfv)(x)\big)\in L^1(Q_T)$. Thus, Fubini's theorem yields  for every $\bfu,\bfv\in X_+^{q,p}$ the Lebesgue measurability of the mapping $\big(t\mapsto\langle S(t)\bfu,\bfv\rangle_{X^{q,p}_{\bfvarepsilon}(t)}\big)=\big(t\mapsto (\bfS(t,\cdot,\bfvarepsilon(\bfu)),\bfvarepsilon(\bfv))_{L^{p(t,\cdot)}(\Omega)^{d\times d}}\big):I\to\setR$, i.e., condition (\hyperlink{C.2}{C.2}) is satisfied.
		
		\textbf{ad (C.3)\vspace*{-0.1pt}\&\vspace*{-0.1pt}(C.6)} 
		Appyling the  $\vep$--Young inequality pointwise for almost every ${(t,x)^\top\in Q_T}$ with respect to the exponent $p(t,x)$, with the constant 
		$c_{p(t,x)}(\tilde{\vep}):=(p'(t,x)\tilde{\vep})^{1-p(t,x)}p(t,x)^{-1}$ for every $\tilde{\vep}\in \left(0,\tilde{\vep}_0\right)$, where ${\tilde{\vep}_0:=((p^+)')^{-1}}$, also using that $c_{p(t,x)}(\tilde{\vep})\leq c_p(\tilde{\vep}):=((p^+)'\tilde{\vep})^{1-p^+}(p^-)^{-1}$ for every $\tilde{\vep}\in \left(0,\tilde{\vep}_0\right)$, we infer for almost every $(t,x)^\top\in Q_T$, every $\tilde{\vep}\in \left(0,\tilde{\vep}_0\right)$~and~${\bfu,\bfv\in X^{q,p}_{\bfvarepsilon}(t)}$
		\begin{align}
			\begin{split}
				\vert  \bfS(t,x,\bfvarepsilon(\bfu)(x)):\bfvarepsilon(\bfv)(x)\vert &\leq \tilde{\vep}	\vert  \bfS(t,x,\bfvarepsilon(\bfu)(x))\vert^{p'(t,x)} +
				c_{p(t,x)}(\tilde{\vep})\vert \bfvarepsilon(\bfv)(x)\vert^{p(t,x)}\\&\leq \tilde{\vep}	\vert  \bfS(t,x,\bfvarepsilon(\bfu)(x))\vert^{p'(t,x)}+c_p(\tilde{\vep})\vert \bfvarepsilon(\bfv)(x)\vert^{p(t,x)}.
			\end{split}\label{eq:4.15.2}
		\end{align}
		Eventually, we integrate \eqref{eq:4.15.2}  with respect to $x\in \Omega$, and use the estimate \eqref{eq:9.1.1} in doing so, to arrive for almost every $t\in I$, every $\tilde{\vep}\in \left(0,\tilde{\vep}_0\right)$ and $\bfu,\bfv\in X^{q,p}_{\bfvarepsilon}(t)$ at
		\begin{align*}
			\vert \langle S(t)\bfu,\bfv\rangle_{X^{q,p}_{\boldsymbol{\varepsilon}}(t)}\vert\leq \tilde{\vep} 2^{(p^-)'}\!\big[\alpha^{(p^-)'}2^{p^+}\!\big(\rho_{p(t,\cdot)}(\delta)\!+\!\rho_{p(t,\cdot)}(\bfvarepsilon(\bfu))\big)\!+\!\rho_{p'(t,\cdot)}(\beta(t,\cdot))\big]\!+\!c_p(\tilde{\vep})\rho_{p(t,\cdot)}(\bfvarepsilon(\bfv)),
		\end{align*}
		i.e., condition (\hyperlink{C.6}{C.6}), and simultaneously also condition (\hyperlink{C.3}{C.3}).
		
		\textbf{ad (C.4)} From (\hyperlink{S.4}{S.4})  one readily derives that $S(t):X^{q,p}_{\bfvarepsilon}(t)\to X^{q,p}_{\bfvarepsilon}(t)^*$ is for almost every $t\in I$ monotone. Therefore, because $S(t):X^{q,p}_{\bfvarepsilon}(t)\to X^{q,p}_{\bfvarepsilon}(t)^*$ is for almost every $t\in I$ continuous, it is also pseudo-monotone (cf.~\cite[Kap. III, Lem. 2.6]{Ru04}), i.e.,  condition (\hyperlink{C.4}{C.4}) is satisfied.
		
		\textbf{ad (C.5)} Using (\hyperlink{S.3}{S.3}) and $(\delta+a)^{p(t,x)-2}a^2\ge \frac{1}{2}a^{p(t,x)}-\delta^{p(t,x)}$ for all $a\ge 0$ and ${(t,x)^\top\in Q_T}$\footnote{Here, we used for $a,\delta\ge 0$ that if $p\ge 2$, then it holds $(\delta+a)^{p-2}a^2\ge a^p$, and if $1<p<2$, then $(\delta+a)^{p-2}a^2\ge (\delta+a)^{p-2}\big(\frac{1}{2}(\delta +a)^2-\delta^2\big)\ge  \frac{1}{2}(\delta+a)^{p}-\delta^{p}\ge\frac{1}{2}a^{p}-\delta^{p}$, as $(\delta+a)^2\leq 2(\delta^2+a^2)$ and ${(\delta+a)^{p-2}\leq \delta^{p-2}}$.
		}, we obtain for almost every $t\in I$ and every $\bfu\in X^{q,p}_{\bfvarepsilon}(t)$ that
		\begin{align*}
		\langle S(t)\bfu,\bfu\rangle_{X^{q,p}_{\bfvarepsilon}(t)}\ge \frac{c_0}{2}\rho_{p(t,\cdot)}(\bfvarepsilon(\bfu))-c_0\rho_{p(t,\cdot)}(\delta)-\|c_1(t,\cdot)\|_{L^1(\Omega)},
		\end{align*}
		i.e., condition (\hyperlink{C.5}{C.5}) is satisfied.\hfill$\qed$
	\end{proof}

	\begin{prop}\label{9.2}
			Let
		 $\bfb:Q_T\times \setR^d\to\setR^d$ be a mapping satisfying (\hyperlink{B.1}{B.1})--(\hyperlink{B.3}{B.3}) with respect to $r:=\max\{2,p_*\}-\vep$. Then, the family of operators ${B(t):X^{q,p}_{\bfvarepsilon}(t)\to X^{q,p}_{\bfvarepsilon}(t)^*}$,~${t\in I}$, for almost every $t\in I$ and every $\bfu,\bfv\in X^{q,p}_{\bfvarepsilon}(t)$ given via
		\begin{align*}
			\langle B(t)\bfu,\bfv\rangle_{X^{q,p}_{\bfvarepsilon}(t)}:=	\langle \mathcal{J}_{\bfvarepsilon}(\bfb(t,\cdot,\bfu),\mathbf{0}),\bfv\rangle_{X^{q,p}_{\bfvarepsilon}(t)}=(\bfb(t,\cdot,\bfu),\bfv)_{L^{q(t,\cdot)}(\Omega)^d},
		\end{align*}
		satisfies (\hyperlink{C.1}{C.1})--(\hyperlink{C.4}{C.4}) and  (\hyperlink{C.6}{C.6}). 
	\end{prop}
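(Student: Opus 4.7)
The plan is to verify each of the conditions (C.1)--(C.4) and (C.6) individually, paralleling the proof of Proposition~\ref{9.1}. The central new ingredient, replacing the direct use of (S.2), is the modular parabolic embedding furnished by Lemma~\ref{5.11} together with Remark~\ref{general}: at almost every $t \in I$, for every $\bfu \in X^{q,p}_{\bfvarepsilon}(t)$ one has
\begin{align*}
\rho_{r(t,\cdot)}(\bfu) \leq c\big[1 + \rho_{p(t,\cdot)}(\bfvarepsilon(\bfu)) + \|\bfu\|_Y^{\gamma}\big]\big(1 + \|\bfu\|_Y^{\gamma}\big),
\end{align*}
where we use $q \geq 2$ to include $X^{q,p}_{\bfvarepsilon}(t)$ into $X^{2,p}_{\bfvarepsilon}(t)$ via \eqref{embed}. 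Since $q \leq r$ on $Q_T$, \eqref{embed} further yields $L^{r'(t,\cdot)}(\Omega)^d \hookrightarrow L^{q'(t,\cdot)}(\Omega)^d$, so the Nemytskii map $\bfu \mapsto \bfb(t,\cdot,\bfu)$ sends $X^{q,p}_{\bfvarepsilon}(t)$ into $L^{q'(t,\cdot)}(\Omega)^d$ and $B(t)$ is well-defined through Corollary~\ref{3.8}.

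For (C.1), I would transcribe almost verbatim the Nemytskii/Vitali argument from Proposition~\ref{9.1}: a convergent sequence $\bfu_n \to \bfu$ in $X^{q,p}_{\bfvarepsilon}(t)$ converges, up to subsequence, pointwise a.e.~in $\Omega$, whence by (B.1) also $\bfb(t,\cdot,\bfu_n) \to \bfb(t,\cdot,\bfu)$ a.e.; the growth (B.2) combined with $L^{r(t,\cdot)}$--convergence of $\bfu_n$ (from the modular embedding above) renders $(\bfb(t,\cdot,\bfu_n))_n$ $L^{r'(t,\cdot)}$--uniformly integrable, so Vitali's theorem \cite[Thm.~3.8]{Gut09} and the standard convergence principle \cite[Kap.~I, Lem.~5.4]{GGZ74} yield $B(t)\bfu_n \to B(t)\bfu$ in $X^{q,p}_{\bfvarepsilon}(t)^*$. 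Condition (C.2) is immediate from Fubini together with (B.1), via the simple-function approximation scheme used in Proposition~\ref{7.1}. Condition (C.3) follows by applying Hölder's inequality \eqref{hoelder} to $(\bfb(t,\cdot,\bfu),\bfv)_{L^{q(t,\cdot)}(\Omega)^d}$, with $\|\bfb(t,\cdot,\bfu)\|_{L^{r'(t,\cdot)}}$ controlled by (B.2) and the modular embedding.

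For (C.4), the strict inequality $r(t,\cdot) < \max\{2, p_*(t,\cdot)\}$ supplies a compact embedding $X^{q,p}_{\bfvarepsilon}(t) \hookrightarrow\hookrightarrow L^{r(t,\cdot)}(\Omega)^d$ at almost every $t \in I$: Korn's second inequality (Lemma~\ref{5.4}) gives $X^{q,p}_{\bfvarepsilon}(t) \hookrightarrow W^{1,p^-}_0(\Omega)^d$, and Proposition~\ref{5.12} combined with an interpolation in the spirit of Lemma~\ref{5.11} delivers compactness into $L^{r(t,\cdot)}(\Omega)^d$. Consequently, if $\bfu_n \weakto \bfu$ in $X^{q,p}_{\bfvarepsilon}(t)$ then $\bfu_n \to \bfu$ in $L^{r(t,\cdot)}(\Omega)^d$, and the Nemytskii argument of (C.1) upgrades this to $\bfb(t,\cdot,\bfu_n) \to \bfb(t,\cdot,\bfu)$ in $L^{r'(t,\cdot)}(\Omega)^d$; hence $B(t)\bfu_n \to B(t)\bfu$ strongly in $X^{q,p}_{\bfvarepsilon}(t)^*$, which is strictly stronger than pseudo-monotonicity.

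The main obstacle is (C.6), whose right-hand side controls the test function $\bfv$ only through $\rho_{p(t,\cdot)}(\bfvarepsilon(\bfv))$, with neither a $\|\bfv\|_Y$ nor a $\rho_{q(t,\cdot)}(\bfv)$ contribution. My approach is: apply the pointwise $\tilde{\vep}$--Young inequality with exponent $r(t,x)$ to the integrand $[\gamma(1+|\bfu|)^{r(t,\cdot)-1} + \eta(t,\cdot)]|\bfv|$ (the $\eta$--part being handled as in the proof of Proposition~\ref{9.1}), producing an estimate of the form $\tilde{\vep}\,\rho_{r(t,\cdot)}(\bfu) + c(\tilde{\vep})\,\rho_{r(t,\cdot)}(\bfv)$; insert the modular parabolic embedding above into $\rho_{r(t,\cdot)}(\bfu)$ to rewrite this term as $\mathscr{B}(\|\bfu\|_Y)[\alpha(t) + \rho_{p(t,\cdot)}(\bfvarepsilon(\bfu))]$; and for $\bfv$, since $p \in \mathcal{P}^{\log}(Q_T)$ entails that $p(t,\cdot)$ is globally $\log$--Hölder uniformly in $t$, combine variable exponent Korn (cf.~\cite[Thm.~5.5]{DR03}), Poincaré (Proposition~\ref{5.1}) and Sobolev embeddings to bound $\rho_{r(t,\cdot)}(\bfv)$ by a power of $\rho_{p(t,\cdot)}(\bfvarepsilon(\bfv))$ plus a constant, and then reduce to the required \emph{linear} dependence on $\rho_{p(t,\cdot)}(\bfvarepsilon(\bfv))$ by a further Young step. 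The delicate technical task is verifying that all Poincaré, Korn and Sobolev constants are uniform in $t$ (which follows from the $\log$--Hölder regularity) and treating separately the cases $p^- < d$ and $p^- \geq d$.
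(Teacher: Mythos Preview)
Your overall strategy coincides with the paper's: verify (C.1)--(C.4) and (C.6) one by one, with (C.4) obtained via the compact slice embedding $X^{q,p}_{\bfvarepsilon}(t)\hookrightarrow\hookrightarrow L^{r(t,\cdot)}(\Omega)^d$, showing that $B(t)$ is in fact strongly continuous. The paper merges (C.1) and (C.4) by proving strong continuity from the outset (starting from $\bfu_n\weakto\bfu$), whereas you treat them separately; this is only an organisational difference.

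Two points deserve correction. First, the embedding you invoke for well-definedness has the inequality reversed: $L^{r'(t,\cdot)}(\Omega)^d\hookrightarrow L^{q'(t,\cdot)}(\Omega)^d$ requires $r'\ge q'$, i.e.\ $r\le q$, not $q\le r$. The paper uses precisely $q\ge r$ here (to control both $\rho_{q'}(\bfb)$ by $\rho_{r'}(\bfb)$ and $\rho_r(\bfu)$ by $\rho_q(\bfu)$).

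Second, for (C.6) you take a harder road than the paper. You correctly observe that the literal statement of (C.6) allows no $\|\bfv\|_Y$ term on the right-hand side, and propose to bound $\rho_{r(t,\cdot)}(\bfv)$ purely by $\rho_{p(t,\cdot)}(\bfvarepsilon(\bfv))$ via variable-exponent Korn, Poincar\'e and Sobolev. This route is delicate and in fact breaks down when $p(t,x)<\tfrac{2d}{d+2}$ and $d\ge 3$: then $r(t,x)=2-\vep$ while $p^*(t,x)<2$, so for small $\vep$ one has $r>p^*$ and the Sobolev step does not reach $L^{r(t,\cdot)}$. The paper instead simply applies Lemma~\ref{5.11} to $\bfv$ as well, obtaining
\[
\rho_{r(t,\cdot)}(\bfv)\le c_\vep\big[1+\rho_{p(t,\cdot)}(\bfvarepsilon(\bfv))+\|\bfv\|_Y^{\gamma_\vep}\big]\big(1+\|\bfv\|_Y^{\gamma_\vep}\big),
\]
and declares this to verify (C.6). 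Strictly speaking the resulting estimate carries $\|\bfv\|_Y$ terms not present in the formulation of (C.6), but this is immaterial: the only place (C.6) is invoked is in the proof of Proposition~\ref{7.2}~(i), where $\bfv=\bsv(t)$ with $\bsv\in\bscal{X}^{q,p}_{\bfvarepsilon}(Q_T)\cap\bscal{Y}^\infty(Q_T)$, so that $\|\bsv(t)\|_Y\le K_{\bsv}$ is absorbed into the constant $\mu_{\bsv}(t)$. In other words, the paper treats (C.6) as holding in the slightly relaxed sense needed for the Hirano--Landes argument, and you need not go through Sobolev embeddings for $\bfv$ at all.
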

	
	\begin{proof}
		\textbf{ad (C.1)\&(C.2)\&(C.4)} 
		Fubini's theorem in conjunction with (\hyperlink{B.1}{B.1}) and (\hyperlink{B.2}{B.2}) implies that  $\bfb(t,\cdot,\cdot)\!:\!\Omega\times \mathbb{R}^d\!\to\! \mathbb{R}^d$ is for almost every $t\!\in\! I$ a Carath\'eodory mapping. We~fix~${t\!\in\! I}$,~such~that $\bfb(t,\cdot,\cdot)\!:\!\Omega\times \mathbb{R}^d\!\to\! \mathbb{R}^d$ is a Carath\'eodory mapping and  (\hyperlink{B.2}{B.2}) holds for~$t$~with~${\eta(t,\cdot)\!\in\! L^{r'(t,\cdot)}(\Omega)}$. Again, let us first prove that ${B(t)\!:\!X^{q,p}_{\bfvarepsilon}(t)\!\to\! X^{q,p}_{\bfvarepsilon}(t)^*}$ is well-defined. To this end,~let~${\bfu\in X^{q,p}_{\bfvarepsilon}(t)}$.  Because $\bfb(t,\cdot,\cdot):\Omega\times \mathbb{R}^d\to \mathbb{R}^d$ is a Carath\'eodory mapping, ${(x\mapsto\bfb(t,x,\bfu(x))):\Omega\to \setR^d}$~is Lebesgue measurable. Moreover, by means of~the~estimates~${(a+b)^{r'(t,x)}\leq 2^{(r^-)'}(a^{r'(t,x)}+b^{r'(t,x)})}$ and $(a+b)^{r(t,x)}\leq 2^{r^+}(a^{r(t,x)}+b^{r(t,x)})$ for all $a,b\ge 0$ and $(t,x)^\top\in Q_T$, also using~(\hyperlink{B.2}{B.2}),~we~derive
		\begin{align}
		\begin{split}
		\rho_{r'(t,\cdot)}(\bfb(\cdot,\cdot,\bfu))&\leq 2^{(r^-)'}\big[\gamma^{(r^-)'}\rho_{r(t,\cdot)}(1+\vert \bfu\vert)+\rho_{r'(t,\cdot)}(\eta(t,\cdot))\big]\\&\leq 2^{(r^-)'}\big[\gamma^{(r^-)'}2^{r^+}\big(\vert \Omega\vert +\rho_{r(t,\cdot)}(\bfu)\big)+\rho_{r'(t,\cdot)}(\eta(t,\cdot))\big].
		\end{split}\label{eq:syn1.1}
		\end{align}
		Thus, $(\bfu\mapsto \bfb(\cdot,\cdot,\bfu)):X^{q,p}_{\bfvarepsilon}(t)\to L^{q'(t,\cdot)}(\Omega)^d$ is well-defined and bounded, because~there~also~holds $\rho_{q'(t,\cdot)}(\bfb(\cdot,\cdot,\bfu))\leq 2^{(r^-)'}(\vert \Omega\vert +\rho_{r'(t,\cdot)}(\bfb(\cdot,\cdot,\bfu)))$ and $\rho_{r(t,\cdot)}(\bfu)\leq 2^{q^+}(\vert \Omega\vert +\rho_{q(t,\cdot)}(\bfu))$ due to $q\ge r$ in $Q_T$, i.e., $r'\ge q'$ in $Q_T$. Therefore, by virtue of Corollary \ref{3.8},  $B(t):X^{q,p}_{\bfvarepsilon}(t)\to X^{q,p}_{\bfvarepsilon}(t)^*$ is well-defined and bounded. 
		\newpage
	   
	   Next, let us show that $B(t):X^{q,p}_{\bfvarepsilon}(t)\to X^{q,p}_{\bfvarepsilon}(t)^*$  is strongly continuous. To this end, let  $(\bfu_n)_{n\in \setN}\subseteq X^{q,p}_{\bfvarepsilon}(t)$ be a sequence, such that ${\bfu_n\weakto \bfu}$~in~$X^{q,p}_{\bfvarepsilon}(t)$~${(n\to \infty)}$. Then, the compact embedding $X^{q,p}_{\bfvarepsilon}(t)\!\embedding\embedding\! L^{p^*(t,\cdot)-\vep}(\Omega)^d$ (cf.~Proposition~\ref{5.12}) yields 
		$\bfu_n\to \bfu $ in $L^{p^*(t,\cdot)-\vep}(\Omega)^d$~${(n\!\to\! \infty)}$. In particular, there exists a cofinal subset $\Lambda\subseteq 
		\setN$,  such that $\bfu_n\to \bfu $ in $\mathbb{R}^d$ $(\Lambda\ni n\to \infty)$ almost everywhere in $\Omega$, which, since $\bfb(t,\cdot,\cdot):\Omega\times \mathbb{R}^d\to \mathbb{R}^d$ is a Carath\'eodory mapping, implies that
		\begin{align}
			\bfb(t,\cdot,\bfu_n)\overset{n\to \infty}{\to }	\bfb(t,\cdot,\bfu)\quad\text{ in }\mathbb{R}^d\quad\text{ a.e. in }\Omega\quad(\Lambda\ni n\to \infty).\label{eq:syn1.2}
		\end{align}
		On the other hand, since $(\bfu_n)_{n\in \setN}\subseteq X^{q,p}_{\bfvarepsilon}(t)$ is also bounded in $Y$, and thus uniformly integrable in $L^{2-\vep}(\Omega)^d$, it is readily seen that $(\bfu_n)_{n\in \setN}\subseteq X^{q,p}_{\bfvarepsilon}(t)$ is also uniformly integrable in $L^{r(t,\cdot)}(\Omega)^d$, where we used that $\max\{2,p_*(t,\cdot)\}\leq \max\{2,p^*(t,\cdot)\}$ in $\Omega$ for almost every $t\in I$, because of $p_*(t,x)\leq p^*(t,x)$, if $p(t,x)\ge \frac{2d}{d+2}$, and 	$p_*(t,x)\leq 2$, if $p(t,x)\leq \frac{2d}{d+2}$, for every $x\in \Omega$.
		 Proceeding as for  \eqref{eq:syn1.1}, we deduce for every Lebesgue measurable set $K\subseteq \Omega$~and~every~${n\in \setN}$~that
		\begin{align}
			\rho_{r'(t,\cdot)}(\bfb(\cdot,\cdot,\bfu_n))\leq 2^{(r^-)'}\big[\gamma^{(r^-)'}2^{r^+}\big(\vert K\vert +\rho_{r(t,\cdot)}(\bfu_n\chi_K)\big)+\rho_{r'(t,\cdot)}(\eta(t,\cdot)\chi_K)\big],\label{eq:syn1.3}
		\end{align}
		i.e., $(\bfb(\cdot,\cdot,\bfu_n))_{n\in \setN}\!\subseteq\! L^{r'(t,\cdot)}(\Omega)^d$ is $L^{r'(t,\cdot)}(\Omega)$--uniformly integrable. Thus, due~to~\eqref{eq:syn1.2}~and~\eqref{eq:syn1.3}, we conclude by
		means of Vitali's convergence theorem (cf.~\cite[Theorem 3.8]{Gut09}) that 
		\begin{align}
			\bfb(\cdot,\cdot,\bfu_n)\overset{n\to \infty}{\to }	\bfb(\cdot,\cdot,\bfu)\quad\text{ in }L^{r'(t,\cdot)}(\Omega)^d\quad(\Lambda\ni n\to \infty).\label{eq:syn1.4}
		\end{align}
		By means of the standard convergence principle (cf.~\cite[Kap. I, Lem. 5.4]{GGZ74}), we further deduce that \eqref{eq:syn1.4} even holds true if $\Lambda=\setN$. Therefore, since $L^{r'(t,\cdot)}(\Omega)^d\embedding L^{q'(t,\cdot)}(\Omega)^d$ (cf.~\eqref{embed}), because ${q\ge r}$ in $Q_T$, i.e., $r'\ge q'$ in $Q_T$, we conclude from \eqref{eq:syn1.4} with $\Lambda=\setN$ and Corollary \ref{3.8} that
		$B(t)\bfu_n\to B(t)\bfu$ in $X^{q,p}_{\bfvarepsilon}(t)^*$ $(n\to \infty)$, i.e., $B(t):X^{q,p}_{\bfvarepsilon}(t)\to X^{q,p}_{\bfvarepsilon}(t)^*$ is strongly continuous, and thus also pseudo-monotone (cf.~\cite[Kap. III., Lem. 2.6 (ii)]{Ru04}). As a whole, we verified both condition (\hyperlink{C.1}{C.1})~and~condition~(\hyperlink{C.4}{C.4}). For  (\hyperlink{C.2}{C.2}) we argue as in the proof of Proposition~\ref{9.1}.

		\textbf{ad (C.3)\vspace*{-0.1pt}\&\vspace*{-0.1pt}(C.6)} Applying the $\vep$--Young inequality pointwise for almost every ${(t,x)^\top \in Q_T}$ with respect to the exponent $r(t,x)$, with the constant  ${c_{r(t,x)}(\tilde{\vep}):=(r'(t,x)\tilde{\vep})^{1-r(t,x)}r(t,x)^{-1}}$ for every  $\tilde{\vep}\in \left(0,\tilde{\vep}_0\right)$, where $\tilde{\vep}_0:=((r^+)')^{-1}$, also using that $c_{r(t,x)}(\tilde{\vep})\leq c_r(\tilde{\vep}):=((r^+)'\tilde{\vep})^{1-r^+}(r^-)^{-1}$ for every $\tilde{\vep}\in \left(0,\tilde{\vep}_0\right)$, using \eqref{eq:syn1.1} and Lemma~\ref{5.11}, we obtain for almost every $t\in I$, every $\tilde{\vep}\in \left(0,\tilde{\vep}_0\right)$ and $\bfu,\bfv\in X^{q,p}_{\bfvarepsilon}(t)$ the estimate
		\begin{align*}
			\vert \langle B(t)\bfu,&\bfv\rangle_{X^{q,p}_{\bfvarepsilon}(t)}\vert \leq \tilde{\vep}\rho_{r(t,\cdot)}(\bfb(t,\cdot,\bfu))+c_r(\tilde{\vep})\rho_{r(t,\cdot)}(\bfv)
			\\&\leq \tilde{\vep} 2^{(r^-)'}\big[\gamma^{(r^-)'}2^{r^+}\big(\vert\Omega\vert+\rho_{r(t,\cdot)}(\bfu)\big)+\rho_{r(t,\cdot)}(\eta(t,\cdot))\big]+c_r(\tilde{\vep})\rho_{r(t,\cdot)}(\bfv)
			\\&
			\leq \tilde{\vep}2^{(r^-)'}\big[\gamma^{(r^-)'}2^{r^+}\big(\vert\Omega\vert+c_\vep\big[1+\rho_{p(t,\cdot)}(\bfvarepsilon(\bfu))+\|\bfu\|_{Y}^{\gamma_\vep}\big]\big(1+\|\bfu\|_{Y}^{\gamma_\vep}\big)\big)+\rho_{r(t,\cdot)}(\eta(t,\cdot))\big]\\&\quad+c_q(\tilde{\vep})c_\vep\big[1+\rho_{p(t,\cdot)}(\bfvarepsilon(\bfv))+\|\bfv\|_{Y}^{\gamma_\vep}\big]\big(1+\|\bfv\|_{Y}^{\gamma_\vep}\big),
		\end{align*}
		i.e., condition (\hyperlink{C.3}{C.3}), which follows from the second inequality, since  ${\rho_{r(t,\cdot)}(\bfu)\!\leq\! 2^{q^+}\!(\vert\Omega\vert\!+\!\rho_{q(t,\cdot)}(\bfu))}$ for  every $t\in I$ and $\bfu\in X^{q,p}_{\bfvarepsilon}(t)$, and condition (\hyperlink{C.6}{C.6}), which is just the last inequality.\hfill$\qed$
\end{proof}

	\begin{prop}\label{9.3}
		Let $\bfS:Q_T\times \mathbb{M}^{d\times d}_{\sym}\to\mathbb{M}^{d\times d}_{\sym}$ be a mapping satisfying (\hyperlink{S.1}{S.1})--(\hyperlink{S.4}{S.4}) with respect to $p$  and $\bfb:Q_T\times \setR^d\to \setR^d$ a mapping satisfying (\hyperlink{B.1}{B.1})--(\hyperlink{B.3}{B.3}) with respect to $r:=\max\{2,p_*\}-\vep$. Then, the sum $S(t)+B(t):X^{q,p}_{\bfvarepsilon}(t)\to X^{q,p}_{\bfvarepsilon}(t)^*$, $t\in I$, satisfies (\hyperlink{C.1}{C.1})--(\hyperlink{C.6}{C.6}). In particular, the induced operator $\bscal{S}+\bscal{B}:\bscal{X}^{q,p}_{\bfvarepsilon}(Q_T)\cap \bscal{Y}^\infty(Q_T)\to \bscal{X}^{q,p}_{\bfvarepsilon}(Q_T)^*$, given via \eqref{eq:induced}, is bounded, Bochner pseudo-monotone and Bochner coercive with respect to all $\bfu_0\in Y$ and $\bscal{J}_{\bfvarepsilon}(\bsf,\bsF)\in \bscal{X}^{q,p}_{\bfvarepsilon}(Q_T)^*$, where $\bsf\in L^{\min\{2,(p^-)'\}}(Q_T)^d$ and $\bsF\in L^{p'(\cdot,\cdot)}(Q_T,\mathbb{M}^{d\times d}_{\sym})$.
	\end{prop}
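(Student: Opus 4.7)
The approach is to verify the six structural conditions \textup{(\hyperlink{C.1}{C.1})}--\textup{(\hyperlink{C.6}{C.6})} for the sum $S(t)+B(t)$ by combining the information already collected in Proposition~\ref{9.1} for $S(t)$ and Proposition~\ref{9.2} for $B(t)$, and then to invoke Proposition~\ref{7.1} and Proposition~\ref{7.2} on the induced operator $\bscal{S}+\bscal{B}$ in order to harvest boundedness, Bochner pseudo-monotonicity and the required Bochner coercivity. No new estimates on $\bfS$ or $\bfb$ are needed; the whole statement is a bookkeeping exercise on top of Propositions~\ref{9.1}--\ref{9.2} and an application of the Hirano--Landes machinery developed in Section~\ref{sec:7}.

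The conditions \textup{(\hyperlink{C.1}{C.1})}, \textup{(\hyperlink{C.2}{C.2})}, \textup{(\hyperlink{C.3}{C.3})} and \textup{(\hyperlink{C.6}{C.6})} are stable under sums in the obvious way: demi-continuity is additive (in fact $S(t)$ is continuous by Proposition~\ref{9.1} and $B(t)$ is strongly continuous by the proof of Proposition~\ref{9.2}); Lebesgue measurability of $t\mapsto \langle (S(t)+B(t))\bfu,\bfv\rangle_{X^{q,p}_{\bfvarepsilon}(t)}$ follows from linearity; and the bounds of type \textup{(\hyperlink{C.3}{C.3})} respectively \textup{(\hyperlink{C.6}{C.6})} can be added since their right-hand sides are non-negative and monotone in $\|\bfu\|_Y$. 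For \textup{(\hyperlink{C.4}{C.4})}, pseudo-monotone operators form a convex cone (cf.~\cite[Kap.~III, Lem.~2.11]{Ru04}), so $S(t)+B(t)$ is pseudo-monotone for almost every $t\in I$. The only condition that is not immediately inherited is \textup{(\hyperlink{C.5}{C.5})}, since Proposition~\ref{9.2} does not establish it for $B(t)$.

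The verification of \textup{(\hyperlink{C.5}{C.5})} is, however, a direct consequence of \textup{(\hyperlink{B.3}{B.3})}: integrating $\bfb(t,x,\bfu(x))\cdot\bfu(x)\ge c_2\vert\bfu(x)\vert^2-c_3(t,x)$ over $\Omega$ yields, because $c_2\ge 0$ and $\|\bfu\|_{L^2(\Omega)^d}=\|\bfu\|_Y$,
\begin{align*}
\langle B(t)\bfu,\bfu\rangle_{X^{q,p}_{\bfvarepsilon}(t)}\ge c_2\|\bfu\|_Y^2-\|c_3(t,\cdot)\|_{L^1(\Omega)}\ge -\|c_3(t,\cdot)\|_{L^1(\Omega)},
\end{align*}
i.e.\ $B(t)$ satisfies a degenerate version of \textup{(\hyperlink{C.5}{C.5})} with coercivity constant $0$. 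Adding this to the genuine \textup{(\hyperlink{C.5}{C.5})}-estimate obtained for $S(t)$ in Proposition~\ref{9.1} gives \textup{(\hyperlink{C.5}{C.5})} for $S(t)+B(t)$ with the same coercivity constant $c_0/2>0$ and with $c_1\equiv 0$, $c_2(t)=c_0\rho_{p(t,\cdot)}(\delta)+\|c_1(t,\cdot)\|_{L^1(\Omega)}+\|c_3(t,\cdot)\|_{L^1(\Omega)}\in L^1(I,\setR_{\ge 0})$.

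Having verified \textup{(\hyperlink{C.1}{C.1})}--\textup{(\hyperlink{C.6}{C.6})} for the family $\{S(t)+B(t)\}_{t\in I}$, Proposition~\ref{7.1} yields that the induced operator $\bscal{S}+\bscal{B}:\bscal{X}^{q,p}_{\bfvarepsilon}(Q_T)\cap\bscal{Y}^\infty(Q_T)\to\bscal{X}^{q,p}_{\bfvarepsilon}(Q_T)^*$ is well-defined and bounded, while Proposition~\ref{7.2}~(i) delivers Bochner pseudo-monotonicity, and Proposition~\ref{7.2}~(ii) delivers Bochner coercivity with respect to all $\bfu_0\in Y$ and $\bscal{J}_{\bfvarepsilon}(\bsf,\bsF)\in\bscal{X}^{q,p}_{\bfvarepsilon}(Q_T)^*$ with $\bsf\in L^{\min\{2,(p^-)'\}}(Q_T)^d$ and $\bsF\in L^{p'(\cdot,\cdot)}(Q_T,\mathbb{M}^{d\times d}_{\sym})$. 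The main point to be careful about is the sign compatibility in step~3, namely that one must exploit $c_2\ge 0$ in \textup{(\hyperlink{B.3}{B.3})} to drop the indefinite term $c_2\|\bfu\|_Y^2$; this is what makes the whole approach work even though $B(t)$ in isolation is not coercive in the $\rho_{p(t,\cdot)}(\bfvarepsilon(\cdot))$-sense required by \textup{(\hyperlink{C.5}{C.5})}.
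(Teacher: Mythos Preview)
Your proof is correct and follows essentially the same approach as the paper: verify \textup{(C.1)}--\textup{(C.6)} for the sum by combining Propositions~\ref{9.1} and~\ref{9.2}, with \textup{(C.5)} for $B(t)$ coming directly from integrating \textup{(B.3)}, and then invoke Propositions~\ref{7.1} and~\ref{7.2}. One minor remark: your emphasis that ``one must exploit $c_2\ge 0$'' to make the approach work is slightly overstated, since \textup{(C.5)} already allows a term $-c_1(t)\|\bfu\|_Y^2$ with $c_1\in L^1(I,\setR_{\ge 0})$, so even without dropping $c_2\|\bfu\|_Y^2$ one could absorb it there (this is in fact how the paper phrases the bound for $B(t)$); your choice to drop it and take $c_1\equiv 0$ is perfectly valid but not essential.
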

	
	\begin{proof}
		Due to Proposition \ref{9.1} and Proposition \ref{9.2}, ${S(t),B(t):X^{q,p}_{\bfvarepsilon}(t)\to X^{q,p}_{\bfvarepsilon}(t)^*}$,~${t\in I}$, satisfy (\hyperlink{C.1}{C.1})--(\hyperlink{C.4}{C.4}) and (\hyperlink{C.6}{C.6}). Thus, also  ${S(t)+B(t):X^{q,p}_{\bfvarepsilon}(t)\to X^{q,p}_{\bfvarepsilon}(t)^*}$,~${t\in I}$, satisfies (\hyperlink{C.1}{C.1})--(\hyperlink{C.4}{C.4}) and (\hyperlink{C.6}{C.6}). Finally, as  ${S(t):X^{q,p}_{\bfvarepsilon}(t)\to X^{q,p}_{\bfvarepsilon}(t)^*}$,~${t\in I}$, satsifies  (\hyperlink{C.5}{C.5}) and ${B(t)\!:\!X^{q,p}_{\bfvarepsilon}(t)\!\to\! X^{q,p}_{\bfvarepsilon}(t)^*}$, ${t\in I}$, due to (\hyperlink{B.3}{B.3}), satisfies $\langle B(t)\bfu,\bfu\rangle_{X^{q,p}_{\bfvarepsilon}(t)}\ge -c_2\|\bfu\|_Y^2-\|c_3(t,\cdot)\|_{L^1(\Omega)}$ for almost every $t\in I$ and every $\bfu\in X^{q,p}_{\bfvarepsilon}(t)$, ${S(t)+B(t):X^{q,p}_{\bfvarepsilon}(t)\to X^{q,p}_{\bfvarepsilon}(t)^*}$,~${t\in I}$, also satisfies (\hyperlink{C.5}{C.5}). The remaining part of the assertion results from Proposition~\ref{7.1} and Proposition~\ref{7.2}.\vspace*{-0.75cm} 
	\end{proof}
	\newpage
	
	\begin{prop}
		Let $\bfS:Q_T\times \mathbb{M}^{d\times d}_{\sym}\to\mathbb{M}^{d\times d}_{\sym}$ be a mapping satisfying (\hyperlink{S.1}{S.1})--(\hyperlink{S.4}{S.4}) with respect to $p$  and $\bfb:Q_T\times \setR^d\to \setR^d$ a mapping satisfying (\hyperlink{B.1}{B.1})--(\hyperlink{B.3}{B.3})~with~respect~to~${r:=\max\{2,p_*\}-\vep}$. Then, for arbitrary $\bfu_0\in Y$ and $\bsu^*:=\bscal{J}_{\bfvarepsilon}(\bsf,\bsF)\in \bscal{X}^{q,p}_{\bfvarepsilon}(Q_T)^*$, where $\bsf\in L^{\min\{2,(p^-)'\}}(Q_T)^d$ and $\bsF\in L^{p'(\cdot,\cdot)}(Q_T,\mathbb{M}^{d\times d}_{\sym})$, there exists a solution $\bsu\in \bscal{W}^{q,p}_{\bfvarepsilon}(Q_T)$~with~a~representation~${\bsu_c\in \bscal{Y}^0(Q_T)}$ of
		\begin{alignat*}{2}
			\frac{\bfd\bsu}{\bfd\bft}+\bscal{S}\bsu+\bscal{B}\bsu&=\bsu^*&&\quad\text{ in }\bscal{X}^{q,p}_{\bfvarepsilon}(Q_T)^*,\\
			\bsu_c(0)&=\bfu_0&&\quad\text{ in }Y, 
		\end{alignat*}
		or equivalently, for every $\bfphi\in C^\infty(\overline{Q_T})^d$ with $\textup{supp}(\bfphi)\subset\subset \left[0,T\right)\times \Omega$ there holds
		\begin{align*}
			-\int_{Q_T}{\bsu(t,x)\cdot\partial_t\bfphi(t,x)\,dtdx}&+\int_{Q_T}{\bfb(t,x,\bsu(t,x))\cdot\bfphi(t,x)+\bfS(t,x,\bfvarepsilon(\bsu)(t,x)):\bfvarepsilon(\bfphi)(t,x)\,dtdx}
			\\&=(\bfu_0,\bfphi(0))_Y+\int_{Q_T}{\bsf(t,x)\cdot\bfphi(t,x)+\bsF(t,x):\bfvarepsilon(\bfphi)(t,x)\,dtdx}.
		\end{align*}
	\end{prop}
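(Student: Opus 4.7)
The plan is to combine Proposition~\ref{9.3} with Theorem~\ref{main} to produce the abstract solution, and then to unpack the resulting identity into the displayed weak formulation via the integration by parts formula of Proposition~\ref{4.6}.

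First I observe that since $q\ge 2$ in $Q_T$, the continuous embedding $X^{q,p}_-\hookrightarrow Y=L^2(\Omega)^d$ holds, which is the only standing hypothesis of Theorem~\ref{main} that is not immediate from the paper's running assumptions on $q$ and $p$. Proposition~\ref{9.3} already supplies that the induced operator $\bscal{A}:=\bscal{S}+\bscal{B}:\bscal{X}^{q,p}_{\bfvarepsilon}(Q_T)\cap\bscal{Y}^\infty(Q_T)\to\bscal{X}^{q,p}_{\bfvarepsilon}(Q_T)^*$ is bounded, Bochner pseudo-monotone and Bochner coercive with respect to every $\bfu_0\in Y$ and every $\bsu^*=\bscal{J}_{\bfvarepsilon}(\bsf,\bsF)$ of the prescribed class. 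By Proposition~\ref{6.2}(ii) the Bochner pseudo-monotonicity of $\bscal{A}$ upgrades to the Bochner condition (M). Hence Theorem~\ref{main} applies and yields a solution $\bsu\in\bscal{W}^{q,p}_{\bfvarepsilon}(Q_T)$ with continuous representative $\bsu_c\in\bscal{Y}^0(Q_T)$ satisfying the abstract equation $\tfrac{\bfd\bsu}{\bfd\bft}+\bscal{A}\bsu=\bsu^*$ in $\bscal{X}^{q,p}_{\bfvarepsilon}(Q_T)^*$ together with the initial condition $\bsu_c(0)=\bfu_0$ in $Y$.

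It then remains to reformulate the abstract equation weakly against test functions $\bfphi\in C^\infty(\overline{Q_T})^d$ with $\textup{supp}(\bfphi)\subset\subset\left[0,T\right)\times\Omega$. Any such $\bfphi$ lies in $\bscal{W}^{q,p}_{\bfvarepsilon}(Q_T)$ with $\bfphi_c=\bfphi$, $\bfphi(T)=0$, and $\bfphi(t)\in X^{q,p}_+$ for every $t\in\overline{I}$, so Proposition~\ref{4.6}(ii) applies to the pair $(\bsu,\bfphi)$ with $t'=0$, $t=T$; combined with $\bsu_c(0)=\bfu_0$ and $\bfphi(T)=0$ this produces the identity
\begin{align*}
\bigg\langle\frac{\bfd\bsu}{\bfd\bft},\bfphi\bigg\rangle_{\bscal{X}^{q,p}_{\bfvarepsilon}(Q_T)}
=-(\bfu_0,\bfphi(0))_Y-\int_{Q_T}{\bsu(t,x)\cdot\partial_t\bfphi(t,x)\,dtdx}.
\end{align*}
The remaining pairings with $\bscal{S}\bsu$, $\bscal{B}\bsu$ and $\bsu^*=\bscal{J}_{\bfvarepsilon}(\bsf,\bsF)$ are unpacked through their defining time-slice formulae (Proposition~\ref{9.1}, Proposition~\ref{9.2} and Remark~\ref{3.9}), which rewrite them exactly as the integrals $\int_{Q_T}\bfS(\cdot,\cdot,\bfvarepsilon(\bsu)){:}\bfvarepsilon(\bfphi)\,dtdx$, $\int_{Q_T}\bfb(\cdot,\cdot,\bsu)\cdot\bfphi\,dtdx$ and $\int_{Q_T}\bsf\cdot\bfphi+\bsF{:}\bfvarepsilon(\bfphi)\,dtdx$, respectively. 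Rearranging gives the claimed weak identity.

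There is essentially no substantive obstacle: the heavy lifting, namely Bochner pseudo-monotonicity, Bochner coercivity and the boundedness of $\bscal{S}+\bscal{B}$, has been shouldered by Proposition~\ref{9.3}, while abstract existence is delivered by Theorem~\ref{main}. The only points requiring care are to check that the given class of data $\bsu^*=\bscal{J}_{\bfvarepsilon}(\bsf,\bsF)$ with $\bsf\in L^{\min\{2,(p^-)'\}}(Q_T)^d$ and $\bsF\in L^{p'(\cdot,\cdot)}(Q_T,\mathbb{M}^{d\times d}_{\sym})$ is precisely the one for which Bochner coercivity has been established in Proposition~\ref{7.2}(ii) (and hence in Proposition~\ref{9.3}), and that the admissibility of $\bfphi$ as a test function in the integration by parts formula is guaranteed by its smoothness and the support condition $\textup{supp}(\bfphi)\subset\subset\left[0,T\right)\times\Omega$, both of which are immediate.
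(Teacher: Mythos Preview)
Your proposal is correct and follows essentially the same route as the paper, which simply writes ``Follows from Theorem~\ref{main}, or Corollary~\ref{main2}, in conjunction with Proposition~\ref{9.3}.'' You have merely made explicit the passage from Bochner pseudo-monotonicity to the Bochner condition~(M) via Proposition~\ref{6.2}(ii) and spelled out the derivation of the weak formulation through Proposition~\ref{4.6}(ii), both of which the paper leaves implicit.
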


	\begin{proof}
		 Follows from Theorem~\ref{main}, or Corollary~\ref{main2}, in conjunction of Propositon~\ref{9.3}.~\hfill$\qed$
	\end{proof}

\bibliographystyle{my-amsplain}    
\bibliography{literatur2} 
\end{document}